\definecolor{myblue}{rgb}{0,0,0.6}
\DeclareMathOperator{\supp}{supp}
\DeclareMathOperator{\diam}{diam}
\DeclareMathOperator{\dist}{dist}
\newcommand{\Hull}{\mathrm{Hull}}
\definecolor{amcol}{rgb}{0.8,0,0}
\definecolor{dhcol}{rgb}{0,0.5,0}
\begin{document}

\newcommand{\rf}[1]{(\ref{#1})}
\newcommand{\mmbox}[1]{\fbox{\ensuremath{\displaystyle{ #1 }}}}	

\newcommand{\hs}[1]{\hspace{#1mm}}
\newcommand{\vs}[1]{\vspace{#1mm}}

\newcommand{\ri}{{\mathrm{i}}}
\newcommand{\re}{{\mathrm{e}}}
\newcommand{\rd}{\mathrm{d}}

\newcommand{\R}{\mathbb{R}}
\newcommand{\Q}{\mathbb{Q}}
\newcommand{\N}{\mathbb{N}}
\newcommand{\Z}{\mathbb{Z}}
\newcommand{\C}{\mathbb{C}}
\newcommand{\K}{{\mathbb{K}}}

\newcommand{\cA}{\mathcal{A}}
\newcommand{\cB}{\mathcal{B}}
\newcommand{\cC}{\mathcal{C}}
\newcommand{\cS}{\mathcal{S}}
\newcommand{\cD}{\mathcal{D}}
\newcommand{\cH}{\mathcal{H}}
\newcommand{\cI}{\mathcal{I}}
\newcommand{\cItilde}{\tilde{\mathcal{I}}}
\newcommand{\cIhat}{\hat{\mathcal{I}}}
\newcommand{\cIcheck}{\check{\mathcal{I}}}
\newcommand{\cIstar}{{\mathcal{I}^*}}
\newcommand{\cJ}{\mathcal{J}}
\newcommand{\cM}{\mathcal{M}}
\newcommand{\cP}{\mathcal{P}}
\newcommand{\cV}{{\mathcal V}}
\newcommand{\cW}{{\mathcal W}}
\newcommand{\scrD}{\mathscr{D}}
\newcommand{\scrS}{\mathscr{S}}
\newcommand{\scrJ}{\mathscr{J}}
\newcommand{\sD}{\mathsf{D}}
\newcommand{\sN}{\mathsf{N}}
\newcommand{\sS}{\mathsf{S}}
 \newcommand{\sT}{\mathsf{T}}
 \newcommand{\sH}{\mathsf{H}}
 \newcommand{\sI}{\mathsf{I}}
 
\newcommand{\bs}[1]{\mathbf{#1}}
\newcommand{\bb}{\mathbf{b}}
\newcommand{\bd}{\mathbf{d}}
\newcommand{\bn}{\mathbf{n}}
\newcommand{\bp}{\mathbf{p}}
\newcommand{\bP}{\mathbf{P}}
\newcommand{\bv}{\mathbf{v}}
\newcommand{\bx}{\mathbf{x}}
\newcommand{\by}{\mathbf{y}}
\newcommand{\bz}{{\mathbf{z}}}
\newcommand{\bxi}{\boldsymbol{\xi}}
\newcommand{\boldeta}{\boldsymbol{\eta}}	

\newcommand{\ts}{\tilde{s}}
\newcommand{\tGamma}{{\tilde{\Gamma}}}
 \newcommand{\tbx}{\tilde{\bx}}
 \newcommand{\tbd}{\tilde{\bd}}
 \newcommand{\txi}{\xi}
 
\newcommand{\done}[2]{\dfrac{d {#1}}{d {#2}}}
\newcommand{\donet}[2]{\frac{d {#1}}{d {#2}}}
\newcommand{\pdone}[2]{\dfrac{\partial {#1}}{\partial {#2}}}
\newcommand{\pdonet}[2]{\frac{\partial {#1}}{\partial {#2}}}
\newcommand{\pdonetext}[2]{\partial {#1}/\partial {#2}}
\newcommand{\pdtwo}[2]{\dfrac{\partial^2 {#1}}{\partial {#2}^2}}
\newcommand{\pdtwot}[2]{\frac{\partial^2 {#1}}{\partial {#2}^2}}
\newcommand{\pdtwomix}[3]{\dfrac{\partial^2 {#1}}{\partial {#2}\partial {#3}}}
\newcommand{\pdtwomixt}[3]{\frac{\partial^2 {#1}}{\partial {#2}\partial {#3}}}
\newcommand{\bnabla}{\boldsymbol{\nabla}}
\newcommand{\dive}{\boldsymbol{\nabla}\cdot}
\newcommand{\curl}{\boldsymbol{\nabla}\times}
\newcommand{\Phixy}{\Phi(\bx,\by)}
\newcommand{\PhiOxy}{\Phi_0(\bx,\by)}
\newcommand{\dxPhixy}{\pdone{\Phi}{n(\bx)}(\bx,\by)}
\newcommand{\dyPhixy}{\pdone{\Phi}{n(\by)}(\bx,\by)}
\newcommand{\dxPhiOxy}{\pdone{\Phi_0}{n(\bx)}(\bx,\by)}
\newcommand{\dyPhiOxy}{\pdone{\Phi_0}{n(\by)}(\bx,\by)}

\newcommand{\eps}{\varepsilon}
\newcommand{\real}[1]{{\rm Re}\left[#1\right]} 
\newcommand{\im}[1]{{\rm Im}\left[#1\right]}
\newcommand{\ol}[1]{\overline{#1}}
\newcommand{\ord}[1]{\mathcal{O}\left(#1\right)}
\newcommand{\oord}[1]{o\left(#1\right)}
\newcommand{\Ord}[1]{\Theta\left(#1\right)}

\newcommand{\hsnorm}[1]{||#1||_{H^{s}(\bs{R})}}
\newcommand{\hnorm}[1]{||#1||_{\tilde{H}^{-1/2}((0,1))}}
\newcommand{\norm}[2]{\left\|#1\right\|_{#2}}
\newcommand{\normt}[2]{\|#1\|_{#2}}
\newcommand{\on}[1]{\Vert{#1} \Vert_{1}}
\newcommand{\tn}[1]{\Vert{#1} \Vert_{2}}

\newcommand{\xt}{\mathbf{x},t}
\newcommand{\PhiF}{\Phi_{\rm freq}}
\newcommand{\cone}{{c_{j}^\pm}}
\newcommand{\ctwo}{{c_{2,j}^\pm}}
\newcommand{\cthree}{{c_{3,j}^\pm}}

\newtheorem{thm}{Theorem}[section]
\newtheorem{lem}[thm]{Lemma}
\newtheorem{defn}[thm]{Definition}
\newtheorem{prop}[thm]{Proposition}
\newtheorem{cor}[thm]{Corollary}
\newtheorem{rem}[thm]{Remark}
\newtheorem{conj}[thm]{Conjecture}
\newtheorem{ass}[thm]{Assumption}
\newtheorem{example}[thm]{Example} 

\newcommand{\tH}{\widetilde{H}}
\newcommand{\Hze}{H_{\rm ze}} 	
\newcommand{\uze}{u_{\rm ze}}		
\newcommand{\dimH}{{\rm dim_H}}
\newcommand{\dimB}{{\rm dim_B}}
\newcommand{\IntClosOm}{\mathrm{int}(\overline{\Omega})}
\newcommand{\IntClosOmOne}{\mathrm{int}(\overline{\Omega_1})}
\newcommand{\IntClosOmTwo}{\mathrm{int}(\overline{\Omega_2})}
\newcommand{\Ccomp}{C^{\rm comp}}
\newcommand{\tCcomp}{\tilde{C}^{\rm comp}}
\newcommand{\uC}{\underline{C}}
\newcommand{\utC}{\underline{\tilde{C}}}
\newcommand{\oC}{\overline{C}}
\newcommand{\otC}{\overline{\tilde{C}}}
\newcommand{\capcomp}{{\rm cap}^{\rm comp}}
\newcommand{\Capcomp}{{\rm Cap}^{\rm comp}}
\newcommand{\tcapcomp}{\widetilde{{\rm cap}}^{\rm comp}}
\newcommand{\tCapcomp}{\widetilde{{\rm Cap}}^{\rm comp}}
\newcommand{\hcapcomp}{\widehat{{\rm cap}}^{\rm comp}}
\newcommand{\hCapcomp}{\widehat{{\rm Cap}}^{\rm comp}}
\newcommand{\tcap}{\widetilde{{\rm cap}}}
\newcommand{\tCap}{\widetilde{{\rm Cap}}}
\newcommand{\ccap}{{\rm cap}}
\newcommand{\ucap}{\underline{\rm cap}}
\newcommand{\uCap}{\underline{\rm Cap}}
\newcommand{\cCap}{{\rm Cap}}
\newcommand{\ocap}{\overline{\rm cap}}
\newcommand{\oCap}{\overline{\rm Cap}}
\DeclareRobustCommand
{\mathringbig}[1]{\accentset{\smash{\raisebox{-0.1ex}{$\scriptstyle\circ$}}}{#1}\rule{0pt}{2.3ex}}
\newcommand{\cirH}{\mathringbig{H}}
\newcommand{\cirHs}{\mathringbig{H}{}^s}
\newcommand{\cirHt}{\mathringbig{H}{}^t}
\newcommand{\cirHm}{\mathringbig{H}{}^m}
\newcommand{\cirHzero}{\mathringbig{H}{}^0}
\newcommand{\deO}{{\partial\Omega}}
\newcommand{\OO}{{(\Omega)}}
\newcommand{\Rn}{{(\R^n)}}
\newcommand{\Id}{{\mathrm{Id}}}
\newcommand{\gap}{\mathrm{Gap}}
\newcommand{\ggap}{\mathrm{gap}}
\newcommand{\isom}{{\xrightarrow{\sim}}}
\newcommand{\half}{{1/2}}
\newcommand{\mhalf}{{-1/2}}
\newcommand{\inter}{{\mathrm{int}}}

\newcommand{\Hsp}{H^{s,p}}
\newcommand{\Htq}{H^{t,q}}
\newcommand{\tHsp}{{{\widetilde H}^{s,p}}}
\newcommand{\SP}{\ensuremath{(s,p)}}
\newcommand{\Xsp}{X^{s,p}}

\newcommand{\dd}{{d}}\newcommand{\pp}{{p_*}}

\newcommand{\Rnn}{\R^{n_1+n_2}}
\newcommand{\Tr}{{\mathrm{Tr}}}

\renewcommand{\arraystretch}{1.7}
\renewcommand{\bs}[1]{\boldsymbol{#1}}
\newcommand{\be}{\bs{e}}
\renewcommand{\bn}{\bs{n}}
\renewcommand{\bx}{x}%
\renewcommand{\by}{y}%
\newcommand{\bbx}{\Psi}
\newcommand{\bby}{\widetilde{\Psi}}
\newcommand{\bg}{\bs{g}}
\newcommand{\bu}{\bs{u}}
\newcommand{\bw}{\bs{w}}
\newcommand{\bA}{\bs{A}}
\newcommand{\bC}{\bs{C}}
\newcommand{\bL}{\bs{L}}
\newcommand{\bS}{\bs{S}}
\newcommand{\bT}{\bs{T}}
\newcommand{\bU}{\bs{U}}
\newcommand{\bV}{\bs{V}}
\newcommand{\bX}{\bs{X}}
\newcommand{\bgamma}{{\bs{\gamma}}}
\newcommand{\bH}{\bs{H}}
\newcommand{\bnu}{\boldsymbol{\nu}}
\newcommand{\btau}{\boldsymbol{\tau}}
\newcommand{\bseta}{\boldsymbol{\eta}}
\newcommand{\rD}{\mathrm{D}}
\newcommand{\rN}{\mathrm{N}}
\newcommand{\bm}{{\bs{m}}}
\newcommand{\bl}{\bs{l}}
\newcommand{\No}{{\mathbb{N}_0}}
\newcommand{\tbX}{\tilde{\bX}}
\newcommand{\tA}{\tilde{A}}
\renewcommand{\tH}{\widetilde{H}{}}
\newcommand{\tbH}{\widetilde{\bH}{}}
\newcommand{\sM}{\mathsf{M}}
\newcommand{\sE}{\mathsf{E}}
\newcommand{\cT}{\mathcal{T}}%
\newcommand{\cU}{\mathcal{U}}%
\newcommand{\cF}{\mathcal{F}}
\newcommand{\cL}{\mathcal{L}}
\newcommand{\cK}{\mathcal{K}}
\newcommand{\cN}{\mathcal{N}}
\newcommand{\cE}{\mathcal{E}}
\newcommand{\cR}{\mathcal{R}}
\newcommand{\tcA}{\tilde{\mathcal{A}}}
\newcommand{\tcL}{\tilde{\mathcal{L}}}
\newcommand{\tcK}{\tilde{\mathcal{K}}}
\newcommand{\bcD}{\boldsymbol{\mathcal{D}}}%
\newcommand{\vbcU}{\vec{\boldsymbol{\cU}}}
\newcommand{\vbcE}{\vec{\boldsymbol{\cE}}}
\newcommand{\vbcS}{\vec{\boldsymbol{\cS}}}
\newcommand{\bscrS}{\boldsymbol{\scrS}}
\newcommand{\dudnjump}{\left[ \pdone{u}{n}\right]}
\newcommand{\dudnjumptext}{[ \pdonetext{u}{n}]}
\newcommand{\sumpm}[1]{\overbracket[0.5pt]{\underbracket[0.5pt]{\,#1\,}}}
\renewcommand{\dudnjump}{\left[ \pdone{u^s}{n}\right]}
\renewcommand{\dudnjumptext}{[ \pdonetext{u^s}{n}]}
\newcommand{\gradd}{\vec{\rm \bf grad}}	%
\newcommand{\curld}{\vec{\rm \bf curl}}	%
\newcommand{\dived}{{\rm div}}			%
\newcommand{\gradp}{{\rm \bf grad}}	%
\newcommand{\curlp}{{\rm \bf curl}}	%
\newcommand{\scurlp}{{\rm curl}}		%
\newcommand{\divep}{{\rm div}}			%
\newcommand{\gradg}{{\rm \bf grad}_\Gamma}	%
\newcommand{\curlg}{{\rm \bf curl}_\Gamma}	%
\newcommand{\scurlg}{{\rm curl}_\Gamma}	%
\newcommand{\diveg}{{\rm div}_\Gamma}		%
\newcommand{\Deltag}{{\Delta_\Gamma}}		%
\newcommand{\kap}{m}
\newcommand{\buk}{\bu_\kap}
\newcommand{\hbuk}{\hat{\bu}_\kap}
\newcommand{\bvk}{\bv_\kap}
\newcommand{\hbvk}{\hat{\bv}_\kap}
\newcommand{\bwk}{\bw_\kap}
\newcommand{\hbwk}{\hat{\bw}_\kap}
\newcommand{\hbu}{\hat{\bu}}
\newcommand{\hbv}{\hat{\bv}}
\newcommand{\hbw}{\hat{\bw}}
\newcommand{\hphi}{\hat{\phi}}
\newcommand{\hpsi}{\hat{\psi}}
\newcommand{\deG}{{\partial\Gamma}}
\newcommand{\GG}{(\Gamma)}
\newcommand{\tr}{\mathrm{tr}_\Gamma}
\newcommand{\trs}{\mathrm{tr}_{\Gamma,s}}
\newcommand{\trhalf}{\mathrm{tr}_{\Gamma,\frac{1}{2}}}
\newcommand{\IH}{\mathbb{H}}
\newcommand{\IS}{\mathbb{S}}
\newcommand{\IL}{\mathbb{L}}
\newcommand{\II}{\mathbb{I}}
\newcommand{\IV}{\mathbb{V}}
\newcommand{\IW}{\mathbb{W}}
\newcommand{\IC}{\mathbb{C}}
\newcommand{\IX}{\mathbb{X}}
\newcommand{\IP}{\mathbb{P}}
\newcommand{\IQ}{\mathbb{Q}}
\newcommand{\IY}{\mathbb{Y}}
\newcommand{\tf}{\tilde{f}}
\newcommand{\dn}{\partial_{\mathrm{n}}}
\newcommand{\ih}{\mathfrak{h}}
\newcommand{\Nn}{\mathbb{N}^{n}}%
\newcommand{\Non}{\mathbb{N}_{0}^{n}}
\newcommand{\Zn}{\mathbb{Z}^{n}}%
\newcommand{\Cn}{\mathbb{C}^{n}}%
\newcommand{\dual}[2]{\left\langle #1\,,\,#2\right\rangle} %
\newcommand{\wt}[1]{\widetilde{#1}}
\newcommand{\divepR}{{\rm div}_{\R^2}}
\newcommand{\scurlpR}{{\rm curl}_{\R^2}}

\definecolor{purple0}{rgb}{0.4,0,0.5}
\definecolor{orange}{rgb}{1,0.4,0}
\newcommand{\cu}[1]{{\color{purple0} #1 }}
\definecolor{orange0}{rgb}{1,0.3,0}
\newcommand{\ctodo}[1]{{\color{orange0} {\bf TODO:} #1 }}
\definecolor{green0}{rgb}{0.1,0.6,0}
\newcommand{\dnote}[1]{{\color{green0} {\bf DH:} #1 }}
\newcommand{\snote}[1]{{\color{red} {\bf SC:} #1 }}
\newcommand{\acnote}[1]{{\color{orange} {\bf AC:} #1 }}
\newcommand{\hBEM}{h_{\mathrm{BEM}}}
\newcommand{\hquad}{h_{\mathrm{quad}}}
\newcommand{\vb}{{\vec b}}
\newcommand{\vc}{{\vec c}}
\newcommand{\va}{{\vec a}}
\newcommand{\vv}{{\vec v}}
\newcommand{\vphi}{{\vec\phi}}
\newcommand{\vpsi}{{\vec\psi}}
\newcommand{\vxi}{{\vec\xi}}
\newcommand{\hmeshref}{h_{\mathrm{ref}}}
\newcommand{\ellref}{\ell_\mathrm{ref}}
\newcommand{\Nref}{N_\mathrm{ref}}

\allowdisplaybreaks[4]

\newtheorem{claim}[thm]{Claim}
\newtheorem{prob}[thm]{Problem}

\title{A Hausdorff-measure boundary element method for acoustic scattering by fractal screens}\author{}
\author{A. M. Caetano$^{\text{a}}$,
S. N. Chandler-Wilde$^{\text{b}}$,
A. Gibbs$^{\text{c}}$,
D. P. Hewett$^{\text{c}}$ and A. Moiola$^{\text{d}}$\\
$^{\text{a}}${\footnotesize
Center for R\&D in Mathematics and Applications, 
Departamento de Matem\'atica, Universidade de Aveiro, Aveiro, Portugal}\\
$^{\text{b}}${\footnotesize Department of Mathematics and Statistics, University of Reading, Reading, United Kingdom}\\
$^{\text{c}}${\footnotesize Department of Mathematics, University College London, London, United Kingdom}\\
$^{\text{d}}${\footnotesize Dipartimento di Matematica ``F. Casorati'', Universit\`a degli studi di Pavia, Pavia, Italy}
}
\maketitle
\renewcommand{\thefootnote}{\arabic{footnote}}

\begin{abstract}
Sound-soft fractal screens can scatter acoustic waves even when they have zero surface measure. To solve such scattering problems we make what appears to be the first application of the boundary element method (BEM) where each BEM basis function is supported in a fractal set, and the integration involved in the formation of the BEM matrix is with respect to a non-integer order Hausdorff measure rather than the usual (Lebesgue) surface measure. Using recent results on function spaces on fractals, we prove convergence of the Galerkin formulation of this ``Hausdorff BEM'' for acoustic scattering in $\R^{n+1}$ ($n=1,2$) when the scatterer, assumed to be a compact subset of $\R^n\times\{0\}$, is a $d$-set for
some $d\in (n-1,n]$, so that, in particular, the scatterer has Hausdorff dimension $d$.
For a class of fractals that are attractors of iterated function systems, we prove convergence rates for the Hausdorff BEM and superconvergence for smooth antilinear functionals, under certain natural regularity assumptions on the solution of the underlying boundary integral equation.
We also propose numerical quadrature routines for the implementation of our Hausdorff BEM, along with a fully discrete convergence analysis, via numerical (Hausdorff measure) integration estimates and inverse estimates on fractals, estimating the discrete condition numbers.
Finally, we show numerical experiments that support the sharpness of  our theoretical results, 
and our solution regularity assumptions, 
including results  for scattering in $\R^2$ by  Cantor sets, and in $\R^3$ by  Cantor dusts.

\end{abstract}

\section{Introduction}
\label{sec:Introduction}
A classical problem in the study of acoustic, electromagnetic and elastic wave propagation is the scattering of a time-harmonic incident wave by an infinitesimally thin screen (or ``crack''). In the simplest configuration the incident wave propagates in $\R^{n+1}$ (typically $n=1,2$) and the screen $\Gamma$ is assumed to be a bounded subset of the hyperplane $\Gamma_\infty=\R^{n}\times\{0\}$. %
In standard analyses
the set $\Gamma$ is assumed (either explicitly or implicitly) to be a relatively open subset of $\Gamma_\infty$ with smooth relative boundary $\partial\Gamma$. But in a recent series of papers \cite{ChaHewMoi:13,ScreenPaper,BEMfract,ImpedanceScreen} it has been shown how well-posed boundary value problems (BVPs) and associated boundary integral equations (BIEs) for the acoustic version of this screen problem (with either Dirichlet, Neumann or impedance boundary conditions) can be formulated, analysed and discretized for arbitrary screens with no regularity assumption on $\Gamma$. In particular, this encompasses situations where either $\partial\Gamma$ or $\Gamma$ itself has a fractal nature.
The study of wave scattering by such fractal structures is not only interesting from a mathematical point of view, but is also relevant for numerous applications including the scattering of electromagnetic waves by complex ice crystal aggregates in weather and climate science \cite{So:01}
and the modelling of fractal antennas
in electrical engineering \cite{WeGa:03}.  
In applications the physical object generally only exhibits a certain number of levels of fractal structure; nonetheless, %
fractals provide an idealised mathematical model for objects that have self-similar structure at multiple lengthscales.

\begin{figure}
\centering\includegraphics[height=20mm]{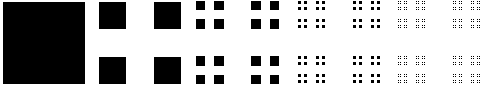}
\caption{The first five standard prefractal approximations, $\Gamma_0,\ldots,\Gamma_4$, to the middle-third Cantor dust $\Gamma$, defined by $\Gamma_0:=[0,1]^2$, $\Gamma_n:=s(\Gamma_{n-1})$, $n\in \N$, where $s$ is defined by \eqref{eq:fixedfirst} and  \eqref{eq:CD_IFS} with $M=4$ and $\rho=1/3$.}
\label{Fig:Dust}
\end{figure}

Our focus in this paper is on the Dirichlet (sound soft) acoustic scattering problem in the case where $\Gamma$ itself is fractal.\footnote{We note that our methods and results apply, with obvious modifications, to the analogous (yet simpler) problem in potential theory, in which the Helmholtz equation is replaced by the Laplace equation.}
We shall assume throughout that, for some $n-1<d\leq n$, $\Gamma$ is a compact $d$-set (i.e., $\Gamma$ is compact as a subset of $\Gamma_\infty$ and is a $d$-set as defined in \S\ref{sec:HausdorffMeasure})
which in particular implies that $\Gamma$ has  Hausdorff dimension equal to $d$.  More specifically, our attention will be on the special case where $\Gamma$ is the self-similar attractor of an iterated function system of contracting similarities, in particular on the case where $\Gamma$ satisfies a certain disjointness condition (described in \S\ref{sec:IFS}), in which case $\Gamma$ has (as a subset of $\R^n$) empty interior and zero Lebesgue measure. An example in the case $n=1$ is the middle-third Cantor set, which is a $d$-set for $d=\log{2}/\log{3}$; an example in the case $n=2$ is the middle-third Cantor dust shown in Figure \ref{Fig:Dust}, which is a $d$-set for $d=\log{4}/\log{3}$.
For such $\Gamma$, well-posed BVP and BIE formulations for the Dirichlet scattering problem were analysed in \cite{ScreenPaper}, where it was shown that the exact solution of the BIE
lies in the function space $H^{-1/2}_{\Gamma}=\{u\in H^{-1/2}(\Gamma_\infty):\supp{u}\subset \Gamma\}$ \cite[\S3.3]{ScreenPaper}. The assumption that $d>n-1$ implies that this space is non-trivial, and that for non-zero incident data the BIE solution is non-zero, so the screen produces a non-zero scattered field.
Our aim in this paper is to develop and analyse a boundary element method (BEM) that can efficiently compute this BIE solution. %

One obvious approach, adopted in \cite{BEMfract} (and see also \cite{jones1994fast,panagouli1997fem,ImpedanceScreen}), is to apply a conventional BEM on a sequence of smoother (e.g.\ Lipschitz) ``prefractal'' approximations to the underlying fractal screen, such as those illustrated in Figure \ref{Fig:Dust} for the middle-third Cantor dust\footnote{For recent overviews of the conventional BEM literature for Lipschitz or smoother screens see \cite{BEMfract} or \cite{AlAv:21,Cletal:21,JePi:22}.}. (In this example each prefractal is a union of squares.)
When $\Gamma$ has empty interior (as in the current paper) this is necessarily a ``non-conforming'' approach, in the sense that the resulting discrete approximations do not lie in $H^{-1/2}_{\Gamma}$, the space in which the continuous variational problem is posed. This is because conventional BEM basis functions are elements of $L_2(\Gamma_\infty)$, the intersection of which with $H^{-1/2}_{\Gamma}$ is trivial. This complicates the analysis of Galerkin implementations, since C\'ea's lemma (e.g., \cite[Theorem 8.1]{Steinbach}), or its standard modifications, cannot be invoked. 
In \cite{BEMfract} we showed how this can be overcome using the framework of Mosco convergence,
proving that, in the case of piecewise-constant basis functions, the BEM approximations on the prefractals converge to the exact BIE solution on $\Gamma$ as the prefractal level tends to infinity, provided that the prefractals satisfy a certain geometric constraint and the corresponding mesh widths tend to zero at an appropriate rate \cite[Thm.~5.3]{BEMfract}. 
However, while \cite{BEMfract} provides, to the best of our knowledge, the first proof of convergence for a numerical method for scattering by fractals, we were unable in \cite{BEMfract} to prove any \textit{rates of convergence}.

In the current paper we present an alternative approach, in which the fractal nature of the scatterer is explicitly built into the numerical discretization.
Specifically, we propose and analyse a ``Hausdorff  BEM'', which is a Galerkin implementation of an $H^{-1/2}_{\Gamma}$-conforming discretization in which the basis functions are the product of piecewise-constant functions and $\cH^d|_\Gamma$, the Hausdorff $d$-measure restricted to $\Gamma$.
A key advantage of the conforming nature of our approximations is that convergence of our Hausdorff BEM can be proved using C\'ea's lemma. Furthermore, extensions that we make in \S\ref{sec:Wavelets} of  the wavelet decompositions from \cite{Jonsson98} to negative exponent spaces allow us to obtain error bounds quantifying the convergence rate of our approximations, under appropriate and natural smoothness assumptions on the exact BIE solution. While these smoothness assumptions have not been proved for the full range that we envisage (see Proposition \ref{prop:epsilon}), the convergence rates observed in our numerical results in \S\ref{sec:NumericalResults} support a conjecture (Conjecture \ref{ass:Smoothness}) that they hold.%

Implementation of our Hausdorff BEM requires the calculation of the entries of the Galerkin linear system, which involve both single and double integrals with respect to the Hausdorff measure $\cH^d$. To evaluate such integrals we apply the quadrature rules proposed and analysed in \cite{HausdorffQuadrature}, in which the self-similarity of $\Gamma$ is exploited to reduce the requisite singular integrals to regular integrals, which can be treated using a simple midpoint-type rule. By combining the quadrature error analysis provided in \cite{HausdorffQuadrature} with novel inverse inequalities on fractal sets (proved in \S\ref{sec:InverseEstimates}) we are able to present a fully discrete analysis of our Hausdorff BEM, subject to the aforementioned smoothness assumptions.

An outline of the paper is as follows.
In \S\ref{sec:prelim} we collect some basic results that will be used throughout the paper on Hausdorff measure and dimension, singular integrals on $d$-sets, iterated function systems, and function spaces; in particular,  in \S\ref{sec:FunctionSpaces} we introduce the function spaces $\IH^t(\Gamma)$ that are trace spaces on $d$-sets that will play a major role in our analysis, and recall connections to the classical Sobolev spaces $H^s_\Gamma$ established recently in \cite{caetano2019density}.
In \S\ref{sec:Wavelets} we recall from \cite{Jonsson98} the construction, for $n-1<d\leq n$,
of wavelets on $d$-sets %
that are the attractors of iterated function systems satisfying the standard open set condition, and, for $n-1<d<n$, the characterisations of Besov spaces on these $d$-sets (which we show in Appendix \ref{app:Besov} coincide with our trace spaces $\IH^t(\Gamma)$ for positive $t$) in terms of wavelet expansion coefficients. We also extend, in Corollary \ref{cor:Wavelets},  these characterisations, which are crucial to our later best-approximation error estimates,  to $\IH^t(\Gamma)$ for a range of negative $t$ via duality arguments.

In \S\ref{sec:BVPsBIEs} we state the BVP and BIE for the Dirichlet screen scattering problem, showing, in the case when $\Gamma$ is a $d$-set, that  the BIE can be formulated in terms of a version $\IS$ of the single-layer potential operator which we show, in Propositions \ref{lem:cont} and \ref{prop:epsilon}, maps $\IH^{t-t_d}(\Gamma)$ to $\IH^{t+t_d}(\Gamma)$, for $|t|<t_d$ and a particular $d$-dependent $t_d\in (0,1/2]$, indeed is invertible between these spaces for $|t|<\epsilon$ and some $0<\epsilon\leq t_d$. %
(The spaces $\IH^{t_d}(\Gamma)\subset \IL_2(\Gamma)\subset \IH^{-t_d}(\Gamma)$ form a Gelfand triple, with $\IL_2(\Gamma)$ the space of square-integrable functions on $\Gamma$ with respect to $d$-dimensional Hausdorff measure as the pivot space, analogous to the usual Gelfand triple $H^{1/2}(\Gamma)\subset L_2(\Gamma)\subset \widetilde H^{-1/2}(\Gamma)$ in scattering by a classical screen $\Gamma$ that is a bounded relatively open subset of $\Gamma_\infty$.) %
Moreover, as Theorem \ref{lem:ISasHauss}, we show the key result that, when acting on $\IL_\infty(\Gamma)$ (which contains our BEM approximation spaces), $\IS$ has the usual representation as an integral operator with the Helmholtz fundamental solution as kernel, but now integrating with respect to $d$-dimensional Hausdorff measure.  %

In \S\ref{sec:HausdorffBEM} we describe the design and implementation of our Hausdorff BEM, and state and prove our convergence results, showing that, at least in the case that $\Gamma$ is the disjoint attractor of an iterated function system with $n-1<\dim_H(\Gamma)<n$, all the results that are achievable for classical Galerkin BEM (convergence and superconvergence results in scales of Sobolev spaces, inverse and condition number estimates, fully discrete error estimates\footnote{Our fully discrete error estimates require, additionally, that $\Gamma$ is hull-disjoint in the sense introduced below \eqref{eq:JQdef2}.}) can be carried over to this Hausdorff measure setting (we defer to Appendix B the details of our strongest inverse estimates, derived via a novel extension of bubble-function type arguments to cases where the elements have no interior). %

In \S\ref{sec:NumericalResults} we present numerical results, for cases where $\Gamma$ is a Cantor set or Cantor dust, illustrating the sharpness of our theoretical predictions. We show that our error estimates appear to apply also in cases, such as the Sierpinski triangle, where $\Gamma$ is not disjoint so that the conditions of our theory are not fully satisfied. We also make comparisons, in terms of accuracy as a function of numbers of degrees of freedom, with numerical results obtained by applying conventional BEM on a sequence of prefractal approximations to $\Gamma$, for which we have, as discussed above, only a much more limited theory \cite{BEMfract}.

In \S\ref{sec:Conclusions} we offer some conclusions and suggestions for future work. In Appendix \ref{sec:TableOfDefns} we provide a table of definitions for easy reference.

\section{Preliminaries}
\label{sec:prelim}
In this section we collect a number of preliminary results that will underpin our analysis.

\subsection{Hausdorff measure and dimension}
\label{sec:HausdorffMeasure}
For $E\subset\R^n$ and $\alpha\geq 0$ we recall (e.g., from \cite{Fal}) the definition of the Hausdorff $\alpha$-measure
of $E$,
\[ \cH^\alpha(E):=\lim_{\delta\to 0} \left(\inf \sum_{i=1}^\infty (\mathrm{diam}(U_i))^\alpha\right)\in[0,\infty)\cup\{\infty\},\]
where, for a given $\delta>0$, the infimum is over all countable covers of $E$
by a collection $\{U_i\}_{i\in\N}$ of subsets of $\R^n$ with $\diam (U_i) \leq \delta$ for each $i$. %
Where $\R^+:= [0,\infty)$, the Hausdorff dimension of $E$ is then defined to be
\[ \dimH(E):= \sup\{\alpha\in \R^+: \cH^\alpha(E)=\infty\}=\inf\{\alpha\in \R^+: \cH^\alpha(E)=0\}\in[0,n].\]
In particular, if $E\subset \R^n$ is Lebesgue measurable then $\cH^n(E)=\mathfrak{c}_n|E|$, for some constant $\mathfrak{c}_n>0$ dependent only on $n$, where $|E|$ denotes the ($n$-dimensional) Lebesgue measure of $E$. Thus $\dimH(E)=n$ if $E\subset \R^n$ has positive Lebesgue measure. %

As in \cite[\S1.1]{JoWa84} and \cite[\S3]{Triebel97FracSpec}, given $0<d\leq n$, a closed set $\Gamma\subset \R^n$ is said to be a {\em$d$-set} if there exist $c_{2}>c_1>0$ such that%
\begin{align}
\label{eq:dset}
c_{1}r^{d}\leq\mathcal{H}^{d}\big(\Gamma\cap B_{r}(x)\big)\leq c_{2}r^{d},\qquad x\in\Gamma,\quad0<r\leq1,
\end{align}
where $B_r(x)\subset \R^n$ denotes the closed ball of radius $r$ centred on $x$.
Condition \rf{eq:dset} implies that $\Gamma$ is uniformly locally $d$-dimensional in the sense that $\dimH(\Gamma\cap B_r(x))=d$ for every $x\in \Gamma$ and $r>0$.
In particular (see the discussion in \cite[\S2.4]{Fal}) \eqref{eq:dset} implies that $0<\cH^d(\Gamma\cap B_R(0))<\infty$ for all sufficiently large $R>0$, so that $\dimH(\Gamma)=d$.

\subsection{Singular integrals on compact \texorpdfstring{$d$}{d}-sets}
Our Hausdorff BEM involves the discretization of a weakly singular integral equation in which integration is carried out with respect to Hausdorff measure.
In order to derive the basic integrability results we require, we appeal to the following lemma, which is \cite[Lemma 2.13]{CC08} with the dependence of the equivalence constants made explicit.
\begin{lem}
\label{lem:AntonioModified}
Let $0<d\leq n$ and let $\Gamma\subset \R^n$ be a compact $d$-set, satisfying \rf{eq:dset} for some constants $0<c_1<c_2$. Let $x\in \Gamma$ and let $f:(0,\infty)\to [0,\infty)$ be non-increasing and continuous. Then, for some constants $C_2>C_1>0$ depending only on $c_1$, $c_2$, $n$, and the diameter of $\Gamma$,
\begin{align}
\label{eq:AntonioModified}
C_1d\int_0^{\diam(\Gamma)} r^{d-1}f(r)\,\rd r \leq
\int_\Gamma f(|x-y|)\, \rd \cH^d(y) \leq C_2d\int_0^{\diam(\Gamma)} r^{d-1}f(r)\,\rd r.
\end{align}
\end{lem}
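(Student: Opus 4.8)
The plan is to reduce the surface integral over $\Gamma$ to a one-dimensional integral via the radial distribution function
\[
\phi(r):=\cH^d\big(\Gamma\cap B_r(x)\big),\qquad r\geq 0,
\]
which is non-decreasing with $\phi(0)=0$ (since $\cH^d(\{x\})=0$ for $d>0$) and, by \rf{eq:dset}, satisfies $c_1 r^d\le\phi(r)\le c_2 r^d$ for $0<r\le 1$. Writing $D:=\diam(\Gamma)$ and letting $\nu$ be the non-negative Lebesgue--Stieltjes measure of the non-decreasing function $-f$, so that $f(r)=f(D)+\nu\big((r,D]\big)$ for $0<r\le D$, an application of Tonelli's theorem (legitimate since all integrands are non-negative) gives the key identity
\[
\int_\Gamma f(|x-y|)\,\rd\cH^d(y)=f(D)\,\phi(D)+\int_{(0,D]}\phi(s)\,\rd\nu(s).
\]
The identical manipulation carried out with the model measure $\rd(s^d)$ in place of $\cH^d|_\Gamma$ yields $d\int_0^{D}r^{d-1}f(r)\,\rd r=f(D)D^d+\int_{(0,D]}s^d\,\rd\nu(s)$. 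As $\rd\nu\ge 0$ and $f(D)\ge 0$, the two-sided estimate \rf{eq:AntonioModified} then follows at once from a pointwise comparison $\tilde c_1 s^d\le\phi(s)\le\tilde c_2 s^d$, valid for all $0<s\le D$, by bounding the integrand $\phi(s)$ above and below inside each non-negative integral and taking $C_1=\tilde c_1$, $C_2=\tilde c_2$.

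The main work, and the only genuine obstacle, is that \rf{eq:dset} is assumed only on the scale $0<r\le 1$, whereas the integrals run up to $D=\diam(\Gamma)$, which may exceed $1$; this is precisely where the dependence of $C_1,C_2$ on $\diam(\Gamma)$ and $n$ will enter. I will extend the comparison $\phi(s)\asymp s^d$ to the full range $0<s\le D$ as follows. For the lower bound, monotonicity gives $\phi(s)\ge\phi(1)\ge c_1$ for $1\le s\le D$, while $s^d\le D^d$, so $\phi(s)\ge (c_1 D^{-d})\,s^d$; combined with \rf{eq:dset} this yields $\phi(s)\ge\tilde c_1 s^d$ on $(0,D]$ with $\tilde c_1:=c_1\min(1,D^{-d})$. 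For the upper bound I will use a standard covering argument: taking a maximal $1$-separated subset $\{x_i\}_{i=1}^N$ of $\Gamma$, the balls $B_1(x_i)$ cover $\Gamma$ while the balls $B_{1/2}(x_i)$ are pairwise disjoint, so a volume-packing estimate bounds $N$ by a constant depending only on $n$ and $D$. Since each centre lies in $\Gamma$, \rf{eq:dset} gives $\cH^d(\Gamma)\le\sum_i\cH^d\big(\Gamma\cap B_1(x_i)\big)\le Nc_2$, whence for $1\le s\le D$ we obtain $\phi(s)\le\cH^d(\Gamma)\le Nc_2\le Nc_2\,s^d$ (using $s\ge 1$), so that $\phi(s)\le\tilde c_2 s^d$ on $(0,D]$ with $\tilde c_2:=\max(1,N)\,c_2$. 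Both $\tilde c_1$ and $\tilde c_2$ depend only on $c_1,c_2,n,\diam(\Gamma)$, exactly as required.

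Two minor technical points remain, neither presenting real difficulty. First, the Tonelli step produces the measure of the open ball, $\cH^d(\{y\in\Gamma:|x-y|<s\})$, rather than $\phi(s)=\cH^d(\Gamma\cap B_s(x))$; but this quantity lies between $\cH^d(\Gamma\cap\overline{B_t(x)})$ for $t<s$ and $\phi(s)$, so it too is squeezed between $\tilde c_1 s^d$ and $\tilde c_2 s^d$ and the comparison is unaffected. Second, $f$ may be unbounded as $r\to 0^+$, as for the weakly singular kernels of interest; this causes no problem, since the layer-cake representation above never generates a boundary term at $0$ and handles the singularity automatically, with both sides of \rf{eq:AntonioModified} permitted to equal $+\infty$ simultaneously when $f$ fails to be integrable against $r^{d-1}\,\rd r$.
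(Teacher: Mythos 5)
Your proof is correct. For comparison: the paper contains no internal proof of this lemma at all --- it is presented as \cite[Lemma 2.13]{CC08} ``with the dependence of the equivalence constants made explicit'' (see also Remark \ref{rem:AntonioModified}), and estimates of this kind are classically obtained by decomposing $\Gamma$ into dyadic annuli $\{y\in\Gamma:\, 2^{-j-1}\diam(\Gamma)<|x-y|\le 2^{-j}\diam(\Gamma)\}$, applying \eqref{eq:dset} on each annulus, and using the monotonicity of $f$ there. Your route is genuinely different and self-contained: the Stieltjes/layer-cake identity converts both sides of \eqref{eq:AntonioModified} into integrals of distribution functions against the measure $\nu$ of $-f$, reducing the whole lemma to the single pointwise comparison $\tilde c_1 s^d\le \cH^d\big(\Gamma\cap B_s(x)\big)\le \tilde c_2 s^d$ on $(0,\diam(\Gamma)]$; the only substantive work --- extending \eqref{eq:dset} beyond the unit scale --- is then done by monotonicity of the distribution function (lower bound) and a maximal $1$-separated packing argument giving $N\le (2\diam(\Gamma)+1)^n$ covering balls (upper bound), which is exactly where the dependence on $n$ and $\diam(\Gamma)$ enters, and which also yields $\cH^d(\Gamma)<\infty$ so that all the Tonelli manipulations are legitimate. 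Your handling of the two technical points (open versus closed balls, and the case where both sides are simultaneously infinite) is also sound. What your argument buys is a short, citation-free proof with completely explicit constants; what the dyadic-annulus argument buys is avoidance of Lebesgue--Stieltjes machinery, but the mathematical content is essentially equivalent. One cosmetic repair: your lower constant $\tilde c_1=c_1\min(1,D^{-d})$ formally depends on $d$, which is not among the parameters allowed in the statement; since $d\le n$ you may simply replace it by the admissible (and smaller) constant $c_1\min(1,D^{-n})$.
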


\begin{rem} \label{rem:AntonioModified} %
If  $\Gamma\subset \R^n$ is compact and the right-hand inequality in \rf{eq:dset} holds, i.e., $\cH^d(\Gamma\cap B_r(x))\leq c_2r^d$, for $x\in \Gamma$, $0<r\leq 1$, then, following the proof of \cite[Lemma 2.13]{CC08}, we see that the right-hand bound in \eqref{eq:AntonioModified} holds, with $C_2$ depending only on $c_2$, $n$, and the diameter of $\Gamma$.
\end{rem}

From the above lemma we obtain the following important corollary.
\begin{cor}\label{lem:1dIntBd}
Let $0<d\leq n$ and let $\Gamma$ be a compact $d$-set.
Let
$x\in \Gamma$ and $\alpha\in \R$. Then
\begin{enumerate}[(i)]
\item $\int_{\Gamma}|x-y|^{-\alpha}\,\rd \cH^d(y)<\infty$ and $\int_{\Gamma}\int_{\Gamma}|x-y|^{-\alpha}\,\rd \cH^d(y) \rd\cH^d(x)<\infty$
if and only if $\alpha<d$;
\item $\int_{\Gamma}|\log{|x-y|}|\,\rd \cH^d(y)<\infty$ and $\int_{\Gamma}\int_{\Gamma}|\log{|x-y|}|\,\rd \cH^d(y) \rd\cH^d(x)<\infty$.
\end{enumerate}
\end{cor}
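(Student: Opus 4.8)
The plan is to derive both parts of Corollary \ref{lem:1dIntBd} directly from Lemma \ref{lem:AntonioModified} by choosing the right non-increasing test function $f$, after first peeling off the regimes in which the integrand is bounded or non-monotone so that the hypotheses of the lemma are respected. Throughout I will use that $\Gamma$, being a compact $d$-set, satisfies $0<\cH^d(\Gamma)<\infty$ (as noted after \eqref{eq:dset}, since $\Gamma\subset B_R(0)$ for large $R$).

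For part (i) I would split on the sign of $\alpha$. If $\alpha\le 0$ then $|x-y|^{-\alpha}=|x-y|^{|\alpha|}\le(\diam\Gamma)^{|\alpha|}$ is bounded and continuous on $\Gamma$, so both integrals are finite because $\cH^d(\Gamma)<\infty$; and since $\alpha\le 0<d$ the stated equivalence holds trivially in this regime. If $\alpha>0$ then $f(r):=r^{-\alpha}$ is continuous and non-increasing on $(0,\infty)$, so Lemma \ref{lem:AntonioModified} applies and gives, with constants independent of $x\in\Gamma$,
\[
C_1 d \int_0^{\diam\Gamma} r^{d-1-\alpha}\,\rd r \;\le\; \int_\Gamma |x-y|^{-\alpha}\,\rd\cH^d(y) \;\le\; C_2 d\int_0^{\diam\Gamma} r^{d-1-\alpha}\,\rd r .
\]
The elementary one-dimensional integral is finite if and only if $d-1-\alpha>-1$, i.e.\ $\alpha<d$, which settles the single-integral claim (the lower bound supplies divergence for $\alpha\ge d$). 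For the double integral I would integrate these $x$-uniform bounds over $\Gamma$, which is legitimate by Tonelli since the integrand is non-negative; this picks up a factor $\cH^d(\Gamma)$. As $0<\cH^d(\Gamma)<\infty$, the double integral is finite if and only if the one-dimensional integral is, i.e.\ again if and only if $\alpha<d$.

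For part (ii) I would avoid applying the lemma directly, since $r\mapsto|\log r|$ is not monotone on $(0,\infty)$, and instead dominate. Fixing any $\alpha\in(0,d)$ (possible because $d>0$), one has on $(0,\diam\Gamma]$ the bound $|\log r|\le C(1+r^{-\alpha})$ for a constant $C$ depending only on $\alpha$ and $\diam\Gamma$: near $r=0$ this holds because $r^\alpha\log r\to 0$, and away from $0$ the left-hand side is bounded on a compact interval. Both the single and double integrals of $|\log|x-y||$ are then controlled by $C$ times the corresponding integrals of $1+|x-y|^{-\alpha}$, which are finite by part (i) together with $\cH^d(\Gamma)<\infty$.

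I expect no genuine obstacle of substance here: the quantitative content lives entirely in Lemma \ref{lem:AntonioModified}, and the corollary is essentially its packaging. The only points requiring care are bookkeeping: ensuring that the constants $C_1,C_2$ are genuinely independent of $x$ (so the passage from the single to the double integral is clean), and correctly isolating the cases where the lemma does not apply verbatim, namely $\alpha\le 0$ in (i) and the non-monotone logarithm in (ii).
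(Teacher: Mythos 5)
Your proposal is correct and follows exactly the route the paper intends: the paper states Corollary \ref{lem:1dIntBd} as an immediate consequence of Lemma \ref{lem:AntonioModified} without writing out the details, and your argument (applying the lemma with $f(r)=r^{-\alpha}$ for $\alpha>0$, handling $\alpha\le 0$ by boundedness, passing to the double integral via the $x$-uniform constants and Tonelli, and dominating $|\log r|$ by $C(1+r^{-\alpha})$ with $0<\alpha<d$ for part (ii)) is precisely the bookkeeping that fills it in.
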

\begin{rem}
Corollary \ref{lem:1dIntBd}(i) is related to the more general correspondence between %
Hausdorff dimension and so-called ``capacitary dimension''
- see, e.g., \cite[\S17.11]{Triebel97FracSpec}.
\end{rem}

\subsection{Iterated function systems}
\label{sec:IFS}
The particular example of a $d$-set
we focus on in this paper is the attractor of an iterated function system (IFS) of contracting similarities, by which we mean a collection $\{s_1,s_2,\ldots,s_M\}$, for some $M\geq 2$, where, for each $m=1,\ldots,M$, $s_m:\R^{n}\to\R^{n}$, with
$|s_m(x)-s_m(y)| = \rho_m|x-y|$, $x,y\in \R^{n}$,
for some $\rho_m\in (0,1)$.
The attractor of the IFS is the unique non-empty compact set $\Gamma$ satisfying
\begin{equation} \label{eq:fixedfirst}
\Gamma = s(\Gamma), \quad \mbox{where} \quad s(E) := \bigcup_{m=1}^M  s_m(E), \quad  E\subset \R^{n}.
\end{equation}
We shall assume throughout that $\Gamma$ satisfies the \textit{open set condition (OSC)} \cite[(9.11)]{Fal}, meaning that there exists a non-empty bounded open set $O\subset \R^{n}$ such that
\begin{align} \label{oscfirst}
s(O) \subset O \quad \mbox{and} \quad s_m(O)\cap s_{m'}(O)=\emptyset, \quad m\neq m'.
\end{align}
Then \cite[Thm.~4.7]{Triebel97FracSpec} $\Gamma$ is a $d$-set,
where $d\in (0,n]$ is the unique solution of \begin{align} \label{eq:dfirst}
\sum_{m=1}^M  (\rho_m)^d = 1.
\end{align}
For a \textit{homogeneous }IFS, where $\rho_m=\rho \in (0,1)$ for $m=1,\ldots,M $, the solution of \rf{eq:dfirst} is
\begin{equation} \label{eq:d2}
d = \log(M)/\log(1/\rho).
\end{equation}
Returning to the general, not necessarily homogeneous, case, the OSC \rf{oscfirst} also implies (again, see \cite[Thm.~4.7]{Triebel97FracSpec}) that $\Gamma$ is \textit{self-similar} in the sense that the sets
\begin{align}
\label{eq:GammamDef}
\Gamma_m:=s_m(\Gamma),\qquad m=1,\ldots,M,
\end{align}
which are similar copies of $\Gamma$, satisfy
\begin{align}
\label{eq:SelfSim}
\cH^d(\Gamma_{m}\cap \Gamma_{m'})=0, \qquad m\neq m'.
\end{align}
That is, $\Gamma$ can be decomposed into $M$ similar %
copies of itself, whose pairwise intersections have Hausdorff measure zero.
For many of our results we make the additional assumption that the sets $\Gamma_1,\ldots,\Gamma_M $ are disjoint, in which case we say that the IFS attractor $\Gamma$ is \emph{disjoint}.
We recall that if $\Gamma$ is disjoint then it is totally disconnected \cite[Thm.~9.7]{Fal}.
Examples of IFS attractors that are disjoint are the Cantor set (\S\ref{s:exp:Cantor}) and Cantor dust (Figure \ref{Fig:Dust} and \S\ref{s:exp:Dust}), while examples that satisfy the OSC but are not disjoint include the Sierpinski triangle (\S\ref{s:exp:SnowflakeWonky}(ii)) and the unit interval $[0,1]$. The latter is the attractor of the IFS \eqref{eq:CS_IFS} with $\rho=1/2$, showing that IFS attractors, while self-similar, need not be fractal. A compendium of well-known fractal IFS attractors can be found at \cite{RiddleWebSite}.

The next result relates disjointness to the OSC.

\begin{lem}
\label{lem:DisjointOSC}
Let $\Gamma$ satisfy \rf{eq:fixedfirst}. Then
$\Gamma$ is disjoint
if and only if \rf{oscfirst} is satisfied for some open set $O$ satisfying $\Gamma\subset O$.
\end{lem}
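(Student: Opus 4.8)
The plan is to prove the two implications separately, the reverse one being immediate and the forward one constructive.

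For the reverse implication, suppose \rf{oscfirst} holds for some open set $O$ with $\Gamma\subset O$. Then $\Gamma_m=s_m(\Gamma)\subset s_m(O)$ for each $m$, and since the sets $s_m(O)$ are pairwise disjoint, so are the sets $\Gamma_m$; hence $\Gamma$ is disjoint. This requires nothing beyond monotonicity of $s_m$ under set inclusion.

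For the forward implication, suppose $\Gamma$ is disjoint. The sets $\Gamma_1,\dots,\Gamma_M$ defined in \rf{eq:GammamDef} are compact, as continuous images of the compact set $\Gamma$, and pairwise disjoint, so $\delta:=\min_{m\neq m'}\dist(\Gamma_m,\Gamma_{m'})>0$. The idea is to take $O$ to be a thin open neighbourhood of $\Gamma$, namely $O:=\{x\in\R^n:\dist(x,\Gamma)<\eta\}$ for a suitably small $\eta>0$ to be fixed; this is automatically non-empty, bounded, open, and contains $\Gamma$. The key observation is that, because $s_m$ is a similarity with ratio $\rho_m$, it maps the $\eta$-neighbourhood of $\Gamma$ exactly onto the $\rho_m\eta$-neighbourhood of $\Gamma_m$; that is, $s_m(O)=\{y\in\R^n:\dist(y,\Gamma_m)<\rho_m\eta\}$, which follows from the identity $\dist(s_m(x),\Gamma_m)=\rho_m\dist(x,\Gamma)$ together with the invertibility of $s_m$.

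Granted this, I would verify the two OSC conditions in turn. For the nesting $s(O)\subset O$: since $\Gamma_m\subset\Gamma$ and $\rho_m<1$, any $y\in s_m(O)$ satisfies $\dist(y,\Gamma)\leq\dist(y,\Gamma_m)<\rho_m\eta<\eta$, so $y\in O$; taking the union over $m$ gives $s(O)\subset O$, and this holds for every $\eta>0$. For the disjointness of the images, if $y\in s_m(O)\cap s_{m'}(O)$ with $m\neq m'$, then choosing near points of $\Gamma_m$ and $\Gamma_{m'}$ and applying the triangle inequality yields $\dist(\Gamma_m,\Gamma_{m'})<(\rho_m+\rho_{m'})\eta\leq 2(\max_k\rho_k)\eta$; so choosing $\eta\leq\delta/(2\max_k\rho_k)$ contradicts $\dist(\Gamma_m,\Gamma_{m'})\geq\delta$, forcing the intersection to be empty. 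Hence with this choice of $\eta$ the set $O$ witnesses \rf{oscfirst} while satisfying $\Gamma\subset O$, completing the forward implication. The only genuine subtlety, and the step I would check most carefully, is the identity $s_m(O)=\{y:\dist(y,\Gamma_m)<\rho_m\eta\}$ and the correct balancing of $\eta$ against $\delta$ and the contraction ratios: the nesting condition constrains $\eta$ not at all (any positive value works, thanks to $\rho_m<1$), whereas the image-disjointness condition is precisely what pins down how small $\eta$ must be taken. Everything else is routine point-set topology.
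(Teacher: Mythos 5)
Your proof is correct and follows essentially the same route as the paper: the reverse implication via $\Gamma_m\subset s_m(O)$, and the forward implication by taking $O$ to be an $\eta$-neighbourhood of $\Gamma$ with $\eta$ small relative to $\min_{m\neq m'}\dist(\Gamma_m,\Gamma_{m'})/(2\max_k\rho_k)$, checking nesting via $\rho_m<1$ and image-disjointness via the triangle inequality. The only cosmetic difference is that you establish the exact identity $s_m(O)=\{y:\dist(y,\Gamma_m)<\rho_m\eta\}$, where the paper gets by with the inclusion $s_m(O)\subset\{y:\dist(y,\Gamma_m)<\rho_m\eta\}$; both suffice.
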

\begin{proof}
If $O$ satisfies the OSC and $\Gamma\subset O$ then for $m'\neq m$ we have $\Gamma_m \cap \Gamma_{m'} = s_m(\Gamma)\cap s_{m'}(\Gamma) \subset s_m(O)\cap s_{m'}(O)=\emptyset$, so $\Gamma_1,\ldots,\Gamma_M $ are disjoint.
Conversely, if $\Gamma_1,\ldots,\Gamma_M $ are disjoint then $O:=\{x: \dist(x,\Gamma)<\eps\}\supset \Gamma$ satisfies the OSC, provided $\eps<\min_{m\neq m'}(\dist(\Gamma_m,\Gamma_{m'}))/(2\max_{m}\rho_m)$, since then $s(O)= \cup_m s_m(O)\subset \cup_m \{x: \dist(x,\Gamma_m)<\rho_m\eps\}\subset \{x: \dist(x,\Gamma)<\eps\}=O$, and there cannot exist $x\in s_m(O)\cap s_{m'}(O)$ for $m\neq m'$ since otherwise $\dist(\Gamma_m,\Gamma_{m'})\leq \dist(x,\Gamma_m) + \dist(x,\Gamma_{m'})<(\rho_m+\rho_{m'})\eps\leq 2\eps\max_{m}\rho_m$, which would contradict the definition of $\eps$.
\end{proof}

The following lemma, which shows that $\Gamma$ is disjoint only if $d<n$, motivates the restriction of our results in large parts of \S\ref{sec:Wavelets} %
to the case $d<n$.
\begin{lem}\label{lem:disconnected}
Suppose that $\Gamma$ satisfies \rf{eq:fixedfirst} and the OSC \eqref{oscfirst} holds for some bounded open $O\subset \R^n$. Then  $\Gamma \subset \overline{O}$, with equality if and only if $d=\dimH(\Gamma)=n$. If $\Gamma$ is disjoint then $0<d<n$.
\end{lem}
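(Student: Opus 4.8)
The plan is to prove the three assertions in turn, with the measure-theoretic core lying in the equivalence $\Gamma=\overline{O}\iff d=n$. First, for the inclusion $\Gamma\subset\overline{O}$ I would observe that the Hutchinson operator $s$ commutes with closure: each $s_m$ is a homeomorphism of $\R^n$, so $s_m(\overline{A})=\overline{s_m(A)}$, and a finite union commutes with closure, giving $s(\overline{A})=\overline{s(A)}$ for any $A$. Applying this with $A=O$ and using $s(O)\subset O$ from \eqref{oscfirst} yields $s(\overline{O})=\overline{s(O)}\subset\overline{O}$. Thus $\overline{O}$ is a non-empty compact set mapped into itself by $s$; by monotonicity of $s$ the iterates $s^k(\overline{O})$ form a nested decreasing sequence converging in Hausdorff metric to the attractor (e.g.\ \cite[Ch.~9]{Fal}), so $\Gamma=\bigcap_{k\geq 0}s^k(\overline{O})\subset\overline{O}$.

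Next I would establish the equivalence. For the direction $d=n\Rightarrow\Gamma=\overline{O}$, recall that each similarity scales Lebesgue measure by $|s_m(E)|=\rho_m^n|E|$, and for measurable $A\subset O$ the images $s_m(A)\subset s_m(O)$ are pairwise disjoint by the OSC, so $|s(A)|=\big(\sum_{m=1}^M\rho_m^n\big)|A|$. When $d=n$, \eqref{eq:dfirst} gives $\sum_m\rho_m^n=1$, hence $|s(O)|=|O|$; since $s(O)\subset O$ and $|O|<\infty$ this forces $|O\setminus s(O)|=0$. Now $O\setminus\overline{s(O)}$ is open and contained in the null set $O\setminus s(O)$, so it is empty (a non-empty open subset of $\R^n$ has positive measure); therefore $O\subset\overline{s(O)}$, and taking closures gives $\overline{O}=\overline{s(O)}=s(\overline{O})$. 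By uniqueness of the attractor as the only non-empty compact fixed set of $s$ (stated below \eqref{eq:fixedfirst}), this yields $\overline{O}=\Gamma$. For the converse, if $\Gamma=\overline{O}$ then $\Gamma\supset O\neq\emptyset$ has non-empty interior, so $|\Gamma|>0$ and hence $\dimH(\Gamma)=n$; since $\dimH(\Gamma)=d$ we conclude $d=n$.

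For the final assertion, $d>0$ is immediate from \eqref{eq:dfirst}, since $\sum_m\rho_m^0=M\geq 2>1$ while $\sum_m\rho_m^d$ is strictly decreasing in $d$ with limit $0$ as $d\to\infty$ (it is also part of the $d$-set property, as $d\in(0,n]$). If $\Gamma$ is disjoint then $\Gamma$ is totally disconnected \cite[Thm.~9.7]{Fal} and so has empty interior, whereas $\overline{O}\supset O$ has non-empty interior; thus $\Gamma\neq\overline{O}$, and the equivalence just proved gives $d\neq n$. Combined with $d\leq n$ this yields $0<d<n$.

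The main obstacle is the implication $|s(O)|=|O|\Rightarrow s(\overline{O})=\overline{O}$ in the equivalence. Two points require care: extracting the clean scaling identity $|s(A)|=\big(\sum_m\rho_m^n\big)|A|$, which relies essentially on the disjointness of the $s_m(O)$ provided by the OSC; and passing from the measure statement $|O\setminus s(O)|=0$ to the topological statement $\overline{s(O)}=\overline{O}$, which uses that a non-empty open set has positive Lebesgue measure, before invoking uniqueness of the attractor. The inclusion and the disjointness step are then comparatively routine.
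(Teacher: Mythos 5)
Your proof is correct, and its core coincides with the paper's: both hinge on the Lebesgue-measure identity $|s(O)|=\sum_m\rho_m^n|O|=|O|$ when $d=n$ (using the OSC disjointness of the $s_m(O)$), followed by the topological upgrade to $s(\overline{O})=\overline{O}$ (your "open null set is empty" phrasing is equivalent to the paper's ball-contradiction argument) and uniqueness of the attractor. You deviate from the paper in two peripheral steps, both validly. First, for the direction $\Gamma=\overline{O}\Rightarrow d=n$ you use that a set with non-empty interior has positive Lebesgue measure and hence Hausdorff dimension $n$, whereas the paper instead runs the scaling inequality $|s(\overline{O})|\leq\sum_m\rho_m^n|\overline{O}|<\sum_m\rho_m^d|\overline{O}|$ when $d<n$ to get $s(\overline{O})\neq\overline{O}$; the two are interchangeable. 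Second, and more substantively, for "disjoint $\Rightarrow d<n$" you argue directly that disjointness implies total disconnectedness (a fact the paper recalls from Falconer just before Lemma \ref{lem:DisjointOSC}), hence empty interior, hence $\Gamma\neq\overline{O}$; the paper instead reaches the contradiction through Lemma \ref{lem:DisjointOSC} (disjointness would give an OSC open set containing $\Gamma$, incompatible with $\Gamma=\overline{O}$). Your route is somewhat more self-contained, since it avoids unpacking how Lemma \ref{lem:DisjointOSC} yields the contradiction, at the cost of importing the total-disconnectedness theorem; either way the conclusion $0<d<n$ follows, with $d>0$ immediate from \eqref{eq:dfirst} as you note.
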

\begin{proof}
Arguing as on \cite[p.~809]{BEMfract}, $\Gamma \subset \overline{O}$, and, if $d<n$, then the Lebesgue measure of $s(\overline O)$,  $|s(\overline O)|\leq \sum_{m=1}^M\rho_m^n|\overline O|< \sum_{m=1}^M\rho_m^d|\overline O|=|\overline{O}|$, so that $s(\overline O) \neq \overline{O}$, so that (since $\Gamma=s(\Gamma)$), $\Gamma\neq \overline{O}$.

Suppose now that $d=n$. Arguing as above and on \cite[p.~809]{BEMfract},
$
|s(O)| = \sum_{m=1}^M |s_m(O)|= \sum_{m=1}^M\rho_m^n|O|=|O|
$,
so that
$|s(O)| = |O|$.
As claimed in \cite[p.~809]{BEMfract}, this implies that $s(\overline O)= \overline O$.  To see this, note that $s(O)\subset O$, so that  $s(\overline O) = \overline{s(O)}\subset \overline O$. Thus, if $s(\overline O)\neq \overline O$, there exists $x\in \overline O\setminus s(\overline O)$ so that, for some $y\in O$ near $x$ and some $\epsilon>0$, $B_\epsilon(y)\subset O\setminus s(\overline O)\subset O\setminus s(O)$, which contradicts $|O|=|s(O)|$. Further, since $\Gamma$ is the unique fixed point of $s$, $s(\overline O) = \overline O$ implies $\Gamma=\overline O$, and that
$\Gamma$ is not disjoint then follows from Lemma \ref{lem:DisjointOSC}.
\end{proof}

\subsection{Function spaces on subsets of \texorpdfstring{$\R^n$}{Rn}}
\label{sec:FunctionSpaces}

Here we collect some results on function spaces from \cite{ChaHewMoi:13,caetano2019density,Jonsson98}. Our function spaces will be complex-valued, and we shall repeatedly use the following terminology relating to dual spaces\footnote{For notational convenience we follow the convention of \cite{ChaHewMoi:13,BEMfract} and work throughout with dual spaces of antilinear functionals rather than linear functionals.}. If $X$ and $Y$ are Hilbert spaces, $X^*$ is the dual space of $X$, and $I:Y\to X^*$ is a unitary isomorphism, we say that $(Y,I)$ is a \textit{unitary realisation} of $X^*$ and define the \textit{duality pairing} $\langle \cdot, \cdot\rangle_{Y\times X}$ by 
$\langle y, x\rangle_{Y\times X}:=Iy(x)$, for $y\in Y,x\in X$. Having selected a unitary realisation $(Y,I)$ of $X^*$, we adopt $(X,I^*)$ as our unitary realisation of $Y^*$, where $I^*x(y) := \overline{\langle y, x\rangle_{Y\times X}}$, so that $\langle x, y\rangle_{X\times Y}= \overline{\langle y, x\rangle_{Y\times X}}$, for $y\in Y,x\in X$.

For $s\in \R$ let $H^s\Rn$ denote the Sobolev space of tempered distributions $\varphi$ for which the norm $\|\varphi\|_{H^s\Rn}=\left(\int_{\R^n} (1+|\xi|^2)^s|\widehat{\varphi}(\xi)|^2\,\rd\xi\right)^{1/2}$ is finite\footnote{Here $\widehat{\varphi}$ is the Fourier transform of $\varphi$, normalised so that $\widehat{\varphi}(\xi)= (2\pi)^{-n/2}\int_{\R^n}\re^{-\ri \xi\cdot x}\varphi(x)\,\rd x$, for $\xi\in \R^n$, when $\varphi\in L_1(\R^n)$.}. %
We recall that $(H^s\Rn)^*$ can be unitarily realised
as $(H^{-s}\Rn,I^{-s})$,
where $I^{-s}:H^{-s}\Rn \to (H^s\Rn)^*$ is given by $I^{-s}\phi(\psi):=\int_{\R^n}\hat{\phi}(\xi)\overline{\hat{\psi}(\xi)}\,\rd\xi$ for $\phi \in H^{-s}\Rn$ and $\psi \in H^{s}\Rn$, so that the resulting duality pairing $\langle \cdot, \cdot\rangle_{H^{-s}\Rn\times H^s\Rn}$ extends both the $L_2(\R^n)$ inner product and the action of tempered distributions on Schwartz functions (see, e.g., \cite[\S3.1.3]{ChaHewMoi:13}).
For an open set $\Omega\subset \R^n$ we denote by $\tH^{s}(\Omega)$ the closure of $C^\infty_0(\Omega)$ in $H^{s}\Rn$, and for a closed set $E\subset \R^n$ we denote by $H^s_E$ the set of all elements of $H^s\Rn$ whose distributional support is contained in $E$. These two types of spaces are related by duality. Where $E^c:=\R^n\setminus E$ denotes  the complement of $E$ and ${}^\perp$ denotes orthogonal complement in $H^s\Rn$ ,
$(H^{-s}_E,\cI)$
is a unitary realisation of $(\tH^{s}(E^c)^{\perp})^*$ \cite[\S3.2]{ChaHewMoi:13},
where $\cI \phi(\psi):=I^{-s}\phi (\psi)$ for $\phi\in H^{-s}_E$ and $\psi\in \tH^{s}(E^c)^{\perp}$,
so that the associated duality pairing  is just the restriction to $H^{-s}_E\times \tH^{s}(E^c)^{\perp}$ of $\langle \cdot, \cdot\rangle_{H^{-s}\Rn\times H^s\Rn}$. Note also that, if $E\subset \Omega \subset \R^n$ and $E$ is compact, $\Omega$ is open, then, as a consequence of \cite[Lemma 3.24]{McLean}, $H^s_E$ is a closed subspace of $\widetilde H^s(\Omega)$. %
In addition to the spaces just introduced we use, at some points,  the standard Sobolev space $H^s(\Omega)$, for $\Omega\subset \R^n$ open and $s\in \R$, defined as the space of restrictions to $\Omega$ of the distributions $\varphi\in H^s(\R^n)$,
equipped with the quotient norm
$\|u\|_{H^s(\Omega)} := \inf_{\substack{\varphi\in H^s\Rn\\ \varphi|_\Omega=u}}\|\varphi\|_{H^s\Rn}$
(see \cite{McLean}, \cite[\S3.1.4]{ChaHewMoi:13}).%

Fix $0<d\leq n$ and let $\Gamma\subset\R^n$ be $\cH^d$-measurable. %
We denote by $\IL_2(\Gamma)$ the space of
(equivalence classes of) complex-valued functions on $\R^n$ that are measurable and square integrable with respect to
$\cH^d|_\Gamma$, normed by
\[
\|f\|_{\IL_2(\Gamma)} := \left(\int_\Gamma |f(x)|^2\,\rd \cH^d(x) \right)^{1/2}.\]
Similarly, $\IL_\infty(\Gamma)$ denotes the space of functions on $\R^n$ that are measurable and essentially bounded with respect to $\cH^d|_\Gamma$, normed by $\|f\|_{\IL_\infty}:= {\rm ess}\, \sup_{x\in \R^n}|f(x)|$.
In practice we shall view $\IL_2(\Gamma)$ and $\IL_\infty(\Gamma)$ as spaces of functions on $\Gamma$, by identifying elements of $\IL_2(\Gamma)$ and $\IL_\infty(\Gamma)$ with their restrictions to $\Gamma$.
The dual space $(\IL_2(\Gamma))^*$ can be realised in the standard way as $(\IL_2(\Gamma),\II)$, where $\II:\IL_2(\Gamma)\to \IL_2(\Gamma)^*$ is the Riesz map (a unitary isomorphism) defined by $\II f(\tf)=(f,\tf)_{\IL_2(\Gamma)}=\int_\Gamma f(x)\overline{\tf(x)}\,\rd\cH^d(x)$.

Now assume that $\Gamma$ is a $d$-set and that $0<d\leq n$. Then function spaces on $\R^n$ and $\Gamma$ are related via the trace operator $\tr$ of \cite[\S18.5]{Triebel97FracSpec}. Defining
$\tr(\varphi)=\varphi|_\Gamma\in \IL_2(\Gamma)$ for $\varphi\in C_0^\infty(\R^n)$,
one can show
\cite[Thm~18.6]{Triebel97FracSpec}
that if
$$
s>\frac{n-d}{2},
$$
which we assume through the rest of this section, and $0<d<n$, then $\tr$ extends to a continuous linear operator
\[\tr:H^{s}(\R^n) \to \IL_2(\Gamma)\]
with dense range. 
This trivially holds also for $d=n$, since the embedding of $H^s(\R^n)$ into $L_2(\R^n)$, for $s>0$, and the trace $\tr:L_2(\R^n)\to \IL_2(\Gamma)$ are both continuous with dense range.
Setting
\begin{equation} \label{eq:st}
t:=s-\frac{n-d}{2} >0,
\end{equation}
we define the trace space $\IH^{t}(\Gamma):=\tr(H^{s}(\R^n))\subset \IL_2(\Gamma)$, which we equip with the quotient norm
\[ \|f\|_{\IH^t(\Gamma)} := \inf_{\substack{\varphi\in H^s\Rn\\ \tr \varphi=f}}\|\varphi\|_{H^s\Rn}.\]
This makes $\IH^t(\Gamma)$ a Hilbert space unitarily isomorphic to the quotient space $H^{s}(\R^n)/\ker(\tr)$. Clearly,
\begin{equation} \label{eq:embed1}
\IH^{t^\prime}(\Gamma) \subset \IH^t(\Gamma)\subset \IL_2(\Gamma),
\end{equation}
for $t^\prime>t>0$, and the embeddings are continuous with dense range.
As explained in \cite[Rem.~6.4]{caetano2019density}  ($\IH^t(\Gamma)$ is denoted $\IH^t_{2,0}(\Gamma)$ in \cite[\S6]{caetano2019density}), for the case $0<d<n$, under the further assumption that $t<1$, $\IH^t(\Gamma)$ coincides (with equivalent norms) with the Besov space $B^t_{2,2}(\Gamma)$ of \cite{JoWa84}. Arguing in the same way, using \cite[Theorem VI.1]{JoWa84} and that $H^s(\R^n)$ coincides with the Besov space $B^s_{2,2}(\R^n)$ (e.g., \cite[p.~8]{JoWa84})\footnote{Our standard notation for Besov spaces on $\R^n$ is that of, e.g., \cite{Tri08}.}, this holds also for $d=n$.

For $t>0$ we denote by $\IH^{-t}(\Gamma)$ the dual space $(\IH^{t}(\Gamma))^*$. Since \eqref{eq:embed1} holds, and the embeddings are continuous with dense range, also $\IH^{-t}(\Gamma)\subset \IH^{{-t^\prime}}(\Gamma)$, for $t^\prime>t>0$, and this embedding is continuous with dense range.
Further, via the Riesz map $\II:\IL_2(\Gamma)\to \IL_2(\Gamma)^*$ introduced above,
$\IL_2(\Gamma)$ is continuously and densely embedded in $\IH^{-t}(\Gamma)$, for $t>0$.
Setting $\IH^0(\Gamma)=\IL_2(\Gamma)$, and combining these embeddings, we then have that $\IH^{t^\prime}(\Gamma)$ is embedded in $\IH^{t}(\Gamma)$ with dense image for any $t,t^\prime\in\R$ with $t^\prime>t$,
and that if 
$g\in\IH^t(\Gamma)$ for some $t\geq 0$ and $f\in\IL_2(\Gamma)$ then
\begin{align}
\label{eq:L2dualequiv}
\langle f,g\rangle_{\IH^{-t}(\Gamma) \times \IH^{t}(\Gamma)}=(f,g)_{\IL_2(\Gamma)}.
\end{align}

Suppose that \eqref{eq:st} holds. %
By the definition of $\IH^t(\Gamma)$, $\tr:H^{s}(\R^n) \to \IH^t(\Gamma)$ is a continuous linear surjection with unit norm.
Its Banach space adjoint
$\widetilde{\tr}^*: \IH^{-t}(\Gamma) \to (H^{s}(\R^n))^*$, defined by $\widetilde{\tr}^*y(x)=y(\tr x)$, for $y\in (\IH^t(\Gamma))^*$, $x\in H^s\Rn$, is then a continuous linear injection with unit norm, and composing $\widetilde{\tr}^*$ with the unitary isomorphism $(I^{-s})^{-1}$ produces a continuous linear injection
\[\tr^*:= (I^{-s})^{-1}\circ \widetilde{\tr}^* : \IH^{-t}(\Gamma) \to H^{-s}(\R^n)\]
with unit norm, which satisfies\footnote{We omit in our notation for $\tr:H^s(\R^n)\to \IH^t(\Gamma)$ any dependence on $s$. This is justified since, for every $s>\frac{n-d}{2}$, $\tr \varphi = \varphi|_\Gamma$ for $\varphi\in C_0^\infty(\R^{n})$, and $C_0^\infty(\R^{n})$ is dense in $H^s(\R^n)$. Likewise, we omit any dependence on $t$ in our notation for $\tr^*:\IH^{-t}(\Gamma)\to H^{-s}(\R^n)$. To see that this is justified, in particular that the values of the $\tr^*$ operators coincide where their domains intersect, denote $\tr^*:\IH^{-t}(\Gamma)\to H^{-s}(\R^n)$ temporarily by $\mathrm{tr}_{\Gamma,t}^*$ to make explicit the domain. Then, for $t^\prime>t>0$, $\mathrm{tr}_{\Gamma,t^\prime}^*f=\mathrm{tr}_{\Gamma,t}^*f$, for $f\in \IH^{-t}(\Gamma)\subset \IH^{-t^\prime}(\Gamma)$, as a consequence of \eqref{eq:tracedual}.}
\begin{equation} \label{eq:tracedual}
 \langle \varphi,\tr^* f \rangle_{H^{s}(\R^n) \times H^{-s}(\R^n)} = \langle \tr \varphi,f\rangle_{\IH^{t}(\Gamma)\times \IH^{-t}(\Gamma)},
\qquad f\in\IH^{-t}(\Gamma),\,\varphi\in H^s(\R^n).
\end{equation}
In particular, when $f\in \IL_2(\Gamma)$ we have that
\begin{align}
\label{eq:L2dualrep}
\langle \varphi,\tr^* f \rangle_{H^{s}(\R^n) \times H^{-s}(\R^n)} =(\tr \varphi,f)_{\IL_2(\Gamma)}.
\end{align}

Since $\tH^{s}(\Gamma^c)\subset\ker(\tr)$ the range of $\tr^*$ is contained in $H^{-s}_\Gamma$, which is the annihilator of $\tH^{s}(\Gamma^c)$ with respect to $\langle \cdot,\cdot \rangle_{H^{-s}\Rn\times H^{s}\Rn}$ \cite[Lemma 3.2]{ChaHewMoi:13}. Key to our analysis will be the following stronger result. This is proved, for the case $0<d<n$, in \cite[Prop.~6.7, Thm~6.13]{caetano2019density}, and the arguments given there (we need only the simplest special case $m=0$) extend to the case $d=n$, with the twist that, to justify the existence of a bounded right inverse $\mathcal{E}_{\Gamma,0}$ in Step 2 of the proof of \cite[Prop.~6.7]{caetano2019density}, we need to use (as above) that $H^s(\R^n)=B_{2,2}^s(\R^n)$ and \cite[Thm.~VI.3 on p.~155]{JoWa84}. %
Figure \ref{fig:FunctionSpaceScheme} shows the main relations between these function spaces.

\begin{figure}[t!]\centering
\begin{tikzpicture}
\matrix[matrix of math nodes,
column sep={13pt},
row sep={20pt,between origins},
text height=1.5ex, text depth=0.25ex] (s)
{
|[name=Ht]| \IH^t\GG &
|[name=]| \subset &
|[name=L2]| \IL_2\GG &
|[name=]| \subset &
|[name=Hmt]| \IH^{-t}\GG&
\\
\\
|[name=Hscp]| \tH^s(\Gamma^c)^\perp&
& & &
|[name=HmsG]| H^{-s}_\Gamma&
\\
\cap&&&&\cap
\\
|[name=Hs]| H^s\Rn&\subset&L_2\Rn&\subset&
|[name=Hms]| H^{-s}\Rn&
\\
};
\draw[->,>=angle 60] %
(Hscp) edge node[auto]{\(\tr\)} (Ht)
(Hmt) edge node[auto]{\(\tr^*\)} (HmsG)
;
\end{tikzpicture}
\caption{The main function spaces introduced in \S \ref{sec:FunctionSpaces} and their relations.
Here $\Gamma\subset\R^n$ is a $d$-set with $0<d\leq n$, $t=s-\frac{n-d}2\in(0,1)$, both arrows represent unitary isomorphisms. Where we write $A\subset B$, the function space $A$ is densely and continuously embedded in $B$.  Where $\cap$ separates $A$ and $B$ vertically, the space $A$ is a closed subspace of $B$.%
}
\label{fig:FunctionSpaceScheme}
\end{figure}

\begin{thm} %
\label{thm:Density}
Let $\Gamma\subset\R^n$ be a $d$-set for some $0<d\leq n$.
Let $\frac{n-d}{2}<s<\frac{n-d}{2}+1$, so that $t=s-\frac{n-d}2\in(0,1)$. Then $\ker(\tr)=\tH^{s}(\Gamma^c)$, so that $\tr|_{\tH^{s}(\Gamma^c)^\perp}:\tH^{s}(\Gamma^c)^\perp \to \IH^{t}(\Gamma)$ is a unitary isomorphism.
Accordingly,
the range of $\tr^*$ is equal to $H^{-s}_\Gamma$, and
\begin{equation}\label{eq:tr*}
\tr^*:\IH^{-t}\GG\to H^{-s}_\Gamma \quad\text{is a unitary isomorphism, where}\quad
t=s-\frac{n-d}2\in(0,1).
\end{equation}
Furthermore, $\tr^*(\IL_2(\Gamma))$ is dense in $H^{-s}_\Gamma$.
\end{thm}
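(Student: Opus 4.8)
The plan is to reduce all three assertions to the single kernel identity $\ker(\tr)=\tH^s(\Gamma^c)$, and then to import the proof of that identity from \cite{caetano2019density}, checking the one step which is genuinely dimension-dependent when $d=n$. First I would record the functional-analytic reductions, valid for every $0<d\le n$ and requiring no new work. By the definition of the quotient norm on $\IH^t\GG=\tr(H^s\Rn)$, the map $H^s\Rn/\ker(\tr)\to\IH^t\GG$ induced by $\tr$ is a unitary isomorphism, so $\tr$ restricted to $\ker(\tr)^\perp$ is a unitary isomorphism onto $\IH^t\GG$; the first assertion is thus equivalent to $\ker(\tr)=\tH^s(\Gamma^c)$. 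Moreover, since $\tr$ is a metric surjection onto $\IH^t\GG$, its adjoint $\tr^*:\IH^{-t}\GG\to H^{-s}\Rn$ is automatically an isometric isomorphism onto its range, and that range is the annihilator of $\ker(\tr)$ in $H^{-s}\Rn$. As recalled before the theorem, $H^{-s}_\Gamma$ is the annihilator of $\tH^s(\Gamma^c)$, so the range of $\tr^*$ equals $H^{-s}_\Gamma$ precisely when $\ker(\tr)=\tH^s(\Gamma^c)$; this yields the second assertion. The third then follows immediately: $\IL_2\GG$ is densely embedded in $\IH^{-t}\GG$ and $\tr^*$ is a homeomorphism onto $H^{-s}_\Gamma$, so $\tr^*(\IL_2\GG)$ is dense in $\tr^*(\IH^{-t}\GG)=H^{-s}_\Gamma$.

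Everything therefore comes down to the inclusion $\ker(\tr)\subset\tH^s(\Gamma^c)$, the reverse inclusion having already been noted above. For $0<d<n$ this is exactly \cite[Prop.~6.7, Thm~6.13]{caetano2019density} in the order-zero case $m=0$, so here I would simply invoke those results. At the level of method, the inclusion amounts to approximating a zero-trace element of $H^s\Rn$ by functions supported away from $\Gamma$, and the decisive analytic ingredient is a bounded right inverse $\mathcal{E}_{\Gamma,0}$ of the trace, built from the Jonsson--Wallin trace/extension theory together with the measure-geometric regularity encoded in the $d$-set condition \eqref{eq:dset}.

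For the remaining case $d=n$ I would run the same argument, noting that now $t=s$, that $\IH^s\GG$ coincides with the Besov space $B^s_{2,2}(\Gamma)$ (established earlier for $d=n$ and $s<1$), and that $\IL_2\GG$ is $L_2$ with respect to a constant multiple of Lebesgue measure restricted to $\Gamma$. The only step of \cite[Prop.~6.7]{caetano2019density} that does not transcribe verbatim is the construction of the bounded right inverse $\mathcal{E}_{\Gamma,0}$ in its Step~2: for $d<n$ this relied on surjectivity of the trace $B^{(n-d)/2}_{2,1}\Rn\to\IL_2\GG$, whereas for $d=n$ one has $(n-d)/2=0$. In its place I would use the identification $H^s\Rn=B^s_{2,2}\Rn$ together with the Jonsson--Wallin extension theorem \cite[Thm~VI.3]{JoWa84} to produce a bounded extension $\mathcal{E}_{\Gamma,0}:B^s_{2,2}(\Gamma)=\IH^s\GG\to B^s_{2,2}\Rn=H^s\Rn$ that right-inverses $\tr$; with this single substitution every subsequent estimate in \cite{caetano2019density} applies unchanged.

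I expect the genuine obstacle to be the reverse inclusion $\ker(\tr)\subset\tH^s(\Gamma^c)$: the quotient-space and duality bookkeeping of the first paragraph is automatic, but showing that zero-trace functions can be approximated by functions supported off $\Gamma$ is where the $d$-set hypothesis is indispensable. For $d<n$ this is settled in \cite{caetano2019density}, so the only new difficulty is the degenerate smoothness index $(n-d)/2=0$ arising in the $d=n$ extension, which is precisely what the Besov extension theorem \cite[Thm~VI.3]{JoWa84} resolves.
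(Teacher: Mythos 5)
Your proposal is correct and takes essentially the same route as the paper: the paper likewise reduces everything to the kernel identity $\ker(\tr)=\tH^{s}(\Gamma^c)$, invokes \cite[Prop.~6.7, Thm~6.13]{caetano2019density} (the $m=0$ case) for $0<d<n$, and treats $d=n$ by exactly the substitution you describe, using $H^s(\R^n)=B^s_{2,2}(\R^n)$ together with \cite[Thm.~VI.3]{JoWa84} to supply the bounded right inverse $\mathcal{E}_{\Gamma,0}$ in Step 2 of that proof. The quotient/annihilator bookkeeping in your first paragraph is the same (implicit) reduction by which the paper passes from the kernel identity to the statements about $\tr^*$ and the density of $\tr^*(\IL_2(\Gamma))$.
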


\subsection{Function spaces on planar screens}
\label{sec:FunctionSpacesScreens}
For the screen scattering problem, we define function spaces on the hyperplane $\Gamma_\infty := \R^{n}\times\{0\}$ and subsets of it (for example, on the compact subset $\Gamma\subset\Gamma_\infty$ that forms the screen) by associating $\Gamma_\infty$ with $\R^{n}$ and $\Gamma$ with the set $\tilde{\Gamma}\subset\R^n$ such that $\Gamma=\tilde{\Gamma}\times\{0\}$ and applying the definitions above, so that $H^s(\Gamma_\infty) := H^s(\R^{n})$, $H^s(\Gamma) := H^s(\tilde{\Gamma})$ etc., and, when $\Gamma\subset\Gamma_\infty$ is a $d$-set, $\IH^t(\Gamma):=\IH^t(\tilde\Gamma)$. In the latter case, the operator
$\tr:H^s(\R^n)\to \IH^t(\tilde{\Gamma})$ naturally gives rise to an operator $\tr:H^s(\Gamma_\infty)\to \IH^t(\Gamma)$.

For open sets $\Omega\subset\R^{n+1}$ (e.g.\ the exterior domain $D:=\R^{n+1}\setminus{\Gamma}$)
we work with the classical Sobolev spaces\footnote{Our notation follows \cite{McLean}. Given an open set $\Omega\subset \R^{n+1}$, $W^1(\Omega)$ is the Sobolev space whose norm is defined ``intrinsically'', via integrals over $\Omega$, while $H^1(\Omega)$ is defined (see \S\ref{sec:FunctionSpaces}) ``extrinsically'' as the set of restrictions to $\Omega$ of functions in $H^1(\R^{n+1})$. These spaces coincide if $\Omega$ is Lipschitz (e.g., \cite[Theorem 3.16]{McLean}), in particular if $\Omega = \R^{n+1}$, but not in general, in particular not, in general, %
for $\Omega = D$.}  $W^1(\Omega)$ and $W^1(\Omega,\Delta)$, normed by $\|u\|_{W^1(\Omega)}^2=\|u\|_{L_2(\Omega)}^2+\|\nabla u\|_{L_2(\Omega)}^2$ and $\|u\|_{W^1(\Omega,\Delta)}^2=\|u\|_{L_2(\Omega)}^2+\|\nabla u\|_{L_2(\Omega)}^2+ \|\Delta u\|_{L_2(\Omega)}^2$ respectively, and their ``local'' versions $W^{1,{\rm loc}}(\Omega)$ and $W^{1,{\rm loc}}(\Omega,\Delta)$, defined as the sets of measurable functions on $\Omega$ whose restrictions to any bounded open $\Omega'\subset\Omega$ are in $W^{1}(\Omega')$ or $W^1(\Omega',\Delta)$ respectively. We denote by $\gamma^\pm:W^1(U^\pm)\to H^{1/2}(\Gamma_\infty)$ and $\dn^\pm:W^1(U^\pm,\Delta)\to H^{-1/2}(\Gamma_\infty)$ the standard Dirichlet and Neumann trace operators from the upper and lower half spaces $U^\pm:=\{x\in\R^{n+1},\pm x_{n+1}>0\}$ onto the hyperplane $\Gamma_\infty$, where
the normal vector is assumed to point into $U^+$ in the case of the Neumann trace. Explicitly, the traces are the extension by density of $\gamma^\pm (u)(x):=\lim_{\substack{x'\to x\\x'\in U^\pm}}u(x')$ and $\dn^\pm u(x) := \lim_{\substack{x'\to x\\x'\in U^\pm}}\frac{\partial u}{\partial x_{n+1}}(x')$ for $u\in C^\infty_0(\R^{n+1})|_{U^\pm}$ %
and $x\in \Gamma_\infty$.
We note that if $u\in W^{1}(\R^{n+1})$ then $\gamma^+(u|_{U^+})=\gamma^-(u|_{U^-})$ \cite[\S2.1]{ScreenPaper}.
Finally,
let $C^\infty_{0,\Gamma}$ denote the set of functions in $C^\infty_0(\R^{n+1})$ that equal one in a neighbourhood of $\Gamma$.

\section{Wavelet decompositions}
\label{sec:Wavelets}

The spaces $\IH^t(\Gamma)$ were defined in \S\ref{sec:FunctionSpaces} for $\Gamma\subset\R^n$ a general $d$-set with $0<d\leq n$. In the case where $\Gamma$ is a disjoint IFS attractor (in the sense of \S\ref{sec:IFS}, in which case, 
by Lemma \ref{lem:disconnected}, 
$d<n$), it was shown in \cite{Jonsson98} that, when $n-1<d=\dimH(\Gamma)<n$, %
the spaces $\IH^t(\Gamma)$ can be characterised in terms of wavelet decompositions for $t>0$.\footnote{More precisely, \cite{Jonsson98} showed that the Besov spaces $B^{p,q}_\alpha(\Gamma)$, as defined in \cite[\S6]{Jonsson98} (and see Definition \ref{def:B^p,q_alpha} below), can be characterised in this way for $\alpha>0$ and $1\leq p,q\leq \infty$, provided $\Gamma$ preserves Markov's inequality, which holds in particular (see Remark \ref{rem:key}) if $d>n-1$. We show in Appendix \ref{app:Besov} (see Corollary \ref{cor:equiv} and Remark \ref{rem:key}) that, for $t>0$, $\IH^t(\Gamma)=B^{2,2}_t(\Gamma)$ with equivalence of norms if $\Gamma$ is a $d$-set with $d>n-1$, so that this characterisation carries over to our trace spaces $\IH^t(\Gamma)$.} %
 This, more precisely an extension of this characterisation to negative $t$, will be central to our BEM convergence analysis later.
In the current section we recap the notation and main results from \cite{Jonsson98} that we will need, initially assuming only that $n-1<d\leq n$ and that the OSC is satisfied.

Let $\Gamma$ be the attractor of an IFS $\{s_1,\ldots,s_M\}$ as in \rf{eq:fixedfirst}, and assume that the OSC \rf{oscfirst} holds.
Following \cite{Jonsson98},
for $\ell\in \N$ we define the set of multi-indices $I_\ell:=\{1,\ldots,M\}^\ell\!=\{\bm = (m_1,m_2,\ldots,m_\ell)$, $\,1\leq m_l\leq M , \, l=1,2,\ldots,\ell\}$, and for $E\subset\R^n$ and $\bm\in I_\ell$ we define
$E_{\bm}=s_{m_1}\circ s_{m_2}\circ \ldots \circ s_{m_\ell}(E)$.
We also set $I_0:=\{0\}$ and adopt the convention that $E_0:=E$. (We will use these notations especially in the case $E=\Gamma$.) For $S=\N$ and $S= \N_0:= \N\cup\{0\}$ we use the notation
\begin{equation} \label{eq:ISdef}
I_S:= \bigcup_{\ell\in S} I_\ell
\end{equation}
and, for $\bm = (m_1,\ldots,m_\ell)$, set $\bm_-:= (m_1,...,m_{\ell-1})$ if $\ell\in \N$ with $\ell\geq 2$, and set $\bm_-:= 0$ if $\ell=1$.  %
 An example of use of the notation $E_\bm$ is shown in Figure \ref{fig:Basis}.
Note that this notation extends that of \rf{eq:GammamDef} where the sets $\Gamma_1,\ldots,\Gamma_M$ were introduced, corresponding to the case $E=\Gamma$ and $\ell=1$ here.
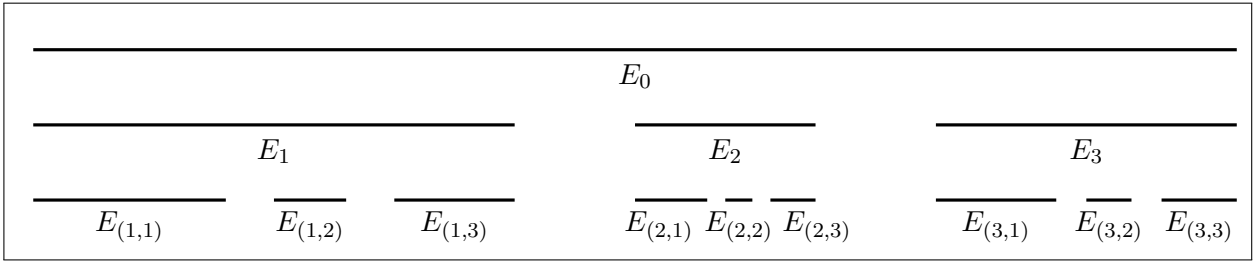
\begin{figure}
\fbox{
\def\xa{0}\def\xb{0.16}\def\xc{0.2}\def\xd{0.26}\def\xe{0.3}\def\xf{0.4}\def\xg{0.5}\def\xh{0.56}\def\xi{0.575}\def\xj{0.5975}\def\xk{0.6125}\def\xl{0.65}\def\xm{0.75}\def\xn{0.85}\def\xo{0.875}\def\xp{0.9125}\def\xq{0.9375}\def\xr{1}
\begin{tikzpicture}[x=16cm]
\def\dx{0.05}
\def\dy{0.35}
\newcommand{\y}{0}
\def\y{0}
\draw[very thick] (\xa,\y)--(\xr,\y);\node at ({(\xa+\xr)/2},\y-\dy) {$E_{0}$};
\def\y{-1}
\draw[very thick] (\xa,\y)--(\xf,\y);\node at ({(\xa+\xf)/2},\y-\dy) {$E_{1}$};
\draw[very thick] (\xg,\y)--(\xl,\y);\node at ({(\xg+\xl)/2},\y-\dy) {$E_{2}$};
\draw[very thick] (\xm,\y)--(\xr,\y);\node at ({(\xm+\xr)/2},\y-\dy) {$E_{3}$};
\def\y{-2}
\draw[very thick] (\xa,\y)--(\xb,\y);\node at ({(\xa+\xb)/2},\y-\dy) {$E_{(1,1)}$};
\draw[very thick] (\xc,\y)--(\xd,\y);\node at ({(\xc+\xd)/2},\y-\dy) {$E_{(1,2)}$};
\draw[very thick] (\xe,\y)--(\xf,\y);\node at ({(\xe+\xf)/2},\y-\dy) {$E_{(1,3)}$};
\draw[very thick] (\xg,\y)--(\xh,\y);\node at ({(\xg+\xh)/2-0.01},\y-\dy) {$E_{(2,1)}$};
\draw[very thick] (\xi,\y)--(\xj,\y);\node at ({(\xi+\xj)/2},\y-\dy) {$E_{(2,2)}$};
\draw[very thick] (\xk,\y)--(\xl,\y);\node at ({(\xk+\xl)/2+0.02},\y-\dy) {$E_{(2,3)}$};
\draw[very thick] (\xm,\y)--(\xn,\y);\node at ({(\xm+\xn)/2},\y-\dy) {$E_{(3,1)}$};
\draw[very thick] (\xo,\y)--(\xp,\y);\node at ({(\xo+\xp)/2},\y-\dy) {$E_{(3,2)}$};
\draw[very thick] (\xq,\y)--(\xr,\y);\node at ({(\xq+\xr)/2},\y-\dy) {$E_{(3,3)}$};
\draw[white](0,.5)--(1,.5); %
\end{tikzpicture}}
\caption{
Illustration of the sets $E_{\bm}$, $\bm\in I_\ell$, $\ell=0,1,2$, with $E=[0,1]$, for the IFS $s_1(x)=0.4x$, $s_2(x)=0.15x+0.5$, $s_3(x)=0.25x+0.75$, associated with a Cantor-type set with $M=3$.
}
\label{fig:Basis}
\end{figure}

Let $\IW_0$ be the space of constant functions on $\Gamma$, a one-dimensional subspace of $\IL_2(\Gamma)$ spanned by
\[\chi_0:=1_\Gamma/\cH^d(\Gamma)^{1/2}.\]
More generally, for $\ell\in \N$ let %
\[\IW_\ell:=\{f\in\IL_2(\Gamma):\forall\bm\in I_\ell, \, \exists c_{\bm}\in \C \mbox{ such that }  f(x)=c_{\bm} \mbox{ for }\cH^d\mbox{-a.e. } x\in \Gamma_{\bm}\}\subset\IL_2(\Gamma).\]
Since $\cH^d(\Gamma_{\bm}\cap \Gamma_{\bm'})=0$ for $\bm\neq \bm'$ (a consequence of \rf{eq:SelfSim} (self-similarity)),
$\IW_\ell$ is a $M^\ell$-dimensional subspace of $\IL_2(\Gamma)$ with orthonormal basis
\[ \{\chi_{\bm}\}_{\bm\in I_{\ell}},\]
where
\begin{align}
\label{eq:VkBasisDefn}
\chi_{\bm}(x):=
\begin{cases}
\frac{1}{\cH^d(\Gamma_{\bm})^{1/2}}, & x\in \Gamma_{\bm},\\
0,& \text{otherwise}.
\end{cases}
\end{align}
Clearly $\IW_0\subset \IW_1\subset \IW_2\subset\cdots \subset\IL_2(\Gamma)$.
But the bases we introduced above are not hierarchical, in the sense that the basis for $\IW_{\ell+1}$ does not contain that for $\IW_{\ell}$. %
Following \cite{Jonsson98} we introduce hierarchical wavelet bases on the $\IW_\ell$ spaces by decomposing
\begin{align}
\label{eq:VkDecomp}
\IW_\ell = \IW_0 \oplus \left(\bigoplus_{\ell'=0}^{\ell-1} (\IW_{\ell'+1}\ominus \IW_{\ell'})\right),
\end{align}
where $\IW_{\ell'+1}\ominus \IW_{\ell'}$ denotes the orthogonal complement of $\IW_{\ell'}$ in $\IW_{\ell'+1}$.
As already noted, $\IW_0$ is one-dimensional, with orthonormal basis $\{\psi_0\}$, where $\psi_0=\chi_0$.
The space $\IW_1\ominus \IW_0$ is $(M -1)$-dimensional, and an orthonormal basis
$\{\psi^m\}_{m=1,\ldots,M -1}$ of $\IW_1\ominus \IW_0$ can be obtained by applying the Gram-Schmidt orthonormalization procedure to the (non-orthonormal) basis $\{\widetilde\psi^m\}_{m=1,\ldots,M -1}$ defined by %
\begin{align}
\label{eq:BasisDefn}
\widetilde\psi^m (x):=
\begin{cases}
(\cH^d(\Gamma_{m}))^{-1}, & \mbox{for } \cH^d\mbox{-a.e. } x\in \Gamma_m,\\
-(\cH^d(\Gamma_{m+1}))^{-1}, & \mbox{for } \cH^d\mbox{-a.e. } x\in \Gamma_{m+1},\\
0,& \text{otherwise},
\end{cases}
\qquad m=1,\ldots,M -1.
\end{align}
For $\ell\in \N$ the space $\IW_{\ell+1}\ominus \IW_\ell$ is $((M-1)M^\ell)$-dimensional, and an orthonormal basis of $\IW_{\ell+1}\ominus \IW_\ell$ (see Figure \ref{fig:PsiBasisPlot}) is given by $\{\psi^m_{\bm}\}_{\bm\in I_{\ell},\,m=1,\ldots,M-1}$, %
where
\[ \psi^m_{\bm} := (\cH^d(\Gamma_{\bm}))^{-1/2}\psi^m\circ s_{m_\ell}^{-1}\circ s_{m_{\ell-1}}^{-1}\circ\cdots \circ s_{m_1}^{-1},
\quad m=1,2,\ldots,M -1,\,\,\bm\in I_\ell.
\]
Hence, recalling \rf{eq:VkDecomp} and setting $\psi^m_{0}=\psi^m$ for $m=1,2,\ldots,M -1$, we obtain the following orthonormal basis of $\IW_\ell$:
\[
\{\psi_0\}\cup\{\psi^m_{\bm}\}_
{\bm\in I_{\ell'},\,\ell'\in\{0,\ldots,\ell-1\},\,m\in\{1,\ldots,M-1\}}.
\]
\begin{figure}[t!]%
\includegraphics[width=\textwidth,clip,trim=100 15 95 15]{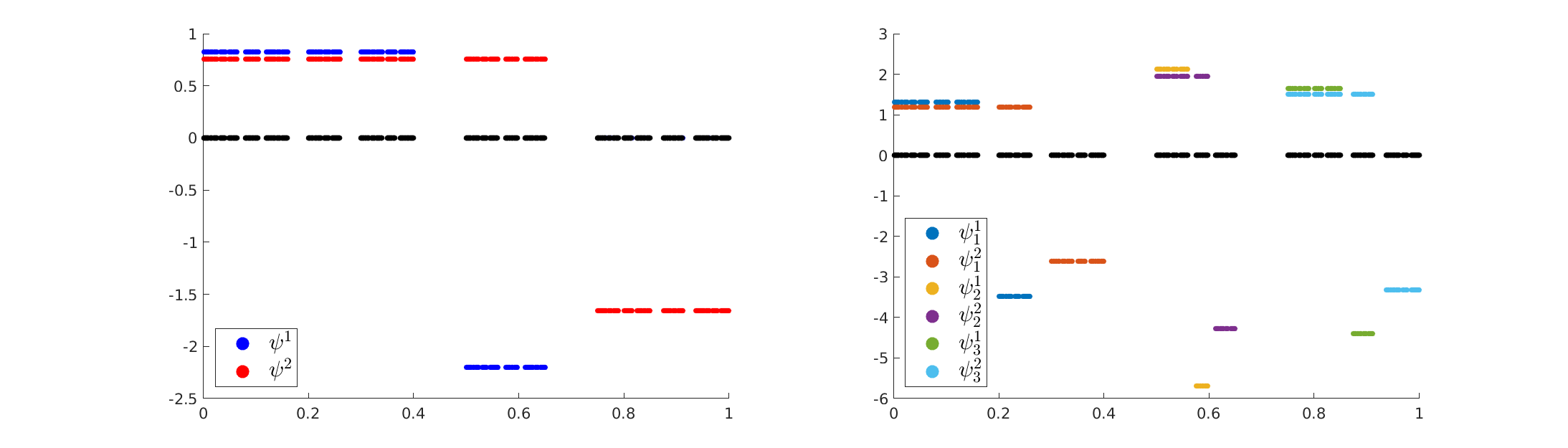}
\caption{Graphs of the orthonormal basis functions $\psi^1,\psi^2$ of $\IW_1\ominus\IW_0$ (left) and $\psi^m_\bm$, $\bm\in I_1$, of $\IW_2\ominus\IW_1$ (right) for the IFS of Figure~\ref{fig:Basis}. The black lines are the components of the attractor $\Gamma$. Where the values of $\psi^1$, $\psi^2$, and $\psi^n_\bm$ on $\Gamma$ are not shown explicitly, the values are zero, i.e.\ the graphs coincide with the black lines.
}
\label{fig:PsiBasisPlot}
\end{figure}
These bases are hierarchical, in the sense that the basis for $\IW_{\ell+1}$ contains that for $\IW_{\ell}$.
Furthermore, as noted in \cite[p.~334]{Jonsson98} (and demonstrated in the proof of Theorem \ref{thm:Convergence} below), elements of $\IL_2(\Gamma)$ can be approximated arbitrarily well by elements of $\IW_\ell$ as $\ell\to\infty$, which implies that
\[\{\psi_0\}\cup \{\psi^m_{\bm}\}_{\bm\in I_\ell,\,\ell\in\N_0,\, m\in\{1,\ldots,M -1\}}\]
is a complete orthonormal set in $\IL_2(\Gamma)$. %
Hence every $f\in \IL_2(\Gamma)$ has a unique representation
\begin{align}
\label{eq:L2Rep}
f = \beta_0  \psi_0 + \sum_{m=1}^{M -1}\sum_{\ell=0}^\infty \sum_{\bm\in I_\ell}\beta^m_{\bm} \psi^m_{\bm},
\end{align}
with %
\begin{align}
 \label{eq:BetaDef1}
 \beta_0  = \beta_0 (f):=(f,\psi_0)_{\IL_2(\Gamma)} \quad \text{and} \quad \beta^m_{\bm} = \beta^m_{\bm}(f) := (f,\psi^m_{\bm})_{\IL_2(\Gamma)}, \,\bm\in I_\ell,\,\ell\in\N_0,\, m\in\{1,\ldots,M -1\},
 \end{align}
and
\[
\|f\|_{\IL_2(\Gamma)} = \Bigg(|\beta_0 |^2 + \sum_{m=1}^{M -1}\sum_{\ell=0}^\infty \sum_{\bm\in I_\ell}|\beta^m_{\bm}|^2\Bigg)^{1/2}.
\]

The next result, which combines Theorems 1 and 2 in \cite{Jonsson98}, provides a characterization of the space $\IH^t(\Gamma)\subset\IL_2(\Gamma)$ (introduced after \rf{eq:st}) for $n-1<d=\dimH(\Gamma)<n$ and $0<t<1$ in terms of the wavelet basis introduced above, under the assumption that
$\Gamma$ is a disjoint IFS attractor.
This assumption ensures that the piecewise-constant spaces $\IW_\ell$ are contained in $\IH^t(\Gamma)$ for all $t>0$ (see \cite[Thm.~2]{Jonsson98}). In the statement of Theorem \ref{thm:Jonsson} the set $J_\nu$ is defined for $\nu\in\mathbb{Z}$ by %
\begin{equation} \label{eq:Jnudef}
J_\nu := \big\{\bm %
\in I_{\N_0} %
:2^{-\nu}\leq \diam(\Gamma_{\bm}) < 2^{-\nu +1}\big\},
\end{equation}
and $\nu_0$ is defined to be the unique integer such that $0\in J_{\nu_0}$, i.e.\ such that $2^{-\nu_0}\leq \diam(\Gamma)<2^{-\nu_0+1}$.
Note that\footnote{As an example of these definitions, suppose that $\Gamma$ is the (disjoint) attractor of the IFS illustrated in Figure \ref{fig:Basis}, in which case $\diam(\Gamma_\bm) = \diam(E_\bm)$, where $E_\bm$ is as defined in Figure \ref{fig:Basis}, in particular $\diam(\Gamma_0)=\diam(\Gamma)=1$. Then $\nu_0=0$, $J_0=\{0\}$, $J_1=\emptyset$, $J_2=\{1,3\}$, and $J_3=\{(1,1),2\}$, so that, since $I_0=\{0\}$ and $I_1=\{1,2,3\}$, $I_0=J_0$ and $J_0\cup J_1 \cup J_2 \subset I_0\cup I_1 \subset J_0\cup J_1 \cup J_2 \cup J_3$.}
\[\bigcup_{\nu=\nu_0}^\infty J_\nu = \bigcup_{\ell=0}^\infty I_\ell,
 \]
with disjoint unions on both sides, so $\sum_{\nu=\nu_0}^\infty\sum_{\bm\in J_\nu}F(\bm) = \sum_{\ell=0}^\infty\sum_{\bm\in I_\ell}F(\bm)$
whenever
the convergence is unconditional, as is the case, for instance, for \eqref{eq:L2Rep}. %
For convenience we introduce in this theorem a norm $\|\cdot\|_t$ that is different, but trivially equivalent to that used in \cite{Jonsson98}, which was $|\beta_0 | + \sum_{m=1}^{M -1}\left(\sum_{\nu=\nu_0}^\infty 2^{2\nu t}\sum_{\bm\in J_\nu}|\beta^m_{\bm}|^2 \right)^{1/2}$.

\begin{thm}[{\cite[Thms 1 \& 2]{Jonsson98}}]
\label{thm:Jonsson}
Let $\Gamma$ be a disjoint IFS attractor with $n-1<d=\dimH(\Gamma)<n$,
and let $0<t<1$. Then
\[\IH^t(\Gamma) = \{f\in \IL_2(\Gamma): \, \|f\|_{t}<\infty\}, \]
with
\[\|f\|_{t} := \Bigg(|\beta_0 |^2 + \sum_{m=1}^{M -1}\sum_{\nu=\nu_0}^\infty 2^{2\nu t}\sum_{\bm\in J_\nu}|\beta^m_{\bm}|^2 \Bigg)^{1/2}, \]
where $\beta_0  $ and $\{\beta^m_{\bm}\}$ are the coefficients from \eqref{eq:L2Rep}. Furthermore, $\|\cdot\|_{t}$ and $\|\cdot\|_{\IH^t(\Gamma)}$ are equivalent.
If $f\in \IH^t(\Gamma)$ then \eqref{eq:L2Rep} converges unconditionally in $\IH^t(\Gamma)$.
\end{thm}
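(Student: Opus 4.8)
The plan is to derive the statement from Jonsson's Theorems~1 and~2 \cite{Jonsson98}, after translating between his Besov spaces and our trace spaces. The argument assembles three ingredients, of which only the second is substantial and is handled in the appendix. Before anything else I would locate where the two hypotheses enter: disjointness of the attractor guarantees, via \cite[Thm.~2]{Jonsson98}, that each piecewise-constant space $\IW_\ell$, and hence each wavelet $\psi^m_{\bm}$, already lies in $\IH^t(\Gamma)$, so that the coefficient expression $\|f\|_t$ is meaningful as a characterisation of $\IH^t(\Gamma)$; and the bound $d>n-1$ ensures that $\Gamma$ preserves Markov's inequality (Remark~\ref{rem:key}), the standing assumption under which the wavelet characterisations of \cite{Jonsson98} are valid.

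The key identification is that, for $0<t<1$ and $d>n-1$, the trace space $\IH^t(\Gamma)$ coincides, with equivalent norms, with the Besov space $B^{2,2}_t(\Gamma)$ of Definition~\ref{def:B^p,q_alpha}; this is Corollary~\ref{cor:equiv} of Appendix~\ref{app:Besov}. Its proof matches the extrinsic trace-space norm against the intrinsic Besov norm of Jonsson and Wallin, using the trace theorem \cite[Thm.~VI.1]{JoWa84} applied to $H^s(\R^n)=B^{2,2}_s(\R^n)$ with $s=t+(n-d)/2$.

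With this identification in hand, Jonsson's Theorems~1 and~2 state precisely that $B^{2,2}_t(\Gamma)$ is the set of $f\in\IL_2(\Gamma)$ for which his wavelet-coefficient norm is finite, and that this coefficient norm is equivalent to the intrinsic Besov norm. The only discrepancy between his norm and $\|\cdot\|_t$ is that he aggregates over $m\in\{1,\dots,M-1\}$ in the $\ell^1$ sense whereas $\|\cdot\|_t$ uses $\ell^2$; since $M-1$ is finite, these are equivalent. Combining the three resulting equivalences (trace space with Besov space, Besov norm with Jonsson's coefficient norm, and Jonsson's norm with $\|\cdot\|_t$) yields both the claimed set equality and the equivalence of $\|\cdot\|_t$ with $\|\cdot\|_{\IH^t(\Gamma)}$.

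For the unconditional convergence of \eqref{eq:L2Rep} in $\IH^t(\Gamma)$, I would observe that, by the equivalence just established, $\|f\|_t^2$ is a convergent sum of non-negative terms indexed by a countable set, which therefore converges unconditionally; hence the restriction of $\sum_{m=1}^{M-1}\sum_{\nu=\nu_0}^\infty 2^{2\nu t}\sum_{\bm\in J_\nu}|\beta^m_{\bm}|^2$ to the complement of any finite index set tends to zero. By the norm equivalence, this controls the $\IH^t(\Gamma)$-norm of the corresponding tail of the partial sums of \eqref{eq:L2Rep}, so the partial sums form a Cauchy sequence in $\IH^t(\Gamma)$ under any ordering of the index set; their limit equals $f$ because $\IH^t(\Gamma)$ embeds continuously in $\IL_2(\Gamma)$, in which \eqref{eq:L2Rep} converges to $f$. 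The main obstacle lies not in any of these bookkeeping steps but in the trace/Besov identification of the second paragraph, which is precisely why it is deferred to Appendix~\ref{app:Besov}.
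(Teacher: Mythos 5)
Your proposal follows essentially the same route as the paper: the theorem is justified by citing Jonsson's Theorems 1 and 2 for the atomic Besov spaces $B^{2,2}_t(\Gamma)$, bridging to the trace spaces via Corollary \ref{cor:equiv} of Appendix \ref{app:Besov} (with Remark \ref{rem:key} supplying Markov's inequality from $d>n-1$, and disjointness giving $\IW_\ell\subset\IH^t(\Gamma)$ via \cite[Thm.~2]{Jonsson98}), and then noting the trivial equivalence of the two coefficient norms. One small correction: the appendix actually proves $\IH^t(\Gamma)=B^{2,2}_t(\Gamma)$ by comparing atomic decompositions (via \cite{Caetano2011} and the trace results of \cite{caetano2019density}), not by the intrinsic-norm argument with \cite[Thm.~VI.1]{JoWa84} that you describe --- the latter is how \S\ref{sec:FunctionSpaces} identifies $\IH^t(\Gamma)$ with the Jonsson--Wallin space $B^t_{2,2}(\Gamma)$ --- but since you defer to the appendix result itself, this does not affect the validity of your argument.
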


\begin{rem}[Fractional norms in the homogeneous case]
In the homogeneous case where $\rho_m=\rho $ for each $m=1,\ldots,M$,
we have
\[\|f\|_{t} = \Bigg(|\beta_0 |^2 + \sum_{m=1}^{M -1}\sum_{\ell=0}^\infty 2^{2\nu(\ell)t}\sum_{\bm\in I_\ell}|\beta^m_{\bm}|^2 \Bigg)^{1/2}, \]
where $\nu(\ell)=\lceil (\ell\log(1/\rho) - \log(\diam(\Gamma)))/\log 2\rceil$, which
is equivalent to the norm
\[\Bigg(|\beta_0 |^2 + \sum_{m=1}^{M-1}\sum_{\ell=0}^\infty \rho^{-2\ell t}\sum_{\bm\in I_\ell}|\beta^m_{\bm}|^2 \Bigg)^{1/2}.\]
If $\rho\in(0,1/2]$ then the function $\nu(\ell)$ is injective and $I_\ell=J_{\nu(\ell)}$, $\ell\in\N_{0}$.
\end{rem}

Theorem \ref{thm:Jonsson} has the following important corollary, which is obtained by duality. In this corollary and subsequently (see, e.g., \cite[Remark 3.8]{InterpolationCWHM}), given an interval $\mathcal{I}\subset \R$ we will say that a collection of Hilbert spaces $\{H_s:s\in \mathcal{I}\}$, indexed by $\mathcal{I}$, is an {\em interpolation scale} if, for all $s,t\in \mathcal{I}$ and $0<\eta<1$, $(H_s,H_t)$ is a compatible couple (in the standard sense, e.g.\ \cite[\S2.3]{BeLo}) and if the interpolation space
$(H_s,H_t)_{\eta}$\footnote{Here, and subsequently, $(H_s,H_t)_{\eta}$ denotes the standard complex interpolation space, $(H_s,H_t)_{[\eta]}$ in the notation of \cite{BeLo}; equivalently,  the $K$- or $J$-method real interpolation spaces denoted $(H_s,H_t)_{\eta,2}$ in \cite{BeLo,InterpolationCWHM}, which are the same Hilbert spaces $(H_s,H_t)_{[\eta]}$, with equal norms, if the $K$- and $J$-methods are appropriately normalised (see \cite[Remark 3.6]{InterpolationCWHM} and \cite{InterpolationE_CWHM}).} coincides with $H_\theta$, for $\theta = (1-\eta)s+\eta t$, with equivalent norms. We will say that $\{H_s:s\in \mathcal{I}\}$ is an {\em exact} interpolation scale if, moreover, the norms of $(H_s,H_t)_{\eta}$ and $H_\theta$ coincide, for all $s,t\in \mathcal{I}$ and $0<\eta<1$.

\begin{cor}
\label{cor:Wavelets}
Let $\Gamma$ and $t$ satisfy the assumptions of Theorem \ref{thm:Jonsson}.

\begin{enumerate}[(i)]

\item If $\{\beta_0\}\cup\{\beta^m_{\bm}\}_{\bm\in I_\ell,\,\ell\in\N_0,\, m\in\{1,\ldots,M -1\}}\subset\C$ satisfy
\begin{align}
\label{eq:BetaCond}
\Bigg(|\beta_0 |^2 + \sum_{m=1}^{M -1}\sum_{\nu=\nu_0}^\infty 2^{-2\nu t}\sum_{\bm\in J_\nu}|{\beta^m_{\bm}}|^2 \Bigg)^{1/2}<\infty
\end{align}
then
\begin{align}
\label{eq:HMinustRep}
f := \beta_0  \psi_0 + \sum_{m=1}^{M -1}\sum_{\ell=0}^\infty \sum_{\bm\in I_\ell}\beta^m_{\bm} \psi^m_{\bm},
\end{align}
converges in $\IH^{-t}(\Gamma)$.

\item
Each $f\in \IH^{-t}(\Gamma)$ can be written in the form \eqref{eq:HMinustRep} (with convergence in $\IH^{-t}(\Gamma)$), where %
\begin{align}
\label{eq:BetaDef2}
\beta_0  := \langle f,\psi_0\rangle_{\IH^{-t}(\Gamma)\times \IH^{t}(\Gamma)} \quad \text{and} \quad \beta^m_{\bm} := \langle f,\psi^m_{\bm}\rangle_{\IH^{-t}(\Gamma)\times \IH^{t}(\Gamma)}, \,\bm\in I_\ell,\,\ell\in\N_0,\, m\in\{1,\ldots,M -1\}
\end{align}
satisfy \rf{eq:BetaCond}. (By \eqref{eq:L2dualequiv} these definitions coincide with \eqref{eq:BetaDef1} when $f\in \IL_2(\Gamma)$.)

\item
The norms $\|\cdot\|_{\IH^{-t}(\Gamma)}$ and
\[ \|f\|_{-t} :=\Bigg(|\beta_0 |^2 + \sum_{m=1}^{M -1}\sum_{\nu=\nu_0}^\infty 2^{-2\nu t}\sum_{\bm\in J_\nu}|{\beta^m_{\bm}}|^2 \Bigg)^{1/2}, \qquad f\in\IH^{-t}(\Gamma),\]
are equivalent on $\IH^{-t}(\Gamma)$.

\item
The duality pairing $\langle \cdot,\cdot\rangle_{\IH^{-t}(\Gamma)\times \IH^{t}(\Gamma)}$ can be evaluated using the wavelet basis as%
\[ \langle f,g\rangle_{\IH^{-t}(\Gamma)\times \IH^{t}(\Gamma)}  = \beta_0\overline{\beta_0'} + \sum_{m=1}^{M -1}\sum_{\nu=\nu_0}^\infty \sum_{\bm\in J_\nu}{\beta^m_{\bm}}\overline{{\beta^m_{\bm}}'},\]
for $g=\beta_0'  \psi_0 + \sum_{m=1}^{M -1}\sum_{\nu=\nu_0}^\infty \sum_{\bm\in J_\nu}{\beta^m_{\bm}}' \psi^m_{\bm}\in \IH^{t}(\Gamma)$. With this pairing, $\IH^{-t}(\Gamma)$ provides a unitary realisation of $(\IH^{t}(\Gamma))^*$ with respect to the norms $\|\cdot\|_{t}$ and $\|\cdot \|_{-t}$.

\item Equipped with the norm $\|\cdot\|_\tau$, %
$\{\IH^{\tau}(\Gamma)\}_{-1<\tau<1}$ is an exact interpolation scale.%
\end{enumerate}
\end{cor}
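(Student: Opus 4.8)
The plan is to observe that the wavelet coefficient map converts everything into weighted sequence spaces, so that Theorem~\ref{thm:Jonsson} becomes the statement that this map is a norm isomorphism, and then to read off (i)--(v) from the elementary duality and interpolation theory of such spaces. Concretely, for $\tau\in(-1,1)$ introduce the weighted sequence space $\ell^2(w_\tau)$ of all families $a=(\alpha_0,\{\alpha^m_{\bm}\})$ indexed by $\{0\}\cup\{(\bm,m):\bm\in I_{\N_0},\,1\le m\le M-1\}$, with norm $\big(|\alpha_0|^2+\sum_{m=1}^{M-1}\sum_{\nu=\nu_0}^\infty 2^{2\nu\tau}\sum_{\bm\in J_\nu}|\alpha^m_{\bm}|^2\big)^{1/2}$, so that $w_\tau$ weights the index $0$ by $1$ and the index $(\bm,m)$ with $\bm\in J_\nu$ by $2^{2\nu\tau}$. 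In this language Theorem~\ref{thm:Jonsson} says exactly that, for $0<t<1$, the coefficient map $U_t:\IH^t(\Gamma)\to\ell^2(w_t)$, $f\mapsto(\beta_0(f),\{\beta^m_{\bm}(f)\})$ from \eqref{eq:BetaDef1}, is a bijection with $\|U_t f\|_{\ell^2(w_t)}=\|f\|_t\asymp\|f\|_{\IH^t(\Gamma)}$, while $U_0$ is the isometric $\IL_2(\Gamma)$ coefficient map. The engine for (i)--(iv) is the sharp duality $\ell^2(w_t)^*=\ell^2(w_{-t})$ under the \emph{unweighted} $\ell^2$ pairing $\langle a,b\rangle_{\ell^2}:=\alpha_0\overline{\beta_0}+\sum_m\sum_{\nu}\sum_{\bm\in J_\nu}\alpha^m_{\bm}\overline{\beta^m_{\bm}}$, which is an isometric isomorphism by Cauchy--Schwarz and its attainment, together with the compatibility \eqref{eq:L2dualequiv} of the $\IL_2(\Gamma)$ inner product with the $\IH^{-t}(\Gamma)\times\IH^t(\Gamma)$ pairing.

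For (i), given $a$ satisfying \eqref{eq:BetaCond} (that is, $a\in\ell^2(w_{-t})$), I would show the partial sums $f_N$ of \eqref{eq:HMinustRep}, which lie in $\IL_2(\Gamma)\subset\IH^{-t}(\Gamma)$, are Cauchy in $\IH^{-t}(\Gamma)$. For any $g\in\IH^t(\Gamma)$, using \eqref{eq:L2dualequiv} and $\IL_2(\Gamma)$-orthonormality one gets $\langle f_N-f_{N'},g\rangle_{\IH^{-t}(\Gamma)\times\IH^t(\Gamma)}=\langle a_N-a_{N'},U_t g\rangle_{\ell^2}$, whence by Cauchy--Schwarz and $\|g\|_t\asymp\|g\|_{\IH^t(\Gamma)}$ we obtain $\|f_N-f_{N'}\|_{\IH^{-t}(\Gamma)}\le C\,\|a_N-a_{N'}\|_{\ell^2(w_{-t})}$, which tends to $0$ by \eqref{eq:BetaCond}; hence the series converges in $\IH^{-t}(\Gamma)$. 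For (ii)--(iii), let $S:\ell^2(w_{-t})\to\IH^{-t}(\Gamma)$ be the resulting limit map; extending the same pairing computation by continuity gives $\langle Sa,g\rangle_{\IH^{-t}(\Gamma)\times\IH^t(\Gamma)}=\langle a,U_t g\rangle_{\ell^2}$ for all $g\in\IH^t(\Gamma)$, i.e.\ $S=(U_t)^*\circ\iota$, where $\iota:\ell^2(w_{-t})\to\ell^2(w_t)^*$ is the canonical isometry and $(U_t)^*:\ell^2(w_t)^*\to(\IH^t(\Gamma))^*=\IH^{-t}(\Gamma)$ is the Banach-space adjoint. Since $U_t$ and $\iota$ are isomorphisms, so is $S$; testing against $g=\psi^m_{\bm}$ (for which $U_t\psi^m_{\bm}$ is the corresponding unit sequence) identifies the coefficients of $Sa$ with the input $a$, and with \eqref{eq:BetaDef2}, giving surjectivity and the representation in (ii), while $\|Sa\|_{\IH^{-t}(\Gamma)}\asymp\|a\|_{\ell^2(w_{-t})}=\|f\|_{-t}$ gives the norm equivalence (iii).

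Part (iv) is then just the pairing identity $\langle Sa,g\rangle=\langle a,U_t g\rangle_{\ell^2}$ written out in coefficients. Moreover, equipping $\IH^t(\Gamma)$ with $\|\cdot\|_t$ and $\IH^{-t}(\Gamma)$ with $\|\cdot\|_{-t}$ makes $U_t$ and $S$ isometries onto $\ell^2(w_t)$ and $\ell^2(w_{-t})$, and the pairing becomes the isometric $\ell^2$ duality between these; hence $(\IH^{-t}(\Gamma),\|\cdot\|_{-t})$ is a \emph{unitary} realisation of $(\IH^t(\Gamma),\|\cdot\|_t)^*$, as claimed.

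For (v), the parts above provide, for every $\tau\in(-1,1)$, an isometric isomorphism $U_\tau:\IH^\tau(\Gamma)\to\ell^2(w_\tau)$ when $\IH^\tau(\Gamma)$ carries $\|\cdot\|_\tau$ (with $U_\tau=U_t$ for $\tau=t\in(0,1)$, $U_\tau=S^{-1}$ for $\tau=-t\in(-1,0)$, and $U_0$ the $\IL_2(\Gamma)$ map); these agree on intersections, so $U$ is an isomorphism of the compatible couple $(\IH^s(\Gamma),\IH^t(\Gamma))$ onto $(\ell^2(w_s),\ell^2(w_t))$. I would then invoke that $\{\ell^2(w_\tau)\}_{-1<\tau<1}$ is the Hilbert scale generated by the positive diagonal operator with eigenvalue $2^\nu$ on level-$\nu$ indices (and $1$ on the index $0$); equivalently it is the family of weighted spaces $L^2(\phi^\tau\,\rd\mu)$, with $\mu$ counting measure and $\phi$ the base weight, so that Calder\'on's theorem on complex interpolation of weighted $L^p$ spaces \cite[Thm.~5.5.3]{BeLo} (or the theory of Hilbert scales, cf.\ \cite{InterpolationCWHM}) yields $(\ell^2(w_s),\ell^2(w_t))_\eta=\ell^2(w_\theta)$ \emph{isometrically}, with $\theta=(1-\eta)s+\eta t$. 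Transporting through the isometric couple isomorphism $U$ gives $(\IH^s(\Gamma),\IH^t(\Gamma))_\eta=\IH^\theta(\Gamma)$ with equal norms. I expect the main obstacle to be precisely this last step: securing \emph{exactness} (equality, not merely equivalence, of norms) requires the power-weight structure $w_\tau=\phi^\tau$ of a single fixed base weight and the isometric form of the Stein--Weiss/Calder\'on interpolation result, together with verifying that $U$ is genuinely a morphism of the interpolation couple; the earlier parts, by contrast, are routine once the sequence-space dictionary is set up.
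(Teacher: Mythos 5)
Your proposal is correct and follows essentially the same route as the paper's own proof: both translate the problem into the power-weighted sequence spaces via the wavelet coefficient map, read off (i)--(iv) from the isometric duality of those sequence spaces under the unweighted $\ell^2$ pairing (together with \eqref{eq:L2dualequiv}), and obtain (v) by transporting the exact interpolation property of the weighted sequence scale through that (isometric) isomorphism. The only difference is the reference used for the final step --- you invoke the Calder\'on/Stein--Weiss isometric interpolation of weighted $L^2$ spaces \cite[Thm.~5.5.3]{BeLo}, whereas the paper cites \cite[Thm.~3.1, Cor.~3.2]{InterpolationCWHM} --- but these encode the same fact about power weights, and your observation that the coefficient maps agree on intersections supplies exactly the injectivity/compatibility point the paper addresses in its footnote.
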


\begin{proof}
For $t\in \R$ we can define the weighted $\ell_2$ sequence space
\[
\ih^t := \{ \boldsymbol{\beta} = \{\beta_0\}\cup\{\beta^m_{\bm}\}_{\bm\in I_\ell,\,\ell\in\N_0,\, m\in\{1,\ldots,M -1\}}:\|\boldsymbol{\beta}\|_{\ih^t}<\infty\},
\]
where
\[\|\boldsymbol{\beta}\|_{\ih^t} := \Bigg(|\beta_0 |^2 + \sum_{m=1}^{M -1}\sum_{\nu=\nu_0}^\infty 2^{2\nu t}\sum_{\bm\in J_\nu}|\beta^m_{\bm}|^2 \Bigg)^{1/2}, \quad \boldsymbol{\beta}\in \ih^t,\]
which is a Hilbert space with the obvious inner product. The dual space of $\ih^t$ can be unitarily realised as $\ih^{-t}$, with duality pairing
\[ \langle \boldsymbol{\beta},\boldsymbol{\beta}' \rangle_{\ih^{-t}\times \ih^t} := \beta_0\overline{\beta_0'} + \sum_{m=1}^{M -1}\sum_{\nu=\nu_0}^\infty \sum_{\bm\in J_\nu}{\beta^m_{\bm}}\overline{{\beta^m_{\bm}}'}, \quad \boldsymbol{\beta}\in \ih^{-t}, \boldsymbol{\beta}'\in \ih^t. \]
Furthermore, Theorem \ref{thm:Jonsson} implies that the space $\IH^t(\Gamma)$ is linearly and topologically isomorphic to $\ih^t$ for $0<t<1$ (unitarily if we equip $\IH^t(\Gamma)$ with $\|\cdot\|_t$).
Hence by duality $\IH^{-t}(\Gamma)$ is also linearly and topologically isomorphic to $\ih^{-t}$ for the same range of $t$ (unitarily if we equip $\IH^{-t}(\Gamma)$ with $\|\cdot\|_{-t}$). From these observations parts (i)-(iv) of the result follow, noting, in the case of (iv), that 
$f$ is a continuous antilinear functional on $\IH^t(\Gamma)$.  

For (v) we note that by
\cite[Thm.~3.1]{InterpolationCWHM} $\{\ih^\tau\}_{\tau\in \R}$ is an exact interpolation scale.
The corresponding statement about $\{\IH^{\tau}(\Gamma)\}_{-1<\tau<1}$, equipped with the norm $\|\cdot\|_\tau$, follows from Theorem \ref{thm:Jonsson} and parts (i)-(iii), combined with\footnote{There is an inaccuracy in the statement of \cite[Cor.~3.2]{InterpolationCWHM}; the map $\mathcal{A}:\Sigma(\overline{H})\to \mathcal{Y}$ in that corollary needs to be injective as well as linear for the corollary to hold (see \cite{InterpolationE_CWHM}). This injectivity follows automatically from the other conditions on $\mathcal{A}$  when (as in the application we make here) $H_1\subset H_0$, with continuous embedding, since then $\Sigma(\overline{H})=H_0$.}
\cite[Cor.~3.2]{InterpolationCWHM} and the fact that $\IH^0(\Gamma)=\IL_2(\Gamma)$ is unitarily isomorphic to $\ih^0$. Explicitly, in \cite[Cor.~3.2]{InterpolationCWHM}, given $-1<\tau_0<\tau_1<1$ we take $H_j=\IH^{\tau_j}(\Gamma)$, $j=0,1$, $\mathcal{X}=\{0\}\cup\{(\nu,\bm,m):\bm\in I_\ell,\,\ell\in\N_0,\, m\in\{1,\ldots,M -1\} \}$, $\mu$ to be the counting measure on $\mathcal{X}$, $\mathcal{A}$ to be the map taking $f\in\IH^{\tau_j}(\Gamma)$ to the sequence $\boldsymbol{\beta}$ defined by \rf{eq:BetaDef1} or \rf{eq:BetaDef2} (as appropriate), and $w_j(0)=1$, $w_j((\nu,\bm,m))=2^{2\nu \tau_j}$.
\end{proof}

A basic interpolation result is that  if $X_0\supset X_1$ and $Y_0\supset Y_1$ are Hilbert spaces, with $X_1$ and $Y_1$ continuously embedded in $X_0$ and $Y_0$, respectively, and $\mathcal{A}:X_j\to Y_j$ is a linear and topological isomorphism, for $j=0,1$, then, for $0<\theta<1$, $\mathcal{A}((X_0,X_1)_\theta) = (Y_0,Y_1)_\theta$ and $\mathcal{A}:(X_0,X_1)_\theta \to (Y_0,Y_1)_\theta$ is a linear and topological isomorphism. This is immediate since $((X_0,X_1)_\theta,(Y_0,Y_1)_\theta)$ and $((Y_0,Y_1)_\theta, (X_0,X_1)_\theta)$ are,  in the terminology of \cite[\S2]{InterpolationCWHM}, pairs of interpolation spaces relative to $(\overline{X},\overline{Y})$ and $(\overline{Y},\overline{X})$, respectively, where $\overline{X}=(X_0,X_1)$, $\overline{Y}=(Y_0,Y_1)$ (e.g., \cite[Theorem 4.1.2]{BeLo}).

\begin{rem}
\label{rem:RemarkInterp}
Theorem \ref{thm:Jonsson} and Corollary \ref{cor:Wavelets}, together with the above interpolation result applied with $\mathcal{A}$ taken as the identity operator, imply that, if $\Gamma$ satisfies the assumptions of Theorem \ref{thm:Jonsson},  then $\{\IH^{t}(\Gamma)\}_{-1<t<1}$ is an interpolation scale also when $\IH^t(\Gamma)$ is equipped with the original norm $\|\cdot\|_{\IH^t(\Gamma)}$ (or indeed any other equivalent norm).
\end{rem}

We include the following corollary, although we will not use it subsequently, because it may be of independent interest. %
Note that the range of $s$ does not extend to $s=-(n-d)/2$ since, by \cite[Thm~2.17]{HewMoi:15}, $H^{s}_\Gamma\neq \{0\}$ for $s<-(n-d)/2$, but $H^{-(n-d)/2}_\Gamma=\{0\}$ so that (e.g., \cite[Theorem 2.2(iv)]{InterpolationCWHM}) $\left(H_\Gamma^s,H_\Gamma^{-(n-d)/2}\right)_\theta = \{0\}$ for all $s\in \R$ and $0<\theta<1$. %
\begin{cor}
\label{cor:Interp}
Suppose that $\Gamma$ satisfies the assumptions of Theorem \ref{thm:Jonsson}. Then $\{H^{s}_\Gamma\}_{-(n-d)/2-1<s<-(n-d)/2}$ is an interpolation scale.
\end{cor}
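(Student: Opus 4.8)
The plan is to transport the interpolation-scale structure of the trace spaces $\{\IH^{\tau}(\Gamma)\}_{-1<\tau<1}$ across the unitary isomorphism $\tr^*$ furnished by Theorem \ref{thm:Density}. Write $c:=(n-d)/2$, noting that $n-1<d<n$ gives $c\in(0,1/2)$. For $s$ in the target range $(-c-1,-c)$ put $t:=-s-c$, so that $t\in(0,1)$ and $s=-t-c$; this is exactly the regime covered by Theorem \ref{thm:Density}, which (applied with its ``$s$'' equal to $t+c$, whence $-(t+c)=s$) tells us that $\tr^*:\IH^{-t}(\Gamma)\to H^{s}_\Gamma$ is a unitary isomorphism. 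Moreover, by the footnote establishing that the operators $\tr^*$ agree on the intersections of their domains, this family is consistent, i.e.\ it is the restriction of a single operator $\tr^*$; this is what will allow us to treat $\tr^*$ as one interpolation morphism of a Banach couple.

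Next I would fix $s_0<s_1$ in $(-c-1,-c)$ and set $t_0:=-s_0-c$ and $t_1:=-s_1-c$, so that $t_0>t_1$ in $(0,1)$. Taking $X_j:=\IH^{-t_j}(\Gamma)$ and $Y_j:=H^{s_j}_\Gamma$ for $j=0,1$, I would verify the hypotheses of the elementary interpolation-transfer result stated just before Remark \ref{rem:RemarkInterp}. The embeddings $\IH^{-t}(\Gamma)\subset\IH^{-t'}(\Gamma)$ for $t'>t$ (from \S\ref{sec:FunctionSpaces}) give $X_1\subset X_0$ continuously, while $H^{a}_\Gamma\subset H^{b}_\Gamma$ for $a>b$ gives $Y_1\subset Y_0$ continuously; the key point is that, because $\tr^*$ sends the exponent $-t$ to $s=-t-c$, increasing $t$ simultaneously enlarges $\IH^{-t}(\Gamma)$ and enlarges $H^{-t-c}_\Gamma$, so the two inclusions point the same way and $\mathcal{A}:=\tr^*$ is a (unitary, hence topological) isomorphism $X_j\to Y_j$ for each $j$.

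By Corollary \ref{cor:Wavelets}(v) together with Remark \ref{rem:RemarkInterp}, $\{\IH^{\tau}(\Gamma)\}_{-1<\tau<1}$ is an interpolation scale, so for every $\theta\in(0,1)$ we have $(X_0,X_1)_\theta=(\IH^{-t_0}(\Gamma),\IH^{-t_1}(\Gamma))_\theta=\IH^{-t_\theta}(\Gamma)$ with $t_\theta=(1-\theta)t_0+\theta t_1\in(0,1)$. The transfer result then yields $(Y_0,Y_1)_\theta=\mathcal{A}\big((X_0,X_1)_\theta\big)=\tr^*\big(\IH^{-t_\theta}(\Gamma)\big)=H^{s_\theta}_\Gamma$, where the exponent is $s_\theta=-t_\theta-c=(1-\theta)s_0+\theta s_1$ by the affine dependence of $s$ on $t$. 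As $s_0,s_1\in(-c-1,-c)$ and $\theta\in(0,1)$ were arbitrary, this is exactly the assertion that $\{H^{s}_\Gamma\}_{-c-1<s<-c}$ is an interpolation scale.

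I expect no genuine analytic obstacle here, since the argument merely assembles Theorem \ref{thm:Density}, Corollary \ref{cor:Wavelets} and the transfer lemma; the care needed is bookkeeping. The two points to watch are (i) matching the directions of the inclusions, handled above by ordering the endpoints so that $\tr^*$ respects rather than reverses containment, and (ii) the consistency of the family $\{\tr^*\}$ across different domains, which is precisely what the footnote to the definition of $\tr^*$ provides and which is indispensable for regarding $\mathcal{A}$ as a single morphism of the couple $(X_0,X_1)$. Finally, I would recall the remark preceding the corollary, which explains why the endpoint $s=-c$ must be excluded: $H^{-c}_\Gamma=\{0\}$ whereas $H^{s}_\Gamma\neq\{0\}$ for $s<-c$, so no interpolation scale can be extended to include it.
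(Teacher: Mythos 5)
Your proposal is correct and follows essentially the same route as the paper's own proof: both apply the elementary interpolation-transfer result with $\mathcal{A}=\tr^*$, using the unitary isomorphism of Theorem \ref{thm:Density} between $\IH^{-t}(\Gamma)$ and $H^{-t-(n-d)/2}_\Gamma$ and the interpolation-scale property of $\{\IH^{\tau}(\Gamma)\}_{-1<\tau<1}$ from Corollary \ref{cor:Wavelets}(v) and Remark \ref{rem:RemarkInterp}. Your extra care about inclusion directions and the consistency of the $\tr^*$ family is sound bookkeeping that the paper leaves implicit.
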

\begin{proof} Apply the basic interpolation result above with $\mathcal{A}=\tr^*$ and $X_0=\IH^{-t}(\Gamma)$, $X_1=\IH^{-t'}(\Gamma)$, $Y_0=H_\Gamma^{-s}$, $Y_1=H_\Gamma^{-s'}$, for some $0<t'<t<1$,
where $s$ and $t$ are related by \eqref{eq:tr*} and similarly $t'=s'-(n-d)/2$; note that $\mathcal{A}:X_j\to Y_j$ is a linear and topological isomorphism, for $j=0,1$, by Theorem \ref{thm:Density}. This gives, for $0<\eta<1$, where $X_\eta:= (X_0,X_1)_\eta$ and $Y_\eta:=(Y_0,Y_1)_\eta$, that $\mathcal{A}(X_\eta)=Y_\eta$ and $\mathcal{A}:X_\eta\to Y_\eta$ is a linear and topological isomorphism. But, by Remark \ref{rem:RemarkInterp}, $X_\eta=\IH^{-t^*}(\Gamma)$, with equivalent norms, where $t^* := (1-\eta)t+\eta t'$,  so that $\mathcal{A}(X_\eta)=\tr^*(\IH^{-t^*}(\Gamma))=H_\Gamma^{-s^*}$, by Theorem \ref{thm:Density}, where $s^*:= t^*+(n-d)/2=(1-\eta)s+\eta s'$. Thus $Y_\eta = H_\Gamma^{-s^*}$; moreover, the norms on $Y_\eta$ and $H_\Gamma^{-s^*}$ are equivalent since the norms on $X_\eta$ and $\IH^{-t^*}(\Gamma)$ are equivalent and the mappings $\tr^*:X_\eta\to Y_\eta$ and $\tr^*:\IH^{-t^*}(\Gamma)\to H_\Gamma^{-s^*}$ (Theorem \ref{thm:Density}) are both linear and topological isomorphisms.
\end{proof}

\section{BVPs and BIEs}
\label{sec:BVPsBIEs}

In this section we state the BVP and BIE that we wish to solve. We consider time-harmonic acoustic scattering of an incident wave $u^i$ propagating in $\R^{n+1}$ ($n=1,2$) by a planar screen $\Gamma$, a subset of the hyperplane $\Gamma_\infty=\R^{n}\times\{0\}$.
We initially consider the case where $\Gamma$ is assumed simply to be non-empty and compact, for which a well-posed BVP/BIE formulation was  presented in \cite[\S3.2]{BEMfract}. We later specialise to the case where $\Gamma$ is a $d$-set for some $n-1<d \leq n$, and then further to the case where $\Gamma$ is a disjoint IFS attractor.

Our BVP, stated as Problem \ref{prob:BVP} below, is for the scattered field $u$, which is assumed to satisfy the Helmholtz equation %
\begin{align}
\label{eqn:HE}
\Delta u + k^2 u = 0, %
\end{align}
in $D:=\R^{n+1}\setminus \Gamma$,
for some wavenumber $k>0$,
and the Sommerfeld radiation condition
\begin{align}
\label{eqn:SRC}
\pdone{u(x)}{r} - \ri k u(x) = o(r^{-n/2}), \qquad r:=|x|\to\infty, \text{ uniformly in } \hat x:=x/|x|.
\end{align}
We assume that the incident wave $u^i$ is an element of $W^{1,{\rm loc}}(\R^{n+1})$ satisfying \rf{eqn:HE}  in some neighbourhood of $\Gamma$ (and hence $C^\infty$ in that neighbourhood by elliptic regularity, see, e.g., \cite[Thm~6.3.1.3]{Evans2010}); for instance, $u^i$ might be the plane wave $u^i(x)=\re^{\ri k \vartheta\cdot x}$ for some $\vartheta\in\R^{n+1}$, $|\vartheta|=1$.
To impose a Dirichlet (sound-soft) boundary condition on $\Gamma$
we stipulate that\footnote{The condition $\sigma(u+u^i)\in W_0^1(D)$, for every $\sigma\in C_{0,\Gamma}^\infty$, is equivalent to the (perhaps more familiar) requirement that $u+u^i\in W_0^{1,{\rm loc}}(D)$. Here $W_0^{1,{\rm loc}}(D)$ is the closure of $C_0^\infty(D)$ in $W^{1,{\rm loc}}(D)$ equipped with its usual topology, so that $W_0^{1,{\rm loc}}(D) = \{v\in W^{1,{\rm loc}}(D):\chi v\in W_0^1(D), \mbox{ for all }\chi\in C_0^\infty(\mathbb{R}^{n+1})\}$. One point of multiplying by the cut-off function $\sigma$ is that, for $u\in W^{1,{\rm loc}}(D)$, $\sigma u\in W^1(D)$, so that, taking traces, $\gamma^\pm (\sigma u)\in H^{1/2}(\Gamma_\infty)$, which is in the domain of the orthogonal projection operator $P$ defined in (29) which plays a key role subsequently.}
$\sigma(u+u^i)\in W^{1}_0(D)$, the closure of $C^\infty_0(D)$ in $W^1(D)$, for every $\sigma\in C^\infty_{0,\Gamma}$.
For the traces on $\Gamma_\infty$ this implies that $\gamma^\pm (\sigma(u+u^i)|_{U^\pm})\in \tH^{1/2}(\Gamma^c)$. (Here, and in what follows, $\Gamma^c$ will denote $\Gamma_\infty\setminus \Gamma$, the complement of $\Gamma$ in $\Gamma_\infty$, rather than its complement in $\R^{n+1}$, which we have denoted by $D$.)
This motivates the following problem statement, in which $P$ denotes the orthogonal projection
\begin{equation} \label{eq:Porth}
P:H^{1/2}(\Gamma_\infty)\to \tH^{1/2}(\Gamma^c)^\perp.
\end{equation}
Note that if $\sigma_1,\sigma_2\in C^\infty_{0,\Gamma}$ then $\sigma_1=\sigma_2$ on some open set $G\supset\Gamma$, so that, for $u\in
W^{1,\mathrm{loc}}(D)$, $\gamma^\pm((\sigma_1- \sigma_2)u|_{U^\pm})\in H^{1/2}_{G^c\cap \Gamma_\infty}\subset
\tilde{H}^{1/2}(\Gamma^c)$ so that $P\gamma^\pm((\sigma_1-\sigma_2)u|_{U^\pm})=0$. %

\begin{prob}%
\label{prob:BVP}
Let $\Gamma\subset\Gamma_\infty$ be non-empty and compact. Given $k>0$ and $g\in \tH^{1/2}(\Gamma^c)^\perp$, find $u\in C^2\left(D\right)\cap  W^{1,\mathrm{loc}}(D)$ satisfying
\rf{eqn:HE} in $D$, \rf{eqn:SRC},
and the boundary condition
\begin{align}\label{a1bc}
P\gamma^\pm(\sigma u|_{U^\pm})&=g,
\end{align}
for some (and hence every) $\sigma\in C^\infty_{0,\Gamma}$.
In the case of scattering of an incident wave $u^i$, $g$ is given specifically as %
\begin{align}
\label{eqn:gDefScatteringProblem}
g=-P \gamma^\pm (\sigma u^i|_{U^\pm}).
\end{align}
\end{prob}

The next result reformulates the BVP as a BIE. In this theorem $\cS:H^{-1/2}_{\Gamma}\to C^2(D)\cap W^{1,{\rm loc}}(\R^{n+1})$ denotes the (acoustic) single-layer potential operator, defined by (e.g., \cite[\S2.2]{ScreenPaper}) %
\begin{equation} \label{eq:SLPdef}
\cS\psi(x) := \langle\gamma^\pm (\sigma\Phi(x,\cdot)|_{U^\pm}), \overline{\psi}\rangle_{H^{1/2}(\Gamma_\infty)\times H^{-1/2}(\Gamma_\infty)}, \quad x\in D,
\end{equation}
where $\overline{\psi}$ denotes the complex conjugate of $\psi$,\footnote{If $\psi\in L_2(\Gamma_\infty)\subset H^{-1/2}(\Gamma_\infty)$, $\bar\psi$ is the usual complex conjugate, and this definition of the complex conjugate is extended to $H^{-1/2}(\Gamma_\infty)$ by density.} $\Phi(\bx,\by):=\re^{\ri k |\bx-\by|}/(4\pi |\bx-\by|)$ ($n=2$),  $\Phi(\bx,\by):=(\ri/4)H^{(1)}_0(k|\bx-\by|)$ ($n=1$), $H_0^{(1)}$ is the Hankel function of the first kind of order zero (e.g., \cite[Equation (9.1.3)]{AbramowitzStegun}),
and $\sigma$ is any element of $C^\infty_{0,\Gamma}$ with $x\not\in \supp{\sigma}$. The $\pm$ in \rf{eq:SLPdef} indicates that either trace can be taken, with the same result.
In the case when $\Gamma$ is the closure of a Lipschitz open subset of $\Gamma_\infty$ and $\psi\in L_2(\Gamma)$ the potential can be expressed as an integral with respect to (Lebesgue) surface measure, namely
 \begin{align}
 \label{eq:SLPrep}
\cS\psi(\bx)=\int_{\Gamma}\Phi(\bx,\by)\psi(\by)\,\rd s(\by), \qquad \bx\in D.
\end{align}
The operator $S:H^{-1/2}_{\Gamma}\to \tH^{1/2}(\Gamma^c)^\perp$ denotes the single-layer boundary integral operator
\begin{equation} \label{eq:Sdef}
S\psi:=  P\gamma^\pm(\sigma\cS\psi|_{U^\pm}), \qquad \psi\in H^{-1/2}_{\Gamma},
\end{equation}
where $\sigma \in C^\infty_{0,\Gamma}$ is arbitrary,
which is continuous and coercive (see Lemma \ref{lem:coer} below).

\begin{thm}[{\cite[Thm.~3.29 and Thm.~6.4]{ScreenPaper}}]
\label{thm:Closed}
Let $\Gamma\subset\Gamma_\infty$ be non-empty and compact.
Then Problem \ref{prob:BVP}
has a unique solution %
satisfying
the representation formula
\begin{align}
\label{eqn:Rep}
u(x )= -\cS\phi(x), \qquad x\in D,
\end{align}
where $\phi = \dn^+(\sigma u|_{U^+})-\dn^-(\sigma u|_{U^-}) \in H^{-1/2}_{\Gamma}$ (with $\sigma \in C^\infty_{0,\Gamma}$ arbitrary) is the unique solution of the BIE
\begin{equation}
\label{eqn:BIE}
S\phi = -g,
\end{equation}
with $g$ given by \eqref{eqn:gDefScatteringProblem} in the case of scattering of an incident wave $u^i$.
\end{thm}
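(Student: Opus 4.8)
The plan is to prove the theorem by establishing that Problem \ref{prob:BVP} and the BIE \eqref{eqn:BIE} are equivalent, and then deducing unique solvability of both from the coercivity of $S$. The abstract, functional-analytic part of this is routine once coercivity is in hand; the genuine work lies in controlling the single-layer potential and in making a Green's identity rigorous on the rough (possibly fractal, non-Lipschitz) domain $D$. I would begin with the BIE. By Lemma \ref{lem:coer} the operator $S:H^{-1/2}_{\Gamma}\to\tH^{1/2}(\Gamma^c)^\perp$ is bounded and coercive, and by the duality theory of \S\ref{sec:FunctionSpaces} the target space $\tH^{1/2}(\Gamma^c)^\perp$ is a unitary realisation of the dual of $H^{-1/2}_{\Gamma}$; hence the Lax--Milgram lemma shows that $S$ is a topological isomorphism, and \eqref{eqn:BIE} has a unique solution $\phi_*\in H^{-1/2}_{\Gamma}$.

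Next I would prove sufficiency: set $u_*:=-\cS\phi_*$ and check it solves Problem \ref{prob:BVP}. The mapping properties of $\cS$ place $u_*$ in $C^2(D)\cap W^{1,\mathrm{loc}}(\R^{n+1})\subset W^{1,\mathrm{loc}}(D)$, solving \eqref{eqn:HE} in $D$ and satisfying \eqref{eqn:SRC}. By the definition \eqref{eq:Sdef} of $S$ we then have $P\gamma^\pm(\sigma u_*|_{U^\pm})=-S\phi_*=g$, so the boundary condition \eqref{a1bc} holds. The key ingredient at this step is the jump relation for the single-layer potential across $\Gamma_\infty$: its Dirichlet trace is continuous, while $\dn^+(\sigma\cS\phi_*|_{U^+})-\dn^-(\sigma\cS\phi_*|_{U^-})=-\phi_*$, so that $\dn^+(\sigma u_*|_{U^+})-\dn^-(\sigma u_*|_{U^-})=\phi_*$. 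Establishing these jump and mapping properties for a merely compact screen $\Gamma$, working through the half-space trace operators $\gamma^\pm,\dn^\pm$ and the cutoffs $\sigma\in C^\infty_{0,\Gamma}$, is where the irregularity of $\Gamma$ first has to be confronted.

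It remains to prove uniqueness of the BVP solution, which I would carry out directly via a Green's identity and Rellich-type argument rather than through a full representation theorem. Suppose $u$ solves Problem \ref{prob:BVP} with $g=0$, and set $\phi:=\dn^+(\sigma u|_{U^+})-\dn^-(\sigma u|_{U^-})$. Since $u$ solves the Helmholtz equation in $D$, which contains a full neighbourhood of every point of $\Gamma^c=\Gamma_\infty\setminus\Gamma$, the normal-derivative jump vanishes off $\Gamma$, so $\phi\in H^{-1/2}_{\Gamma}$. Applying Green's first identity on $B_R\setminus\Gamma$ for large $R$, the only boundary contribution on $\Gamma$ is the pairing of $\phi$ with $\gamma^\pm(\sigma u|_{U^\pm})$; but $H^{-1/2}_{\Gamma}$ is the annihilator of $\tH^{1/2}(\Gamma^c)$, so this equals $\langle\phi,P\gamma^\pm(\sigma u|_{U^\pm})\rangle=\langle\phi,g\rangle=0$. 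Taking imaginary parts and invoking \eqref{eqn:SRC} then forces the far-field pattern to vanish, whence Rellich's lemma and unique continuation give $u\equiv 0$ in $D$. Combining existence (the solution $u_*$), this uniqueness, and the jump relation, the unique BVP solution is $u=u_*=-\cS\phi_*$, and its normal-derivative jump $\phi=\phi_*$ is exactly the unique solution of \eqref{eqn:BIE}, which is the assertion \eqref{eqn:Rep}.

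I expect the main obstacle to be not the abstract solvability, which is immediate from coercivity, but the two trace-level steps forced by the low regularity of $\Gamma$: first, the jump relations and mapping properties of $\cS$ for a screen with no smoothness assumption; and second, making the Green's identity on the non-Lipschitz exterior domain $D$ rigorous, in particular justifying the trace pairings and identifying the boundary term on $\Gamma$ with the $H^{-1/2}_{\Gamma}\times\tH^{1/2}(\Gamma^c)^\perp$ duality, so that the vanishing projected Dirichlet data annihilates it. These are precisely the technical points developed in \cite{ScreenPaper}.
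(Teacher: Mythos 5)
First, a point of comparison: the paper itself contains no proof of Theorem \ref{thm:Closed} --- it is imported verbatim from \cite[Thms.~3.29, 6.4]{ScreenPaper} --- so your proposal can only be judged against that reference, whose overall architecture you have essentially reproduced: Lax--Milgram for the BIE via the coercivity in Lemma \ref{lem:coer}, existence via the ansatz $u_*=-\cS\phi_*$ (where indeed the boundary condition \eqref{a1bc} follows immediately from the definition \eqref{eq:Sdef} of $S$), the single-layer jump relation to identify the Neumann jump, and a Green's identity/Rellich/unique-continuation argument for uniqueness.

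There is, however, a genuine gap in your uniqueness step. Summing Green's identities over $U^\pm\cap B_R$, the contribution from $\Gamma_\infty$ is
$\langle \dn^+ u,\gamma^+ u\rangle-\langle \dn^- u,\gamma^- u\rangle
=\langle \phi,\gamma^+(\sigma u)\rangle+\langle \dn^-(\sigma u),[\sigma u]\rangle$,
where $[\sigma u]:=\gamma^+(\sigma u|_{U^+})-\gamma^-(\sigma u|_{U^-})$ is the Dirichlet jump. Your assertion that ``the only boundary contribution on $\Gamma$ is the pairing of $\phi$ with $\gamma^\pm(\sigma u|_{U^\pm})$'' tacitly assumes that the two Dirichlet traces coincide; if they do not, the second term survives, and the annihilator argument says nothing about it, since $\dn^-(\sigma u)$ is not supported in $\Gamma$. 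What is missing is the proof that $[\sigma u]=0$ for the homogeneous problem, and this is exactly the crux of why the sound-soft problem is well posed for \emph{arbitrary} compact $\Gamma$: since $u$ is smooth across $\Gamma^c$ one has $[\sigma u]\in H^{1/2}_\Gamma$; since $g=0$ one has $\gamma^\pm(\sigma u|_{U^\pm})\in\tH^{1/2}(\Gamma^c)$, hence $[\sigma u]\in\tH^{1/2}(\Gamma^c)\cap H^{1/2}_\Gamma$; and this intersection is $\{0\}$ because elements of $H^{1/2}(\Gamma_\infty)$ are genuine $L_2$ functions --- any element of $\tH^{1/2}(\Gamma^c)$ is an $L_2$-limit of functions vanishing near $\Gamma$ and so vanishes a.e.\ on $\Gamma$, while any element of $H^{1/2}_\Gamma$ vanishes a.e.\ on $\Gamma^c$. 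Note that this argument relies on the positive Sobolev exponent and has no analogue at exponent $-1/2$ (which is why the Neumann screen problem is delicate for rough $\Gamma$), so it cannot be waved through as a routine trace technicality. Once this lemma is inserted, the rest of your outline --- the vanishing of $\langle\phi,\gamma^+(\sigma u)\rangle$ by the annihilator property, the Rellich and unique-continuation step (using that $D=\R^{n+1}\setminus\Gamma$ is connected), and the identification $u=u_*$ and $\phi=\phi_*$ by combining uniqueness with the jump relation --- is sound.
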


Define the sesquilinear form $a(\cdot,\cdot)$  on $H^{-1/2}_\Gamma\times  H^{-1/2}_\Gamma$ by
\begin{align}
\label{eqn:Sesqui}
a(\phi,\psi):=\langle S\phi,\psi\rangle_{\tH^{1/2}(\Gamma^c)^\perp \times H^{-1/2}_\Gamma}=\langle S\phi,\psi\rangle_{H^{1/2}(\Gamma_\infty)\times H^{-1/2}(\Gamma_\infty)},\qquad \phi, \psi\in H^{-1/2}_{\Gamma}.
\end{align}
Then the BIE \rf{eqn:BIE} can be written equivalently in variational form as: given $g\in (\tH^{1/2}(\Gamma^c))^\perp$, find $\phi\in H^{-1/2}_{\Gamma}$ such that
\begin{align}
\label{eqn:VariationalCts}
a(\phi,\psi)=-\langle g,\psi\rangle_{H^{1/2}(\Gamma_\infty)\times H^{-1/2}(\Gamma_\infty)}, \qquad \mbox{ for all } \psi\in H^{-1/2}_{\Gamma}.
\end{align}
This equation will be the starting point for our Galerkin discretisation in \S\ref{sec:HausdorffBEM}.

The definition, domain and codomain of $S$ may seem exotic.
But, as noted in \cite[\S3.3]{BEMfract}, given any bounded Lipschitz open set $\Omega\subset \Gamma_\infty$ containing $\Gamma$, the sesquilinear form $a(\cdot,\cdot)$ is nothing but the restriction to $H^{-1/2}_\Gamma\times  H^{-1/2}_\Gamma$ of the sesquilinear form $a^\Omega(\cdot,\cdot)$ defined on $\tH^{-1/2}(\Omega)\times \tH^{-1/2}(\Omega)$ by
\begin{align} \label{eqn:Sesquir}
a^\Omega(\phi,\psi):=\langle S^\Omega\phi,\psi\rangle_{H^{1/2}(\Omega) \times \tH^{-1/2}(\Omega)},\qquad \phi, \psi\in \tH^{-1/2}(\Omega),
\end{align}
with $S^\Omega:\tH^{-1/2}(\Omega)\to H^{1/2}(\Omega)$ %
the single-layer boundary integral operator on the Lipschitz screen $\Omega$, defined in the standard way (e.g., \cite[\S2.3]{CoercScreen2}), so that
\begin{equation} \label{eq:SOmega}
S^\Omega\phi(\bx)=\int_{\Omega}\Phi(\bx,\by)\phi(\by)\,\rd s(\by), \qquad \bx\in \Omega,
\end{equation}
for $\phi\in L_2(\Omega)$.  %
The continuity and coercivity of $a^\Omega(\cdot,\cdot)$ (e.g., \cite{CoercScreen2}) therefore implies the continuity and coercivity of $a(\cdot,\cdot)$, as the following lemma (\cite{CoercScreen2}, \cite[\S2.2]{ScreenPaper}) states.

\begin{lem} \label{lem:coer}
The sesquilinear form $a(\cdot,\cdot)$ is continuous and coercive on $H^{-1/2}_\Gamma\times  H^{-1/2}_\Gamma$, specifically, for some constants $C_a, \alpha>0$ (the {\em continuity} and {\em coercivity} constants) depending only on $k$ and $\diam(\Gamma)$,
\begin{equation} \label{eq:ContCoer}
|a(\phi,\psi)| \leq C_a\|\phi\|_{H^{-1/2}_\Gamma}\, \|\psi\|_{H^{-1/2}_\Gamma}, \quad |a(\phi,\phi)|\geq \alpha \|\phi\|_{H^{-1/2}_\Gamma}^2, \quad \phi,\psi\in H^{-1/2}_\Gamma.
\end{equation}
\end{lem}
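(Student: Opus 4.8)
The plan is to transfer the known continuity and coercivity of the single-layer sesquilinear form on a Lipschitz screen down to the closed subspace $H^{-1/2}_\Gamma$, exploiting the identification $a(\cdot,\cdot)=a^\Omega(\cdot,\cdot)|_{H^{-1/2}_\Gamma\times H^{-1/2}_\Gamma}$ that is recorded just above the lemma in \eqref{eqn:Sesquir}--\eqref{eq:SOmega}. First I would fix a bounded Lipschitz open set $\Omega\subset\Gamma_\infty$ with $\Gamma\subset\Omega$; concretely I would take $\Omega$ to be an open ball (or cube) of radius a fixed multiple of $\diam(\Gamma)$ centred at a point of $\Gamma$, so that the \emph{shape} of $\Omega$ is independent of $\Gamma$ and its size is controlled by $\diam(\Gamma)$. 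The key structural observation is that, since $\Gamma$ is compact with $\Gamma\subset\Omega$, the space $H^{-1/2}_\Gamma$ is a closed subspace of $\tH^{-1/2}(\Omega)$ (by \cite[Lemma 3.24]{McLean}, as noted in \S\ref{sec:FunctionSpaces}) and, crucially, both spaces carry the norm inherited from the ambient space $H^{-1/2}(\Gamma_\infty)$, so that this embedding is isometric.

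With this in place the argument is essentially a restriction. Invoking the continuity and coercivity of $a^\Omega(\cdot,\cdot)$ on $\tH^{-1/2}(\Omega)\times\tH^{-1/2}(\Omega)$ from \cite{CoercScreen2}, which provide constants $C_a^\Omega,\alpha^\Omega>0$ with
\[
|a^\Omega(\phi,\psi)| \leq C_a^\Omega \|\phi\|_{\tH^{-1/2}(\Omega)}\|\psi\|_{\tH^{-1/2}(\Omega)}, \qquad |a^\Omega(\phi,\phi)| \geq \alpha^\Omega \|\phi\|_{\tH^{-1/2}(\Omega)}^2,
\]
I would simply restrict to $\phi,\psi\in H^{-1/2}_\Gamma$. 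Since $a(\phi,\psi)=a^\Omega(\phi,\psi)$ there, and since $\|\cdot\|_{\tH^{-1/2}(\Omega)}$ and $\|\cdot\|_{H^{-1/2}_\Gamma}$ coincide on $H^{-1/2}_\Gamma$ (both being the ambient $H^{-1/2}(\Gamma_\infty)$ norm), the two inequalities immediately yield \eqref{eq:ContCoer} with $C_a=C_a^\Omega$ and $\alpha=\alpha^\Omega$.

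The main obstacle is not the restriction itself but verifying that $C_a$ and $\alpha$ depend \emph{only} on $k$ and $\diam(\Gamma)$, whereas the Lipschitz-screen constants $C_a^\Omega,\alpha^\Omega$ of \cite{CoercScreen2} a priori depend on the full geometry of $\Omega$. This is precisely why I would fix $\Omega$ as a ball of radius a fixed multiple of $\diam(\Gamma)$: the single-layer form is invariant under rigid motions of $\Gamma_\infty$, so the constants are unaffected by the position and orientation of $\Gamma$, and for the single fixed ball shape they can depend only on $k$ and the radius, hence only on $k$ and $\diam(\Gamma)$. If one wants to exhibit this dependence explicitly, a dilation of the configuration by $1/\diam(\Gamma)$ maps $\Gamma$ into a unit-diameter set inside a reference ball and sends $k$ to $k\,\diam(\Gamma)$; tracking the explicit scaling of $\Phi$ and of the $H^{\pm 1/2}$ norms under this dilation then shows $C_a$ and $\alpha$ depend only on the product $k\,\diam(\Gamma)$. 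The one genuinely delicate input, which I would take wholesale from \cite{CoercScreen2}, is the coercivity of $a^\Omega$ on the Lipschitz screen $\Omega$ itself, since this is a nontrivial result for the Helmholtz (as opposed to Laplace) single-layer operator; everything on the fractal side of the argument is then soft.
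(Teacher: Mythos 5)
Your proposal is correct and takes essentially the same route as the paper: the paper also obtains the lemma by observing (via \eqref{eqn:Sesquir}--\eqref{eq:SOmega}) that $a(\cdot,\cdot)$ is the restriction of $a^\Omega(\cdot,\cdot)$ to the isometrically embedded closed subspace $H^{-1/2}_\Gamma\times H^{-1/2}_\Gamma$ for a bounded Lipschitz $\Omega\supset\Gamma$, and then importing the continuity and coercivity of $a^\Omega$ from \cite{CoercScreen2}. Your extra care about the constants is a faithful filling-in of detail; in fact the estimates in \cite{CoercScreen2} are already wavenumber- and diameter-explicit, so choosing $\Omega$ as a ball of radius comparable to $\diam(\Gamma)$ yields the stated dependence on $k$ and $\diam(\Gamma)$ directly, exactly as you argue.
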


Having computed $\phi$ by solving a Galerkin discretisation of \eqref{eqn:VariationalCts}, we will also evaluate $u(x)$ at points $x\in D$ using \eqref{eqn:Rep} and \eqref{eq:SLPdef}. Further, recall that (e.g., \cite[Eqn.~(2.23)]{ChGrLaSp:11}, \cite[p.~294]{McLean})
$$
u(x) = \frac{\re^{\ri k|x|}}{|x|^{n/2}}\left(u^\infty(\hat x)+ O(|x|^{-1})\right), \quad \mbox{as} \quad |x|\to \infty,
$$
uniformly in $\hat x:= x/|x|$, where $u^\infty\in C^\infty(\mathds{S}^n)$ is the so-called {\em far-field pattern} of $u$ and $\mathds{S}^n$ is the unit sphere in $\R^{n+1}$. We will also compute this far-field pattern, given explicitly (\cite[Eqn.~(2.23)]{ChGrLaSp:11}, \cite[p.~294]{McLean}) as
\begin{equation} \label{eq:ffpattern}
u^\infty(\hat x) = - \langle \gamma^\pm (\sigma \Phi^\infty(\hat x,\cdot)), \overline{\phi}\rangle_{H^{1/2}(\Gamma_\infty)\times H^{-1/2}(\Gamma_\infty)}, \quad \hat x\in \mathds{S}^n,
\end{equation}
where $\sigma$ is any element of $C^\infty_{0,\Gamma}$ and
\begin{equation} \label{eq:FFps}
\Phi^\infty(\hat x,y):= \frac{\ri k^{(n-2)/2}}{2(2\pi \ri)^{n/2}}\, \exp(-\ri k\hat x\cdot y), \quad \hat x\in \mathds{S}^n, \; y\in \R^{n+1}.
\end{equation}
Note that $\Phi^\infty(\cdot,y)$ is the  far-field pattern of $\Phi(\cdot,y)$, for $y\in \R^{n+1}$.

The following lemma provides conditions under which a compact screen $\Gamma\subset\Gamma_\infty$ produces a non-zero scattered field. We note that a sufficient condition for $H^{-1/2}_\Gamma \neq \{0\}$ is that $\dimH(\Gamma)>n-1$ \cite[Thm~2.12]{HewMoi:15}, and that when $\Gamma$ is a $d$-set this is also a necessary condition \cite[Thm~2.17]{HewMoi:15}.
\begin{lem}[{\cite[Thm~4.6]{ScreenPaper}}]
\label{lem:Nullity}
Suppose that $u^i$, which is $C^\infty$ in a neighbourhood of $\Gamma$, is non-zero on $\Gamma$. Then the solution of Problem \ref{prob:BVP} with $g$ given by \eqref{eqn:gDefScatteringProblem} is zero if and only if $H^{-1/2}_\Gamma = \{0\}$.
\end{lem}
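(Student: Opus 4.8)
The plan is to reduce the statement to a property of the data $g$ and then to analyse $g$ by a multiplication (module) argument. By Theorem~\ref{thm:Closed} the solution is $u=-\cS\phi$, where $\phi\in H^{-1/2}_\Gamma$ solves $S\phi=-g$, and moreover $\phi=\dn^+(\sigma u|_{U^+})-\dn^-(\sigma u|_{U^-})$, so that $u=0$ if and only if $\phi=0$. The \emph{if} direction is then immediate: if $H^{-1/2}_\Gamma=\{0\}$ then $\phi\in H^{-1/2}_\Gamma$ forces $\phi=0$, and hence $u=-\cS\phi=0$. For the \emph{only if} direction I would argue by contraposition, assuming $H^{-1/2}_\Gamma\neq\{0\}$ and showing $u\neq 0$. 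Since $a(\cdot,\cdot)$ is continuous and coercive (Lemma~\ref{lem:coer}), the Lax--Milgram theorem makes $S:H^{-1/2}_\Gamma\to\tH^{1/2}(\Gamma^c)^\perp$ invertible, so that $u=0\iff\phi=0\iff g=0$. It therefore suffices to prove $g\neq 0$.

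Next I would unwind the definition $g=-P\gamma^\pm(\sigma u^i|_{U^\pm})$. Writing $v:=\gamma^\pm(\sigma u^i|_{U^\pm})\in H^{1/2}(\Gamma_\infty)$, which (as $\sigma\equiv 1$ near $\Gamma$ and $u^i$ is smooth there) is a smooth compactly supported function agreeing with $u^i$ on $\Gamma$, and recalling that $P$ is the orthogonal projection onto $\tH^{1/2}(\Gamma^c)^\perp$, we have $g=0$ if and only if $v\in\tH^{1/2}(\Gamma^c)$. Because $H^{-1/2}_\Gamma$ is the annihilator of $\tH^{1/2}(\Gamma^c)$ (\cite[Lemma~3.2]{ChaHewMoi:13}), and since $\langle v,\psi\rangle=\langle Pv,\psi\rangle$ for $\psi\in H^{-1/2}_\Gamma$ (the difference $v-Pv$ lies in $\tH^{1/2}(\Gamma^c)$ and so is annihilated), this is equivalent to $\langle v,\psi\rangle_{H^{1/2}(\Gamma_\infty)\times H^{-1/2}(\Gamma_\infty)}=0$ for every $\psi\in H^{-1/2}_\Gamma$. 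Thus the goal becomes to exhibit a single $\psi\in H^{-1/2}_\Gamma$ with $\langle v,\psi\rangle\neq 0$.

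The crux is a module argument. Suppose, for contradiction, that $\langle v,\psi\rangle=0$ for all $\psi\in H^{-1/2}_\Gamma$. For any $w\in C^\infty_0(\Gamma_\infty)$ the product $w\psi$ again lies in $H^{-1/2}_\Gamma$, since multiplication by a smooth function preserves both $H^{-1/2}(\Gamma_\infty)$ and the support condition $\supp(\cdot)\subset\Gamma$. Hence $0=\langle v,w\psi\rangle=\langle\overline{w}v,\psi\rangle$, and letting $w$ range over $C^\infty_0(\Gamma_\infty)$ forces the distributional product $\overline{u^i}\psi=0$ (using $v=u^i$ near $\Gamma\supseteq\supp\psi$). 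But $u^i$ is continuous and nonvanishing on the compact set $\Gamma$, hence nonvanishing on a neighbourhood of $\Gamma$, so multiplication by $\overline{u^i}$ is injective on $H^{-1/2}_\Gamma$: its inverse is multiplication by a function in $C^\infty_0(\Gamma_\infty)$ that coincides with $1/\overline{u^i}$ near $\Gamma$. Therefore $\psi=0$, and since $\psi$ was arbitrary this gives $H^{-1/2}_\Gamma=\{0\}$, contradicting our assumption.

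I expect the main obstacle to be this last module step: carefully justifying that $w\psi\in H^{-1/2}_\Gamma$, that the vanishing of all the pairings $\langle\overline{w}v,\psi\rangle$ encodes $\overline{u^i}\psi=0$ as a distribution, and that multiplication by a smooth nonvanishing function is injective on $H^{-1/2}_\Gamma$ (via a smooth local reciprocal, using compactness of $\Gamma$ and continuity of $u^i$). The reduction steps are routine given Theorem~\ref{thm:Closed} and Lemma~\ref{lem:coer}; the duality-pairing conventions (antilinearity in the second slot) must be tracked to place the conjugations correctly, but this affects only bookkeeping and not the conclusion that the relevant product vanishes.
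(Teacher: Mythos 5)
Your proof is correct. Note, however, that this paper does not actually prove Lemma \ref{lem:Nullity}: the result is imported verbatim from \cite[Thm.~4.6]{ScreenPaper}, so there is no internal proof to compare against, and your argument --- reduction, via Lemma \ref{lem:coer} and Lax--Milgram invertibility of $S$, to showing $g\neq 0$; the identification of $g=0$ with the vanishing of $\langle \gamma^\pm(\sigma u^i|_{U^\pm}),\psi\rangle$ for all $\psi\in H^{-1/2}_\Gamma$; and the multiplication (module) argument that kills each such $\psi$ using a smooth compactly supported reciprocal of $u^i$ near $\Gamma$ --- is essentially the argument of that reference. The two delicate points you flag are indeed the right ones and are handled correctly in your sketch: the ``product'' $\overline{u^i}\psi$ only makes sense through a cutoff equal to $1$ near $\Gamma$ (since $v=\gamma^\pm(\sigma u^i|_{U^\pm})$ is merely $H^{1/2}(\Gamma_\infty)$ away from $\Gamma$, one discards the far-field part using that $\psi$ annihilates $\tH^{1/2}(\Gamma^c)$), and the hypothesis must be read, as you do, as $u^i$ non-vanishing at every point of the compact set $\Gamma$, which by continuity gives non-vanishing on a neighbourhood and hence the smooth local reciprocal.
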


\subsection{The BIE on \texorpdfstring{$d$}{d}-sets in trace spaces}
\label{sec:BIEondsets}

Suppose now that
$\Gamma$ is a compact $d$-set with $n-1<d\leq n$.
The assumption that $d>n-1$ ensures that $\Gamma$ produces a non-zero scattered field under the conditions of Lemma \ref{lem:Nullity}.
Furthermore, the condition $d>n-1$ is equivalent to %
\eqref{eq:st} with $s=1/2$, %
so that the results in \S\ref{sec:FunctionSpaces} apply with $s=1/2$ and
\begin{align}
\label{eq:tdDef}
t=t_d:=\frac{1}{2}-\frac{n-d}{2} \in\Big(0,\frac12\Big].
\end{align}
It then follows from \rf{eq:SLPdef} and \rf{eq:L2dualrep} that for $\Psi\in\IL_2(\Gamma)$ the potential $\cS$ has the following integral representation with respect to Hausdorff measure:
\begin{align}
\label{eq:SLPrepHausdorff}
\cS\tr^*\Psi(\bx)=\int_{\Gamma}\Phi(\bx,\by)\Psi(\by)\,\rd \cH^d(\by), \qquad \bx\in D.
\end{align}

\begin{prop} \label{prop:LPcont} For every $\Psi\in \IL_\infty(\Gamma)$ it holds that $\cS\tr^*\Psi\in C(\R^{n+1})$. Precisely, the function
$$
F(x):=\int_{\Gamma}\Phi(\bx,\by)\Psi(\by)\,\rd \cH^d(\by),
$$
is well-defined for all $x\in \R^{n+1}$ and is continuous, and $\cS\tr^*\Psi(x) = F(x)$ for $x\in D$. %
\end{prop}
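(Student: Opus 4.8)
The plan is to establish three things in turn: that $F(x)$ is finite for every $x\in\R^{n+1}$, that $F$ is continuous on $\R^{n+1}$, and that $F$ agrees with $\cS\tr^*\Psi$ on $D$. The last point is immediate: since $\Gamma$ is a compact $d$-set, $\cH^d(\Gamma)<\infty$, so $\IL_\infty(\Gamma)\subset\IL_2(\Gamma)$, and the identity $\cS\tr^*\Psi(x)=F(x)$ for $x\in D$ is precisely \eqref{eq:SLPrepHausdorff}. So the substance is the continuity of $F$. I would begin by recording the kernel bounds: for $x\ne y$ in any fixed bounded set, $|\Phi(x,y)|\le C|x-y|^{-1}$ when $n=2$, and $|\Phi(x,y)|\le C(1+|\log|x-y||)$ when $n=1$ (the latter from the small-argument asymptotics of $H_0^{(1)}$). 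Combined with $\|\Psi\|_{\IL_\infty(\Gamma)}<\infty$ and Corollary \ref{lem:1dIntBd}, whose hypothesis $\alpha<d$ holds here because the singularity exponent $n-1$ satisfies $n-1<d$, this shows $F(x)$ is well defined and finite at every $x$ (for $x\notin\Gamma$ the integrand is bounded; for $x\in\Gamma$ apply the Corollary).

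The crux is a uniform-in-$x$ smallness estimate for the singular part,
\begin{equation*}
\omega(r):=\sup_{x\in\R^{n+1}}\int_{\Gamma\cap B_r(x)}|\Phi(x,y)|\,\rd\cH^d(y)\longrightarrow 0\quad\text{as }r\to0.
\end{equation*}
I would prove this by a dyadic decomposition of the ball into annuli $B_{r2^{-j}}(x)\setminus B_{r2^{-j-1}}(x)$, on which $|\Phi(x,\cdot)|$ is controlled by its value at radius $r2^{-j-1}$. The requisite measure bound $\cH^d(\Gamma\cap B_\rho(x))\le C\rho^d$ must be made to hold for all $x\in\R^{n+1}$, not only $x\in\Gamma$: if the ball meets $\Gamma$, one picks $x_*\in\Gamma\cap B_\rho(x)$, observes $\Gamma\cap B_\rho(x)\subset\Gamma\cap B_{2\rho}(x_*)$, and applies the upper $d$-set bound (the inequality used in Remark \ref{rem:AntonioModified}). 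Summing the resulting series yields $\omega(r)\le Cr^{d-(n-1)}$ for $n=2$ and $\omega(r)\le Cr^d(1+|\log r|)$ for $n=1$, both of which tend to $0$ since $d>n-1\ge0$. This is exactly where the standing hypothesis $d>n-1$ is indispensable, and the extension of the $d$-set bound to $x\notin\Gamma$ is the one genuinely technical point.

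With $\omega$ in hand, continuity at a fixed $x_0$ follows by a near/far splitting. Given $\epsilon>0$, I would choose $r$ with $\omega(2r)<\epsilon$. On $\Gamma\cap B_r(x_0)$, the contributions of both $\Phi(x,\cdot)$ (for $|x-x_0|<r$, using $B_r(x_0)\subset B_{2r}(x)$) and $\Phi(x_0,\cdot)$ are each at most $\|\Psi\|_{\IL_\infty(\Gamma)}\epsilon$; on the complement $\Gamma\setminus B_r(x_0)$ one has $|x-y|\ge r/2$ for $x$ near $x_0$, so $\Phi(\cdot,y)$ is uniformly continuous there and $\int_{\Gamma\setminus B_r(x_0)}(\Phi(x,y)-\Phi(x_0,y))\Psi(y)\,\rd\cH^d(y)\to0$ as $x\to x_0$, using $\cH^d(\Gamma)<\infty$. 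Hence $\limsup_{x\to x_0}|F(x)-F(x_0)|\le 2\|\Psi\|_{\IL_\infty(\Gamma)}\epsilon$, and letting $\epsilon\to0$ gives continuity at $x_0$; as $x_0$ is arbitrary, $F\in C(\R^{n+1})$. I expect the uniform estimate $\omega(r)\to0$, in particular its validity for $x\notin\Gamma$, to be the main obstacle, after which the splitting argument and the identification with $\cS\tr^*\Psi$ are routine.
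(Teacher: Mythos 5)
Your proof is correct, and its skeleton matches the paper's: well-definedness comes from the kernel bounds plus Corollary \ref{lem:1dIntBd} (with $d>n-1$ doing the work when $n=2$), and the identification on $D$ is just \eqref{eq:SLPrepHausdorff} applied to $\Psi\in\IL_\infty(\Gamma)\subset\IL_2(\Gamma)$. The continuity step, however, is executed differently. The paper truncates the kernel, replacing $\Phi(x,y)$ by its value at distance $\varepsilon$ whenever $|x-y|\le\varepsilon$; the truncated integral $F_\varepsilon$ is then continuous, and the paper shows $F_\varepsilon\to F$ uniformly on $\R^{n+1}$ via the bound $|F(x)-F_\varepsilon(x)|\le 2C\|\Psi\|_{\IL_\infty(\Gamma)}\int_{\Gamma\cap\overline{B}_\varepsilon(x)}f(|x-y|)\,\rd\cH^d(y)\le 2CC_2\|\Psi\|_{\IL_\infty(\Gamma)}\int_0^{2\varepsilon}r^{d-1}f(r)\,\rd r$, quoting Remark \ref{rem:AntonioModified}, so that $F$ is continuous as a uniform limit of continuous functions. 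Your near/far splitting with the modulus $\omega(r)$ is the $\epsilon$--$\delta$ rendering of the same underlying idea, and your dyadic-annuli computation re-proves from scratch what the paper imports from Lemma \ref{lem:AntonioModified}/Remark \ref{rem:AntonioModified}: your bounds $r^{d-(n-1)}$ and $r^{d}(1+|\log r|)$ are exactly what $\int_0^{2\varepsilon}r^{d-1}f(r)\,\rd r$ evaluates to for $n=2$ and $n=1$ respectively. One point where you are more careful than the paper: Remark \ref{rem:AntonioModified} is stated for integration centres $x\in\Gamma$, while the proof needs the estimate uniformly over all $x\in\R^{n+1}$; the paper invokes the remark without comment, implicitly relying on the recentering argument inside the proof of \cite[Lemma 2.13]{CC08}, whereas you state and use the recentering trick $\Gamma\cap B_\rho(x)\subset\Gamma\cap B_{2\rho}(x_*)$, $x_*\in\Gamma\cap B_\rho(x)$, explicitly. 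In short, the paper's truncation device buys brevity (no pointwise splitting argument is needed), while your version buys self-containedness and an explicit treatment of centres off $\Gamma$.
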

\begin{proof} It is clear that $F(x)$ is well-defined for $x\in D$ since the integrand is then in $\IL_\infty(\Gamma)$, as $\Phi(x,y)$ is continuous for $x\neq y$, and $\cH^d(\Gamma)<\infty$ as $\Gamma$ is a bounded $d$-set.  For some constant $C>0$,
$|\Phi(x,y)|\leq Cf(|x-y|)$, for $x,y\in \R^{n+1}$, $x\neq y$, where $f:(0,\infty)\to(0,\infty)$ is decreasing and continuous, given explicitly by  (see \cite[Equations (9.1.3), (9.1.12), (9.1.13), (9.2.3)]{AbramowitzStegun})
\begin{alignat*}{2}
f(r)&:=
\left\lbrace \begin{array}{ll}
1+|\log(r)|, & 0<r<1,\\
r^{-1/2}, & r>1,
\end{array}
\right.
\end{alignat*}
when $n=1$, by $f(r):=r^{-1}$, $r>0$, when $n=2$. Thus $F(x)$ is also well-defined for $x\in \Gamma$ by Corollary \ref{lem:1dIntBd} and since $\Psi\in \IL_\infty(\Gamma)$.
To see that $F$ is continuous, for $\varepsilon>0$ let
$$
\Phi_\varepsilon(x,y) := \left\{\begin{array}{cc}
                                  \Phi(x,y), & |x-y|>\varepsilon, \\
                                  \Phi(0,\varepsilon \hat e), & |x-y|\leq \varepsilon,
                                \end{array}\right.
$$
where $\hat e\in \R^{n+1}$ is any unit vector,
and let $F_\varepsilon(x):= \int_\Gamma \Phi_\varepsilon(x,y)\Psi(y) \, \rd \cH^d(y)$, for $x\in \R^{n+1}$. Then, for every $\varepsilon>0$, $\Phi_\varepsilon \in C(\R^{n+1}\times \R^{n+1})$, so that $F_\varepsilon\in C(\R^{n+1})$. Further, for $x\in \R^{n+1}$, noting Remark \ref{rem:AntonioModified}, we have that
\begin{equation} \label{eq:Feps bound}
|F(x)-F_\varepsilon(x)| \leq 2C \|\Psi\|_{\IL_\infty(\Gamma)} \int_{\Gamma\cap \overline B_\varepsilon(x)}f(|x-y|)\, \rd \cH^d(y) \leq 2C C_2\|\Psi\|_{\IL_\infty(\Gamma)}\int_0^{2\varepsilon} r^{d-1}f(r) \rd r \to 0
\end{equation}
as $\varepsilon\to 0$, uniformly in $x\in \R^{n+1}$, so that also $F\in C(\R^{n+1})$. Thus also $\cS\tr^* \Psi\in W^1_{\mathrm{loc}}(\R^{n+1})$ is continuous, in the (usual) sense that it is equal almost everywhere with respect to $n+1$-dimensional Lebesgue measure to a continuous function (the function $F$), by \eqref{eq:SLPrepHausdorff}.
\end{proof}

Noting that $\tr:\tH^{1/2}(\Gamma^c)^\perp \to \IH^{t_d}(\Gamma)$ and $\tr^*:\IH^{-t_d}(\Gamma)\to H^{-1/2}_\Gamma$ are unitary isomorphisms (see Theorem~\ref{thm:Density}), if we define $\IS:\IH^{-t_d}\GG\to\IH^{t_d}\GG$ by
\begin{align}
\label{eqn:ISDef}
\IS:=\tr\, S\, \tr^* %
\end{align}
then $\IS$ is continuous and coercive, with the same associated constants as $S$ (the constants in Lemma \ref{lem:coer}).
Furthermore, it follows from \eqref{eqn:Sesqui} and \eqref{eq:tracedual} that
\begin{equation} \label{eqn:SesquiS}
a(\tr^*\bbx,\tr^*\bby) = \langle \IS \bbx,\bby\rangle_{\IH^{t_d}\GG\times\IH^{-t_d}\GG}, \quad \bbx,\bby\in \IH^{-t_d}(\Gamma),
\end{equation}
and
the variational problem \rf{eqn:VariationalCts} can be equivalently stated as: given $g\in \tH^{1/2}(\Gamma^c)^\perp$, find $\bbx\in \IH^{-t_d}(\Gamma)$ such that
\begin{equation} \label{eq:varformnew}
\langle \IS \bbx,\bby\rangle_{\IH^{t_d}\GG\times\IH^{-t_d}\GG}=-\langle \tr g,\bby\rangle_{\IH^{t_d}\GG\times \IH^{-t_d}\GG},
\qquad \mbox{ for all } \bby\in \IH^{-t_d}\GG,
\end{equation}
and the solutions of \eqref{eq:varformnew} and \rf{eqn:VariationalCts} are related through $\phi = \tr^*\Psi$.
A schematic showing the relationships between the relevant function spaces and operators is given in Figure \ref{fig:AndreaScheme}.

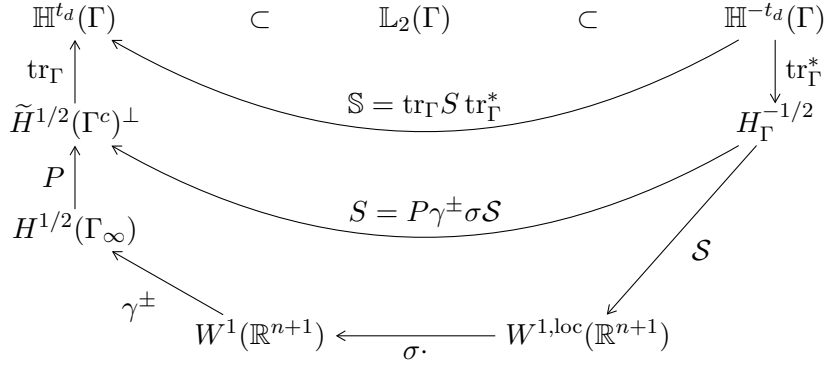
\begin{figure}[t]
\centering
\begin{tikzpicture}
\matrix[matrix of math nodes,
column sep={13pt},
row sep={40pt,between origins},
text height=1.5ex, text depth=0.25ex] (s)
{
|[name=Ht]| \IH^{t_d}\GG &
|[name=]| \subset &
|[name=L2]| \IL_2\GG &
|[name=]| \subset &
|[name=Hmt]| \IH^{-t_d}\GG
\\
|[name=Hscp]| \tH^\half(\Gamma^c)^\perp&
& & &
|[name=HmsG]| H^\mhalf_\Gamma&
\\
|[name=Hs]| H^\half(\Gamma_\infty) &&&&
\\
&
|[name=W]| W^{1}(\R^{n+1})
&&
|[name=Wloc]| W^{1,{\rm loc}}(\R^{n+1})
\\
};
\draw[->,>=angle 60] %
(Hscp) edge node[auto]{\(\tr\)} (Ht)
(Hmt) edge node[auto]{\(\tr^*\)} (HmsG)
(HmsG) edge node[auto]{\(\cS\)} (Wloc)
(Wloc) edge node[auto]{\(\sigma\cdot\)} (W)
(W) edge node[auto]{\(\gamma^\pm\)} (Hs)
(Hs) edge node[auto]{\(P\)} (Hscp)
(HmsG) edge[bend left=30] node[auto,swap]{\( S = P\gamma^\pm\sigma\cS\)} (Hscp)
(Hmt) edge[bend left=30] node[auto,swap]{\( \IS = \tr S\, \tr^*\)} (Ht)
;
\end{tikzpicture}
\caption{Schema of relevant function spaces and operators for $s=1/2$, $t=t_d:=\half-\frac{n-d}2$.}
\label{fig:AndreaScheme}
\end{figure}

Since, %
as an operator on $H^{1/2}(\Gamma_\infty)$, $\ker(\tr)= \tH^{1/2}(\Gamma^c)$ (Theorem \ref{thm:Density}), so that $\tr P\phi=\tr \phi$, $\phi\in H^{1/2}(\Gamma_\infty)$, and recalling \eqref{eq:tr*}, \eqref{eq:Porth} and \eqref{eq:Sdef}, we see that
(with the $\pm$ again indicating that either trace can be taken, with the same result)
\begin{equation} \label{eq:Srep}
\IS\bbx = \tr \gamma^\pm((\sigma\cS\tr^*\bbx)|_{U^\pm}), \quad \bbx \in \IH^{-t_d}(\Gamma),
\end{equation}
with $\sigma \in C^\infty_{0,\Gamma}$ arbitrary.

The following integral representation for $\IS$ will be crucial for our Hausdorff BEM in \S\ref{sec:HausdorffBEM}.%
\begin{thm}\label{lem:ISasHauss}
Let $\Gamma$ be a compact $d$-set with $n-1<d\leq n$. For $\bbx$ in $\IL_\infty(\Gamma)$, %
\begin{equation} \label{eq:ISasHauss}
\IS\bbx(x) = \int_{\Gamma} \Phi(x,y) \bbx(y) \rd \cH^d(y),
\qquad \text{ for }\cH^d\text{-a.e.\ }x\in\Gamma.
\end{equation}
\end{thm}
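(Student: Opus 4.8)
The plan is to unwind the definition $\IS=\tr\,S\,\tr^*$ via the representation \eqref{eq:Srep} and the explicit continuous form of the potential supplied by Proposition \ref{prop:LPcont}, so that the identity \eqref{eq:ISasHauss} reduces to the statement that the trace operator $\tr$ acts by pointwise restriction on continuous functions. Since $\Gamma$ is a bounded $d$-set we have $\cH^d(\Gamma)<\infty$, whence $\IL_\infty(\Gamma)\subset\IL_2(\Gamma)$, and $\IL_2(\Gamma)$ embeds continuously into $\IH^{-t_d}(\Gamma)$; thus $\Psi\in\IH^{-t_d}(\Gamma)$ and \eqref{eq:Srep} applies, giving $\IS\Psi=\tr\,\gamma^\pm\big((\sigma\,\cS\tr^*\Psi)|_{U^\pm}\big)$ for any $\sigma\in C^\infty_{0,\Gamma}$. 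By Proposition \ref{prop:LPcont}, $\cS\tr^*\Psi$ has the continuous representative $F\in C(\R^{n+1})$ given by $F(x)=\int_\Gamma\Phi(x,y)\Psi(y)\,\rd\cH^d(y)$, and since $\sigma\in C_0^\infty(\R^{n+1})$ the product $\sigma F$ lies in $W^1(\R^{n+1})\cap C(\R^{n+1})$ and has compact support.

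Next I would compute the two successive traces on $\sigma F$. Because $\sigma F$ is continuous and in $W^1(\R^{n+1})$, its one-sided Dirichlet traces coincide and equal the pointwise boundary values, so $\gamma^\pm\big((\sigma F)|_{U^\pm}\big)=(\sigma F)|_{\Gamma_\infty}=:g$, a continuous, compactly supported element of $H^{1/2}(\Gamma_\infty)$. The crux is then to show $\tr g=g|_\Gamma$ $\cH^d$-a.e. I would argue by mollification: for a standard mollifier, $g_\varepsilon:=g*\eta_\varepsilon\in C_0^\infty(\R^n)$ satisfies $g_\varepsilon\to g$ in $H^{1/2}(\R^n)$ and, since $g$ is uniformly continuous, $g_\varepsilon\to g$ uniformly on $\R^n$. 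Continuity of $\tr:H^{1/2}(\Gamma_\infty)\to\IL_2(\Gamma)$ gives $\tr g_\varepsilon\to\tr g$ in $\IL_2(\Gamma)$; on the other hand $\tr g_\varepsilon=g_\varepsilon|_\Gamma$ by definition of $\tr$, and uniform convergence together with $\cH^d(\Gamma)<\infty$ yields $g_\varepsilon|_\Gamma\to g|_\Gamma$ in $\IL_2(\Gamma)$. Uniqueness of limits forces $\tr g=g|_\Gamma$. Finally, as $\sigma\equiv1$ in a neighbourhood of $\Gamma$ we have $g|_\Gamma=F|_\Gamma$, and chaining the identities gives $\IS\Psi(x)=F(x)=\int_\Gamma\Phi(x,y)\Psi(y)\,\rd\cH^d(y)$ for $\cH^d$-a.e. $x\in\Gamma$.

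I expect the main obstacle to be the identification $\tr g=g|_\Gamma$ for continuous $g$, which is the only point at which the fractal trace theory enters essentially: $\tr$ is defined abstractly as the continuous extension to $H^{1/2}$ of restriction from $C_0^\infty$, and one must verify that this abstract extension really coincides with pointwise restriction on the non-smooth but continuous function $g$. The mollification argument is the natural route, its one delicate ingredient being that mollifications of a compactly supported continuous function converge uniformly, which, via $\cH^d(\Gamma)<\infty$, upgrades to convergence in $\IL_2(\Gamma)$. By comparison, the bookkeeping in the first two steps --- checking $\Psi\in\IH^{-t_d}(\Gamma)$, that $\sigma F\in W^1(\R^{n+1})$, and that the Dirichlet trace of the continuous function $\sigma F$ is its restriction --- is routine given Proposition \ref{prop:LPcont}.
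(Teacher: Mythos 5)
Your proposal is correct and follows essentially the same route as the paper's proof: both unwind $\IS$ via \eqref{eq:Srep}, invoke Proposition \ref{prop:LPcont} to replace $\cS\tr^*\Psi$ by its continuous representative $F$, and conclude by the fact that $\tr\gamma^\pm(G|_{U^\pm})=G|_\Gamma$ for $G\in W^1(\R^{n+1})\cap C(\R^{n+1})$. The only difference is that the paper dismisses this last fact as ``easy to see,'' whereas you supply the (correct) mollification argument establishing it.
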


\begin{proof}
Let $\Psi\in \IL_\infty(\Gamma)\subset \IL_2(\Gamma)\subset \IH^{-t_d}(\Gamma)$, so that $\IS \Psi\in \IH^{t_d}(\Gamma)\subset \IL_2(\Gamma)$.  For arbitrary $\sigma \in C^\infty_{0,\Gamma}$, we have that
\begin{align}
\label{eqn:StrictRep1}
\IS\Psi = \tr\left(\gamma^\pm\left((\sigma f)|_{U^\pm}\right)\right),
\end{align}
by \eqref{eq:Srep}, where $f:=\cS\tr^*\Psi \in C^2(D)\cap W^{1,{\rm loc}}(\R^{n+1})$. Now, where $F$ is defined as in Proposition \ref{prop:LPcont}, $f(x)=F(x)$ for almost all $x\in \R^{n+1}$ (for $x\in D$ by \eqref{eq:SLPrepHausdorff}), and $F\in C(\R^{n+1})$ by Proposition \ref{prop:LPcont}. Further, if $G\in W^1(\R^{n+1})\cap C(\R^{n+1})$ it is easy to see that  $\tr\gamma^\pm(G|_{U^\pm})= G|_\Gamma$. Thus $\IS \Psi = F|_\Gamma$ in $\IL_2(\Gamma)$, and the result follows.
\end{proof}

The definition and mapping properties of $\tr$ and $\tr^*$, noted in \S\ref{sec:FunctionSpaces}, combined with the representation \eqref{eq:Srep}, enable us to extend the domain of $\IS$ to $\IH^{-t}(\Gamma)$, for $t_d<t<2t_d$, or restrict it to $\IH^{-t}(\Gamma)$, for $0<t<t_d$, as stated in %
the following key result.%

\begin{prop}
\label{lem:cont}
Let $\Gamma$ be a compact $d$-set with $n-1<d \leq n$. For $|t|<t_d$, $\IS: \IH^{t-t_d}(\Gamma)\to \IH^{t+t_d}(\Gamma)$ and is continuous. %

\end{prop}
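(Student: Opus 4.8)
The plan is to conjugate $\IS=\tr\,S\,\tr^*$ back to the classical Sobolev scale on $\Gamma_\infty$, where $S$ is a single-layer operator of order $-1$, exploiting that the trace isomorphisms of Theorem~\ref{thm:Density} are available across the whole range $|t|<t_d$. First I would do the index bookkeeping. Writing $\tfrac{n-d}{2}=\tfrac12-t_d$ from \eqref{eq:tdDef}, the identity $\IH^{\tau}(\Gamma)=\tr(H^{\tau+(n-d)/2}(\R^n))$ shows that the codomain $\IH^{t+t_d}(\Gamma)$ corresponds to the classical index $\tfrac12+t$, while, dualising, $\tr^*$ identifies the domain $\IH^{t-t_d}(\Gamma)=\IH^{-(t_d-t)}(\Gamma)$ with $H^{t-1/2}_\Gamma$. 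For $|t|<t_d$ one has $t_d-t,\,t_d+t\in(0,2t_d]\subseteq(0,1)$, with strict inequalities, so Theorem~\ref{thm:Density} (equivalently \eqref{eq:tr*}) provides a unitary isomorphism $\tr^*:\IH^{t-t_d}(\Gamma)\to H^{t-1/2}_\Gamma$ and a continuous surjection $\tr:H^{1/2+t}(\R^n)\to\IH^{t+t_d}(\Gamma)$ (the latter requiring only $\tfrac12+t>\tfrac{n-d}{2}$, i.e.\ $t>-t_d$). This reduces the claim to the single classical mapping estimate
\[
\big\|\gamma^\pm\big((\sigma\,\cS\phi)|_{U^\pm}\big)\big\|_{H^{1/2+t}(\Gamma_\infty)}\le C\,\|\phi\|_{H^{t-1/2}_\Gamma},\qquad \phi\in H^{t-1/2}_\Gamma,
\]
valid uniformly for $|t|<t_d$.

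Given this estimate the proposition follows at once from the representation \eqref{eq:Srep}: for $\bbx\in\IH^{t-t_d}(\Gamma)$ put $\phi=\tr^*\bbx\in H^{t-1/2}_\Gamma$, so that $\IS\bbx=\tr\big(\gamma^\pm((\sigma\,\cS\phi)|_{U^\pm})\big)$, whence
\[
\|\IS\bbx\|_{\IH^{t+t_d}(\Gamma)}\le\|\tr\|\,\big\|\gamma^\pm\big((\sigma\,\cS\phi)|_{U^\pm}\big)\big\|_{H^{1/2+t}(\Gamma_\infty)}\le C\,\|\phi\|_{H^{t-1/2}_\Gamma}=C\,\|\bbx\|_{\IH^{t-t_d}(\Gamma)},
\]
the last equality because $\tr^*$ is unitary. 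Using \eqref{eq:Srep} rather than the definition $S=P\gamma^\pm\sigma\cS$ sidesteps having to make sense of the orthogonal projection $P$ at the index $\tfrac12+t\neq\tfrac12$: the trace $\tr$ already annihilates the unwanted component $\tH^{1/2+t}(\Gamma^c)$, since $\ker(\tr)=\tH^{1/2+t}(\Gamma^c)$ at this index by Theorem~\ref{thm:Density} (applicable because $-t_d<t<1-t_d$ when $|t|<t_d\le\tfrac12$).

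The main work, and the main obstacle, is the displayed classical estimate; note that its $t=0$ instance is already contained in the continuity half of Lemma~\ref{lem:coer}, so the genuinely new content is $t\neq0$. Here I would recall that $\phi\mapsto\gamma^\pm(\cS\phi)$ is the single-layer boundary integral operator on the hyperplane $\Gamma_\infty\cong\R^n$, a convolution operator whose Fourier symbol $\tfrac12(|\xi|^2-k^2)^{-1/2}$ (with the branch fixed by the radiation condition) is elliptic of order $-1$; consequently it is bounded $H^{\sigma}(\R^n)\to H^{\sigma+1}(\R^n)$ for \emph{every} $\sigma\in\R$, in particular for $\sigma=t-\tfrac12\in(-\tfrac12-t_d,-\tfrac12+t_d)$. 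Restricting this bound to $\phi\in H^{t-1/2}_\Gamma\subset H^{t-1/2}(\R^n)$ yields the estimate with $\gamma^\pm(\cS\phi)$ in place of $\gamma^\pm(\sigma\cS\phi)$; it then remains only to absorb the cutoff, which is harmless because $\sigma\equiv1$ on a neighbourhood of $\Gamma$ and $\cS\phi$ is smooth away from $\Gamma$, so $(\sigma-1)\cS\phi$ is smooth across $\Gamma_\infty$ and $\gamma^\pm((\sigma-1)\cS\phi)$ causes no loss of regularity. The one point requiring care is the consistency of the extended/restricted operators across indices, so that this $\IS$ genuinely extends the $\IS$ of \eqref{eqn:ISDef}; this follows from the compatibility of the $\tr$ and $\tr^*$ families recorded in \S\ref{sec:FunctionSpaces} together with the index-independence of the symbol description of the hyperplane operator.
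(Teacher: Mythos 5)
Your reduction is exactly the paper's: both arguments use the representation \eqref{eq:Srep} together with the mapping properties of $\tr$ and $\tr^*$ (Theorem \ref{thm:Density} and \S\ref{sec:FunctionSpaces}) to convert the claim into the single classical estimate $\|\gamma^\pm((\sigma\cS\phi)|_{U^\pm})\|_{H^{1/2+t}(\Gamma_\infty)}\le C\,\|\phi\|_{H^{t-1/2}_\Gamma}$, and your index bookkeeping for this reduction is correct. The difference is how that estimate is justified. The paper observes that $H^{t-1/2}_\Gamma$ is a closed subspace of $\tH^{t-1/2}(\Omega)$ for any bounded open $\Omega\subset\Gamma_\infty$ containing $\Gamma$, and then cites the continuity of $\phi\mapsto\gamma^\pm((\sigma\cS\phi)|_{U^\pm}):\tH^{s}(\Omega)\to H^{s+1}(\Gamma_\infty)$ for all $s\in\R$ from \cite[Thm 1.6]{CoercScreen2}. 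You instead attempt a self-contained proof of this step, and that is where your argument breaks down.

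Concretely, the claim that the trace of the single-layer potential is a Fourier multiplier on $\R^n$ which is ``bounded $H^{\sigma}(\R^n)\to H^{\sigma+1}(\R^n)$ for every $\sigma$'' is false for real $k>0$. The symbol is $\tfrac{\ri}{2}(k^2-|\xi|^2)^{-1/2}$ (branch fixed by the radiation condition), which blows up on the sphere $|\xi|=k$; since a Fourier multiplier $m$ is bounded $H^\sigma\to H^{\sigma+1}$ precisely when $(1+|\xi|^2)^{1/2}m(\xi)$ is essentially bounded, the global mapping property fails. Ellipticity of the principal symbol controls only high frequencies, not this singularity at $|\xi|=k$. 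The same obstruction appears in physical space: $\gamma^\pm(\cS\phi)$ decays only like $|x|^{-n/2}$ along $\Gamma_\infty$, so it is in general not even in $L_2(\Gamma_\infty)$. Consequently the un-cut-off estimate you start from is false, and the cutoff cannot be ``absorbed'' afterwards as a harmless perturbation: outside the compact support of $\sigma$ one has $(\sigma-1)\cS\phi=-\cS\phi$, whose trace likewise fails to lie in $H^{1/2+t}(\Gamma_\infty)$. The compact support of the density (membership of $\tH^{t-1/2}(\Omega)$ with $\Omega$ bounded) together with the cutoff on the output side is exactly what tames the $|\xi|=k$ singularity, and this is the content of the cited theorem; it is not recoverable from a naive symbol bound. (Your argument would be fine for the wavenumber $k=\ri$, where the symbol $\tfrac12(1+|\xi|^2)^{-1/2}$ is genuinely bounded of order $-1$, but not for the Helmholtz case treated here.) The proof becomes correct — and then coincides with the paper's — once you replace the multiplier claim by the citation \cite[Thm 1.6]{CoercScreen2}, first embedding $H^{t-1/2}_\Gamma\subset\tH^{t-1/2}(\Omega)$ as the paper does.
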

\begin{proof} Let $\Omega\subset \Gamma_\infty$ be any bounded open set containing $\Gamma$, so that (see \S\ref{sec:FunctionSpaces}) $H^s_\Gamma$ is a closed subspace of $\widetilde H^s(\Omega)$ for every $s\in \R$. The claimed mapping property of $\IS$ follows from \rf{eq:Srep} since $\tr:H^s(\R^n)\to \IH^t(\Gamma)$ and $\tr^*:\IH^{-t}(\Gamma)\to H^{-s}_\Gamma\subset \widetilde H^{-s}(\Omega)$ are continuous for $s>(n-d)/2$ (i.e.\ $t>0$), with $s$ and $t$ related by \eqref{eq:st}, and since
the mapping $\phi\mapsto\gamma^\pm((\sigma \cS\phi)|_{U^\pm}):\widetilde H^s(\Omega)\to H^{s+1}(\Gamma_\infty)$ is continuous for $s\in \R$ (e.g., \cite[Thm 1.6]{CoercScreen2}). %
\end{proof}

We now make a conjecture concerning the mapping properties of $\IS^{-1}$. To the best of our knowledge there are no results in this direction in the case $d<n$, but the conjecture can be seen as an extension of known results in the case $d=n$,  since the conjecture is known to be true in the case that $\Gamma = \overline{\Omega}$ for some bounded Lipschitz domain $\Omega\subset\Gamma_\infty$, in which case $d=n$ (so $t_d=1/2$).
For in this case the single layer BIO $S^{\Omega}$, defined below \eqref{eqn:Sesquir}, is invertible as an operator from $\tH^{t-1/2}(\Omega)$ to $H^{t+1/2}(\Omega)$ for $|t|<1/2$
--- see \cite[Thm~1.8]{StWe84} for the case $n=1$ and \cite[Theorem 4.1]{Schneider91}\footnote{\label{footnote:Si} Temporarily denoting $S^\Omega$ by $S_k$ to indicate the dependence on $k$, \cite[Theorem 4.1]{Schneider91} gives that $S_k:\tH^{t-1/2}(\Omega)\to H^{t+1/2}(\Omega)$  is Fredholm of index zero for $|t|<1/2$ when the wavenumber is purely imaginary, say $k=\ri$. (We are defining $S_\ri$ here by \eqref{eq:SOmega} with $k=\ri$; equivalently, arguing, e.g., as in \cite[\S3]{CoercScreen2},  $S_\ri$ is the pseudodifferential operator on $\Omega$ with symbol $\sigma(x,\xi)= 1/(2\sqrt{1+|\xi|^2})$, in the notation of \cite[Theorem 4.1]{Schneider91}.)  Since  $S_\ri-S_k$ is compact as an operator from $\tH^0(\Gamma)=L^2(\Gamma)\to H^1(\Gamma)$ and by duality also from $\tH^{-1}(\Gamma)\to L^2(\Gamma)=H^0(\Gamma)$ (see, e.g., the argument on \cite[p.~122]{ChGrLaSp:11}) and so, by interpolation (see, e.g., \cite[Thm.~10]{CwKa95} and \cite[Cor.~4.7, 4.10]{InterpolationCWHM}) from $\tH^{t-1/2}(\Gamma)\to H^{t+1/2}(\Gamma)$ for $|t|\leq 1/2$, it follows that $S_k=S_\ri + (S_k-S_\ri)$ is Fredholm of index zero as a mapping from $\tH^{t-1/2}(\Gamma)\to H^{t+1/2}(\Gamma)$ for $|t|< 1/2$.
Since $S_k:\tH^{t-1/2}(\Omega)\to H^{t+1/2}(\Omega)$ is coercive and so invertible by Lax-Milgram for $t=0$, $S_k$ is injective and so invertible for $0<t<1/2$; indeed, invertible also for $-1/2<t<0$, since $S_k^*$, the adjoint of $S_k$, is invertible for this range and $S_k^*=JS_kJ$, where $J$ maps $\phi$ to its complex conjugate.}
 for the case $n=2$
--- which implies 
that $\IS: \IH^{t-t_d}(\Gamma)\to \IH^{t+t_d}(\Gamma)$ is invertible, 
by 
the fact that $\IH^s(\Gamma)=H^s(\Gamma)$ for $s>0$ (see \cite{dequalsnpaper}), 
Theorem \ref{thm:Density}, and the fact that $\tH^s(\Omega) = H^s_{\overline\Omega}$ (see, e.g.,\ \cite[Thm 3.29]{McLean}). 
A further motivation for our conjecture is that its  truth  implies convergence rates for our BEM (defined and analysed in \S\ref{sec:HausdorffBEM}) that are evidenced by numerical experiments in \S\ref{sec:NumericalResults} for cases with $n-1<d<n$.

\begin{conj}
\label{ass:Smoothness}
If $\Gamma$ is a compact $d$-set with $n-1<d\leq n$ and $|t|<t_d$, then $\IS: \IH^{t-t_d}(\Gamma)\to \IH^{t+t_d}(\Gamma)$ is invertible, and hence (by Proposition \ref{lem:cont} and the bounded inverse theorem) a linear and topological isomorphism.
\end{conj}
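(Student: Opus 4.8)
The plan is to emulate the Lipschitz ($d=n$) argument recalled just before the conjecture, substituting a Fredholm-plus-injectivity scheme for the pseudodifferential calculus that is unavailable on a fractal. The natural anchor is $t=0$: by Lemma~\ref{lem:coer} the form $a(\cdot,\cdot)$ is continuous and coercive, so the Lax--Milgram lemma gives that $\IS\colon\IH^{-t_d}(\Gamma)\to\IH^{t_d}(\Gamma)$ is invertible, and invertibility persists for $|t|<\epsilon$ by a standard perturbation argument (Proposition~\ref{prop:epsilon}). Continuity of $\IS$ on the whole shifted scale $|t|<t_d$ is already Proposition~\ref{lem:cont}, so the only thing left to supply is a quantitative lower bound, equivalently invertibility, for $\epsilon\le|t|<t_d$.

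First I would write $\IS=\IS_\ri+K$, where $\IS_\ri$ is defined exactly as $\IS$ but with wavenumber $k$ replaced by $\ri$, and $K:=\IS-\IS_\ri$. For both $n=1$ and $n=2$ the leading singularities of the two fundamental solutions cancel, so by the kernel bounds underlying Proposition~\ref{prop:LPcont} and Corollary~\ref{lem:1dIntBd} the difference $K$ has a bounded, indeed smooth, Hausdorff-measure kernel; hence $K$ maps $\IH^{t-t_d}(\Gamma)$ into $\IH^{\tau}(\Gamma)$ for every $\tau\in(t+t_d,1)$, and the embedding $\IH^{\tau}(\Gamma)\hookrightarrow\IH^{t+t_d}(\Gamma)$ is compact: in the wavelet coordinates of Corollary~\ref{cor:Wavelets} it is the diagonal embedding $\ih^{\tau}\hookrightarrow\ih^{t+t_d}$, whose distinct dyadic weight ratios $2^{2\nu\tau}/2^{2\nu(t+t_d)}$ tend to infinity while each level $J_\nu$ is finite, so $K$ is compact on the shifted scale. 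It would then suffice to prove that $\IS_\ri\colon\IH^{t-t_d}(\Gamma)\to\IH^{t+t_d}(\Gamma)$ is Fredholm of index zero. Granting this, $\IS=\IS_\ri+K$ is Fredholm of index zero; it is injective because $\IH^{t-t_d}(\Gamma)\subset\IH^{-t_d}(\Gamma)$ for $t>0$ and $\IS$ is already injective on $\IH^{-t_d}(\Gamma)$ by the $t=0$ case; and index zero together with injectivity yields invertibility for $0<t<t_d$. The range $-t_d<t<0$ then follows by duality, using, exactly as in the $d=n$ argument, that the adjoint of $\IS$ is $J\IS J$ with $J$ complex conjugation (an antilinear isometry of each $\IH^{\tau}(\Gamma)$) and the wavelet duality pairing of Corollary~\ref{cor:Wavelets}(iv).

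The \emph{main obstacle} is exactly the Fredholmness of $\IS_\ri$ on the shifted scale, i.e.\ an a priori estimate of the form $\|\bbx\|_{\IH^{t-t_d}(\Gamma)}\le C\big(\|\IS_\ri\bbx\|_{\IH^{t+t_d}(\Gamma)}+\|\bbx\|_{\IH^{t-t_d-\delta}(\Gamma)}\big)$, with the lower-order term compact. In the Lipschitz case this is precisely where the positive principal symbol of the order $-1$ operator $\IS_\ri$ is used, and no such symbol calculus exists on a $d$-set with $d<n$. The most promising concrete substitute seems to be the wavelet basis of \S\ref{sec:Wavelets}: one would estimate the matrix entries $\langle\IS_\ri\psi^m_\bm,\psi^{m'}_{\bm'}\rangle$ and try to establish almost-diagonality, combining Calder\'on--Zygmund-type off-diagonal decay in the relative scale and separation of the supports with a uniform lower bound on the diagonal coming from the positivity of $\IS_\ri$, after which boundedness and invertibility on the scale $\{\IH^{\tau}(\Gamma)\}$ would follow by a Schur-type argument; for a homogeneous IFS attractor one could in addition exploit the self-similar scaling $\Phi(s_m x,s_m y)\sim\rho^{-(n-1)}\Phi(x,y)$ of the kernel to set up a renormalisation linking the inverse across levels. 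Controlling these entries uniformly across all scales on a general $d$-set is the step I expect to resist a short proof, which is presumably why the statement is only conjectured. Finally, by the interpolation-scale property (Corollary~\ref{cor:Wavelets}(v) and Remark~\ref{rem:RemarkInterp}) it would in fact be enough to establish invertibility at a single $t_1\in(\epsilon,t_d)$ and then interpolate with the known range $|t|<\epsilon$, so any method furnishing one additional non-perturbative anchor point would close the gap.
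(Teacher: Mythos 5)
You were asked to prove a statement that the paper itself labels a \emph{Conjecture} and does not prove: the authors establish only the Lipschitz case $d=n$ (by citing \cite{StWe84} and \cite{Schneider91}, see the footnote preceding the conjecture) and, as Proposition \ref{prop:epsilon}, the weaker statement that for a \emph{disjoint IFS attractor} with $n-1<d<n$ there exists some $\epsilon\in(0,t_d]$ such that invertibility holds for $|t|<\epsilon$, obtained from coercivity at $t=0$ (Lemma \ref{lem:coer}) plus a \v{S}ne\u{\i}berg-type stability-of-invertibility theorem \cite{vsneiberg1974spectral,mitrea1999boundary} applied on the interpolation scale of Corollary \ref{cor:Wavelets}. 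Your proposal is correctly calibrated to this state of affairs: you reproduce the known anchor at $t=0$ and the perturbative range $|t|<\epsilon$, and you reduce the remaining range $\epsilon\le|t|<t_d$ to a single unproven claim, namely that $\IS_\ri$ (the operator with wavenumber $\ri$) is Fredholm of index zero from $\IH^{t-t_d}(\Gamma)$ to $\IH^{t+t_d}(\Gamma)$. That claim is precisely what the pseudodifferential calculus supplies in the Lipschitz case and for which no substitute is known on a $d$-set with $d<n$; you flag this honestly, and it is indeed the irreducible gap --- your argument does not prove the conjecture, and it could not be expected to.

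Within your conditional reduction there are also two technical points that need repair. First, the difference kernel $\Phi-\Phi_\ri$ is \emph{not} smooth: for $n=2$ it equals $(\re^{\ri k r}-\re^{-r})/(4\pi r)$ with $r=|x-y|$, whose expansion contains odd powers of $r$ (and for $n=1$ terms like $r^2\log r$), so it is continuous but not $C^\infty$ on $\R^{n+1}\times\R^{n+1}$; boundedness of the kernel alone does not yield your claim that $K=\IS-\IS_\ri$ maps $\IH^{t-t_d}(\Gamma)$ into $\IH^{\tau}(\Gamma)$ for every $\tau\in(t+t_d,1)$, and mapping properties of integral operators \emph{into} trace spaces on a fractal are themselves nontrivial (the paper only obtains such properties for $\IS$ indirectly, via $\tr$, $\tr^*$ and the theory on $\R^n$, in Proposition \ref{lem:cont}). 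Second, your compact-embedding argument, the duality realisation, and the interpolation-scale property all rest on the wavelet characterisation of Theorem \ref{thm:Jonsson} and Corollary \ref{cor:Wavelets}, which is available only for disjoint IFS attractors with $n-1<d<n$, not for the general compact $d$-sets in the conjecture's hypothesis; so even your reduction is restricted to that subclass. On the positive side, your closing observation is correct and worth retaining: since $\IS$ is injective on $\IH^{-t_d}(\Gamma)$, inverses at different parameters are automatically consistent on overlapping spaces, so establishing invertibility at one additional point $t_1\in(\epsilon,t_d)$ would, by interpolating the inverses, settle the conjecture for all $0\le t\le t_1$ (and for $-t_1\le t<0$ by the duality $\IS^*=J\IS J$). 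One genuinely new, non-perturbative anchor point is exactly what is missing.
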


While we are not able to prove Conjecture \ref{ass:Smoothness} in its full generality, in the case where $\Gamma$ is a disjoint IFS attractor, in which case, by Lemma \ref{lem:disconnected}, 
$d<n$,
we can prove that $\IS$ is invertible for a range of $t$,
using Corollary \ref{cor:Wavelets} and results from function space interpolation theory.
\begin{prop}
\label{prop:epsilon}
Let $\Gamma$ be a disjoint IFS attractor with $n-1<d=\dimH(\Gamma)<n$.
Then there exists $0<\epsilon\leq t_d$ such that $\IS: \IH^{t-t_d}(\Gamma)\to \IH^{t+t_d}(\Gamma)$ is invertible for $|t|< \epsilon$. %
\end{prop}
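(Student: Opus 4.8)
The plan is to bootstrap from the single invertibility we already possess at $t=0$, using the fact that invertibility is an open condition along a complex interpolation scale (Sne\u{\i}berg's stability theorem). First I would record the base case: combining \eqref{eqn:ISDef}, \eqref{eqn:SesquiS} with the coercivity of Lemma \ref{lem:coer}, the operator $\IS:\IH^{-t_d}\GG\to\IH^{t_d}\GG$ is bounded and coercive (with the constants of Lemma \ref{lem:coer}), hence invertible by the Lax--Milgram lemma. This is exactly the $t=0$ instance of the claim, and it is the only invertibility input we have direct access to in the regime $d<n$, where no pseudodifferential calculus is available to mimic the $d=n$ argument of footnote~\ref{footnote:Si}.

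Next I would set up the two interpolation couples on which $\IS$ acts. Fix any $t_1\in(0,t_d)$. By Proposition \ref{lem:cont}, $\IS$ is bounded from $\IH^{s-t_d}\GG$ to $\IH^{s+t_d}\GG$ for every $|s|<t_d$, in particular for $s=\pm t_1$. Writing $\overline X:=(\IH^{-t_1-t_d}\GG,\IH^{t_1-t_d}\GG)$ and $\overline Y:=(\IH^{-t_1+t_d}\GG,\IH^{t_1+t_d}\GG)$, the map $\IS$ is therefore bounded between the corresponding endpoints of $\overline X$ and $\overline Y$. Since $t_1<t_d\leq 1/2$, all four exponents lie in $(-1,1)$, so by Corollary \ref{cor:Wavelets}(v) together with Remark \ref{rem:RemarkInterp} the family $\{\IH^{\tau}\GG\}_{-1<\tau<1}$ is an interpolation scale; hence for $\theta\in(0,1)$, setting $t:=(2\theta-1)t_1$, one has the identifications $(\overline X)_\theta=\IH^{t-t_d}\GG$ and $(\overline Y)_\theta=\IH^{t+t_d}\GG$ up to equivalent norms. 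Under these identifications $\IS:(\overline X)_\theta\to(\overline Y)_\theta$ is precisely the operator $\IS:\IH^{t-t_d}\GG\to\IH^{t+t_d}\GG$, and at $\theta=1/2$ (that is, $t=0$) it is invertible by the base case.

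Finally I would invoke Sne\u{\i}berg's theorem: for a linear operator bounded between two complex interpolation scales, the set of parameters $\theta\in(0,1)$ at which it is invertible is open (with the norm of the inverse locally uniformly bounded). Since this set contains $\theta=1/2$, there is $\delta\in(0,1/2)$ such that $\IS:(\overline X)_\theta\to(\overline Y)_\theta$ is invertible for $|\theta-1/2|<\delta$; because passing to equivalent norms does not affect invertibility, it follows that $\IS:\IH^{t-t_d}\GG\to\IH^{t+t_d}\GG$ is invertible for $|t|<\epsilon$ with $\epsilon:=2\delta t_1<t_1<t_d\leq t_d$, as required. I expect the only genuine subtlety to be the bookkeeping identifying $(\overline X)_\theta$ and $(\overline Y)_\theta$ with the scale members $\IH^{t\mp t_d}\GG$: this rests entirely on the exact interpolation-scale property of Corollary \ref{cor:Wavelets}(v), which holds precisely because $\Gamma$ is a disjoint IFS attractor with $d<n$. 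Everything else is a direct application of the general Sne\u{\i}berg result, so no further estimates are needed.
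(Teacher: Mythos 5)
Your proposal is correct and is essentially the paper's own proof: both arguments take the Lax--Milgram invertibility of $\IS:\IH^{-t_d}(\Gamma)\to\IH^{t_d}(\Gamma)$ from Lemma \ref{lem:coer} as the base case, use Proposition \ref{lem:cont} for boundedness at the endpoint exponents, identify the intermediate spaces via the interpolation-scale property of Corollary \ref{cor:Wavelets}, and conclude with Sne\u{\i}berg's stability theorem (which the paper invokes through \cite[Prop.~4.7]{mitrea1999boundary}, quoting \cite{vsneiberg1974spectral}). The only differences are notational (your $t_1$ is the paper's $\tau$, and you make the parametrisation $t=(2\theta-1)t_1$ and the equivalent-norms bookkeeping explicit).
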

\begin{proof}
We recall the following result from \cite[Prop.~4.7]{mitrea1999boundary}, which quotes \cite{vsneiberg1974spectral}.
Suppose $E_j$ and $F_j$ are Banach spaces, $E_1\subset E_0$ and $F_1\subset F_0$ with continuous embeddings, and $T:E_j\to F_j$ is a bounded linear operator
for $j=0,1$.
Let $E_\theta=(E_0 , E_1)_\theta$ be defined by complex interpolation, and similarly $F_\theta$, so $T: E_\theta\to F_\theta$ and is
bounded, for $0<\theta<1$.
Assume that $T: E_{\theta_0}\to F_{\theta_0}$ is invertible for some $\theta_0\in (0,1)$.
Then $T: E_\theta\to F_\theta$ is invertible for $\theta$ in a neighbourhood of $\theta_0$.

The claimed invertibility of $\IS: \IH^{t-t_d}(\Gamma)\to \IH^{t+t_d}(\Gamma)$ in a neighbourhood of $t=0$ then follows by taking $0<\tau<t_d$ and applying the above result with
\[E_0=\IH^{-\tau-t_d}(\Gamma),\quad E_1 = \IH^{\tau-t_d}(\Gamma),\quad
F_0 = \IH^{-\tau+t_d}(\Gamma),\quad F_1 = \IH^{\tau+t_d}(\Gamma),$$
$$\text{and }\theta_0=1/2, \text{ so that }E_{\theta_0} = \IH^{-t_d}(\Gamma)\text{ and }F_{\theta_0} = \IH^{t_d}(\Gamma),\]
recalling that (i) $\IS: \IH^{t-t_d}(\Gamma)\to \IH^{t+t_d}(\Gamma)$ is bounded for $|t|< t_d$ (Proposition \ref{lem:cont}); (ii) $\IS: \IH^{-t_d}(\Gamma)\to \IH^{t_d}(\Gamma)$ is invertible (Lemma \ref{lem:coer}); (iii) $\{\IH^t(\Gamma)\}_{|t|<1}$ is an interpolation scale (Corollary \ref{cor:Wavelets}); and (iv) $t_d\in(0,1/2)$, so $t-t_d\in(-2t_d,0)\subset(-1,0)$ and $t+t_d\in(0,2t_d)\subset(0,1)$, for $|t|<t_d$.
\end{proof}

\begin{rem}[Solution regularity in the $H_\Gamma^s$ scale] \label{rem:regularity}
The mapping properties of $\tr^*$ in Theorem~\ref{thm:Density} and
the relationship \eqref{eqn:ISDef} %
between $S$ and $\IS$ imply that, if $\phi\in H_\Gamma^{-1/2}$ is the solution of the BIE \rf{eqn:BIE}, then $\IS\Psi = -\tr g$, where $\Psi := (\tr^*)^{-1}\phi\in \IH^{-t_d}(\Gamma)$. Thus
if, for some $0<t<t_d$, $\IS: \IH^{t-t_d}(\Gamma)\to \IH^{t+t_d}(\Gamma)$ is invertible and $\tr g\in \IH^{t+t_d}(\Gamma)$, then, again using the mapping properties of $\tr^*$ from Theorem~\ref{thm:Density},  the solution $\phi=\tr^*\Psi$ of the BIE \rf{eqn:BIE} satisfies
\begin{align}
\label{eq:phiReg}
\phi\in H^{-1/2+t}_\Gamma, \quad \mbox{with} \quad \|\phi\|_{H^{-1/2+t}_\Gamma} \leq C\|\tr g\|_{\IH^{t+t_d}(\Gamma)}
\end{align}
for some constant $C>0$ independent of $\phi$ and $g$.
In the case of scattering of an incident wave $u^i$, in which $g$ is given by \eqref{eqn:gDefScatteringProblem}, we have that $\tr g$ $=-\tr \gamma^\pm((\sigma u^i)|_{U^\pm})$ $\in \IH^{t+t_d}(\Gamma)$ for all $0<t<t_d$, since $u^i$ is $C^\infty$ in a neighbourhood of $\Gamma$.
Hence, in this case, if $\Gamma$ is a disjoint IFS attractor with $n-1<d=\dimH(\Gamma)<n$, then \eqref{eq:phiReg} holds for $0< t<\epsilon$, where $\epsilon$ is as in Proposition \ref{prop:epsilon}, and, if Conjecture \ref{ass:Smoothness} holds, then \eqref{eq:phiReg} holds for $0< t<t_d$ whenever $\Gamma$ is a compact $d$-set with $n-1<d\leq n$.
\end{rem}

\section{The Hausdorff BEM}
\label{sec:HausdorffBEM}
We now define and analyse our Hausdorff BEM. %
To begin with, we assume simply that $\Gamma$ is a compact $d$-set for some $n-1<d\leq n$.
Given $N\in \N$ let $\{T_j\}_{j=1}^N$ be a ``mesh'' of $\Gamma$, by which we mean a collection of $\cH^d$-measurable subsets of $\Gamma$ (the ``elements'') such that $\cH^d(T_j)>0$ for each $j=1,\ldots,N$, $\cH^d(T_j\cap T_{j'})=0$ for $j\neq j'$, and
\[\Gamma = \bigcup_{j=1}^N T_j.\]
Define the $N$-dimensional space of piecewise constants
\begin{equation}\label{eq:IVnDef}
\IV_N:=\{f\in\IL_2(\Gamma):f|_{T_j}=c_j \text{ for some }c_j\in \C, \, j=1,\ldots,N\}\subset\IL_2(\Gamma)\end{equation}
and set
\begin{align}
\label{eq:VnDef}
V_N := \tr^*(\IV_N)\subset H^{-1/2}_\Gamma.
\end{align}

Our proposed BEM for solving the BIE \rf{eqn:BIE}, written in variational form as \eqref{eqn:VariationalCts}, uses $V_N$ as the approximation space in a Galerkin method.
Given $g\in (\tH^{1/2}(\Gamma^c))^\perp$ we seek $\phi_N\in V_N$ such that (with $a$ defined by \rf{eqn:Sesqui})
\begin{align}
\label{eqn:Variational}
a(\phi_N,\psi_N)
=-\langle g,\psi_N\rangle_{H^{1/2}(\Gamma_\infty)\times H^{-1/2}(\Gamma_\infty)}, \qquad \mbox{ for all } \psi_N\in V_N.
\end{align}

Let $\{f^i\}_{i=1}^N$ be a basis for $\IV_N$, and let $\{e^i=\tr^*f^i\}_{i=1}^N$ be the corresponding basis for $V_N$. Then, writing $\phi_N=\sum_{j=1}^N c_j e^j$, \eqref{eqn:Variational} implies that the coefficient vector $\vec{c}=(c_1,\ldots,c_N)^T\in\C^N$ satisfies the system
\begin{equation} \label{eq:cvec}
A \vec{c} = \vec{b},
\end{equation}
where, by \eqref{eqn:SesquiS}, \eqref{eq:L2dualequiv}, and \eqref{eq:ISasHauss}, the matrix $A\in \C^{N\times N}$ has $(i,j)$-entry given by %
\begin{align}
A_{ij}=a(e^j,e^i) = a\left(\tr^*f^j,\tr^* f^i\right)
& = \langle \IS f^j,f^i \rangle_{\IH^{t_d}(\Gamma)\times \IH^{-t_d}(\Gamma)} \notag\\
& = (\IS f^j,f^i)_{\IL_2(\Gamma)},
\notag\\
\label{eq:Galerkin}
&=\int_\Gamma \int_\Gamma \Phi(x,y) f^j(y) \overline{f^i(x)}\, \rd\cH^d(y)\rd\cH^d(x),
\end{align}
and, by \eqref{eq:L2dualrep}, the vector $\vec{b}\in\C^N$ has $i$th entry given by
\begin{align}
b_{i}=-\langle g,e^i \rangle_{H^{1/2}(\Gamma_\infty)\times H^{-1/2}(\Gamma_\infty)}
& = -(\tr g,f^i)_{\IL_2(\Gamma)}
= -\int_\Gamma \tr g(x) \overline{f^i(x)}\, \rd\cH^d(x).
\label{eq:GalerkinRHS}
\end{align}

For the canonical $\IL_2(\Gamma)$-orthonormal basis for $\IV_N$, where $f^j|_{T_j} := (\cH^d(T_j))^{-1/2}$ and $f^j|_{\Gamma\setminus T_j}:=0$, $j=1,\ldots,N$,
\begin{align}
\label{eqn:GalerkinElements}
A_{ij}&=(\cH^d(T_i))^{-1/2}(\cH^d(T_j))^{-1/2}\int_{T_i}\int_{T_j} \Phi(x,y) \, \rd\cH^d(y)\rd\cH^d(x),
\end{align}
\begin{align}
\label{eqn:GalerkinElementsRHS}
b_{i}&=-(\cH^d(T_i))^{-1/2}\int_{T_i} \tr g(x)
\, \rd\cH^d(x).
\end{align}

Once we have computed $\phi_N$ by solving \eqref{eq:cvec} we will compute approximations to $u(x)$ and $u^\infty(x)$, given by \eqref{eqn:Rep}/\eqref{eq:SLPdef} and \eqref{eq:ffpattern}, respectively. Each expression takes the form\footnote{Note that if $\psi\in H^{-1/2}_\Gamma$ and $\varphi^\dag \in H^{1/2}(\Gamma_\infty)$ then $\langle\varphi^\dag,\psi\rangle_{H^{1/2}(\Gamma_\infty)\times H^{-1/2}(\Gamma_\infty)}=\langle\varphi,\psi\rangle_{H^{1/2}(\Gamma_\infty)\times H^{-1/2}(\Gamma_\infty)}$, where $\varphi := P\varphi^\dag\in (\tH^{1/2}(\Gamma^c))^\perp$, since $\langle \phi,\psi\rangle_{H^{1/2}(\Gamma_\infty)\times H^{-1/2}(\Gamma_\infty)}= 0$ for $\psi \in H^{-1/2}_\Gamma$ and $\phi \in \tH^{1/2}(\Gamma^c)$.}
\begin{equation} \label{eq:Jdef}
J(\phi) = \langle \varphi, \overline{\phi}\rangle_{H^{1/2}(\Gamma_\infty)\times H^{-1/2}(\Gamma_\infty)},
\end{equation}
for %
some $\varphi\in (\tH^{1/2}(\Gamma^c))^\perp$. Explicitly, where $\sigma$ is any element of $C^\infty_{0,\Gamma}$ (with $x$ not in the support of $\sigma$ in the case $u(x)$),
\begin{equation} \label{eq:varphi}
\varphi = -P\gamma_\pm\left(\sigma v|_{U_\pm}\right),
\end{equation}
with $v = \Phi(x,\cdot)$ in the case that $J(\phi)=u(x)$, $v=\Phi^\infty(\hat x,\cdot)$ in the case that $J(\phi)=u^\infty(\hat x)$; note that each $v$ is $C^\infty$ in a neighbourhood of $\Gamma$. In each case we approximate $J(\phi)$ by $J(\phi_N)$ which, recalling \eqref{eq:L2dualrep}, is given explicitly by
\begin{eqnarray} \nonumber
J(\phi_N) &=& \langle \varphi, \overline{\phi_N}\rangle_{H^{1/2}(\Gamma_\infty)\times H^{-1/2}(\Gamma_\infty)} = \sum_{j=1}^N c_j\langle \varphi, \overline{e^j}\rangle_{H^{1/2}(\Gamma_\infty)\times H^{-1/2}(\Gamma_\infty)}\\ \label{eq:JphiN}
& =& \sum_{j=1}^Nc_j(\tr \varphi,\overline{f^j})_{\IL_2(\Gamma)}
= \sum_{j=1}^Nc_j\int_\Gamma \tr \varphi \,f^j\, \rd\cH^d.
\end{eqnarray}
For the canonical $\IL_2(\Gamma)$-orthonormal basis for $\IV_N$,
\begin{equation} \label{eq:JphiNCanonical}
J(\phi_N)
= \sum_{j=1}^N\frac{c_j}{(\cH^d(T_j))^{1/2}}\int_{T_j} \tr \varphi \, \rd\cH^d.
\end{equation}

The following is a basic convergence result.%

\begin{thm}
\label{thm:Convergence}
Let $\Gamma$ be a compact $d$-set for some $n-1<d\leq n$. Then for each $N\in\N$ the variational problem \rf{eqn:Variational}
has a unique solution $\phi_N\in V_N$ and $\|\phi - \phi_N\|_{H^{-1/2}(\Gamma_\infty)} \to 0$ as $N\to \infty$, where $\phi\in H^{-1/2}_\Gamma$ denotes the solution of \rf{eqn:BIE}, provided that
$h_N:=\max_{j=1,\ldots,N}\diam(T_j) \to 0$ as $N\to \infty$. Further, where $J(\cdot)$ is given by \eqref{eq:Jdef} for some $\varphi \in  (\tH^{1/2}(\Gamma^c))^\perp$, $J(\phi_N)\to J(\phi)$ as $N\to\infty$.
\end{thm}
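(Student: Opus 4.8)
The plan is to exploit the conforming nature of the discretisation, so that existence, uniqueness and quasi-optimality all follow from the standard Lax--Milgram and C\'ea machinery. First I would note that $V_N=\tr^*(\IV_N)$ is a finite-dimensional, hence closed, subspace of $H^{-1/2}_\Gamma$, and that the sesquilinear form $a(\cdot,\cdot)$ is continuous and coercive on $H^{-1/2}_\Gamma\times H^{-1/2}_\Gamma$ by Lemma \ref{lem:coer}. Coercivity restricted to $V_N$ forces the Galerkin matrix to be injective, hence invertible, giving a unique $\phi_N\in V_N$ solving \eqref{eqn:Variational}. Subtracting \eqref{eqn:Variational} from \eqref{eqn:VariationalCts} gives Galerkin orthogonality $a(\phi-\phi_N,\psi_N)=0$ for all $\psi_N\in V_N$, whence, using \eqref{eq:ContCoer} and writing $\phi-\phi_N=(\phi-\psi_N)+(\psi_N-\phi_N)$, the C\'ea estimate
\[
\|\phi-\phi_N\|_{H^{-1/2}_\Gamma}\leq \frac{C_a}{\alpha}\inf_{\psi_N\in V_N}\|\phi-\psi_N\|_{H^{-1/2}_\Gamma}.
\]
Since $H^{-1/2}_\Gamma$ carries the norm induced from $H^{-1/2}(\Gamma_\infty)$, it then suffices to show that this best-approximation error tends to $0$ as $N\to\infty$ whenever $h_N\to 0$.

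For the best-approximation error I would transfer the problem to the trace space $\IH^{-t_d}\GG$. By Theorem \ref{thm:Density} the map $\tr^*:\IH^{-t_d}\GG\to H^{-1/2}_\Gamma$ is a unitary isomorphism, and $\phi=\tr^*\Psi$, $V_N=\tr^*(\IV_N)$ with $\Psi:=(\tr^*)^{-1}\phi$; hence
\[
\inf_{\psi_N\in V_N}\|\phi-\psi_N\|_{H^{-1/2}_\Gamma}=\inf_{f_N\in\IV_N}\|\Psi-f_N\|_{\IH^{-t_d}\GG},
\]
so the task reduces to showing that the piecewise-constant spaces $\IV_N$ become dense in $\IH^{-t_d}\GG$ as $h_N\to 0$.

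This density statement is the crux of the argument and the main obstacle; I would establish it by a three-stage approximation through the chain $\IH^{t_d}\GG\subset\IL_2\GG\subset\IH^{-t_d}\GG$ of \eqref{eq:embed1} and the subsequent embeddings. Given $\varepsilon>0$: first, since $\IL_2\GG$ embeds continuously and densely in $\IH^{-t_d}\GG$, choose $g\in\IL_2\GG$ with $\|\Psi-g\|_{\IH^{-t_d}\GG}<\varepsilon/3$; second, since $\Gamma$ is compact and $\cH^d|_\Gamma$ a finite Borel measure, $C(\Gamma)$ is dense in $\IL_2\GG$, so choose $\tilde g\in C(\Gamma)$ with $C_e\|g-\tilde g\|_{\IL_2\GG}<\varepsilon/3$, where $C_e$ is the norm of the embedding $\IL_2\GG\hookrightarrow\IH^{-t_d}\GG$; third, define the piecewise-constant interpolant $\tilde g_N\in\IV_N$ by $\tilde g_N|_{T_j}:=\tilde g(x_j)$ for an arbitrary $x_j\in T_j$, so that by uniform continuity of $\tilde g$ and $\diam(T_j)\le h_N$ we get $\|\tilde g-\tilde g_N\|_{\IL_2\GG}\leq\cH^d(\Gamma)^{1/2}\,\omega_{\tilde g}(h_N)$, with $\omega_{\tilde g}$ the modulus of continuity of $\tilde g$. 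Taking $N$ large enough that $h_N$ is small enough to make $C_e\,\cH^d(\Gamma)^{1/2}\,\omega_{\tilde g}(h_N)<\varepsilon/3$, the triangle inequality gives $\|\Psi-\tilde g_N\|_{\IH^{-t_d}\GG}<\varepsilon$. As $\varepsilon>0$ was arbitrary, the best-approximation error tends to $0$, and combined with C\'ea this proves $\|\phi-\phi_N\|_{H^{-1/2}(\Gamma_\infty)}\to 0$.

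Finally, for the convergence of the functionals I would observe that each $J$ in \eqref{eq:Jdef} has the form $J(\cdot)=\langle\varphi,\overline{\,\cdot\,}\rangle_{H^{1/2}(\Gamma_\infty)\times H^{-1/2}(\Gamma_\infty)}$ for a fixed $\varphi\in(\tH^{1/2}(\Gamma^c))^\perp\subset H^{1/2}(\Gamma_\infty)$, and is therefore a bounded antilinear functional on $H^{-1/2}(\Gamma_\infty)$. Consequently $|J(\phi)-J(\phi_N)|=|J(\phi-\phi_N)|\leq\|\varphi\|_{H^{1/2}(\Gamma_\infty)}\,\|\phi-\phi_N\|_{H^{-1/2}(\Gamma_\infty)}\to 0$, which closes the proof.
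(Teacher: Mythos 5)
Your proposal is correct and follows essentially the same route as the paper's proof: Lax--Milgram plus C\'ea, transfer of the best-approximation problem to $\IH^{-t_d}(\Gamma)$ via the unitary isomorphism $\tr^*$, density of $\IL_2(\Gamma)$ in $\IH^{-t_d}(\Gamma)$ and of $C(\Gamma)$ in $\IL_2(\Gamma)$, a uniform-continuity piecewise-constant interpolation, and boundedness of $J$. The only (harmless) difference is that you justify the density of $C(\Gamma)$ in $\IL_2(\Gamma)$ by general measure theory on the compact set $\Gamma$ with the finite Borel measure $\cH^d|_\Gamma$, whereas the paper deduces it from the density of $C_0^\infty(\R^n)$ in $H^{1/2}(\R^n)$ together with the continuity and dense range of $\tr:H^{1/2}(\R^n)\to\IL_2(\Gamma)$.
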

\begin{proof}
The well-posedness of \rf{eqn:Variational} follows from the Lax--Milgram lemma and the continuity and coercivity of $S:H^{-1/2}_{\Gamma}\to (\tH^{1/2}(\Gamma^c))^\perp$.
Furthermore, by C\'ea's lemma (e.g., \cite[Theorem 8.1]{Steinbach}) we have the following quasi-optimality estimate, where  $C_a,\alpha>0$ are as in \rf{eq:ContCoer}:
\begin{align}
\label{eq:Quasiopt}
\|\phi - \phi_N\|_{H^{-1/2}(\Gamma_\infty)} \leq \frac{C_a}{\alpha}\inf_{\psi_N\in V_N}\|\phi - \psi_N\|_{H^{-1/2}(\Gamma_\infty)}.
\end{align}
Hence to prove convergence of $\phi_N$ to $\phi$ it suffices to show that $\inf_{\psi_N\in V_N}\|\phi - \psi_N\|_{H^{-1/2}(\Gamma_\infty)}\to 0$ as $N\to \infty$, which, by the definition of $V_N$ and the fact that $\tr^*: \IH^{-t_d}(\Gamma) \to H^{-1/2}_\Gamma$ is a unitary isomorphism, is equivalent to showing that $\inf_{\Psi_N\in \IV_N}\|(\tr^*)^{-1}\phi - \Psi_N\|_{\IH^{-t_d}(\Gamma)}\to 0$ as $N\to \infty$.
Furthermore, since $\IL_2(\Gamma)$ is continuously embedded in $\IH^{-t_d}(\Gamma)$ with dense image it suffices to show that $\inf_{\Psi_N\in \IV_N}\|\Psi - \Psi_N\|_{\IL_2(\Gamma)}\to 0$ as $N\to \infty$ for every fixed $\Psi\in\IL_2(\Gamma)$.
To show the latter
we note that the space $C(\Gamma)$ of continuous functions on $\Gamma$ (equipped with the supremum norm) is continuously embedded in $\IL_2(\Gamma)$ with dense image (continuity is obvious and density follows by the density of $C^\infty_0(\R^n)$ in $H^{1/2}(\R^n)$ and that (see \S\ref{sec:FunctionSpaces}) $\tr:H^{1/2}(\R^n)\to\IL_2(\Gamma)$ is continuous and has dense range).
Then, given $\epsilon>0$ and $\Psi\in\IL_2(\Gamma)$, there exists $\widetilde{\Psi}\in C(\Gamma)$ such that $\|\Psi - \widetilde{\Psi}\|_{\IL_2(\Gamma)}<\epsilon/2$, and by the uniform continuity of $\widetilde{\Psi}$ and the fact that $h_N\to 0$ as $N\to \infty$ there exists $N\in\N$ and $\Psi_N\in\IV_N$ such that $|\widetilde{\Psi}(x)-\Psi_N(x)|<\epsilon/(2\sqrt{\cH^d(\Gamma)})$ for $\cH^d$-a.e.\ $x\in \Gamma$, which implies that $\|\widetilde{\Psi} - \Psi_N\|_{\IL_2(\Gamma)}<\epsilon/2$, from which it follows that $\|\Psi - \Psi_N\|_{\IL_2(\Gamma)}<\epsilon$ by the triangle inequality. That also $J(\phi_N)\to J(\phi)$ is clear since $J(\cdot)$ is a bounded linear functional.
\end{proof}

\subsection{Best approximation error estimates}
\label{sec:BestApprox}
We now assume that $\Gamma$ is the attractor of an IFS of contracting similarities satisfying the OSC, as in \S\ref{sec:IFS}. In this case, one possible choice of BEM approximation space could be $\IV_N=\IW_\ell$, for some $\ell\in\N_0$ (recall the definitions in \S\ref{sec:Wavelets}),
so that $\{T_j\}_{j=1}^N=\{\Gamma_{\bm}\}_{\bm\in I_\ell}$ and $N=M^\ell$. However, since the spaces $\IW_\ell$ are defined by refinement to a certain prefractal level, if the contraction factors $\rho_1,\ldots,\rho_M$ are not all equal the resulting mesh elements may differ significantly in size when $\ell$ is large. This motivates the use of spaces defined by refinement to a certain element size.
Recalling the notations of \S\ref{sec:Wavelets} (in particular, $J_\nu$ is defined in \eqref{eq:Jnudef} and $\nu_0$  below \eqref{eq:Jnudef}), for $\nu\geq \nu_0$  let
\begin{align}
\label{eq:IXdef}
\IX_\nu := {\rm span}\big(\{\psi_0\}\cup \{\psi^m_{\bm}\}_{\bm\in J_{\nu'},\,\nu'\in\{\nu_0,\ldots,\nu\},\, m\in\{1,\ldots,M -1\}}\big)=  {\rm span}\big(\{\chi_{\bm}\}_{\bm\in K_{\nu}}\big),
\end{align}
where %
\begin{equation} \label{eq:Knudef}
K_{\nu}  := \big\{\bm \in I_{\N} %
: \diam(\Gamma_{\bm})< 2^{-\nu} \text{ and } \diam(\Gamma_{\bm_-})\geq 2^{-\nu}\big\}.
\end{equation}
Note that both of these spanning sets for $\IX_\nu$ are orthonormal bases, and that $\IX_\nu\subset \IX_{\nu^\prime}$, for $\nu_0\leq \nu\leq \nu^\prime$, giving that
\begin{equation} \label{eq:Xnugen}
\IX_\nu =  {\rm span}\big(\{\chi_0\}\cup \{\chi_{\bm}:\bm\in I_{\N}, \, \diam(\Gamma_{\bm_-})\geq 2^{-\nu}\}\big)
\end{equation}
(but with this spanning set not linearly independent).
Note also that, for $\nu\geq \nu_0$, $\{\Gamma_\bm:\bm\in K_\nu\}$ is a mesh in the sense introduced at the beginning of this section.
Figure \ref{fig:BasisCont} illustrates the definitions of $\IX_\nu$ and $K_\nu$, showing the meaning of $K_2$ for a particular IFS for which $\diam(\Gamma)=1$ so that $\nu_0=0$; for this same IFS we have that $K_0=K_1=\{1,2,3\}$.

\begin{figure}
\fbox{
\def\xa{0}\def\xb{0.16}\def\xc{0.2}\def\xd{0.26}\def\xe{0.3}\def\xf{0.4}\def\xg{0.5}\def\xh{0.56}\def\xi{0.575}\def\xj{0.5975}\def\xk{0.6125}\def\xl{0.65}\def\xm{0.75}\def\xn{0.85}\def\xo{0.875}\def\xp{0.9125}\def\xq{0.9375}\def\xr{1}
\begin{tikzpicture}[x=16cm]
\def\dx{0.05}
\def\dy{0.35}
\newcommand{\y}{0}
\def\y{0}
\draw (\xa,\y)--(\xr,\y);\node at ({(\xa+\xr)/2},\y-\dy) {$E_{0}$};
\def\y{-1}
\draw (\xa,\y)--(\xf,\y);\node at ({(\xa+\xf)/2},\y-\dy) {$E_{1}$};
\draw[ultra thick] (\xg,\y)--(\xl,\y);\node at ({(\xg+\xl)/2},\y-\dy) {$E_{2}$};
\draw (\xm,\y)--(\xr,\y);\node at ({(\xm+\xr)/2},\y-\dy) {$E_{3}$};
\def\y{-2}
\draw[ultra thick] (\xa,\y)--(\xb,\y);\node at ({(\xa+\xb)/2},\y-\dy) {$E_{(1,1)}$};
\draw[ultra thick] (\xc,\y)--(\xd,\y);\node at ({(\xc+\xd)/2},\y-\dy) {$E_{(1,2)}$};
\draw[ultra thick] (\xe,\y)--(\xf,\y);\node at ({(\xe+\xf)/2},\y-\dy) {$E_{(1,3)}$};
\draw (\xg,\y)--(\xh,\y);\node at ({(\xg+\xh)/2-0.01},\y-\dy) {$E_{(2,1)}$};
\draw (\xi,\y)--(\xj,\y);\node at ({(\xi+\xj)/2},\y-\dy) {$E_{(2,2)}$};
\draw (\xk,\y)--(\xl,\y);\node at ({(\xk+\xl)/2+0.02},\y-\dy) {$E_{(2,3)}$};
\draw[ultra thick] (\xm,\y)--(\xn,\y);\node at ({(\xm+\xn)/2},\y-\dy) {$E_{(3,1)}$};
\draw[ultra thick] (\xo,\y)--(\xp,\y);\node at ({(\xo+\xp)/2},\y-\dy) {$E_{(3,2)}$};
\draw[ultra thick] (\xq,\y)--(\xr,\y);\node at ({(\xq+\xr)/2},\y-\dy) {$E_{(3,3)}$};
\draw[white](0,.5)--(1,.5); %
\end{tikzpicture}}
\caption{
To illustrate the definition $K_\nu$, we show, as in Figure \ref{fig:Basis},  the sets $E_{\bm}$, $\bm\in I_\ell$, $\ell=0,1,2$, with $E=[0,1]$, for the IFS $s_1(x)=0.4x$, $s_2(x)=0.15x+0.5$, $s_3(x)=0.25x+0.75$. Let $\Gamma$ be the attractor of this IFS. Then $\Gamma_\bm=E_\bm\cap \Gamma$ so that $\diam(\Gamma_\bm)=\diam(E_\bm)$ for each $\bm$. The sets $E_\bm$ highlighted with thick lines are those with indices $\bm\in K_2$; for example, $\diam(\Gamma_3)=\diam(E_3)=1/4$, so that $(3,m)\in K_2$, $m=1,2,3$, and $\diam(\Gamma_2)=\diam(E_2)<1/4$, so that also $2\in K_2$.
}
\label{fig:BasisCont}
\end{figure}
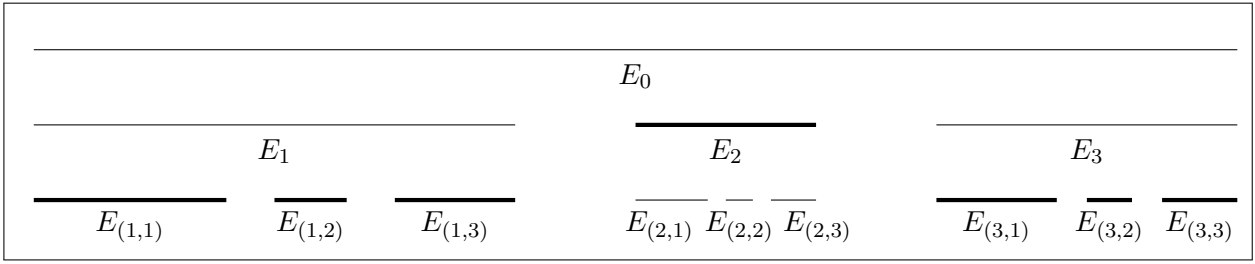
\begin{prop} \label{prop:convrateHausdorff}
Let $\Gamma$ be a disjoint IFS attractor with $n-1<d=\dimH(\Gamma)<n$.
Then for $-1<t<1$ and $\nu\geq \nu_0$ the orthogonal projection operator $\IP_\nu:\IL_2(\Gamma)\to \IX_\nu\subset \IL_2(\Gamma)$ defined by
\begin{align}
\label{eq:PnuDef}
\IP_\nu f = \beta_0  \psi_0 + \sum_{m=1}^{M-1}\sum_{\nu'=\nu_0}^{\nu}\sum_{\bm\in J_{\nu'}}\beta^m_{\bm} \psi^m_{\bm},
\end{align}
with the coefficients given by \eqref{eq:BetaDef1}, extends/restricts to a bounded linear operator $\IP_\nu:\IH^t(\Gamma)\to\IX_\nu \subset \IH^t(\Gamma)$,
and there exists $C>0$, independent of $\nu$ and $f$, such that
\begin{align}
\label{eq:HtboundPnu}
\| \IP_\nu  f\|_{\IH^t(\Gamma)}\leq C\| f\|_{\IH^t(\Gamma)}, \qquad \nu\geq \nu_0, \quad f\in \IH^t(\Gamma).
\end{align}
Furthermore, for $-1<t_{1}<t_{2}<1$ there exists $c>0$, independent of $\nu$ and $f$, such that%
\begin{align}
\label{eq:Htbound}
\| f-\IP_\nu  f\|_{\IH^{t_{1}}(\Gamma)}\leq c\,2^{-\nu(t_{2}-t_{1})}\| f\|_{\IH^{t_{2}}(\Gamma)}, \qquad \nu\geq \nu_0, \quad f\in \IH^{t_{2}}(\Gamma).
\end{align}
\end{prop}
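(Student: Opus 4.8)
The plan is to work entirely in the wavelet coordinates supplied by Theorem \ref{thm:Jonsson} and Corollary \ref{cor:Wavelets}, which realise $\IH^\tau\GG$, for \emph{every} $-1<\tau<1$, as a weighted $\ell_2$ sequence space with the (equivalent) norm
\[
\|f\|_\tau = \left(|\beta_0|^2 + \sum_{m=1}^{M-1}\sum_{\nu'=\nu_0}^\infty 2^{2\nu'\tau}\sum_{\bm\in J_{\nu'}}|\beta^m_{\bm}|^2\right)^{1/2},
\]
where the coefficients $\beta_0=\beta_0(f)$, $\beta^m_{\bm}=\beta^m_{\bm}(f)$ are defined consistently across the whole scale (by the $\IL_2$ inner product \eqref{eq:BetaDef1} when $f\in\IL_2\GG$, and by the duality pairing \eqref{eq:BetaDef2} in general, these agreeing on the overlap by Corollary \ref{cor:Wavelets}(ii)). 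In these coordinates the operator $\IP_\nu$ of \eqref{eq:PnuDef} is precisely the truncation that retains $\beta_0$ together with those $\beta^m_{\bm}$ having $\bm\in J_{\nu'}$ for $\nu'\leq\nu$, and annihilates the rest; this makes sense simultaneously for all $-1<t<1$ and is the asserted extension/restriction of the $\IL_2$-orthogonal projection.

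For the stability bound \eqref{eq:HtboundPnu} I would simply observe that truncation cannot increase the squared wavelet norm, so that $\|\IP_\nu f\|_\tau\leq\|f\|_\tau$ for all $\nu\geq\nu_0$. Transferring to the original norm via the norm equivalence $\|\cdot\|_\tau\simeq\|\cdot\|_{\IH^\tau\GG}$ (from Theorem \ref{thm:Jonsson} for $\tau>0$, Corollary \ref{cor:Wavelets}(iii) for $\tau<0$, and equality for $\tau=0$) yields \eqref{eq:HtboundPnu}, the constant $C$ depending only on $t$ through those equivalence constants and in particular being independent of $\nu$ and $f$.

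For the approximation estimate \eqref{eq:Htbound} I would compute the error directly in wavelet coordinates. Since $\IP_\nu$ kills exactly the levels $\nu'>\nu$,
\[
\|f-\IP_\nu f\|_{t_1}^2 = \sum_{m=1}^{M-1}\sum_{\nu'=\nu+1}^\infty 2^{2\nu't_1}\sum_{\bm\in J_{\nu'}}|\beta^m_{\bm}|^2.
\]
Writing $2^{2\nu't_1}=2^{2\nu'(t_1-t_2)}\,2^{2\nu't_2}$ and using that, because $t_1-t_2<0$, the factor $2^{2\nu'(t_1-t_2)}$ is maximised over $\nu'\geq\nu+1$ at $\nu'=\nu+1$, I would bound the sum by $2^{2(\nu+1)(t_1-t_2)}\|f\|_{t_2}^2$. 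Taking square roots gives
\[
\|f-\IP_\nu f\|_{t_1}\leq 2^{t_1-t_2}\,2^{-\nu(t_2-t_1)}\,\|f\|_{t_2},
\]
and a final pair of norm equivalences on the $t_1$- and $t_2$-sides produces \eqref{eq:Htbound} with $c$ independent of $\nu$ and $f$.

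The argument is thus a routine exploitation of the diagonal, Littlewood--Paley-type structure of the wavelet norm, and there is no genuine analytic obstacle once Theorem \ref{thm:Jonsson} and Corollary \ref{cor:Wavelets} are available. The only points demanding care are the consistent definition of $\IP_\nu$ across the full scale $-1<t<1$ --- in particular on the negative-order spaces, where $f\notin\IL_2\GG$, so that the orthogonality interpretation of the coefficients must be replaced by the duality pairing of Corollary \ref{cor:Wavelets}(ii) --- and the observation that the norm-equivalence constants invoked when passing between $\|\cdot\|_\tau$ and $\|\cdot\|_{\IH^\tau\GG}$ are intrinsic properties of the spaces and hence uniform in $\nu$.
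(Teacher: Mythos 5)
Your proposal is correct and follows essentially the same route as the paper's proof: both express $\IP_\nu$ as a coefficient truncation in the wavelet coordinates of Theorem \ref{thm:Jonsson} and Corollary \ref{cor:Wavelets}, obtain \eqref{eq:HtboundPnu} from the fact that truncation does not increase $\|\cdot\|_t$ together with the norm equivalence, and obtain \eqref{eq:Htbound} by factoring $2^{2\nu' t_1}=2^{-2\nu'(t_2-t_1)}2^{2\nu' t_2}$ in the tail sum and bounding the geometric factor at $\nu'=\nu+1$. Your explicit remark that the coefficients on the negative-order spaces must be read via the duality pairing of Corollary \ref{cor:Wavelets}(ii) is a point the paper leaves implicit, but it is the same argument.
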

\begin{proof}

The boundedness result follows from Theorem \ref{thm:Jonsson} and Corollary \ref{cor:Wavelets}, noting that
\[ \|\IP_\nu f\|_{t} = \Bigg(|\beta_0 |^2 + \sum_{m=1}^{M -1}\sum_{\nu'=\nu_0}^{\nu} 2^{2\nu' t}\sum_{\bm\in J_{\nu'}}|\beta^m_{\bm}|^2 \Bigg)^{1/2}\leq \|f\|_{t},\]
which gives \eqref{eq:HtboundPnu} with $C=\sup_{\Psi\in \IH^{t}(\Gamma)\setminus\{0\}}\frac{\|\Psi\|_{\IH^t(\Gamma)}}{\|\Psi\|_{t}}\times \sup_{\Psi\in \IH^{t}(\Gamma)\setminus\{0\}}\frac{\|\Psi\|_{t}}{\|\Psi\|_{\IH^{t}(\Gamma)}}$.

For the approximation result, we note, where $c>0$ denotes a constant independent of $\nu$ and $f$, not necessarily the same at each occurrence, that %
\begin{align*}
\| f-\IP_\nu  f\|_{\IH^{t_{1}}(\Gamma)} &= \Bigg\|\sum_{m=1}^{M-1}\sum_{\nu'=\nu+1}^{\infty}\sum_{\bm\in J_{\nu'}}\beta^m_{\bm}\psi^m_{\bm}\Bigg\|_{\IH^{t_{1}}(\Gamma)}\\
& \leq c \Bigg(\sum_{m=1}^{M -1}\sum_{\nu'=\nu+1}^\infty 2^{2\nu' t_1}\sum_{\bm\in J_{\nu'}}|\beta^m_{\bm}|^2 \Bigg)^{1/2}\\
&=c\Bigg(\sum_{m=1}^{M -1}\sum_{\nu'=\nu+1}^\infty 2^{-2\nu'(t_2-t_1)}2^{2\nu't_2}\sum_{\bm\in J_{\nu'}}|\beta^m_{\bm}|^2 \Bigg)^{1/2}\\
& \leq c 2^{-\nu(t_2-t_1)} \Bigg(\sum_{m=1}^{M -1}\sum_{\nu'=\nu+1}^\infty 2^{2\nu't_2}\sum_{\bm\in J_{\nu'}}|\beta^m_{\bm}|^2 \Bigg)^{1/2}%
\leq c 2^{-\nu(t_2-t_1)}\| f\|_{\IH^{t_{2}}(\Gamma)}.
\end{align*}
\end{proof}

Let $X_\nu := \tr^*(\IX_\nu)$. (Recall that the choice of $s>(n-d)/2$ in the definition of $\tr^*$ makes no difference to the definition of $X_\nu$.) Since $\IX_\nu\subset\IL_2(\Gamma)$ we have $X_\nu\subset H^{s}_{\Gamma}$ for all $s<-(n-d)/2$.
Then the following best approximation error estimate follows immediately, on application of $\tr^*$, from Theorem \ref{thm:Density} and Proposition \ref{prop:convrateHausdorff} applied with $-1<t_{1}<t_{2}<0$.
\begin{cor}
\label{cor:conv}
Let $\Gamma$ be a disjoint IFS attractor with $n-1<d=\dimH(\Gamma)<n$,
and suppose that $-(n-d)/2-1<s_{1}<s_{2}<-(n-d)/2$.
Then %
there exists $c>0$, independent of $\nu$ and $\psi$, such that %
\begin{align}
\label{eq:Hsbound}
\inf_{\psi_\nu\in X_\nu}\|\psi-\psi_\nu\|_{H^{s_{1}}_\Gamma}\leq c 2^{-\nu(s_{2}-s_{1})}\|\psi\|_{H^{s_{2}}_\Gamma},\qquad \nu\geq \nu_0,\,\, \psi\in H^{s_{2}}_\Gamma.
\end{align}
\end{cor}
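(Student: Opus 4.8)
The plan is to transfer the approximation estimate \eqref{eq:Htbound} of Proposition \ref{prop:convrateHausdorff}, which lives in the trace-space scale $\IH^t(\Gamma)$, to the Sobolev scale $H^s_\Gamma$ via the unitary isomorphism $\tr^*$ of Theorem \ref{thm:Density}, using crucially that $\tr^*(\IX_\nu)=X_\nu$ and that $\tr^*$ is an isometry at each relevant index.

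First I would set up the index bookkeeping. For $j=1,2$ put $\tau_j:=s_j+\frac{n-d}2$. Since $-\frac{n-d}2-1<s_1<s_2<-\frac{n-d}2$, we have $-1<\tau_1<\tau_2<0$, so in particular $\tau_1,\tau_2$ lie in the range $(-1,1)$ in which Proposition \ref{prop:convrateHausdorff} applies. Moreover, writing $-s=s_j$ (so $s=-s_j\in(\frac{n-d}2,\frac{n-d}2+1)$) and $-t=\tau_j$ (so $t=-\tau_j\in(0,1)$) matches the hypotheses of Theorem \ref{thm:Density}, which therefore gives that
\[
\tr^*:\IH^{\tau_j}(\Gamma)\to H^{s_j}_\Gamma,\qquad j=1,2,
\]
is a unitary isomorphism; by the compatibility of the $\tr^*$ operators recorded below \eqref{eq:tracedual} these two maps agree on the common (densely embedded) subspace $\IH^{\tau_2}(\Gamma)\subset\IH^{\tau_1}(\Gamma)$.

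Next, given $\psi\in H^{s_2}_\Gamma$, I would set $f:=(\tr^*)^{-1}\psi\in\IH^{\tau_2}(\Gamma)$, so that $\|f\|_{\IH^{\tau_2}(\Gamma)}=\|\psi\|_{H^{s_2}_\Gamma}$, and take the trial function $\psi_\nu:=\tr^*\IP_\nu f\in\tr^*(\IX_\nu)=X_\nu$. Since $f-\IP_\nu f\in\IH^{\tau_2}(\Gamma)\subset\IH^{\tau_1}(\Gamma)$ and $\tr^*$ is an isometry of $\IH^{\tau_1}(\Gamma)$ onto $H^{s_1}_\Gamma$ consistent with its action on the smaller space, I obtain
\[
\|\psi-\psi_\nu\|_{H^{s_1}_\Gamma}=\|\tr^*(f-\IP_\nu f)\|_{H^{s_1}_\Gamma}=\|f-\IP_\nu f\|_{\IH^{\tau_1}(\Gamma)}.
\]
Applying \eqref{eq:Htbound} with $(t_1,t_2)=(\tau_1,\tau_2)$ and noting $\tau_2-\tau_1=s_2-s_1$, the right-hand side is at most $c\,2^{-\nu(s_2-s_1)}\|f\|_{\IH^{\tau_2}(\Gamma)}=c\,2^{-\nu(s_2-s_1)}\|\psi\|_{H^{s_2}_\Gamma}$; taking the infimum over $X_\nu$ then yields \eqref{eq:Hsbound}.

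This argument is essentially routine once the two cited results are in hand, so there is no genuine obstacle. The only points demanding care are the sign/index conversion between the two scales --- verifying that $s=-s_j$ satisfies $\frac{n-d}2<s<\frac{n-d}2+1$ so that Theorem \ref{thm:Density} applies, and that $-1<\tau_1<\tau_2<1$ so that Proposition \ref{prop:convrateHausdorff} applies --- together with the consistency of the nested family of $\tr^*$ operators, which is what legitimises measuring $f-\IP_\nu f$ in the $\IH^{\tau_1}(\Gamma)$-norm even though $f$ is initially produced in $\IH^{\tau_2}(\Gamma)$.
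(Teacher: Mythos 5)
Your proposal is correct and is precisely the paper's argument, spelled out in detail: the paper proves this corollary in one line by applying the unitary isomorphism $\tr^*$ of Theorem \ref{thm:Density} to Proposition \ref{prop:convrateHausdorff} with $-1<t_1<t_2<0$, which is exactly your index conversion $\tau_j=s_j+\frac{n-d}{2}$ and transfer of \eqref{eq:Htbound} via the isometry. Your care over the consistency of the nested $\tr^*$ operators (justified by the footnote below \eqref{eq:tracedual}) fills in the only detail the paper leaves implicit.
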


\begin{rem} The above corollary holds also for larger values of $s_2$, but is then a trivial result since, as noted above Corollary \ref{cor:Interp}, $H_\Gamma^{s_2}=\{0\}$ for $s_2\geq -(n-d)/2$.
\end{rem}

Corollary \ref{cor:conv} can be rephrased with $2^{-\nu}$ replaced by a more general element size $h$, and $X_{\nu}$ replaced by
\begin{align}
\label{eq:YHDef}
Y_h := \tr^*(\IY_h),
\end{align}
where
\begin{align} \label{eq:YHDef2}
\IY_h := {\rm span}\big(\{\chi_{\bm}\}_{\bm\in L_{h}}\big),
\end{align}
and %
$L_h:=\{0\}$ for $h=\diam(\Gamma)$, while, for $h<\diam(\Gamma)$,
\begin{align}
\label{eq:LhDef}
L_{h} := \big\{\bm \in I_{\N} %
: \diam(\Gamma_{\bm})\leq h \text{ and } \diam(\Gamma_{\bm_-})>h\big\}.
\end{align}
This framework is a natural one for numerics --- we specify $h>0$ and consider all those components which have diameter less than or equal to $h$ but whose ``parent'' (in the IFS structure) has diameter greater than $h$. The change from $<$ to $\leq$ and $\geq$ to $>$ in moving from $K_\nu$ to $L_h$ is intentional, so that the definition of $L_h$ allows components that are equal to $h$ in diameter; $h$ is an upper bound for, and may be equal to, the maximum element diameter $h_N = \max_{\bm\in L_h}\diam(\Gamma_\bm)$. %
Note that the set $\{\chi_{\bm}\}_{\bm\in L_{h}}$ is an orthonormal basis for $\IY_h$ and, if $\chi_\bm\in \IY_h$, then $\chi_{\bm_-}\in \IY_h$, so that
\begin{equation} \label{eq:Yhgen}
\IY_h =  {\rm span}\big(\{\chi_0\}\cup\{\chi_{\bm}: \bm\in I_\N, \, \diam(\Gamma_{\bm_-})> h\}\big)
\end{equation}
(this spanning set not linearly independent).
We assume in the following corollary and subsequently that $h$, an upper bound for $h_N\leq \diam(\Gamma)$, satisfies
\begin{equation} \label{eq:hrange}
0<h\leq \diam(\Gamma).
\end{equation}
Except where indicated explicitly otherwise, results through the rest of the paper hold for all $h$ in this range. The following corollary, a best approximation result for the spaces $Y_h$, follows from the inclusion $\IX_\nu\subset \IY_h$, for some suitable $\nu$, and Corollary \ref{cor:conv}.
\begin{cor}
\label{cor:convh}
Under the assumptions of Corollary \ref{cor:conv} we have that
\begin{align}
\label{eq:Hsboundh}
\inf_{\psi_h\in Y_h}\|\psi-\psi_h\|_{H^{s_{1}}_\Gamma}\leq c  h^{s_{2}-s_{1}}\|\psi\|_{H^{s_{2}}_\Gamma},\qquad %
\psi\in H^{s_{2}}_\Gamma,
\end{align}
for some constant $c>0$ independent of $h$ and $\psi$.
\end{cor}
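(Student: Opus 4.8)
The plan is to deduce the estimate from Corollary \ref{cor:conv} by sandwiching $Y_h$ between the dyadic spaces $X_\nu$. Since the best-approximation error over a larger space is no larger, it suffices to find $\nu\geq\nu_0$ with $X_\nu\subset Y_h$ and $2^{-\nu}$ comparable to $h$: Corollary \ref{cor:conv} then controls the best approximation from $X_\nu$, and the comparability converts the factor $2^{-\nu(s_2-s_1)}$ into $h^{s_2-s_1}$. Concretely I would take $\nu=\nu(h)$ to be the largest integer with $2^{-\nu}>h$.

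For the inclusion I would argue directly from the (redundant) spanning representations \eqref{eq:Xnugen} and \eqref{eq:Yhgen}. If $2^{-\nu}>h$, then every generator $\chi_{\bm}$ of $\IX_\nu$, which satisfies $\diam(\Gamma_{\bm_-})\geq 2^{-\nu}>h$, is also a generator of $\IY_h$; hence $\IX_\nu\subset\IY_h$, and applying the injective linear map $\tr^*$ gives $X_\nu=\tr^*(\IX_\nu)\subset\tr^*(\IY_h)=Y_h$. This is the step where the strict inequality $2^{-\nu}>h$ (rather than $\geq$) is essential, matching the deliberate $\leq$/$>$ convention built into the definition of $L_h$ in \eqref{eq:LhDef}.

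With the inclusion in hand, suppose first that $h<2^{-\nu_0}$. Then the chosen $\nu$ satisfies $\nu\geq\nu_0$ and, by maximality, $2^{-\nu-1}\leq h<2^{-\nu}$, so $2^{-\nu}\leq 2h$. Monotonicity of the infimum under $X_\nu\subset Y_h$, followed by Corollary \ref{cor:conv} and $s_2-s_1>0$, yields
\[\inf_{\psi_h\in Y_h}\|\psi-\psi_h\|_{H^{s_1}_\Gamma}\leq \inf_{\psi_\nu\in X_\nu}\|\psi-\psi_\nu\|_{H^{s_1}_\Gamma}\leq c\,2^{-\nu(s_2-s_1)}\|\psi\|_{H^{s_2}_\Gamma}\leq c\,(2h)^{s_2-s_1}\|\psi\|_{H^{s_2}_\Gamma},\]
and the factor $2^{s_2-s_1}$ is absorbed into the constant.

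The only remaining case is the bounded range $2^{-\nu_0}\leq h\leq\diam(\Gamma)$, in which no admissible $\nu\geq\nu_0$ with $2^{-\nu}>h$ exists; this is the one point needing separate care, and it is the main (if minor) obstacle. Here I would simply take $\psi_h=0\in Y_h$ and use the continuous embedding $H^{s_2}_\Gamma\hookrightarrow H^{s_1}_\Gamma$ (valid since $s_1<s_2$, with norm at most $1$) to get $\inf_{\psi_h\in Y_h}\|\psi-\psi_h\|_{H^{s_1}_\Gamma}\leq\|\psi\|_{H^{s_1}_\Gamma}\leq\|\psi\|_{H^{s_2}_\Gamma}$. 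Since $h\geq 2^{-\nu_0}$ and $s_2-s_1>0$ we have $h^{s_2-s_1}\geq 2^{-\nu_0(s_2-s_1)}$, hence $\|\psi\|_{H^{s_2}_\Gamma}\leq 2^{\nu_0(s_2-s_1)}h^{s_2-s_1}\|\psi\|_{H^{s_2}_\Gamma}$, which is of the required form after enlarging $c$ to dominate both cases.
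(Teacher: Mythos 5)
Your proposal is correct and follows essentially the same route as the paper's proof: choose $\nu$ with $2^{-\nu-1}\leq h<2^{-\nu}$, deduce $\IX_\nu\subset\IY_h$ from the strict inequality $\diam(\Gamma_{\bm_-})\geq 2^{-\nu}>h$ (hence $X_\nu\subset Y_h$ under $\tr^*$), and conclude via Corollary \ref{cor:conv}, handling large $h$ trivially with $\psi_h=0$. The only differences are cosmetic: the paper splits cases at $h\leq\diam(\Gamma)/2$ rather than $h<2^{-\nu_0}$, and uses the $K_\nu$ basis where you use the redundant spanning set \eqref{eq:Xnugen}.
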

\begin{proof}
It is enough to show \eqref{eq:Hsboundh} for $0<h\leq\diam(\Gamma)/2$, since the result trivially holds for larger $h$ (just take $\psi_h=0$ on the left hand side). Thus, given $0<h\leq\diam(\Gamma)/2$,  let $\nu=\lceil \log(1/h)/\log(2) \rceil-1$, %
so that $2^{-\nu-1}\leq h<2^{-\nu}$ and $\nu\geq \nu_0$. %
Then, if $\bm\in K_\nu$, which implies that
$\diam(\Gamma_{\bm_-})\geq 2^{-\nu}$, it follows that $\diam(\Gamma_{\bm_-})>h$, so that $\chi_\bm\in \IY_h$. Thus  %
$\IX_\nu\subset\IY_h$, which implies that $ X_\nu \subset Y_h$, so that, where $c>0$ denotes a constant independent of $h$ and $\psi$, not necessarily the same at each occurrence,
\[
\inf_{\psi_h\in Y_h}\|\psi-\psi_h\|_{H^{s_{1}}_\Gamma}\leq \inf_{\psi_\nu\in X_v}\|\psi-\psi_\nu\|_{H^{s_{1}}_\Gamma}\leq c 2^{-\nu(s_{2}-s_{1})}\|\psi\|_{H^{s_{2}}_\Gamma}\leq c h^{s_{2}-s_{1}}\|\psi\|_{H^{s_{2}}_\Gamma}.
\]
\end{proof}

\subsection{Galerkin error estimates}
\label{sec:GalerkinBounds}
We now use the best approximation error results proved above to give an error bound for the Galerkin approximation to the BIE \eqref{eqn:BIE} when the solution
$\phi$ is sufficiently smooth.  Note that, combining the results of this theorem with inverse estimates that we prove as Theorem \ref{thm:inverse} below, we extend \eqref{eq:GalerkinBound} to a bound on $\|\phi-\phi_N\|_{H_\Gamma^{s_1}}$ for a range of $s_1$ in Corollary \ref{cor:GalExt} below.

\begin{thm}
\label{thm:BEMConvergence}
Let $\Gamma$ be a disjoint IFS attractor with $n-1<d=\dimH(\Gamma)<n$.
Let $\phi$ be the unique solution of \rf{eqn:BIE} and %
let $\phi_N$ be the unique solution of \rf{eqn:Variational} with $V_N=Y_h$, with $Y_h$ defined as in \rf{eq:YHDef}. Suppose that $\phi\in H^s_\Gamma$ for some $-1/2<s<-(n-d)/2$. Then, for some constant $c>0$ independent of $h$ and $\phi$,
\begin{align}
\label{eq:GalerkinBound}
\|\phi-\phi_N\|_{H^{-1/2}_\Gamma}\leq c  h^{s+1/2}\|\phi\|_{H^{s}_\Gamma}. %
\end{align}
Furthermore, let $\varphi\in (\tH^{1/2}(\Gamma^c))^\perp$ be such that the solution
$\zeta\in H^{-1/2}_\Gamma$ of \rf{eqn:VariationalCts}, with $-g$ replaced by $\varphi\in (\tH^{1/2}(\Gamma^c))^\perp$, also lies in the space $H^s_\Gamma$. Then the linear functional $J(\cdot)$ on $H^{-1/2}_\Gamma$, defined by $J(\psi):=\langle \varphi, \overline{\psi}\rangle_{H^{1/2}(\Gamma_\infty)\times H^{-1/2}(\Gamma_\infty)}$, $\psi\in H^{-1/2}_\Gamma$, satisfies
\begin{align}
\label{eq:GalerkinBoundJ}
|J(\phi)-J(\phi_N)|\leq c h^{2s+1}\|\phi\|_{H^{s}_\Gamma}\|\zeta\|_{H^{s}_\Gamma},%
\end{align}
for some constant $c>0$ independent of $h$, $\phi$, and $\zeta$.
\end{thm}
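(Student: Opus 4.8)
The plan is to prove the two estimates in turn, in both cases reducing the error to a best-approximation error controlled by Corollary~\ref{cor:convh} and using the continuity and coercivity constants $C_a,\alpha$ of Lemma~\ref{lem:coer}. For the energy-norm bound \eqref{eq:GalerkinBound} I would combine C\'ea's lemma, in the form of the quasi-optimality estimate \eqref{eq:Quasiopt} (available since $a$ is continuous and coercive and $V_N=Y_h\subset H^{-1/2}_\Gamma$), with Corollary~\ref{cor:convh} applied to $\psi=\phi$ with $s_1=-1/2$ and $s_2=s$; the hypotheses $-1/2<s<-(n-d)/2$ are precisely those needed for $-1<s_1<s_2<-(n-d)/2$, and the resulting best-approximation factor $h^{s+1/2}$ gives \eqref{eq:GalerkinBound}.

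For the superconvergence estimate \eqref{eq:GalerkinBoundJ} I would run an Aubin--Nitsche duality argument. Writing $e:=\phi-\phi_N$ and using linearity of $J$, we have $J(\phi)-J(\phi_N)=J(e)=\langle\varphi,\overline{e}\rangle_{H^{1/2}(\Gamma_\infty)\times H^{-1/2}(\Gamma_\infty)}$. Testing the dual problem $a(\zeta,\psi)=\langle\varphi,\psi\rangle$ with $\psi=\overline{e}\in H^{-1/2}_\Gamma$ gives $J(e)=a(\zeta,\overline{e})$. The key manipulation is to move $e$ into the first slot of $a$ using the symmetry of the underlying (non-conjugated) bilinear form, i.e.\ $a(\zeta,\overline{e})=a(e,\overline{\zeta})$ (justified in the next paragraph). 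Galerkin orthogonality $a(e,\psi_N)=0$ for $\psi_N\in V_N$, together with the fact that $V_N=Y_h$ is closed under complex conjugation, then allows me to replace $\overline{\zeta}$ by $\overline{\zeta-\zeta_N}$ for an arbitrary $\zeta_N\in V_N$, giving $a(e,\overline{\zeta})=a(e,\overline{\zeta-\zeta_N})$. Continuity of $a$ and the invariance of $\|\cdot\|_{H^{-1/2}_\Gamma}$ under conjugation yield $|J(e)|\leq C_a\|e\|_{H^{-1/2}_\Gamma}\inf_{\zeta_N\in V_N}\|\zeta-\zeta_N\|_{H^{-1/2}_\Gamma}$. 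Bounding the first factor by \eqref{eq:GalerkinBound} applied to $\phi$, and the second by Corollary~\ref{cor:convh} applied to $\zeta$ (again with $s_1=-1/2$, $s_2=s$), each contributes a factor $h^{s+1/2}$, and multiplying gives the product bound $h^{2s+1}\|\phi\|_{H^s_\Gamma}\|\zeta\|_{H^s_\Gamma}$ of \eqref{eq:GalerkinBoundJ}.

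The main obstacle is justifying the symmetry identity $a(\zeta,\overline{e})=a(e,\overline{\zeta})$ at the level of $H^{-1/2}_\Gamma$, where the Hausdorff-measure integral representation of Theorem~\ref{lem:ISasHauss} is not directly available because $\zeta$ and $e$ need not lie in $\IL_\infty(\Gamma)$. I would establish it through the Lipschitz single-layer form: for a bounded Lipschitz open $\Omega\supset\Gamma$, the bilinear form $(\mu,\nu)\mapsto\langle S^\Omega\mu,\overline{\nu}\rangle$ on $\tH^{-1/2}(\Omega)\times\tH^{-1/2}(\Omega)$ is symmetric, since for $L_2$ densities this is immediate from the symmetric kernel $\Phi(x,y)=\Phi(y,x)$ in \eqref{eq:SOmega} and Fubini, and it extends by density and the continuity of $S^\Omega$; as recalled below \eqref{eqn:Sesquir}, $a(\cdot,\cdot)$ is the restriction of $a^\Omega(\cdot,\cdot)$ to $H^{-1/2}_\Gamma\times H^{-1/2}_\Gamma$, so symmetry is inherited. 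The remaining ingredients are routine: $\overline{e}\in H^{-1/2}_\Gamma$ with $\|\overline{e}\|_{H^{-1/2}_\Gamma}=\|e\|_{H^{-1/2}_\Gamma}$, since conjugation preserves both support in $\Gamma$ and the $H^{-1/2}$ norm, and $V_N$ is conjugation-closed because it is the $\tr^*$-image of the span of the real-valued functions $\chi_{\bm}$.
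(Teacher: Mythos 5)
Your proposal is correct and follows essentially the same route as the paper's proof: C\'ea's lemma in the form \eqref{eq:Quasiopt} combined with the best-approximation bound of Corollary \ref{cor:convh} (with $s_1=-1/2$, $s_2=s$) for \eqref{eq:GalerkinBound}, and the standard superconvergence/duality argument --- the symmetry identity \eqref{eq:symm} (justified exactly as you do, via the symmetric kernel of $S^\Omega$ on a Lipschitz $\Omega\supset\Gamma$, Fubini, density and restriction), Galerkin orthogonality, and continuity of $a(\cdot,\cdot)$ --- for \eqref{eq:GalerkinBoundJ}. The only (harmless) differences are that you bound $\inf_{\zeta_N\in V_N}\|\zeta-\zeta_N\|_{H^{-1/2}_\Gamma}$ directly by Corollary \ref{cor:convh}, whereas the paper takes $\zeta_N$ to be the Galerkin solution of the dual problem and applies \eqref{eq:GalerkinBound} to it, and that you make explicit the conjugation-closedness of $V_N$ needed in the orthogonality step, which the paper leaves implicit.
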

\begin{proof}
We first note that, in the general notation introduced at the start of this section, to get $V_N=Y_h$ we can take $\{T_j\}_{j=1}^N=\{\Gamma_{\bm}\}_{\bm\in L_h}$.
Then to obtain \rf{eq:GalerkinBound} we simply combine the best approximation error bound \rf{eq:Hsboundh} (for $s_1=-1/2$ and $s_2=s$) with the quasioptimality estimate \rf{eq:Quasiopt}.

The bound \rf{eq:GalerkinBoundJ} follows by a standard superconvergence argument, as used in e.g.\ \cite{HsiaoWendland81,SloanSpence:88}.
Since the kernel $\Phi(x,y)$ of the integral operator $S^\Omega$, densely defined on $H^{-1/2}(\Omega)$ by \eqref{eq:SOmega}, satisfies $\Phi(x,y)=\Phi(y,x)$ for $x\neq y$,
it follows from \eqref{eqn:Sesquir} and since $a(\cdot,\cdot)$ is the restriction of $a^\Omega(\cdot,\cdot)$ to $H_\Gamma^{-1/2}\times H_\Gamma^{-1/2}$,
that
\begin{equation} \label{eq:symm}
a(\upsilon,\psi)=a(\bar \psi,\bar \upsilon), \quad \upsilon,\psi\in H^{-1/2}_\Gamma,
\end{equation}
(recall that the overline denotes complex conjugation).
Now $\zeta\in H^{-1/2}_\Gamma$
is the solution of \rf{eqn:VariationalCts} with $-g$ replaced by $\varphi$, i.e.
$$
a(\zeta,\psi) = \langle \varphi,\psi\rangle_{H^{1/2}(\Gamma_\infty)\times H^{-1/2}(\Gamma_\infty)}, \quad \psi \in H^{-1/2}_\Gamma.
$$
Then, by \eqref{eq:symm}, Galerkin orthogonality (that $a(\phi-\phi_N,\psi_N)=0$, for all $\psi_N\in V_N$ by \eqref{eqn:VariationalCts} and \eqref{eqn:Variational}), and %
Lemma \ref{lem:coer},
for any $\zeta_N\in V_N$ we have, where $c>0$ denotes some constant independent of $h$, $\phi$, and $\zeta$, not necessarily the same at each occurrence,
\begin{align}
\nonumber
|J(\phi)-J(\phi_N)| & = |\langle \varphi,\overline{\phi-\phi_N}\rangle_{H^{1/2}(\Gamma_\infty)\times H^{-1/2}(\Gamma_\infty)}|\\ \nonumber
&= |a(\zeta,\overline{\phi-\phi_N})|=|a(\phi-\phi_N,\overline{\zeta})|\\ \nonumber
&=|a(\phi-\phi_N,\overline{\zeta-\zeta_N})|\\ \label{eq:basic}
&\leq c \|\phi-\phi_N\|_{H^{-1/2}_\Gamma}\|\zeta-\zeta_N\|_{H^{-1/2}_\Gamma}.
\end{align}
Choosing $\zeta_N$ to be the solution of \rf{eqn:Variational} with $-g$ replaced by $\varphi$,
it follows by \rf{eq:GalerkinBound} that
\[
|J(\phi)-J(\phi_N)| \leq c h^{s+1/2}\|\phi\|_{H^s_\Gamma}h^{s+1/2}\|\zeta\|_{H^s_\Gamma} = c h^{2s+1}\|\phi\|_{H^s_\Gamma}\|\zeta\|_{H^s_\Gamma},
\]
proving \rf{eq:GalerkinBoundJ}.
\end{proof}

Importantly, as observed above \eqref{eq:Jdef}, both $u(x)$ and $u^\infty(\hat x)$ can be written as $J(\phi)$, where $J(\cdot)$ is a bounded linear functional of the form treated in Theorem \ref{thm:BEMConvergence}, explicitly with $\varphi$ given by \eqref{eq:varphi}, where $v=\Phi(x,\cdot)$ in the case that $J(\phi)=u(x)$ and $v=\Phi^\infty(\hat x,\cdot)$ in the case that $J(\phi)=u^\infty(\hat x)$; in each case  $v$ is $C^\infty$ in a neighbourhood of $\Gamma$.

\begin{rem}[Range of exponent $s$ in the convergence rates]
\label{rem:extra_smoothness}
By Proposition \ref{prop:epsilon} and Remark \ref{rem:regularity}, there exists $\epsilon\in (0,t_d]$ (with $t_d=1/2-(n-d)/2$) such that, if $0<t< \epsilon$ and $\tr g\in \IH^{t+t_d}(\Gamma)$, then, where $s=-1/2+t$, $\phi\in H_\Gamma^s$ and \eqref{eq:GalerkinBound} holds with the same value of $s$. If also $\tr \varphi\in \IH^{t+t_d}(\Gamma)$, then $\zeta\in H_\Gamma^{s}$ and \eqref{eq:GalerkinBoundJ} holds with $s=-1/2+t$. In the case of scattering when $g$ is given by \eqref{eqn:gDefScatteringProblem}, $\tr g\in \IH^{t+t_d}(\Gamma)$ for all $0<t< \epsilon$, and also $\tr \varphi\in \IH^{t+t_d}(\Gamma)$ for all $0<t< \epsilon$, if $\varphi= P\gamma^\pm (\sigma v|_{U^\pm})$, where $\sigma\in C^\infty_{0,\Gamma}$ and $v$ is $C^\infty$ in a neighbourhood of $\Gamma$.
(As noted above,
$\varphi$ has this form
for the linear functionals needed to compute $u(x)$, for $x\in D$, and the far-field $u^\infty(\hat x)$, for $\hat x\in \mathds{S}^n$.)
If Conjecture \ref{ass:Smoothness} holds then we may take $\epsilon = t_d$. Thus, in the scattering case, if Conjecture \ref{ass:Smoothness} holds, then \eqref{eq:GalerkinBound} holds for all $-1/2<s<-(n-d)/2$, and the same is true for \eqref{eq:GalerkinBoundJ} if $\varphi= P\gamma^\pm (\sigma v|_{U^\pm})$ and $v$ is $C^\infty$ in a neighbourhood of $\Gamma$.
\end{rem}

\begin{rem}[Convergence rates when $\Gamma$ is a disjoint homogeneous IFS]
\label{rem:ConvRates}
Suppose that, in addition to the assumptions of Theorem \ref{thm:BEMConvergence}, $\Gamma$ is homogeneous, with $\rho_m=\rho $ for $m=1,\ldots,M$, for some $0<\rho<1$. Then, taking $h=\rho ^\ell\diam{\Gamma}$,
i.e.\ using the approximation space $V_N=\tr^*({\rm span}\left(\{\chi_{\bm}\}_{\bm\in I_\ell}\right))$, Theorem \ref{thm:BEMConvergence} implies that %
\[
\|\phi-\phi_N\|_{H^{-1/2}_\Gamma}\leq c \rho^{\ell(s+1/2)}\|\phi\|_{H^{s}_\Gamma},
\quad |J(\phi)-J(\phi_N)| \leq c \rho^{\ell(2s+1)}\|\phi\|_{H^{s}_\Gamma}\|\zeta\|_{H^{s}_\Gamma}.
\]
(Here and below $c>0$ denotes some constant independent of $\ell$, $\phi$, and $\zeta$, not necessarily the same at each occurrence.)
If $\phi\in H^s_{\Gamma}$ for all $s<-(n-d)/2$
 we can take $s=-(n-d)/2-\epsilon$ for arbitrarily small $\epsilon$ in the first of the above estimates, and the same holds for the second of the above estimates if also $\zeta$ satisfies $\zeta \in H^s_{\Gamma}$ for the same range of $s$. (If Conjecture~\ref{ass:Smoothness} holds this smoothness of $\phi$ and $\zeta$ is guaranteed for all scattering problems by Remark \ref{rem:extra_smoothness} if $\varphi$ satisfies the conditions in that remark.) Then, recalling that $d=\log(1/M)/\log \rho $, we get $s+1/2=(d+1-n)/2-\epsilon=(1/2)(\log(1/M)/\log \rho  + 1-n-2\epsilon)$, so that, for each $\epsilon>0$, %
\begin{align}
\label{eq:ConvRates}
\begin{aligned}
\|\phi-\phi_N\|_{H^{-1/2}_\Gamma}& \leq c \left(\sqrt{\frac{\rho ^{1-n-2\epsilon}}{M}}\right)^\ell\|\phi\|_{H^{-(n-d)/2-\epsilon}_\Gamma},\\
\quad |J(\phi)-J(\phi_N)| &\leq c \left(\frac{\rho ^{1-n-2\epsilon}}{M}\right)^\ell\|\phi\|_{H^{-(n-d)/2-\epsilon}_\Gamma}\|\zeta\|_{H^{-(n-d)/2-\epsilon}_\Gamma}.
\end{aligned}
\end{align}
In the case $n=1$ %
(e.g.\ $\Gamma$ a Cantor set, see \eqref{eq:CS_IFS}, for which $M=2$), the fact that $\epsilon>0$ can be taken arbitrarily small means that in numerical experiments we expect to see errors in computing $\phi_N$ and $J(\phi_N)$ that tend to zero roughly like $M^{-\ell/2}$ and $M^{-\ell}$ respectively, independent of the parameter $\rho$, if the above bounds are sharp.%

In the case $n=2$ %
(e.g.\ $\Gamma$ a Cantor dust, see \eqref{eq:CD_IFS}, for which $M=4$), we expect errors in computing $\phi_N$ and $J(\phi_N)$ that tend to zero roughly like $(M\rho )^{-\ell/2}$ and $(M\rho )^{-\ell}$ respectively, if the above bounds are sharp. Note that in this case we need $\rho >1/M$ to ensure $d=\log(1/M)/\log(\rho)>1=n-1$, and that these predicted convergence rates,
as a function of $\ell$, %
decrease as $d$ approaches $1$ with $M$ fixed.
For the Cantor dust with $\rho =1/3$ (the ``middle-third'' case) we predict convergence rates of roughly $(3/4)^{\ell/2}$ and $(3/4)^{\ell}$, respectively.
\end{rem}

\begin{rem}[Connection to standard BEM convergence results] \label{rem:classicalBEMconv}
The results of Theorem \ref{thm:BEMConvergence}, because they require that $d<n$, do not apply when $\Gamma$ is 
the closure of  
a Lipschitz domain (so that $d=\dim_H(\Gamma)=n$), for which case standard regularity and convergence results (e.g., \cite{StWe84,Ste:87,ErStEl90} and see the discussion above Conjecture \ref{ass:Smoothness}) predict that $\phi\in H^s_\Gamma$ and that the bounds \eqref{eq:GalerkinBound} and \eqref{eq:GalerkinBoundJ} hold for all $s<0$. Note that  these convergence rates for standard BEM are those predicted by taking the formal limit as $d\to n^-$ in the results of Theorem \ref{thm:BEMConvergence}. In \S\ref{sec:NumericalResults} we will compare results for the cases where $\Gamma$ is a Cantor set or Cantor dust, the attractor of the IFS \eqref{eq:CS_IFS} or \eqref{eq:CD_IFS}, respectively, with results for standard BEM. If we take the parameter $\rho=0.5$ in each of \eqref{eq:CS_IFS} and \eqref{eq:CD_IFS} then the attractor is just $\Gamma=[0,1]^n$, with $n=1$ or $2$. As we note in \S\ref{s:exp:Cantor} and \S\ref{s:exp:Dust}, the relative errors $\|\phi-\phi_N\|_{H^{-1/2}_\Gamma}/\|\phi\|_{H^{-1/2}_\Gamma}$ for our new Hausdorff-measure BEM for the Cantor set and Cantor dust with $\rho=0.49$ are almost the same as relative errors for the limiting case $\rho=0.5$, when $\Gamma=[0,1]^n$, for standard BEM with a uniform mesh and the same number of degrees of freedom.
\end{rem}

\subsection{Inverse estimates and conditioning}
\label{sec:InverseEstimates}

The following inverse estimate follows almost immediately from the wavelet characterisation of the spaces $\IH^{-t}(\Gamma)$ for $-1<t<1$ in Theorem \ref{thm:Jonsson} and Corollary \ref{cor:Wavelets}. In Appendix \ref{app:Inverse} we show, by an alternative, lengthier argument, closer to standard
arguments  based on ``bubble functions'' (e.g.~\cite{Dahmen04}), that the estimate \eqref{eq:InvEst} holds in fact for all $t>0$, and for the full range $0<d<n$; moreover, for any $T>0$, the constant $c_t$ in the estimate can be chosen independently of $t$ for $0<t\leq T$.
\begin{thm} \label{thm:inverse}
Let $\Gamma$ be a disjoint IFS attractor with $n-1<d=\dimH(\Gamma)<n$, and suppose that $-1<t_1<t_2<1$. Then, for some constant $c>0$ independent of $h$ and $\Psi_{h}$,
\begin{equation}\label{eq:InvEstGen}
\|\Psi_h\|_{\IH^{t_2}(\Gamma)} \leq c h^{t_1-t_2}\|\Psi_h\|_{\IH^{t_1}(\Gamma)}, \quad %
\Psi_h\in \IY_h.
\end{equation}
In particular, if
$0<t<1$,
then,
for some constant $c_t>0$ independent of $h$ and $\Psi_{h}$, %
\begin{equation}\label{eq:InvEst}
\|\Psi_h\|_{\IL_2(\Gamma)} \leq c_t h^{-t}\|\Psi_h\|_{\IH^{-t}(\Gamma)}, \quad %
\Psi_h\in \IY_h.
\end{equation}
\end{thm}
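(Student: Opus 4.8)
The plan is to transfer the whole estimate to the weighted sequence norms $\|\cdot\|_t$ of Theorem \ref{thm:Jonsson} and Corollary \ref{cor:Wavelets}(iii), which are equivalent to $\|\cdot\|_{\IH^t(\Gamma)}$ for every $-1<t<1$ (and coincide with $\|\cdot\|_{\IL_2(\Gamma)}$ when $t=0$). Writing any $\Psi_h\in\IY_h$ in the wavelet basis as $\Psi_h=\beta_0\psi_0+\sum_{m=1}^{M-1}\sum_{\nu'\geq\nu_0}\sum_{\bm\in J_{\nu'}}\beta^m_{\bm}\psi^m_{\bm}$, the inequality \eqref{eq:InvEstGen} reduces to the elementary fact that, over a \emph{finite} range of scales $\nu_0\leq\nu'\leq\nu^*$, the weights $2^{2\nu't_2}$ and $2^{2\nu't_1}$ differ by at most the factor $2^{2\nu^*(t_2-t_1)}$. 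The only genuine input needed is therefore a bound, in terms of $h$, on the finest scale $\nu^*$ that can occur in the expansion of an element of $\IY_h$.

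First I would prove the inclusion $\IY_h\subseteq\IX_{\nu^*}$ for the choice $\nu^*:=\lceil\log_2(1/h)\rceil$. Using the (linearly dependent) spanning sets \eqref{eq:Xnugen} and \eqref{eq:Yhgen}, namely $\IX_{\nu^*}={\rm span}(\{\chi_0\}\cup\{\chi_{\bm}:\diam(\Gamma_{\bm_-})\geq 2^{-\nu^*}\})$ and $\IY_h={\rm span}(\{\chi_0\}\cup\{\chi_{\bm}:\diam(\Gamma_{\bm_-})>h\})$, it suffices to observe that the choice of $\nu^*$ gives $2^{-\nu^*}\leq h$, so that every generator $\chi_{\bm}$ of $\IY_h$, for which $\diam(\Gamma_{\bm_-})>h\geq 2^{-\nu^*}$, is also a generator of $\IX_{\nu^*}$. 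The constraint $0<h\leq\diam(\Gamma)$ from \eqref{eq:hrange} guarantees $\nu^*\geq\nu_0$ (so that $\IX_{\nu^*}$ is defined) and the bound $\nu^*<\log_2(2/h)$ gives $2^{\nu^*}<2/h$. By the definition \eqref{eq:IXdef} of $\IX_{\nu^*}$ as the span of $\psi_0$ together with the wavelets $\psi^m_{\bm}$, $\bm\in J_{\nu'}$, $\nu'\leq\nu^*$, the wavelet coefficients of $\Psi_h$ then vanish for all $\bm\in J_{\nu'}$ with $\nu'>\nu^*$.

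With this scale truncation in hand the coefficient estimate is immediate: for $\nu_0\leq\nu'\leq\nu^*$ and $t_2>t_1$ one has $2^{2\nu't_2}=2^{2\nu'(t_2-t_1)}2^{2\nu't_1}\leq 2^{2\nu^*(t_2-t_1)}2^{2\nu't_1}$, while the constant term obeys $|\beta_0|^2\leq C_0\,2^{2\nu^*(t_2-t_1)}|\beta_0|^2$ with $C_0:=\max(1,2^{-2\nu_0(t_2-t_1)})$ (using $\nu^*\geq\nu_0$). Summing gives $\|\Psi_h\|_{t_2}^2\leq C_0\,2^{2\nu^*(t_2-t_1)}\|\Psi_h\|_{t_1}^2$, and substituting $2^{\nu^*}<2/h$ yields $\|\Psi_h\|_{t_2}\leq c\,h^{t_1-t_2}\|\Psi_h\|_{t_1}$ with $c$ independent of $h$ and $\Psi_h$. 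Passing back through the norm equivalences proves \eqref{eq:InvEstGen}, and \eqref{eq:InvEst} follows by taking $t_1=-t\in(-1,0)$ and $t_2=0$, so that $\|\Psi_h\|_{\IL_2(\Gamma)}=\|\Psi_h\|_{\IH^0(\Gamma)}\leq c\,h^{-t}\|\Psi_h\|_{\IH^{-t}(\Gamma)}$.

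I expect the one step requiring care to be the correct identification of the finest scale $\nu^*$ and the verification of $\IY_h\subseteq\IX_{\nu^*}$ through the spanning-set descriptions \eqref{eq:Xnugen}--\eqref{eq:Yhgen}; once the truncation $\beta^m_{\bm}=0$ for $\nu'>\nu^*$ is secured, the remainder is a one-line weighted-$\ell_2$ computation. An alternative, expanding $\IY_h$ directly in the wavelets $\psi^m_{\bm}$ with $\diam(\Gamma_{\bm})>h$ and matching dimensions by the standard $M$-ary tree count, would also work but is less clean than arguing via $\IX_{\nu^*}$.
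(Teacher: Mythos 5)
Your proposal is correct and follows essentially the same route as the paper's proof: embed $\IY_h$ into a truncated wavelet space $\IX_\nu$ with $2^\nu$ comparable to $1/h$ (the paper does this for $h\leq\diam(\Gamma)/2$ with $2^{-\nu-1}\leq h<2^{-\nu}$, arguing as in Corollary \ref{cor:convh}), then compare the weighted sequence norms $\|\cdot\|_{t_1}$ and $\|\cdot\|_{t_2}$ of Theorem \ref{thm:Jonsson} and Corollary \ref{cor:Wavelets} over the resulting finite range of scales, and conclude by norm equivalence. The only differences are bookkeeping: your ceiling-based $\nu^*$ and constant $C_0=\max(1,2^{-2\nu_0(t_2-t_1)})$ handle all $h\in(0,\diam(\Gamma)]$ at once, where the paper first reduces to small $h$ via $\IY_h\subset\IY_{h'}$ and absorbs the $\nu_0$-dependence through the factor $\min(1,2^{2(\nu_0+1)})$.
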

\begin{proof}
Since $\IY_h\subset \IY_{h^\prime}$ for $0<h^\prime \leq h\leq \diam(\Gamma)$, it is enough to show this result for $0<h\leq \diam(\Gamma)/2$. So suppose $0<h\leq \diam(\Gamma)/2$ and $\Psi_h\in \IY_h$. Then, arguing as in the proof of Corollary \ref{cor:convh}, we have that $\IY_h\subset \IX_{\nu+1}$, with $\nu\geq \nu_0$ such that $2^{-\nu -1}\leq h<2^{-\nu}$.
Then, since $\Psi_h = \IP_{\nu+1}\Psi_h$, and where the
coefficients are given by \eqref{eq:BetaDef1}  with $f$ replaced by $\Psi_h$,
\begin{align*}
\|\Psi_h\|_{t_1} &=\Bigg(|\beta_0 |^2 + \sum_{m=1}^{M -1}\sum_{\nu'=\nu_0}^{\nu+1} 2^{2\nu' t_1}\sum_{\bm\in J_{\nu'}}|{\beta^m_{\bm}}|^2\Bigg)^{1/2} \\
&\geq \min(1,2^{-(\nu+1)(t_2-t_1)})\Bigg(|\beta_0 |^2 + \sum_{m=1}^{M -1}\sum_{\nu'=\nu_0}^{\nu+1} 2^{2\nu' t_2}\sum_{\bm\in J_{\nu'}}|{\beta^m_{\bm}}|^2\Bigg)^{1/2}\\
&=\min(1,2^{-(\nu+1)(t_2-t_1)})\|\Psi_h\|_{t_2}.
\end{align*}
Noting that, where $s=t_2-t_1\in (0,2)$,
\begin{eqnarray*}
\min(1,2^{-(\nu+1)s}) = 2^{-(\nu+1)s}\min(1,2^{(\nu+1)s})
\geq 2^{-(\nu+1)s}\min(1,2^{2(\nu_0+1)})>2^{-s}h^{s}\min(1,2^{2(\nu_0+1)}),
\end{eqnarray*}
\eqref{eq:InvEstGen} follows by the equivalence of the norms $\|\cdot\|_{s}$ and $\|\cdot\|_{\IH^{s}(\Gamma)}$ for $-1<s<1$. The bound \eqref{eq:InvEst} is the special case of \eqref{eq:InvEstGen} with $t_2=0$ and $t_1=-t$.
\end{proof}

One application of the above result is to extend the bound \eqref{eq:GalerkinBound} on $\|\phi-\phi_N\|_{H_\Gamma^s}$ for $s=-1/2$ to a larger range $-1/2\leq s< -(n-d)/2$, by applying the following proposition in the case that $c_\psi=\|\psi\|_{H_\Gamma^{s_2}}$.
\begin{prop} \label{prop:InvApp1} Suppose that $-(n-d)/2-1<r<s_1\leq s_2<-(n-d)/2$, $\psi\in H_\Gamma^{s_2}$, $\psi_h\in Y_h$, and $c_\psi>0$, and that
$$
\|\psi-\psi_h\|_{H_\Gamma^{r}} \leq c^* h^{s_2-r} c_\psi, %
$$
for some constant $c^*>0$ independent of $h$, $\psi$, and $\psi_h$. Then
\begin{equation} \label{eq:ErrExt}
\|\psi-\psi_h\|_{H_\Gamma^{s_1}} \leq C h^{s_2-s_1} \left({c_\psi+}\|\psi\|_{H^{s_2}_\Gamma}\right) \quad \mbox{and} \quad  \|\psi_h\|_{H_\Gamma^{s_2}} \leq C\left(c_\psi+\|\psi\|_{H^{s_2}_\Gamma}\right),
\end{equation}
for some constant $C>0$ independent of $h$, $\psi$, and $\psi_h$.
\end{prop}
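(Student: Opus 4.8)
The plan is to run the standard ``inverse estimate plus near-best approximation'' argument, but first transporting everything from the $\IH^\tau(\Gamma)$ scale (where Theorem \ref{thm:inverse} and Proposition \ref{prop:convrateHausdorff} live) into the $H^s_\Gamma$ scale via the unitary isomorphism of Theorem \ref{thm:Density}. Concretely, for $\sigma\in\big(-(n-d)/2-1,-(n-d)/2\big)$ the map $\tr^*:\IH^{\sigma+(n-d)/2}(\Gamma)\to H^\sigma_\Gamma$ is a unitary isomorphism (this is \eqref{eq:tr*} with $t=-(\sigma+(n-d)/2)\in(0,1)$), and by definition it carries $\IY_h$ onto $Y_h$ and $\IX_\nu$ onto $X_\nu$. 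Since $r,s_1,s_2$ all lie in $\big(-(n-d)/2-1,-(n-d)/2\big)$, the corresponding exponents $\tau=\sigma+(n-d)/2$ lie in $(-1,0)\subset(-1,1)$, so both the inverse estimate and the projection bounds are available and transfer with the same constants (as $\tr^*$ is norm-preserving). In particular, transporting \eqref{eq:InvEstGen} gives, for $\sigma_1<\sigma_2$ in that range, an inverse estimate $\|\psi_h\|_{H^{\sigma_2}_\Gamma}\le c\,h^{\sigma_1-\sigma_2}\|\psi_h\|_{H^{\sigma_1}_\Gamma}$ valid for all $\psi_h\in Y_h$.

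Next I would fix the operator to subtract off. Choosing $\nu$ exactly as in the proof of Corollary \ref{cor:convh} (so that $2^{-\nu-1}\le h<2^{-\nu}$ and $X_\nu\subset Y_h$), set $Q_h:=\tr^*\,\IP_\nu\,(\tr^*)^{-1}$, which maps into $X_\nu\subset Y_h$. Transporting \eqref{eq:HtboundPnu} and \eqref{eq:Htbound} through $\tr^*$, and using $2^{-\nu}\simeq h$, I record the three properties needed: boundedness $\|Q_h\psi\|_{H^{s_2}_\Gamma}\le C\|\psi\|_{H^{s_2}_\Gamma}$, and the approximation bounds $\|\psi-Q_h\psi\|_{H^{s_1}_\Gamma}\le c\,h^{s_2-s_1}\|\psi\|_{H^{s_2}_\Gamma}$ and $\|\psi-Q_h\psi\|_{H^{r}_\Gamma}\le c\,h^{s_2-r}\|\psi\|_{H^{s_2}_\Gamma}$ (here I may assume $s_1<s_2$; the degenerate case $s_1=s_2$ of \eqref{eq:ErrExt} follows at the end from the second estimate and the triangle inequality).

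Then I would split $\psi-\psi_h=(\psi-Q_h\psi)+(Q_h\psi-\psi_h)$. The first summand is controlled directly by the $H^{s_1}_\Gamma$ approximation bound. For the second, since $Q_h\psi-\psi_h\in Y_h$, the inverse estimate from $H^r_\Gamma$ to $H^{s_1}_\Gamma$ (legitimate precisely because $r<s_1$) gives
\[
\|Q_h\psi-\psi_h\|_{H^{s_1}_\Gamma}\le c\,h^{r-s_1}\|Q_h\psi-\psi_h\|_{H^{r}_\Gamma},
\]
while the triangle inequality bounds $\|Q_h\psi-\psi_h\|_{H^r_\Gamma}\le\|\psi-Q_h\psi\|_{H^r_\Gamma}+\|\psi-\psi_h\|_{H^r_\Gamma}\le c\,h^{s_2-r}\big(c_\psi+\|\psi\|_{H^{s_2}_\Gamma}\big)$, using the $H^r_\Gamma$ approximation bound and the hypothesis. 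Multiplying by $h^{r-s_1}$ collapses the powers to $h^{s_2-s_1}$, which yields the first inequality of \eqref{eq:ErrExt}. For the second inequality I write $\|\psi_h\|_{H^{s_2}_\Gamma}\le\|\psi_h-Q_h\psi\|_{H^{s_2}_\Gamma}+\|Q_h\psi\|_{H^{s_2}_\Gamma}$, bound the first term by the inverse estimate from $H^r_\Gamma$ to $H^{s_2}_\Gamma$ (legitimate because $r<s_2$), whose negative power $h^{r-s_2}$ exactly cancels the $h^{s_2-r}$ just obtained, and bound the second term by boundedness of $Q_h$.

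The argument is essentially bookkeeping, and I expect no conceptual obstacle; the only points requiring care are the exponent translation $\sigma\mapsto\sigma+(n-d)/2$ (and checking the images stay in $(-1,1)$ so that the wavelet-based inverse and projection estimates are all applicable), the consistent choice of a single $\nu$ with $2^{-\nu}\simeq h$ and $X_\nu\subset Y_h$, and verifying that it is exactly the strict inequalities $r<s_1$ and $r<s_2$ that make the two applications of the inverse estimate legitimate. All constants produced are independent of $h$, $\psi$ and $\psi_h$, because the transported inverse, approximation and boundedness constants are.
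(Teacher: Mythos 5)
Your proposal is correct and follows essentially the same route as the paper's proof: both split $\psi-\psi_h$ via the wavelet projection $\IP_\nu$ (with $2^{-\nu}\simeq h$ and $\IX_\nu\subset\IY_h$), apply the inverse estimate \eqref{eq:InvEstGen} to the difference lying in the approximation space, and combine the hypothesis at level $r$ with the projection bounds \eqref{eq:HtboundPnu} and \eqref{eq:Htbound} of Proposition \ref{prop:convrateHausdorff}; the only differences are cosmetic (the paper does its bookkeeping in the $\IH^\tau(\Gamma)$ scale via $\Psi=(\tr^*)^{-1}\psi$ and gets the second bound of \eqref{eq:ErrExt} by setting $s_1=s_2$ in the first, whereas you transport everything to the $H^s_\Gamma$ scale and prove the second bound directly). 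The paper also states explicitly the harmless reduction to $0<h\leq\diam(\Gamma)/2$ (using $Y_h\subset Y_{h'}$ for $h'\leq h$) that guarantees $\nu\geq\nu_0$, which your appeal to the choice of $\nu$ in Corollary \ref{cor:convh} leaves implicit.
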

\begin{proof} As in the proof of Theorem \ref{thm:inverse}, since $Y_h\subset Y_{h^\prime}$ for $0<h^\prime \leq h\leq \diam(\Gamma)$, it is enough to show this result for $0<h\leq \diam(\Gamma)/2$. So suppose $0<h\leq \diam(\Gamma)/2$ and $\psi_h\in Y_h$. Recalling \eqref{eq:tr*}, let $\Psi:=(\tr^*)^{-1}\psi\in \IH^{s_2+(n-d)/2}(\Gamma)$ and $\Psi_h:=(\tr^*)^{-1}\psi_h\in \IY_h$. Then, for every $\Xi_h\in \IY_h$, using \eqref{eq:tr*} and the inverse inequality \eqref{eq:InvEstGen},
\begin{eqnarray} \nonumber
\|\psi-\psi_h\|_{H_\Gamma^{s_1}} &= &\|\Psi-\Psi_h\|_{\IH^{s_1+(n-d)/2}(\Gamma)} \leq  \|\Psi-\Xi_h\|_{\IH^{s_1+(n-d)/2}(\Gamma)} + \|\Xi_h-\Psi_h\|_{\IH^{s_1+(n-d)/2}(\Gamma)}\\ \nonumber
& \leq &  \|\Psi-\Xi_h\|_{\IH^{s_1+(n-d)/2}(\Gamma)} + ch^{r-s_1}\|\Xi_h-\Psi_h\|_{\IH^{r+(n-d)/2}(\Gamma)}\\ \nonumber
 & \leq & \|\Psi-\Xi_h\|_{\IH^{s_1+(n-d)/2}(\Gamma)} + ch^{r-s_1}\big(\|\Psi-\Xi_h\|_{\IH^{r+(n-d)/2}(\Gamma)} + \|\psi-\psi_h\|_{H_\Gamma^{r}}\big)\\ \label{eq:ErrEst2}
  & \leq & \|\Psi-\Xi_h\|_{\IH^{s_1+(n-d)/2}(\Gamma)} + ch^{r-s_1}\|\Psi-\Xi_h\|_{\IH^{r+(n-d)/2}(\Gamma)} +cc^*h^{s_2-s_1}c_\psi. %
\end{eqnarray}
Let $\nu\geq \nu_0$ be such that $2^{-\nu -1}\leq h<2^{-\nu}$, so that, by \eqref{eq:Xnugen} and \eqref{eq:Yhgen}, $\IX_\nu\subset\IY_{ h}$, and, where $\IP_\nu$ is as defined in Proposition \ref{prop:convrateHausdorff},
let $\Xi_h:= \IP_\nu\Psi\in \IX_\nu$. Then, for $r\leq s\leq s_2$, by Proposition \ref{prop:convrateHausdorff} (specifically by \eqref{eq:HtboundPnu} when $s=s_2$, \eqref{eq:Htbound}, applied with $t_1=s+(n-d)/2$ and $t_2=s_2+(n-d)/2$, when $r\leq s<s_2$), and again using \eqref{eq:tr*},
$$
\|\Psi-\Xi_h\|_{\IH^{s+(n-d)/2}(\Gamma)} \leq C_s 2^{-\nu(s_2-s)} \|\Psi\|_{\IH^{s_2+(n-d)/2}(\Gamma)} \leq C_s 2^{s_2-s}h^{s_2-s} \|\psi\|_{H^{s_2}_\Gamma}\leq 2C_s h^{s_2-s} \|\psi\|_{H^{s_2}_\Gamma}.
$$
This bound, applied with $s=r$ and $s=s_1$, combined with \eqref{eq:ErrEst2},
gives the first bound in \eqref{eq:ErrExt}. The second bound follows on taking $s_1=s_2$.
\end{proof}

The first claim of the following corollary follows immediately from the above result. The second (cf.~\cite[Thm.~1.4]{ErStEl90}, \cite{CoSt88}, \cite[Thm.~3.2.4]{Ciarlet78}) combines our earlier results with standard Aubin-Nitsche lemma arguments.
\begin{cor} \label{cor:GalExt} Suppose that $\Gamma$ satisfies the conditions of Theorem \ref{thm:BEMConvergence} and that $\phi$ and $\phi_N$ are as defined in Theorem \ref{thm:BEMConvergence}. Suppose also that $-1/2 \leq s_1 < s_2<-(n-d)/2$ and that $\phi\in H^{s_2}_\Gamma$. Then, for some constant $c>0$ independent of $h$ and $\phi$,
\begin{equation} \label{eq:GalerkinBoundG}
\|\phi-\phi_N\|_{H_\Gamma^{s_1}} \leq c h^{s_2-s_1} \|\phi\|_{H^{s_2}_\Gamma} \quad \mbox{and} \quad \|\phi_N\|_{H_\Gamma^{s_2}} \leq c \|\phi\|_{H^{s_2}_\Gamma}.
\end{equation}
If Conjecture \ref{ass:Smoothness} holds, then \eqref{eq:GalerkinBoundG} holds also for $-1+(n-d)/2< s_1<-1/2 \leq s_2 <-(n-d)/2$.
\end{cor}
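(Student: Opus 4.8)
The plan is to treat the two claims separately. The first claim, for $-1/2\le s_1<s_2<-(n-d)/2$, I expect to read off directly from Proposition \ref{prop:InvApp1}. Indeed, the energy-norm bound \eqref{eq:GalerkinBound} of Theorem \ref{thm:BEMConvergence} (available since $s_2>s_1\ge -1/2$) is exactly the hypothesis of that proposition taken with $\psi=\phi$, $\psi_h=\phi_N$, $r=-1/2$ and $c_\psi=\|\phi\|_{H^{s_2}_\Gamma}$: it reads $\|\phi-\phi_N\|_{H^{-1/2}_\Gamma}\le c h^{s_2+1/2}\|\phi\|_{H^{s_2}_\Gamma}$. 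For $-1/2<s_1<s_2$ the two conclusions \eqref{eq:ErrExt} then deliver both inequalities of \eqref{eq:GalerkinBoundG} simultaneously. The only endpoint needing a separate word is $s_1=-1/2$, where the first inequality of \eqref{eq:GalerkinBoundG} is literally \eqref{eq:GalerkinBound}, while the stability bound $\|\phi_N\|_{H^{s_2}_\Gamma}\le c\|\phi\|_{H^{s_2}_\Gamma}$ follows from the second half of \eqref{eq:ErrExt} applied with the proposition's ``$s_1$'' and ``$s_2$'' both equal to $s_2$.

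For the second claim I would run an Aubin--Nitsche duality argument, carried out in the trace-space scale $\IH^\tau(\Gamma)$, where the duality of Corollary \ref{cor:Wavelets}(iv) is transparent. First I transfer the problem across the unitary isomorphism $\tr^*$: set $\mathcal{E}:=(\tr^*)^{-1}(\phi-\phi_N)\in\IH^{-t_d}(\Gamma)$, recall from \eqref{eqn:SesquiS} that $a(\tr^*\cdot,\tr^*\cdot)=\langle\IS\cdot,\cdot\rangle$, so that Galerkin orthogonality reads $\langle\IS\mathcal{E},\bby\rangle=0$ for all $\bby\in\IY_h$, and note that $H^{s_1}_\Gamma$ corresponds to $\IH^{\tau}(\Gamma)$ with $\tau:=s_1+(n-d)/2$, which lies in $(-2t_d,-t_d)$ for $s_1$ in the stated range. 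Writing the norm by duality, $\|\mathcal{E}\|_{\IH^\tau(\Gamma)}=\sup_{\eta\neq 0}|\langle\mathcal{E},\eta\rangle_{\IH^\tau(\Gamma)\times\IH^{-\tau}(\Gamma)}|/\|\eta\|_{\IH^{-\tau}(\Gamma)}$, I would, for each $\eta\in\IH^{-\tau}(\Gamma)$, solve the adjoint problem: find $z_\eta\in\IH^{-t_d}(\Gamma)$ with $\langle\IS\bby,z_\eta\rangle=\langle\bby,\eta\rangle$ for all $\bby\in\IH^{-t_d}(\Gamma)$. The conjugate-symmetry \eqref{eq:symm} of $a$ makes the adjoint of $\IS$ invertible on the same scale as $\IS$, so under Conjecture \ref{ass:Smoothness} (which applies precisely because $\tau>-2t_d$) we obtain $z_\eta\in\IH^{-\tau-2t_d}(\Gamma)$ with $\|z_\eta\|_{\IH^{-\tau-2t_d}(\Gamma)}\le C\|\eta\|_{\IH^{-\tau}(\Gamma)}$; since $-\tau>t_d$, the exponent $-\tau-2t_d$ exceeds $-t_d$, so $z_\eta$ is strictly smoother than the energy space.

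The argument then closes in the standard way: taking $\bby=\mathcal{E}$ gives $\langle\mathcal{E},\eta\rangle=\langle\IS\mathcal{E},z_\eta\rangle=\langle\IS\mathcal{E},z_\eta-\Pi_h z_\eta\rangle$ for any $\Pi_h z_\eta\in\IY_h$, by Galerkin orthogonality; boundedness of $\IS$ together with the best-approximation estimate of Proposition \ref{prop:convrateHausdorff} (in the $\IY_h$ form underlying Corollary \ref{cor:convh}) applied to $z_\eta$ then yields $|\langle\mathcal{E},\eta\rangle|\le c\,h^{-\tau-t_d}\|\mathcal{E}\|_{\IH^{-t_d}(\Gamma)}\|\eta\|_{\IH^{-\tau}(\Gamma)}$, hence $\|\mathcal{E}\|_{\IH^\tau(\Gamma)}\le c\,h^{-\tau-t_d}\|\mathcal{E}\|_{\IH^{-t_d}(\Gamma)}$. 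Transferring back through $\tr^*$ this is $\|\phi-\phi_N\|_{H^{s_1}_\Gamma}\le c\,h^{-s_1-1/2}\|\phi-\phi_N\|_{H^{-1/2}_\Gamma}$; combining with \eqref{eq:GalerkinBound} for the exponent $s_2$ (and plain quasi-optimality \eqref{eq:Quasiopt} with $\psi_N=0$ in the boundary case $s_2=-1/2$) produces $h^{-s_1-1/2}h^{s_2+1/2}=h^{s_2-s_1}$, as required. I expect the main obstacle to be the bookkeeping that pins down the reachable range of $s_1$: the lower bound $s_1>-1+(n-d)/2$ is exactly the condition $-\tau<2t_d$ needed for $\IS^{-1}$ (via Conjecture \ref{ass:Smoothness}) to act on the adjoint data $\eta$, while the upper bound $s_1<-1/2$ is exactly $-\tau>t_d$, which is what makes $z_\eta$ smoother than $\IH^{-t_d}(\Gamma)$ so the best-approximation step yields a positive power of $h$. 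Keeping the duality pairings, the conjugation in \eqref{eq:symm}, and the extended mapping properties mutually consistent across these nonstandard trace spaces is the delicate part.
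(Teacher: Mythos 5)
Your proposal is correct, and both halves reach the paper's conclusions; the interest is in how your second half compares with the paper's.

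For the first claim your route is exactly the paper's: Theorem \ref{thm:BEMConvergence} supplies the hypothesis of Proposition \ref{prop:InvApp1} with $r=-1/2$ and $c_\psi=\|\phi\|_{H^{s_2}_\Gamma}$, and \eqref{eq:ErrExt} yields both bounds in \eqref{eq:GalerkinBoundG}; your separate treatment of the endpoint $s_1=-1/2$ (where Proposition \ref{prop:InvApp1} formally requires $r<s_1$, so one falls back on \eqref{eq:GalerkinBound} and on the proposition with both indices equal to $s_2$) is a point of care the paper's one-line citation glosses over. For the second claim, you and the paper both run an Aubin--Nitsche duality argument in which Conjecture \ref{ass:Smoothness} supplies the extra regularity of the dual solution, but the realizations differ. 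The paper stays in the $H^s_\Gamma$ scale: it represents $\|\phi-\phi_N\|_{H^{s_1}_\Gamma}$ by duality against $\varphi\in\tH^{-s_1}(\Gamma^c)^\perp$, takes $\zeta$ to be the solution of \eqref{eqn:VariationalCts} with data $\varphi$, obtains $\zeta\in H^{-1-s_1}_\Gamma$ from Remark \ref{rem:regularity} (this is where the Conjecture enters), and then reuses the superconvergence inequality \eqref{eq:basic} together with \eqref{eq:GalerkinBound} applied to both $\phi$ and $\zeta$. You instead transfer everything through $\tr^*$ to the trace scale, represent the $\IH^{\tau}(\Gamma)$ norm via the wavelet duality of Corollary \ref{cor:Wavelets}(iv), solve an adjoint problem for $\IS$ directly (using \eqref{eq:symm} and the Conjecture to place $z_\eta$ in $\IH^{-\tau-2t_d}(\Gamma)$), and close with Galerkin orthogonality, continuity of $\IS$, and the projection estimate of Proposition \ref{prop:convrateHausdorff}. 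The net bound is the same, and your range bookkeeping ($-\tau\in(t_d,2t_d)$ translating exactly to $-1+(n-d)/2<s_1<-1/2$) and the endpoint fix at $s_2=-1/2$ via \eqref{eq:Quasiopt} are sound. What your version buys is the clean intermediate inequality $\|\phi-\phi_N\|_{H^{s_1}_\Gamma}\leq c\,h^{-s_1-1/2}\|\phi-\phi_N\|_{H^{-1/2}_\Gamma}$, valid for the Galerkin error irrespective of the regularity of $\phi$, which the paper leaves implicit; the price is that you must track the conjugations in the adjoint problem by hand (the step $\IS\bar z_\eta=\bar\eta$ hidden behind ``the adjoint of $\IS$ is invertible on the same scale''), whereas the paper packages precisely this bookkeeping inside \eqref{eq:basic} and Remark \ref{rem:regularity}, at the cost of invoking the dual problem's Galerkin solution $\zeta_N$ where you need only a best-approximation element.
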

\begin{proof} The first claim follows from Theorem \ref{thm:BEMConvergence} and Proposition \ref{prop:InvApp1} applied with $r=-1/2$ and $c_\psi = \|\psi\|_{H_\Gamma^{s_2}}$. Suppose now that Conjecture \ref{ass:Smoothness} holds and $-1+(n-d)/2< s_1 <-1/2 \leq s_2 <-(n-d)/2$.  For $\varphi\in \tH^{-s_1}(\Gamma^c)^\perp\subset H^{1/2}(\Gamma^\infty)$, define the bounded linear functional $J_\varphi(\cdot)$ on $H_\Gamma^{-1/2}$ by
$$
J_\varphi(\psi) = \langle \varphi,\overline{\psi}\rangle_{H^{1/2}(\Gamma_\infty)\times H^{-1/2}(\Gamma_\infty)} = \langle \varphi,\overline{\psi}\rangle_{H^{-s_1}(\Gamma_\infty)\times H^{s_1}(\Gamma_\infty)}, \quad \psi\in H^{-1/2}(\Gamma).
$$
Recalling from \S\ref{sec:FunctionSpaces} that $\tH^{-s_1}(\Gamma^c)^\perp$ is a unitary realisation of the dual space of $H_\Gamma^{s_1}$ with respect to the duality pairing $\langle \cdot,\cdot\rangle_{H^{-s_1}(\Gamma_\infty)\times H^{s_1}(\Gamma_\infty)}$, we have that
$$
\|\phi-\phi_N\|_{H_\Gamma^{s_1}} = \sup_{\varphi\in \tH^{-s_1}(\Gamma^c)^\perp \setminus\{0\}} \frac{|J_\varphi(\phi-\phi_N)|}{\|\varphi\|_{H^{-s_1}(\Gamma_\infty)}}.
$$
Given $\varphi\in \tH^{-s_1}(\Gamma^c)^\perp$, let
$\zeta\in H^{-1/2}_\Gamma$ denote the solution of \rf{eqn:VariationalCts} with $-g$ replaced by $\varphi$. Noting that, by Theorem \ref{thm:Density}, $\tr \varphi\in \IH^{-s_1-(n-d)/2}(\Gamma)$  with $\|\tr\varphi\|_{\IH^{-s_1-(n-d)/2}(\Gamma)}= \|\varphi\|_{H^{-s_1}(\Gamma_\infty)}$, it follows from
Remark \ref{rem:regularity} that $\zeta\in H^{-s_1-1}_\Gamma$, with $\|\zeta\|_{H^{-s_1-1}(\Gamma_\infty)} \leq C\|\varphi\|_{H^{-s_1}(\Gamma_\infty)}$, for some constant $C>0$ independent of $\varphi$. It follows from \eqref{eq:GalerkinBound} and \eqref{eq:basic} that, for some constant $c'>0$ independent of $\phi$, $h$, and $\zeta$,
$$
|J_\varphi(\phi-\phi_N)| \leq c'h^{s_2-s_1}\|\phi\|_{H^{s_2}_\Gamma}\|\zeta\|_{H^{-1-s_1}_\Gamma} \leq c'Ch^{s_2-s_1}\|\phi\|_{H^{s_2}_\Gamma}\|\varphi\|_{H^{-s_1}(\Gamma_\infty)},
$$
so that $\|\phi-\phi_N\|_{H_\Gamma^{s_1}}\leq c'Ch^{s_2-s_1}\|\phi\|_{H^{s_2}_\Gamma}$.
\end{proof}

Another application of Theorem \ref{thm:inverse}, specifically \eqref{eq:InvEst}, is to prove bounds on the condition number of the matrix in our Galerkin BEM. Let %
$N:=\# L_h$ and suppose that $\{f^i\}_{i=1}^N = \{\chi_{\bm}:\bm\in L_h\}$, i.e.\
$f^1,f^2,...,f^N$ is a particular ordering of the $\IL_2(\Gamma)$-orthonormal basis $\{\chi_{\bm}:\bm\in L_h\}$ of $\IY_h$, and let $\{e^i=\tr^*f^i\}_{i=1}^N$ be the corresponding basis for $Y_h$.
Then the Galerkin method using the $N$-dimensional space $V_N=Y_h$ leads to the Galerkin matrix $A\in \C^{N\times N}$ given by \eqref{eqn:GalerkinElements}. The following theorem bounds the $2$-norm, $\|\cdot\|_2$, of this matrix and its inverse. The bound for $\|A\|_2$ is in terms of $\|\IS\|_2$, the norm of $\IS$ as an operator on $\IL_2(\Gamma)$, and recall that $t_d$ is defined in \eqref{eq:tdDef}. The numerical results reported at the end of \S\ref{s:exp:Dust} suggest that these bounds are sharp in their dependence on $h$ and $d$.

\begin{thm}\label{thm:MatrixBounds}
Let $\Gamma$ be an IFS attractor satisfying the OSC with $n-1<d=\dimH(\Gamma)<n$. Then, with $V_N=Y_h$ as described above,
the Galerkin matrix $A\in \C^{N\times N}$ defined in \eqref{eqn:GalerkinElements} satisfies
$$
\|A\|_2 \leq \|\IS\|_2.
$$
If also
$\Gamma$ is disjoint
then, for some $c>0$, %
\begin{equation} \label{eq:DiscreteC}
|\va^H A \va| \geq c h^{2t_d}\|\va\|_2^2, \quad \mbox{for all } \va=(a_1,...,a_N)^T\in \C^N,
\end{equation}
so that also
\begin{equation} \label{eq:InvBound}
\|A^{-1}\|_2 \leq c^{-1} h^{-2t_d}.
\end{equation}
In particular, \rf{eq:DiscreteC} and \rf{eq:InvBound} hold with $c = \alpha c_{t_d}^{-2}$, where $\alpha$ is the coercivity constant from \eqref{eq:ContCoer} and $c_{t_d}$ is the constant from \eqref{eq:InvEst} when $t=t_d$.
\end{thm}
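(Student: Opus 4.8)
The plan is to transfer everything to the operator $\IS$ acting on $\IL_2(\Gamma)$, exploiting that the chosen basis $\{f^i\}_{i=1}^N=\{\chi_{\bm}:\bm\in L_h\}$ is $\IL_2(\Gamma)$-orthonormal. First I would record the key algebraic identity: for $\va,\vb\in\C^N$, setting $F:=\sum_j a_j f^j$ and $G:=\sum_i b_i f^i$ in $\IY_h\subset\IL_2(\Gamma)$, bilinearity of \eqref{eq:Galerkin} gives $\vb^H A\va=(\IS F,G)_{\IL_2(\Gamma)}$, while orthonormality of the basis gives $\|F\|_{\IL_2(\Gamma)}=\|\va\|_2$ and $\|G\|_{\IL_2(\Gamma)}=\|\vb\|_2$. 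All three bounds of the theorem will be read off from this identity.

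For the upper bound I would first observe that $\IS$ is bounded on $\IL_2(\Gamma)$, so that $\|\IS\|_2<\infty$: since $\IL_2(\Gamma)\subset\IH^{-t_d}(\Gamma)$ and $\IH^{t_d}(\Gamma)\subset\IL_2(\Gamma)$ with continuous embeddings, the mapping $\IS:\IH^{-t_d}(\Gamma)\to\IH^{t_d}(\Gamma)$ of Proposition \ref{lem:cont} restricts to a bounded operator on $\IL_2(\Gamma)$. Then Cauchy--Schwarz yields $|\vb^H A\va|=|(\IS F,G)_{\IL_2(\Gamma)}|\leq\|\IS\|_2\,\|\va\|_2\,\|\vb\|_2$, and taking the supremum over unit vectors $\vb$ gives $\|A\|_2\leq\|\IS\|_2$. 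This step uses only the OSC, not disjointness.

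For the coercivity estimate \eqref{eq:DiscreteC} I would put $\vb=\va$ and move from the $\IL_2$-inner product to the sesquilinear form: by \eqref{eq:L2dualequiv} and \eqref{eqn:SesquiS}, $\va^H A\va=(\IS F,F)_{\IL_2(\Gamma)}=\langle\IS F,F\rangle_{\IH^{t_d}(\Gamma)\times\IH^{-t_d}(\Gamma)}=a(\tr^*F,\tr^*F)$. Coercivity of $a(\cdot,\cdot)$ (Lemma \ref{lem:coer}) bounds this below by $\alpha\|\tr^*F\|_{H^{-1/2}_\Gamma}^2$, and since $\tr^*:\IH^{-t_d}(\Gamma)\to H^{-1/2}_\Gamma$ is a unitary isomorphism (Theorem \ref{thm:Density}) this equals $\alpha\|F\|_{\IH^{-t_d}(\Gamma)}^2$. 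The decisive ingredient is then the inverse estimate \eqref{eq:InvEst} with $t=t_d\in(0,1/2]$, applied to $F\in\IY_h$, which gives $\|F\|_{\IH^{-t_d}(\Gamma)}\geq c_{t_d}^{-1}h^{t_d}\|F\|_{\IL_2(\Gamma)}$; combining and using $\|F\|_{\IL_2(\Gamma)}=\|\va\|_2$ yields $|\va^H A\va|\geq\alpha c_{t_d}^{-2}h^{2t_d}\|\va\|_2^2$, which is \eqref{eq:DiscreteC} with $c=\alpha c_{t_d}^{-2}$. Finally \eqref{eq:InvBound} follows from \eqref{eq:DiscreteC} and Cauchy--Schwarz, since $c h^{2t_d}\|\va\|_2^2\leq|\va^H A\va|\leq\|A\va\|_2\|\va\|_2$ forces $\|A\va\|_2\geq c h^{2t_d}\|\va\|_2$, whence $A$ is invertible with $\|A^{-1}\|_2\leq c^{-1}h^{-2t_d}$.

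I do not expect a genuine obstacle here: the proof is an assembly of facts already established earlier in the paper. The entire $h$-dependence of the lower bound is supplied by the inverse estimate \eqref{eq:InvEst}, and this is where the real work lives; coercivity alone only controls the weaker norm $\|F\|_{\IH^{-t_d}(\Gamma)}\leq\|F\|_{\IL_2(\Gamma)}$, which gives no useful lower bound on $\|\va\|_2$ without the scale-dependent reverse comparison valid on the finite-dimensional space $\IY_h$. The only points needing care are the bookkeeping of which embeddings and isometries intervene---in particular that the basis is orthonormal in $\IL_2(\Gamma)$ rather than in $\IH^{-t_d}(\Gamma)$---and checking that $t_d\leq 1$ so that \eqref{eq:InvEst} applies and that disjointness of $\Gamma$ (needed for \eqref{eq:InvEst}) is invoked only for the lower bound.
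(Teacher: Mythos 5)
Your proposal is correct and follows essentially the same route as the paper's own proof: the identity $\vb^H A\va = (\IS \Psi_h,\widetilde{\Psi}_h)_{\IL_2(\Gamma)}$ combined with $\IL_2(\Gamma)$-orthonormality of the basis for the upper bound, and coercivity of $a(\cdot,\cdot)$ transferred through the unitary isomorphism $\tr^*$ followed by the inverse estimate \eqref{eq:InvEst} with $t=t_d$ for the lower bound, with \eqref{eq:InvBound} deduced in the standard way from $|\va^H A\va|\leq \|\va\|_2\,\|A\va\|_2$. The only (harmless) addition is your explicit verification that $\IS$ is bounded on $\IL_2(\Gamma)$ via the embeddings $\IL_2(\Gamma)\subset\IH^{-t_d}(\Gamma)$ and $\IH^{t_d}(\Gamma)\subset\IL_2(\Gamma)$, a fact the paper assumes implicitly when it introduces $\|\IS\|_2$.
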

\begin{proof}
For $\va=(a_1,...,a_N)^T\in \C^N$ and $\vb=(b_1,...,b_N)^T\in \C^N$ it holds that
$$
\vb^HA\va = (\IS \Psi_h,\widetilde{\Psi}_h)_{\IL_2(\Gamma)},
$$
where $\Psi_h=\sum_{i=1}^N a_i f^i$, $\widetilde{\Psi}_h=\sum_{i=1}^N b_if^i$, so that
$$
\|A\|_2 = \sup_{\va,\vb\in \C^N\setminus\{0\}} \frac{|\vb^HA\va|}{\|\va\|_2\, \|\vb\|_2} = \sup_{\va,\vb\in \C^N\setminus\{0\}} \frac{|(\IS \Psi_h,\widetilde{\Psi}_h)_{\IL_2(\Gamma)}|}{\|\va\|_2\, \|\vb\|_2} \leq \|\IS\|_2,
$$
as $\|\Psi_h\|_{\IL_2(\Gamma)}=\|\va\|_2$ and $\|\widetilde{\Psi}_h\|_{\IL_2(\Gamma)}=\|\vb\|_2$, since $\{f^1,...,f^N\}$ is $\IL_2(\Gamma)$-orthonormal. Similarly,
it follows from
\eqref{eq:L2dualequiv},  \eqref{eqn:SesquiS}, Lemma \ref{lem:coer}, and Theorem \ref{thm:Density} that
\begin{equation} \label{eq:disc_coer}
|\va^HA\va| = |(\IS \Psi_h,\Psi_h)_{\IL_2(\Gamma)}| = |a(\tr^*\Psi_h,\tr^*\Psi_h)| \geq \alpha \, \|\tr^*\Psi_h\|_{H^{-1/2}_\Gamma}^2 = \alpha \, \|\Psi_h\|_{\IH^{-t_d}(\Gamma)}^2.
\end{equation}
The bound \eqref{eq:DiscreteC}, with $c = \alpha c_{t_d}^{-2}$, follows, if $\Gamma$ is disjoint, by applying Theorem \ref{thm:inverse} with $t=t_d$. The bound \eqref{eq:InvBound} then follows from \eqref{eq:DiscreteC} in the standard way, using the fact that $|\va^HA\va|\leq \|\va\|_2\, \|A\va\|_2$.
\end{proof}

\subsection{Numerical quadrature and fully discrete error estimates}
\label{sec:Quadrature}

To evaluate the Galerkin matrix \eqref{eqn:GalerkinElements} and right-hand side entries \eqref{eqn:GalerkinElementsRHS} we use the quadrature routines from \cite{HausdorffQuadrature}. These are applicable when $\Gamma$ is a disjoint IFS attractor with $n-1<d=\dimH(\Gamma)<n$, and we assume throughout this section that $\Gamma$ is of this form.

As in the previous two sections we focus on the case where $V_N=Y_{h}$ (with $Y_{h}$ defined as in \rf{eq:YHDef}), for some $h\in(0,\diam(\Gamma)]$, and we use the canonical orthonormal basis for $V_N$ 
(i.e.,\ $\tr^*$ applied to the functions $\chi_m$ from \eqref{eq:YHDef2} and \eqref{eq:VkBasisDefn})  %
so that $A$ and $\vb$ are given by \eqref{eqn:GalerkinElements} and \eqref{eqn:GalerkinElementsRHS}.
It follows from \rf{eqn:GalerkinElements} that the Galerkin matrix entries are the double integrals
\begin{align}
\label{eq:GalInt}
A_{ij} = \mu_{\bm(i)}^{-1/2}\mu_{\bm(j)}^{-1/2} \int_{\Gamma_{\bm(i)}}\int_{\Gamma_{\bm(j)}}\Phi(x,y)\,\rd\cH^d(y)\rd\cH^d(x), \qquad i,j\in {1,\ldots,N},
\end{align}
where $\bm(1),\ldots,\bm(N)$ is an ordering of the elements of $L_{h}$ (corresponding to the order of $f^1,\ldots,f^N$ in \S\ref{sec:InverseEstimates}), and for brevity we have written $\mu_{\bm}:=\cH^d(\Gamma_{\bm})$ for $\bm\in I_{\N_0}$. Recall from \S\ref{sec:Wavelets} that $\bm\in I_{\N_0}$ means that either $\bm=0$, in which case $\Gamma_{\bm}=\Gamma_0=\Gamma$ and $\mu_\bm=\cH^d(\Gamma)$, or $\bm=(m_1,\ldots,m_\ell)$ for some $\ell\in\N$, in which case $\mu_{\bm}=\rho_{m_1}^d\cdots \rho_{m_\ell}^d\cH^d\GG$.

For $i\neq j$ the integrand in \rf{eq:GalInt} is continuous,
because, by assumption, the components of $\Gamma$ are disjoint.
In that case, to evaluate \rf{eq:GalInt} we use the composite barycentre rule of \cite[Defn~3.5]{HausdorffQuadrature}.
In our context, this means partitioning the BEM elements $\Gamma_{\bm(i)}$ and $\Gamma_{\bm(j)}$, which have diameter approximately equal to $h$, into a union of self-similar subsets of a possibly smaller approximate diameter $0< h_Q\leq h$, writing the integral in \rf{eq:GalInt} as a sum of integrals over the Cartesian products of these subsets, and approximating these integrals by a one-point barycentre rule.
Specifically, we approximate
\begin{align}
\label{eq:OffDiagQuad}
A_{ij}\approx A_{ij}^Q:= \mu_{\bm(i)}^{-1/2}\mu_{\bm(j)}^{-1/2} \sum_{\bn\in L_{h_Q}^{\bm(i)}}\sum_{\bn'\in L_{h_Q}^{\bm(j)}}\mu_{\bn}\mu_{\bn'}\Phi(x_{\bn},x_{\bn'}),\qquad i,j\in {1,\ldots,N},\,i\neq j,
\end{align}
where, for $i\in {1,\ldots,N}$, %
\begin{align*}
\label{}
L_{h_Q}^{\bm(i)}:=
\begin{cases}
\{\bm(i)\}, & h_Q\geq\diam(\Gamma_{\bm(i)}),\\
\big\{\bn\in L_{h_Q}:\, \Gamma_{\bn}\subset\Gamma_{\bm(i)}\}, &  h_Q<\diam(\Gamma_{\bm(i)}),
\end{cases}
\end{align*}
describes the partitioning of $\Gamma_{\bm(i)}$
and, for $\bn\in I_{\N_0}$,
\[x_{\bn} := \frac{\int_{\Gamma_{\bn}}x\,\rd \cH^d(x)}{\int_{\Gamma_{\bn}}\,\rd \cH^d(x)} = \mu_{\bn}^{-1}\int_{\Gamma_{\bn}}x\,\rd \cH^d(x) \]
is the barycentre of $\Gamma_{\bn}$ with respect to the measure $\cH^d$.
(Note that $x_{\bn}$ is not necessarily an element of $\Gamma_{\bn}$.)
The similarities can be written as $s_m(x)=\rho_mA_mx+v_m$,
for some $v_m\in \R^n$ and some orthogonal $A_m\in\R^{n\times n}$ ($A_m=\pm 1$ in the case $n=1$), and then, writing $\bn=(n_1,\ldots,n_\ell)$, we find (see \cite[Prop.~3.3]{HausdorffQuadrature}) that
\[
x_{\bn}=s_{n_1}\circ s_{n_2}\circ \ldots \circ s_{n_\ell}
\Bigg(\bigg[I-\sum_{m=1}^M\rho_m^{d+1}A_m\bigg]^{-1}\bigg(\sum_{m=1}^M\rho_m^dv_m\bigg)\Bigg),
\]
the formula that we use for calculation of $x_{\bn}$ in \S\ref{sec:NumericalResults}.

For $i=j$ the integral \rf{eq:GalInt} is singular. To evaluate it we adopt the singularity subtraction approach of \cite[\S5]{HausdorffQuadrature}, writing
\begin{align}
 \label{eq:Aii}
  A_{ii} =  \mu_{\bm(i)}^{-1} \int_{\Gamma_{\bm(i)}}\int_{\Gamma_{\bm(i)}}\left(\Phi_{\rm sing}(x,y)+\Phi_{\rm reg}(x,y)\right)\,\rd\cH^d(y)\rd\cH^d(x),
 \end{align}
where
\[
\Phi_{\rm sing}(x,y) :=
\begin{cases}
-\frac{1}{2\pi}\log{|x-y|}, & \quad n=1,\\
\frac{1}{4\pi|x-y|}, & \quad n=2,
\end{cases}
\]
and $\Phi_{\rm reg}:=\Phi -\Phi_{\rm sing}$. The integral of $\Phi_{\rm reg}$
has a continuous integrand
and can be evaluated using the composite barycentre rule,
viz.
\begin{align}
\label{eq:AiiReg}
\int_{\Gamma_{\bm(i)}}\int_{\Gamma_{\bm(i)}}\Phi_{\rm reg}(x,y)\,\rd\cH^d(y)\rd\cH^d(x)\approx \sum_{\bn\in L_{h_Q}^{\bm(i)}}\sum_{\bn'\in L_{h_Q}^{\bm(i)}}\mu_{\bn}\mu_{\bn'}\Phi_{\rm reg}(x_{\bn},x_{\bn'}).
\end{align}
The integral of $\Phi_{\rm sing}$ over $\Gamma_{\bm(i)}\times\Gamma_{\bm(i)}$ is singular, but, using the self-similarity of $\Gamma$ and the symmetry and homogeneity properties of $\Phi_{\rm sing}$, namely the fact that,
for $m=1,\ldots,M$,
$$
\Phi_{\rm sing}( s_m(x),s_m(y)) = \left\{\begin{array}{cc}\Phi_{\rm sing}(x,y) -\frac{1}{2\pi} \log \rho_m, & n=1,\\ \rho_m^{-1}\Phi_{\rm sing}(x,y), & n=2,\end{array}\right.
$$
 it can be written (see \cite[Thm~4.6]{HausdorffQuadrature}) as
\begin{align}
\notag
&\int_{\Gamma_{\bm(i)}}\int_{\Gamma_{\bm(i)}}\Phi_{\rm sing}(x,y)\,\rd\cH^d(y)\rd\cH^d(x)
 \\
\label{eqn:singsub}
& =
\begin{cases}
\displaystyle\bigg(1-\sum_{m=1}^M \rho_m^{2d}\bigg)^{-1}\sum_{m=1}^M\Bigg(-\frac{1}{2\pi} \mu_{\bm(i)}^2\rho_m^{2d}\log( \rho_m) &\\
\displaystyle \hspace{35mm} + \sum_{\substack{m'=1\\m'\neq m}}^M \int_{\Gamma_{(\bm(i),m)}}\int_{\Gamma_{(\bm(i),m')}}\!\!\!\!\!\!\Phi_{\rm sing}(x,y)\,\rd\cH^d(y)\rd\cH^d(x)\Bigg),& n=1,\\
\displaystyle\bigg(1-\sum_{m=1}^M \rho_m^{2d-1}\bigg)^{-1}\sum_{m=1}^M\sum_{\substack{m'=1\\m'\neq m}}^M \int_{\Gamma_{(\bm(i),m)}}\int_{\Gamma_{(\bm(i),m')}}\Phi_{\rm sing}(x,y)\,\rd\cH^d(y)\rd\cH^d(x),& n=2.
\end{cases}
\end{align}
The integrals appearing in \rf{eqn:singsub} all have continous integrands and can be evaluated using the composite barycentre rule (as in \cite[Eqn (47)]{HausdorffQuadrature}),
viz.
\begin{align}
\label{eq:AiiSing}
\int_{\Gamma_{(\bm(i),m)}}\int_{\Gamma_{(\bm(i),m')}}\Phi_{\rm sing}(x,y)\,\rd\cH^d(y)\rd\cH^d(x)\approx \sum_{\bn\in L_{h_Q}^{(\bm(i),m)}}\sum_{\bn'\in L_{h_Q}^{(\bm(i),m')}}\mu_{\bn}\mu_{\bn'}\Phi_{\rm sing}(x_{\bn},x_{\bn'}).
\end{align}
Let $A_{ii}^Q$ denote the resulting approximation of $A_{ii}$, obtained by combining \eqref{eq:Aii}-\eqref{eq:AiiSing}.

Assuming the data $g$ is sufficiently smooth, the right-hand side entries \eqref{eqn:GalerkinElementsRHS} are regular single integrals and can be evaluated by a single integral version of the composite barycentre rules described for double integrals above, as in \cite[Defn~3.1]{HausdorffQuadrature}). Explicitly,
we use the approximation
\begin{align}
\label{eq:bi}
b_{i} = -\mu_{\bm(i)}^{-1/2} \int_{\Gamma_{\bm(i)}}%
\tr g(x)
\,\rd\cH^d(x)
\approx b_{i}^Q:=-\mu_{\bm(i)}^{-1/2}  \sum_{\bn\in L_{h_Q}^{\bm(i)}}\mu_{\bn}
\tr g(x_{\bn})
.
\end{align}
Having computed $A^Q=[A_{ij}^Q]_{i,j=1}^N\approx A$ and $\vb^Q=(b_1^Q,\ldots,b_N^Q)^T\approx \vb$, we solve, instead of \eqref{eq:cvec}, the perturbed linear system $A^Q\vc^{\,Q}=\vb^Q$. Provided this is uniquely solvable (for which see Corollary \ref{cor:fullydiscrete} below), our fully discrete approximation to $\phi$ is $\phi_N^Q := \sum_{j=1}^N c_j^Q e^j$.

Provided $\varphi\in (\tH^{1/2}(\Gamma^c))^\perp$ is sufficiently smooth, we approximate $J(\phi)$, given by \eqref{eq:Jdef}, by an approximation $J^Q(\phi_N^Q)$ defined in a similar way to \eqref{eq:bi}. Using \eqref{eq:JphiNCanonical}, we have that
\begin{equation} \label{eq;JQdef}
J(\phi)\approx J(\phi_N) = \sum_{j=1}^Nc_j \mu_{\bm(j)}^{-1/2}\int_{\Gamma_{\bm(j)}} \tr \varphi \, \rd\cH^d
 \approx J^Q(\phi^Q_N)= \sum_{j=1}^N c_j^Q\mu_{\bm(j)}^{-1/2}\sum_{\bn\in L_{h_Q}^{\bm(j)}}\mu_{\bn}\tr \varphi(x_{\bn}),
\end{equation}
where, for every $\psi_N = \sum_{j=1}^N d_je^j\in V_N$,
\begin{equation} \label{eq:JQdef2}
J^Q(\psi_N) := \sum_{j=1}^N d_j\mu_{\bm(j)}^{-1/2}\sum_{\bn\in L_{h_Q}^{\bm(j)}}\mu_{\bn}\tr \varphi(x_{\bn}).
\end{equation}

The following quadrature error estimates follow from results in \cite{HausdorffQuadrature}, specifically {\cite[Prop.~5.2, Thms~3.6(iii), 5.7 \& 5.11]{HausdorffQuadrature}}.
Following the terminology of \cite{HausdorffQuadrature}, we say an IFS attractor $\Gamma$ is \textit{hull-disjoint} if the convex hulls $\Hull(\Gamma_1), \ldots, \Hull(\Gamma_M)$ are disjoint, which holds if and only if the OSC holds for some open set $O\supset \Hull(\Gamma)$ (cf.\ Lemma \ref{lem:DisjointOSC}, the proof of which works in the same way for hull-disjointness as for disjointness). 
The assumption of hull-disjointness permits analysis of the singular quadrature rules in \cite{HausdorffQuadrature} by Taylor expansion. (The point here is that while the barycentre $x_{\bm}$ of a component $\Gamma_{\bm}$ may not lie in $\Gamma_{\bm}$, it must lie in $\Hull(\Gamma_{\bm}$).) However, numerical experiments suggest that hull-disjointness is not essential for the applicability of the quadrature rules in \cite{HausdorffQuadrature} (see \cite[\S6]{HausdorffQuadrature}). We also suspect that the estimates \rf{eq:AquadEst1} and \rf{eq:AquadEst2} may not be sharp in their $h$-dependence --- see \cite[Rem.~5.10]{HausdorffQuadrature} and the discussion around \cite[Fig.~7]{HausdorffQuadrature}.
In Remark \ref{rem:reducedquad} we describe modifications to our quadrature rule which may improve its efficiency. %
\begin{thm}
\label{thm:Quadrature}
Let $\Gamma$ be an IFS attractor satisfying the OSC with $n-1<d=\dimH(\Gamma)<n$.
Suppose that $0<h_Q\leq h$, and let $A_{ij}$, $A_{ij}^Q$, $b_i$ and $b_i^Q$ be as in \eqref{eq:GalInt}-\eqref{eq:bi}, and $J(\cdot)$ and $J^Q(\cdot)$ be as defined by \eqref{eq:Jdef} and \eqref{eq:JQdef2}, respectively, for some $\varphi\in (\tH^{1/2}(\Gamma^c))^\perp$.

\begin{enumerate}[(i)]
\item
Suppose that $\tr g = G|_\Gamma$, for some $G$ that is twice boundedly differentiable in some open neighbourhood of $\Hull(\Gamma)$.
(For the scattering problem this holds with $G=-u^i$, if $u^i$ satisfies the Helmholtz equation in a neighbourhood of $\Hull(\Gamma)$.)
Then, for $i=1,\ldots,N$,
\begin{align}
\label{eq:bquadEst}
|b_{i}-b^Q_{i}| &\leq h_Q^2
|G|_{2,\Hull(\Gamma)}
\mu_{\bm(i)}^{1/2}
\qquad \text{ and hence } \qquad
\|\vb-\vb^Q\|_2 \leq h_Q^2
|G|_{2,\Hull(\Gamma)}\cH^d(\Gamma)^{1/2},
\end{align}
where $|G|_{2,\Hull(\Gamma)}:=\max_{x\in\Hull(\Gamma)}\max_{\substack{\alpha\in \N_0^n\\|\alpha|=2}}|D^\alpha G(x)|$.
\item
Suppose that $\tr\varphi = V|_\Gamma$, where $V$ is twice boundedly differentiable in some open neighbourhood of $\Hull(\Gamma)$.
(For $\varphi$ given by \eqref{eq:varphi} this holds with $V=-v$ if $v$ is $C^\infty$ in a neighbourhood of $\Hull(\Gamma)$.) Then, for $-1/2\leq s<-(n-d)/2$, there exists a constant $C>0$, independent of $h$, $h_Q$, and $V$, such that, for all $\psi_N\in V_N=Y_h$,
\begin{align} \label{eq:JJQ}
|J(\psi_N)-J^Q(\psi_N)| &\leq C h_Q^2 h^{s+(n-d)/2} |V|_{2,\Hull(\Gamma)} \|\psi_N\|_{H_\Gamma^{s}}.
\end{align}
\item
Suppose that $\Gamma$ is hull-disjoint. Then there exists a constant $C>0$, independent of $h$ and $h_Q$, such that, for $i,j=1,\ldots,N$,
\begin{align}
\label{eq:AquadEst2}
|A_{ij}-A^Q_{ij}| &\leq
C h_{Q} h^{-n}\mu_{\bm(i)}^{1/2}\mu_{\bm(j)}^{1/2},
\qquad \text{ and hence } \qquad
\|A-A^Q\|_2 \leq Ch_Q h^{-n}
\cH^d(\Gamma).
\end{align}
If, further, $\Gamma$ is homogeneous, then there exists a constant $C>0$, independent of $h$ and $h_Q$, such that
\begin{align}
\label{eq:AquadEst1}
|A_{ij}-A^Q_{ij}| &\leq
C h^2_{Q} h^{-(n+1)}\mu_{\bm(i)}^{1/2}\mu_{\bm(j)}^{1/2},
\qquad \text{ and hence } \qquad
\|A-A^Q\|_2 \leq Ch_Q^2 h^{-(n+1)}
\cH^d(\Gamma).
\end{align}
\end{enumerate}
\end{thm}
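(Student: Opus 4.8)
The plan is to reduce all three parts to the per-element (single-integral) and per-entry (double-integral) quadrature error estimates established in \cite{HausdorffQuadrature}, and then to assemble these into the stated vector/matrix/functional bounds by elementary summation together with the inverse inequality of Theorem \ref{thm:inverse}. The basic building block for (i) and (ii) is the single-integral composite barycentre rule error bound \cite[Prop.~5.2]{HausdorffQuadrature}, which, applied to a function $\tr\varphi=V|_\Gamma$ with $V$ twice boundedly differentiable near $\Hull(\Gamma)$, controls $\big|\int_{\Gamma_{\bm}}\tr\varphi\,\rd\cH^d-\sum_{\bn\in L_{h_Q}^{\bm}}\mu_{\bn}\tr\varphi(x_{\bn})\big|$ by $h_Q^2|V|_{2,\Hull(\Gamma)}\mu_{\bm}$ (this second-order scaling being consistent with summing a midpoint-type error $|V|_{2}h_Q^2\mu_{\bn}$ over the partition of $\Gamma_{\bm}$); for (iii) the building blocks are the off-diagonal estimate for continuous integrands \cite[Thm~3.6(iii)]{HausdorffQuadrature} and the singular-diagonal estimates \cite[Thms~5.7,\,5.11]{HausdorffQuadrature}.

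For part (i) I would apply the single-integral estimate directly with $G$ in place of $V$ to obtain the entrywise bound $|b_i-b_i^Q|\le h_Q^2|G|_{2,\Hull(\Gamma)}\mu_{\bm(i)}^{1/2}$ (the extra $\mu_{\bm(i)}^{-1/2}$ in the definition of $b_i$ combining with the $\mu_{\bm(i)}$ in the per-element error), and then the $\ell_2$ bound follows by squaring and summing, using that the elements $\{\Gamma_{\bm(i)}\}$ partition $\Gamma$ up to $\cH^d$-null sets so that $\sum_i\mu_{\bm(i)}=\cH^d(\Gamma)$. Part (ii) is the step requiring the most care. Writing $\psi_N=\sum_j d_j e^j$ and $\Psi_N:=(\tr^*)^{-1}\psi_N=\sum_j d_j\chi_{\bm(j)}$, the difference $J(\psi_N)-J^Q(\psi_N)$ is, by \eqref{eq:JphiNCanonical} and \eqref{eq:JQdef2}, exactly $\sum_j d_j\mu_{\bm(j)}^{-1/2}E_j$, where $E_j$ denotes the single-integral quadrature error on the $j$th element. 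Bounding each $|E_j|\le h_Q^2|V|_{2,\Hull(\Gamma)}\mu_{\bm(j)}$ gives $|J(\psi_N)-J^Q(\psi_N)|\le h_Q^2|V|_{2,\Hull(\Gamma)}\sum_j|d_j|\mu_{\bm(j)}^{1/2}=h_Q^2|V|_{2,\Hull(\Gamma)}\|\Psi_N\|_{\IL_1(\Gamma)}$. The remaining task is to convert this $\IL_1$ bound into one involving $\|\psi_N\|_{H_\Gamma^s}$ with the correct power of $h$: Cauchy--Schwarz gives $\|\Psi_N\|_{\IL_1(\Gamma)}\le\cH^d(\Gamma)^{1/2}\|\Psi_N\|_{\IL_2(\Gamma)}$, and then the inverse estimate \eqref{eq:InvEst} applied with $t=-s-(n-d)/2$ --- which lies in $(0,t_d]\subset(0,1)$ precisely because $-1/2\le s<-(n-d)/2$ --- together with the isometry $\tr^*:\IH^{-t}(\Gamma)\to H_\Gamma^{s}$ from \eqref{eq:tr*} yields $\|\Psi_N\|_{\IL_2(\Gamma)}\le c_t\,h^{-t}\|\Psi_N\|_{\IH^{-t}(\Gamma)}=c_t\,h^{s+(n-d)/2}\|\psi_N\|_{H_\Gamma^s}$. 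Combining these gives \eqref{eq:JJQ}.

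For part (iii) the entrywise bounds come directly from \cite{HausdorffQuadrature}: the off-diagonal entries ($i\ne j$) are integrals of the continuous kernel $\Phi$ over $\Gamma_{\bm(i)}\times\Gamma_{\bm(j)}$ and are handled by \cite[Thm~3.6(iii)]{HausdorffQuadrature}, while the diagonal entries ($i=j$), treated via the singularity-subtraction decomposition \eqref{eq:Aii}--\eqref{eq:AiiSing}, are controlled by \cite[Thm~5.7]{HausdorffQuadrature} in general and by the sharper \cite[Thm~5.11]{HausdorffQuadrature} in the homogeneous case; hull-disjointness is what licenses the Taylor-expansion analysis of the singular rule, since it guarantees that each barycentre $x_{\bm}$ lies in $\Hull(\Gamma_{\bm})$. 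These give $|A_{ij}-A^Q_{ij}|\le C h_Q h^{-n}\mu_{\bm(i)}^{1/2}\mu_{\bm(j)}^{1/2}$ in general and $|A_{ij}-A^Q_{ij}|\le Ch_Q^2h^{-(n+1)}\mu_{\bm(i)}^{1/2}\mu_{\bm(j)}^{1/2}$ when $\Gamma$ is homogeneous. The matrix-norm bounds then follow from $\|A-A^Q\|_2\le\|A-A^Q\|_F=\big(\sum_{i,j}|A_{ij}-A^Q_{ij}|^2\big)^{1/2}$ together with $\sum_{i,j}\mu_{\bm(i)}\mu_{\bm(j)}=\big(\sum_i\mu_{\bm(i)}\big)^2=\cH^d(\Gamma)^2$.

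I expect the main obstacle to be part (ii), and specifically the bookkeeping that identifies the naturally arising norm as the $\IL_1(\Gamma)$ norm of $\Psi_N$ and then upgrades it through the inverse inequality with exactly the exponent $t=-s-(n-d)/2$; one must verify carefully that this $t$ lies in $(0,1)$ across the whole range $-1/2\le s<-(n-d)/2$ so that \eqref{eq:InvEst} is applicable, and that the resulting power $h^{-t}=h^{s+(n-d)/2}$ is the one claimed in \eqref{eq:JJQ}. The only genuinely external input is the near-singular accounting in the off-diagonal and singular-diagonal quadrature estimates of \cite{HausdorffQuadrature} (the source of the $h^{-n}$ and $h^{-(n+1)}$ factors and of the hull-disjointness hypothesis), which I would quote rather than reprove.
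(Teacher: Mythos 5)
Your proposal is correct and follows essentially the same route as the paper's proof: per-element barycentre-rule errors summed over the partition for (i); the same per-element errors combined with Cauchy--Schwarz, the inverse estimate \eqref{eq:InvEst} at $t=-s-(n-d)/2$ and the isometry \eqref{eq:tr*} for (ii); and entrywise off-diagonal/singular-diagonal estimates from \cite{HausdorffQuadrature} assembled via the Frobenius norm for (iii). The only discrepancy is a label swap in the external citations: the paper invokes \cite[Thm~3.6(iii)]{HausdorffQuadrature} for the single-integral (barycentre) errors underlying (i)--(ii) and \cite[Prop.~5.2]{HausdorffQuadrature} for the off-diagonal double integrals in (iii), the reverse of your attribution.
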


\begin{proof}
In what follows, when applying results from \cite{HausdorffQuadrature} we are taking $h$, $\Gamma$ and $\Gamma'$ in \cite{HausdorffQuadrature} to be respectively $h_Q$, $\Gamma_{\bm(i)}$, and $\Gamma_{\bm(j)}$ from the current paper.
For (i), noting that
\[ |b_{i}-b^Q_{i}| = \mu_{\bm(i)}^{-1/2}\bigg|\int_{\Gamma_{\bm(i)}} G(x) \, \rd\cH^d(x) -\sum_{\bn\in L_{h_Q}^{\bm(i)}}\mu_{\bn} G(x_{\bn})\bigg|,\]
the first estimate in \rf{eq:bquadEst} follows from \cite[Thm 3.6(iii)]{HausdorffQuadrature}, and the second then follows from the fact that $\sum_{i=1}^N\mu_{\bm(i)}=\cH^d(\Gamma)$.
For (ii), given $\psi_N\in V_N$ let $\vpsi=((\vpsi)_1,\ldots,(\vpsi)_N)^T\in\IC^N$ denote the coefficient vector of its expansion with respect to the basis $(e^1,\ldots,e^N)$.
By the orthonormality of the basis $(f^1,\ldots, f^N)$ in $\IL_2\GG$, the inverse estimate \eqref{eq:InvEst} and the isometry property \eqref{eq:tr*}, given $0<t<1$ we can bound
\begin{align}
\|\vpsi\|_2
=\|(\tr^*)^{-1}\psi_{N}\|_{\IL_2\GG}
\overset{\eqref{eq:InvEst}}\le {c_{t}}  h^{-t}\|(\tr^*)^{-1}\psi_{N}\|_{\IH^{-t}\GG}
\overset{\eqref{eq:tr*}}={c_{t}}  h^{-t}\|\psi_{N}\|_{H^{-t-\frac{n-d}{2}}_\Gamma}.
\label{eq:CoeffBound}
\end{align}
Then, to prove \eqref{eq:JJQ}, using \eqref{eq:JphiNCanonical}, \eqref{eq:JQdef2}, %
\cite[Thm~3.6(iii)]{HausdorffQuadrature}, \eqref{eq:CoeffBound}, and the Cauchy-Schwarz inequality,
\begin{eqnarray*}
|J(\psi_N) - J^Q(\psi_N)| &= & \Bigg|\sum_{j=1}^N (\vpsi)_j\mu_{\bm(j)}^{-1/2}\bigg(\int_{\Gamma_{\bm(j)}} V(x) \, \rd\cH^d(x) -\sum_{\bn\in L_{h_Q}^{\bm(j)}}\mu_{\bn} V(x_{\bn})\bigg)\Bigg|\\
& \leq &  h_Q^2|V|_{2,\Hull(\Gamma)}(\cH^d(\Gamma))^{1/2}\|\vpsi\|_2\\
&\leq & c_{t} h_Q^2h^{-t}|V|_{2,\Hull(\Gamma)}(\cH^d(\Gamma))^{1/2}\|\psi_N\|_{H_\Gamma^{-t-\frac{n-d}{2}}}.
\end{eqnarray*}
For (iii), the first estimates in \rf{eq:AquadEst2} and \rf{eq:AquadEst1} follow from \cite[Prop 5.2]{HausdorffQuadrature} for the off-diagonal terms and \cite[Thms~5.7 \& 5.11]{HausdorffQuadrature} for the diagonal terms.
The factors of $h^{-(n+1)}$ and $h^{-n}$ come from the fact that, in the notation of \cite[Thms~5.7 \& 5.11]{HausdorffQuadrature}, $R_{\Gamma,\Hull}$ equals $\diam(\Gamma)$ times a constant independent of $\diam(\Gamma)$. We note that, since we are allowing $C$ to be $k$-dependent, we can use the ``$k\diam(\Gamma)\leq c_{\rm osc}$'' estimates in \cite[Thms~5.7 \& 5.11]{HausdorffQuadrature}, since, given $k>0$, the constant $c_{\rm osc}$ can be chosen as large as is required.
To derive the second estimate in \rf{eq:AquadEst2} (a similar argument gives the second estimate in \rf{eq:AquadEst1}), note that, with $\|\cdot\|_{\mathrm{F}}$ denoting the Frobenius norm,
\[ \|A-A^Q\|_2\leq \|A-A^Q\|_{\mathrm{F}} \leq Ch_Qh^{-n}\bigg(\sum_{i=1}^N\sum_{j=1}^N \mu_{\bm(i)}\mu_{\bm(j)} \bigg)^{1/2} = Ch_Qh^{-n}\cH^d(\Gamma).\]
\end{proof}

\begin{rem}[Value of $\cH^d(\Gamma)$]
\label{rem:HausdorffMeasure}
The proposed quadrature formulas require the values of $\mu_{\bm}=\cH^d(\Gamma_\bm)$, which are easily computed in terms of $\cH^d\GG$ as $\mu_{\bm}=\rho_{m_1}^d\cdots \rho_{m_\ell}^d\cH^d\GG$, for all $\bm\in I_\N$.
For $n=1$, the Hausdorff measure of a class of Cantor sets is shown to be $\cH^d\GG=1$ in \cite[Thm.~1.14--1.15]{Fal85}; see, e.g., \cite{Zuberman2019} for more recent related results.
However, for $n>1$ the exact value of the Hausdorff measure of even the simplest IFS attractors is known
only for $d\le1$ (see, e.g., \cite{XiongZhou2005}), i.e.\ for the cases that are not relevant for scattering problems (recall, as discussed above Lemma \ref{lem:Nullity}, that $H^{-1/2}_\Gamma=\{0\}$ and $\phi=0$ for $d\le n-1$).
A simple implementation technique which avoids working with an unknown value for $\cH^d(\Gamma)$ and produces the correct solution $\phi_N$ is just to set $\cH^d(\Gamma)=1$ in all calculations. This is equivalent to
introducing a ``normalised Hausdorff measure'' $\cH^d_\star(\cdot):=\cH^d(\cdot)/\cH^d\GG$, so that $\cH^d_\star\GG=1$, and using it throughout in the BEM in place of $\cH^d(\cdot)$.
\end{rem}

To study the influence of the quadrature error on the BEM solution we first adapt the first Strang lemma to our setting.
\begin{prop}\label{prop:Strang}
Let $\Gamma$ be a disjoint IFS attractor with $n-1<d=\dimH(\Gamma)<n$. Assume that
the unique solution $\phi$ of \eqref{eqn:BIE} belongs to $H^s_\Gamma$ for some $-1/2<s<-(n-d)/2$, so that $0<s+1/2<t_d$.
Let
$V_N=Y_{ h}$ so that $N=\dim(V_N)$.
Let $\alpha$ be the coercivity constant of \eqref{eq:ContCoer} and $c_{t_d}$ the constant in the inverse inequality \eqref{eq:InvEst} when $t=t_d$.
Assume that $A^Q\in\C^{N\times N}$ and $\vb^Q\in\C^N$ are approximations of the Galerkin matrix $A$ and the right-hand side vector $\vb$ in \eqref{eqn:GalerkinElements}--\eqref{eqn:GalerkinElementsRHS}
satisfying
\begin{equation}\label{eq:AmA}
\|A-A^Q\|_2\le   h^{2t_d}E_A\qquad \text{and} \qquad \|\vb-\vb^Q\|_2\le  h^{t_d+s+1/2} E_b,
\end{equation}
for some $0\leq E_A<\alpha/c_{t_d}^2$, and $E_b\geq 0$.
Then %
the perturbed linear system $A^Q\vc^{\,Q}=\vb^Q$ is invertible and the corresponding solution
$\phi_{N}^Q:=\sum_{j=1}^N c_j^Qe^j\in V_N$ %
satisfies the %
error bound
\begin{align}\label{eq:PerturbError}
\|\phi_{N}-\phi_{N}^Q\|_{H^\mhalf_\Gamma}
&\leq \frac{C}{\alpha_Q}\Big(E_A\|\phi\|_{H^s_\Gamma} + E_b\Big)  h^{s+1/2},
\end{align}
where $\phi_N=\sum_{j=1}^Nc_je^j\in V_N$ is the Galerkin solution given by \eqref{eqn:Variational}, $\alpha_Q:= \alpha - E_Ac_{t_d}^2$,
 and $C>0$ is a constant independent of $h$, $\phi$, $E_a$, $E_b$, and $N$.
\end{prop}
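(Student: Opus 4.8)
The plan is to adapt the first Strang lemma to this nonconforming, Hausdorff-measure setting, using the coercivity of $a(\cdot,\cdot)$ together with the inverse inequalities of Theorem~\ref{thm:inverse} to pass between the $\ell^2$-norm of coefficient vectors and the $H^\mhalf_\Gamma$-norm of Galerkin functions. Throughout I write $\vec{c}$ and $\vec{c}^{\,Q}$ for the coefficient vectors of $\phi_N$ and $\phi_N^Q$ with respect to the canonical $\IL_2(\Gamma)$-orthonormal basis $\{f^i\}$, and $\vec{w}:=\vec{c}-\vec{c}^{\,Q}$ for that of $\phi_N-\phi_N^Q$. By orthonormality and the isometry property \eqref{eq:tr*} (with $s=\half$, $t=t_d$), one has $\|\vec{w}\|_2=\|(\tr^*)^{-1}(\phi_N-\phi_N^Q)\|_{\IL_2(\Gamma)}$ and $\|\phi_N-\phi_N^Q\|_{H^\mhalf_\Gamma}=\|(\tr^*)^{-1}(\phi_N-\phi_N^Q)\|_{\IH^{-t_d}(\Gamma)}$.

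First I would establish solvability of the perturbed system. Perturbing the discrete coercivity estimate \eqref{eq:DiscreteC} (which holds with $c=\alpha c_{t_d}^{-2}$ by Theorem~\ref{thm:MatrixBounds}) by the assumed bound $\|A-A^Q\|_2\le h^{2t_d}E_A$ gives, for all $\va\in\C^N$,
\[
|\va^H A^Q\va|\ge |\va^H A\va| - \|A-A^Q\|_2\|\va\|_2^2 \ge c_{t_d}^{-2}(\alpha - E_Ac_{t_d}^2)h^{2t_d}\|\va\|_2^2 = c_{t_d}^{-2}\alpha_Q h^{2t_d}\|\va\|_2^2,
\]
and since $E_A<\alpha/c_{t_d}^2$ we have $\alpha_Q>0$, so $A^Q$ is injective, hence invertible, and $\phi_N^Q$ is well defined.

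Next I would run the Strang argument at the level of the form. Writing $a^Q(\psi_N,\upsilon_N):=\vec{v}^H A^Q\vec{u}$ and $\ell^Q(\upsilon_N):=\vec{v}^H\vb^Q$ (with $\vec{u},\vec{v}$ the coefficient vectors of $\psi_N,\upsilon_N$), the exact and perturbed Galerkin equations read $a(\phi_N,\upsilon_N)=\ell(\upsilon_N)$ and $a^Q(\phi_N^Q,\upsilon_N)=\ell^Q(\upsilon_N)$. Taking $\upsilon_N=\phi_N-\phi_N^Q$, using coercivity (Lemma~\ref{lem:coer}), the identity $a(\phi_N,\cdot)=\ell(\cdot)$, and insertion of $\pm a^Q(\phi_N^Q,\cdot)=\pm\ell^Q(\cdot)$, I obtain
\[
\alpha\|\phi_N-\phi_N^Q\|_{H^\mhalf_\Gamma}^2 \le |\vec{w}^H(\vb-\vb^Q)| + |\vec{w}^H(A^Q-A)\vec{c}^{\,Q}|.
\]
Applying Cauchy--Schwarz and the inverse inequality \eqref{eq:InvEst} (with $t=t_d$) in the form $\|\vec{w}\|_2\le c_{t_d}h^{-t_d}\|\phi_N-\phi_N^Q\|_{H^\mhalf_\Gamma}$, then dividing through, gives $\alpha\|\phi_N-\phi_N^Q\|_{H^\mhalf_\Gamma}\le c_{t_d}h^{-t_d}\big(\|\vb-\vb^Q\|_2 + \|A-A^Q\|_2\|\vec{c}^{\,Q}\|_2\big)$. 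Writing $\|\vec{c}^{\,Q}\|_2\le\|\vec{c}\|_2+\|\vec{w}\|_2$ and using the inverse inequality once more, the $\|\phi_N-\phi_N^Q\|_{H^\mhalf_\Gamma}$-term carries coefficient $c_{t_d}^2h^{-2t_d}\|A-A^Q\|_2\le c_{t_d}^2E_A$, which I absorb on the left; this is exactly where $\alpha_Q=\alpha-E_Ac_{t_d}^2$ appears, leaving $\alpha_Q\|\phi_N-\phi_N^Q\|_{H^\mhalf_\Gamma}\le c_{t_d}h^{-t_d}\big(\|\vb-\vb^Q\|_2+\|A-A^Q\|_2\|\vec{c}\|_2\big)$.

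The crux, and the step I expect to be the main obstacle, is to obtain the sharp power $h^{s+\half}$ on the $E_A\|\phi\|_{H^s_\Gamma}$ term, which requires a bound on $\|\vec{c}\|_2$ sharper than the crude $c_{t_d}h^{-t_d}\|\phi_N\|_{H^\mhalf_\Gamma}$. Here I would exploit that $\phi_N$ inherits the smoothness of $\phi$: by Corollary~\ref{cor:GalExt} (with $s_1=-\half$, $s_2=s$, legitimate since $s>-\half$), $\|\phi_N\|_{H^s_\Gamma}\le c\|\phi\|_{H^s_\Gamma}$, so $\Psi_h:=(\tr^*)^{-1}\phi_N\in\IY_h$ satisfies $\|\Psi_h\|_{\IH^{s+(n-d)/2}(\Gamma)}=\|\phi_N\|_{H^s_\Gamma}\le c\|\phi\|_{H^s_\Gamma}$ by \eqref{eq:tr*}. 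The general inverse inequality \eqref{eq:InvEstGen} with $t_1=s+(n-d)/2\in(-t_d,0)$ and $t_2=0$ then yields $\|\vec{c}\|_2=\|\Psi_h\|_{\IL_2(\Gamma)}\le c\,h^{s+\half-t_d}\|\phi\|_{H^s_\Gamma}$. Substituting this and the data bounds \eqref{eq:AmA}, the powers combine as $h^{-t_d}\cdot h^{t_d+s+\half}=h^{s+\half}$ for the $E_b$ term and $h^{-t_d}\cdot h^{2t_d}\cdot h^{s+\half-t_d}=h^{s+\half}$ for the $E_A$ term, giving \eqref{eq:PerturbError} with $C$ independent of $h$, $\phi$, $E_A$, $E_b$, and $N$. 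The delicate point throughout is to use each inverse inequality only in the favourable direction (bounding coefficient $\ell^2$-norms by negative-order Sobolev norms), since converting the other way would lose a factor $h^{t_d}$ and spoil the rate.
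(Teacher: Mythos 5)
Your proposal is correct and follows essentially the same route as the paper's proof: perturbed coercivity (via the inverse inequality with $t=t_d$) for invertibility of $A^Q$, a first-Strang-lemma estimate for the error, and—crucially, as you identify—the sharp bound $\|\vec{c}\|_2\lesssim h^{s+1/2-t_d}\|\phi\|_{H^s_\Gamma}$ obtained from the inverse inequality at the smoothness level $s$ together with the discrete stability bound $\|\phi_N\|_{H^s_\Gamma}\lesssim\|\phi\|_{H^s_\Gamma}$ of Corollary \ref{cor:GalExt}. The only cosmetic difference is that the paper cites Strang's first lemma from Braess (working with $\vec{c}$ directly), whereas you re-derive the same estimate inline via the test function $\phi_N-\phi_N^Q$ and an absorption argument on the $\vec{c}^{\,Q}$ term, which is where $\alpha_Q=\alpha-E_Ac_{t_d}^2$ appears in both treatments.
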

\begin{proof}
As in the proof of Theorem \ref{thm:Quadrature}, given $\psi_N\in V_N$ let $\vpsi\in\IC^N$ denote the coefficient vector of its expansion with respect to the basis $(e^1,\ldots,e^N)$.
Let $B(\psi_{N}):=-\langle g,\psi_{N}\rangle_{H^\half(\Gamma_\infty)\times H^\mhalf(\Gamma_\infty)} =\vpsi^H\vec b$ and denote the perturbed sesquilinear form and antilinear functional by $a^Q(\xi_{N},\psi_{N}):=\vpsi^H A^Q \vxi$ and $B^Q(\psi_{N}):=\vpsi^H\vec b^Q$, for $\xi_{N},\psi_{N}\in V_N$.
The first bound in \eqref{eq:AmA} gives %
\begin{align}
\nonumber
|a(\xi_{N},\psi_{N})-a^Q(\xi_{N},\psi_{N})|
={|\vpsi^H (A-A^Q)\vxi|}
&\le\|A-A^Q\|_2 \|\vxi\|_2 \|\vpsi\|_2\nonumber\\
&\le h^{t_d}E_A{c_{t_d}} \|\vxi\|_2  \|\psi_{N}\|_{H^\mhalf_\Gamma}   \label{eq:a-aQ}\\
&\le E_A {c_{t_d}^2} \|\xi_{N}\|_{H^\mhalf_\Gamma} \|\psi_{N}\|_{H^\mhalf_\Gamma},
\qquad  \forall \xi_{N},\psi_{N}\in V_N.
\nonumber
\end{align}
From this and the coercivity of $a(\cdot,\cdot)$ in \eqref{eq:ContCoer} follows the coercivity of the perturbed form, and hence the invertibility of $A^Q$:
$$
|a^Q(\psi_{N},\psi_{N})|
\ge |a(\psi_{N},\psi_{N})| - |a(\psi_{N},\psi_{N})-a^Q(\psi_{N},\psi_{N})|
\ge %
\alpha_Q\|\psi_{N}\|_{H^\mhalf_\Gamma}^2,
\quad \forall\psi_h\in V_N.
$$
The second bound in \eqref{eq:AmA}, combined with \eqref{eq:CoeffBound} for $t=t_d$, gives
\begin{align}
\label{eq:B-BQ}
|B(\psi_{N})-B^Q(\psi_{N})|
= |\vpsi^H(\vb-\vb^Q)|
\leq \|\vb-\vb^Q\|_2\|\vpsi\|_2
\leq E_b c_{t_d}h^{s+1/2}\|\psi_{N}\|_{H^\mhalf_\Gamma}^2,
\quad \forall\psi_h\in V_N.
\end{align}
Now $\phi^Q_{N}\in V_N$ is the solution of $a^Q(\phi^Q_{N},\psi_{N})=B^Q(\psi_{N})$, $\forall\psi_{N}\in V_N$.
Strang's first lemma (e.g.\ \cite[III.1.1]{Braess2007}), applied in the special case that $V=S_h$ (in the notation of \cite[III.1.1]{Braess2007}), combined with \eqref{eq:a-aQ} and
\eqref{eq:B-BQ},
gives
\begin{align*}
\|\phi_{N}-\phi^Q_{N}\|_{H^\mhalf_\Gamma}
&\le
\frac1{\alpha_Q}
\bigg(\sup_{\psi_{N}\in V_N} \frac{|a(\phi_{N},\psi_{N})-a^Q(\phi_{N},\psi_{N})|}{\|\psi_{N}\|_{H^\mhalf_\Gamma}}
+\sup_{\psi_{N}\in V_N} \frac{|B(\psi_{N})-B^Q(\psi_{N})|}{\|\psi_{N}\|_{H^\mhalf_\Gamma}}\bigg)
\\
&\le
\frac{c_{t_d}}{\alpha_Q}
\bigg(
E_A h^{t_d}\|\vc\|_2+E_b h^{s+1/2} \bigg).
\end{align*}
Applying \eqref{eq:CoeffBound} with
$t=-s-(n-d)/2$, we obtain that
\begin{align*}
\|\phi_{N}-\phi^Q_{N}\|_{H^\mhalf_\Gamma}
\leq
\frac{c_{t_d}}{\alpha_Q}\bigg(c_{-s-\frac{n-d}2} E_A \|\phi_N\|_{H^s_\Gamma}+E_b\bigg) h^{s+1/2},
\end{align*}
and the bound
\eqref{eq:PerturbError}
follows on applying Corollary \ref{cor:GalExt} with $s_2=s$.
\end{proof}
By combining Theorem \ref{thm:BEMConvergence}, Proposition~\ref{prop:Strang} and the quadrature error bounds in Theorem \ref{thm:Quadrature}, we can complete the convergence analysis of our fully discrete Hausdorff BEM for a hull-disjoint IFS attractor.

\begin{cor}
\label{cor:fullydiscrete}
Let $\Gamma$ be a hull-disjoint IFS attractor with $n-1<d=\dimH(\Gamma)<n$.
Assume that the unique solution $\phi$ of \eqref{eqn:BIE} belongs to $H^s_\Gamma$ for some $-1/2<s<-(n-d)/2$.
Let $V_N=Y_{ h}$.
Let the entries of the Galerkin matrix $A$ \eqref{eqn:GalerkinElements} and right-hand side $\vb$ \eqref{eqn:GalerkinElementsRHS} be approximated with the quadrature formulas outlined in \eqref{eq:GalInt}-\eqref{eq:bi}.
Let $g$ satisfy the assumptions of Theorem \ref{thm:Quadrature}(i),
and suppose that
\begin{align}
\label{eqn:hQuadCondGeneral}
h_Q \leq C_Q h^{d+1},
\end{align}
for some sufficiently small $C_Q>0$, independent of $h_Q$ and $h$.
Then the approximated linear system is invertible and the fully discrete solution $\phi_{N}^Q$ satisfies the %
error bound
\begin{align}\label{eq:PerturbErrorUniform}
\|\phi-\phi_{N}^Q\|_{H^\mhalf_\Gamma}
&\leq C
\big(\|\phi\|_{H^s_\Gamma} + |G|_{2,\Hull(\Gamma)}\big)
 h^{s+1/2},
\end{align}
for some $C>0$ independent of $h$, $\phi$, and $G$.

If, further, $\Gamma$ is homogeneous the above result holds with \eqref{eqn:hQuadCondGeneral} replaced by the weaker condition
\begin{align}
\label{eqn:hQuadCondUniform}
h_Q \leq C_Q h^{d/2+1}.
\end{align}
\end{cor}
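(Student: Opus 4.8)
The plan is to combine the Galerkin error bound of Theorem \ref{thm:BEMConvergence} with the perturbation estimate of Proposition \ref{prop:Strang}, verifying the hypotheses \eqref{eq:AmA} of the latter by feeding in the quadrature bounds of Theorem \ref{thm:Quadrature}. The central observation, which dictates the exponent in the stepsize condition \eqref{eqn:hQuadCondGeneral}, is the identity $2t_d = d+1-n$, immediate from \eqref{eq:tdDef}.

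First I would verify the matrix perturbation hypothesis in \eqref{eq:AmA}. By Theorem \ref{thm:Quadrature}(iii) (using hull-disjointness), $\|A-A^Q\|_2 \leq C h_Q h^{-n}\cH^d(\Gamma)$. Substituting \eqref{eqn:hQuadCondGeneral}, i.e.\ $h_Q \leq C_Q h^{d+1}$, gives $\|A-A^Q\|_2 \leq (C\,C_Q\,\cH^d(\Gamma))\,h^{d+1-n} = (C\,C_Q\,\cH^d(\Gamma))\,h^{2t_d}$, so the first bound in \eqref{eq:AmA} holds with $E_A := C\,C_Q\,\cH^d(\Gamma)$. Crucially $E_A$ is proportional to $C_Q$, so taking $C_Q$ small enough guarantees $E_A < \alpha/c_{t_d}^2$ (the invertibility condition of Proposition \ref{prop:Strang}, which then also gives $\alpha_Q = \alpha - E_A c_{t_d}^2 \geq \alpha/2$). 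For the right-hand side, Theorem \ref{thm:Quadrature}(i) (valid since $g$ meets its hypotheses) gives $\|\vb - \vb^Q\|_2 \leq h_Q^2 |G|_{2,\Hull(\Gamma)}\cH^d(\Gamma)^{1/2}$; since $h_Q^2 \leq C_Q^2 h^{2d+2}$ and $2d+2 > 2t_d > t_d + s + 1/2$ (recall $0 < s+1/2 < t_d$), the factor $h^{2d+2}$ is bounded by a constant times $h^{t_d+s+1/2}$ for $h \leq \diam(\Gamma)$, so the second bound in \eqref{eq:AmA} holds with $E_b$ a fixed multiple of $|G|_{2,\Hull(\Gamma)}$.

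With these in hand, Proposition \ref{prop:Strang} yields the invertibility of $A^Q$ and, using $\alpha_Q \geq \alpha/2$ together with the explicit forms of $E_A$ and $E_b$,
\[
\|\phi_N - \phi_N^Q\|_{H^{-1/2}_\Gamma} \leq \frac{C}{\alpha_Q}\big(E_A\|\phi\|_{H^s_\Gamma} + E_b\big)h^{s+1/2} \leq C'\big(\|\phi\|_{H^s_\Gamma} + |G|_{2,\Hull(\Gamma)}\big)h^{s+1/2}.
\]
Combining this, via the triangle inequality, with the Galerkin bound $\|\phi - \phi_N\|_{H^{-1/2}_\Gamma} \leq c\, h^{s+1/2}\|\phi\|_{H^s_\Gamma}$ of Theorem \ref{thm:BEMConvergence} (applicable since $\phi\in H^s_\Gamma$ with $-1/2<s<-(n-d)/2$) gives the claimed bound \eqref{eq:PerturbErrorUniform}.

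Finally, for the homogeneous case I would repeat only the matrix step, now invoking the sharper estimate $\|A-A^Q\|_2 \leq C h_Q^2 h^{-(n+1)}\cH^d(\Gamma)$ of Theorem \ref{thm:Quadrature}(iii): with $h_Q \leq C_Q h^{d/2+1}$ we get $h_Q^2 \leq C_Q^2 h^{d+2}$ and hence $\|A-A^Q\|_2 \leq (C\,C_Q^2\,\cH^d(\Gamma))\,h^{d+1-n} = (C\,C_Q^2\,\cH^d(\Gamma))\,h^{2t_d}$, exactly the form needed; the right-hand side estimate is unchanged and, since $h_Q^2 \leq C_Q^2 h^{d+2}$ with $d+2 > t_d+s+1/2$, is if anything easier to satisfy. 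The remainder of the argument is verbatim. The only genuinely delicate point is this exponent bookkeeping — tying the power of $h$ in the stepsize condition to $2t_d = d+1-n$ so that $\|A-A^Q\|_2 = O(h^{2t_d})$ with a constant $E_A$ that can be forced below $\alpha/c_{t_d}^2$; everything else is an assembly of results already established.
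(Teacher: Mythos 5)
Your proposal is correct and follows essentially the same route as the paper's proof: both verify the hypotheses \eqref{eq:AmA} of Proposition \ref{prop:Strang} by substituting the step-size conditions into the quadrature bounds of Theorem \ref{thm:Quadrature} (using $2t_d=d+1-n$ for the matrix term and $h\leq\diam(\Gamma)$ to absorb the surplus power of $h$ in the right-hand-side term), then combine the resulting perturbation bound with \eqref{eq:GalerkinBound} by the triangle inequality. The only cosmetic difference is that the paper unifies the general and homogeneous cases via a parameter $p\in\{0,1\}$ and defines $h$-dependent $E_A,E_b$ that are subsequently bounded by constants, whereas you treat the two cases separately and obtain constant $E_A,E_b$ directly; the content is identical.
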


\begin{proof}
The two conditions \eqref{eqn:hQuadCondGeneral} and \eqref{eqn:hQuadCondUniform} can be written as
\begin{align}
\label{eqn:hQuadCondBoth}
h_Q\leq C_Q h^{\frac{d}{1+p}+1},
\end{align}
where $p=0$ in the general case, and $p=1$ when $\Gamma$ is also homogeneous.
By Theorem \ref{thm:Quadrature}(i) and (iii) our quadrature formulas
achieve \eqref{eq:AmA} with
\begin{equation} \label{eq:EAbForm}
E_A = C_* h_Q^{1+p} h^{-d-1-p},
\qquad E_b = C_* h_Q^2 h^{-s-1+\frac{n-d}{2}}|G|_{2,\Hull(\Gamma)},
\end{equation}
for some constant $C_*$ independent of both $h$ and $h_Q$.
Hence \eqref{eqn:hQuadCondBoth} implies that $E_A$ is bounded independently of $h$, and, furthermore, by choosing the constant $C_Q$ in \eqref{eqn:hQuadCondBoth} to be sufficiently small, one can ensure that
$E_A\leq \alpha/(2c_{t_d}^2)$, so that the discrete system is invertible by Proposition \ref{prop:Strang}, with
$\alpha_Q := \alpha -E_Ac_{t_d}^2\geq \alpha/2$.
The assumption of \eqref{eqn:hQuadCondBoth} also ensures that $E_b$ is bounded independently of $h$, with
\[E_b\leq C_*C_Q^2h^{
\frac{2d}{1+p}
-s+\frac{n-d}{2}+1}|G|_{2,\Hull(\Gamma)} \leq C_*C_Q^2(\diam(\Gamma))^{
\frac{2d}{1+p}
-s+\frac{n-d}{2}+1}|G|_{2,\Hull(\Gamma)},\]
since $h\leq \diam(\Gamma)$ and
$\frac{2d}{1+p}-s+\frac{n-d}{2}+1>0$
for both $p=0$ and $p=1$. 
Then, by \eqref{eq:PerturbError},
\begin{align}\label{eq:PerturbErrorUniformDiscrete}
\|\phi_N-\phi_{N}^Q\|_{H^\mhalf_\Gamma}
&\leq C
\big(\|\phi\|_{H^s_\Gamma} + |G|_{2,\Hull(\Gamma)}\big)
 h^{s+1/2},
\end{align}
for some $C>0$ independent of $h$, $\phi$, and $G$, and by combining this with \eqref{eq:GalerkinBound}
we deduce \eqref{eq:PerturbErrorUniform}.
\end{proof}

Under a stronger condition on $h_Q$ we can also prove a superconvergence result for the fully discrete approximation $J^Q(\phi_N^Q)$, given by \eqref{eq:JQdef2}, to the linear functional $J(\phi)$ given by \eqref{eq:Jdef} (cf.~\cite[Thm.~4.2.18]{sauter-schwab11}).
The significance of this result for the computation of $u(x)$ and $u^\infty(\hat x)$ is as spelled out in and above Remark \ref{rem:extra_smoothness}.

\begin{cor} \label{cor:fullydiscrete2}
Suppose that $\Gamma$, $\phi$, $V_N=Y_h$, and $g$ are defined, and satisfy the same assumptions, as in Corollary \ref{cor:fullydiscrete}.
Let $J$ and $J^Q$ be defined by \eqref{eq:Jdef}
and \eqref{eq:JQdef2} and suppose that $\varphi$ satisfies the assumptions of Theorem \ref{thm:Quadrature}(ii).
Suppose also that $\zeta\in H_\Gamma^s$, where $\zeta$ is the solution of \eqref{eqn:Variational} with $-g$ replaced by $\varphi$, and suppose that
\begin{equation} \label{eqn:hQuadSuperGeneral}
h_Q \leq C'_Q h^{d+s+3/2},
\end{equation}
for some sufficiently small $C'_Q > 0$, 
independent of $h$ and $h_Q$.  
Then the approximated linear system is invertible and
\begin{align}\nonumber
& |J(\phi)-J^Q(\phi_N^Q)| \leq \\ \label{eq:superFD}
& \hspace{3ex} C\Big(\|\phi\|_{H^s_\Gamma}(\|\zeta\|_{H^s_\Gamma} + \|\varphi\|_{H^{1/2}(\Gamma_\infty)}+|V|_{2,\Hull(\Gamma)}) +(\|\varphi\|_{H^{1/2}(\Gamma_\infty)}+|V|_{2,\Hull(\Gamma)})|G|_{2,\Hull(\Gamma)}\Big)h^{2s+1},
\end{align}
for some constant $C>0$ independent of $h$, $\phi$, $\zeta$, $\varphi$, $G$, and $V$.

If, further, $\Gamma$ is homogeneous the above result holds with \eqref{eqn:hQuadSuperGeneral} replaced by the weaker condition
\begin{align}
\label{eqn:hQuadSuperUniform}
h_Q\leq C'_Q h^{d/2+s/2+5/4}.
\end{align}
\end{cor}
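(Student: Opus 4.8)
The plan is to split the total error, via the triangle inequality, into three contributions, each of which I will show is $O(h^{2s+1})$:
$$J(\phi)-J^Q(\phi_N^Q) = \big(J(\phi)-J(\phi_N)\big) + J(\phi_N-\phi_N^Q) + \big(J(\phi_N^Q)-J^Q(\phi_N^Q)\big),$$
where $\phi_N$ denotes the exact Galerkin solution of \eqref{eqn:Variational} and $\phi_N^Q$ the fully discrete solution. First I would observe that \eqref{eqn:hQuadSuperGeneral} implies the weaker condition \eqref{eqn:hQuadCondGeneral} of Corollary \ref{cor:fullydiscrete} (since $s+3/2>1$, absorbing any power of $\diam(\Gamma)$ into the constant), so that the perturbed system is invertible and $\phi_N^Q$ is well defined, and the basic error bound \eqref{eq:PerturbErrorUniform} is available.

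For the first, pure Galerkin, term I would apply the superconvergence estimate \eqref{eq:GalerkinBoundJ} of Theorem \ref{thm:BEMConvergence} directly, using the hypothesis $\zeta\in H^s_\Gamma$, to obtain the bound $c\,h^{2s+1}\|\phi\|_{H^s_\Gamma}\|\zeta\|_{H^s_\Gamma}$. For the second term I would use linearity, $J(\phi_N-\phi_N^Q)=\langle\varphi,\overline{\phi_N-\phi_N^Q}\rangle_{H^{1/2}(\Gamma_\infty)\times H^{-1/2}(\Gamma_\infty)}$, and the crude bound $|J(\phi_N-\phi_N^Q)|\le\|\varphi\|_{H^{1/2}(\Gamma_\infty)}\|\phi_N-\phi_N^Q\|_{H^\mhalf_\Gamma}$, followed by the Strang estimate \eqref{eq:PerturbError} of Proposition \ref{prop:Strang}. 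The key point is that, substituting the explicit quadrature constants \eqref{eq:EAbForm} (with $p=0$) into \eqref{eqn:hQuadSuperGeneral}, one finds $E_A=C_*h_Q h^{-d-1}\le C_*C'_Q\,h^{s+1/2}$, while $E_b$ carries an even higher power of $h$; hence $\|\phi_N-\phi_N^Q\|_{H^\mhalf_\Gamma}=O\big(h^{2s+1}(\|\phi\|_{H^s_\Gamma}+|G|_{2,\Hull(\Gamma)})\big)$ and the second term is $O(h^{2s+1})$ with coefficients $\|\varphi\|_{H^{1/2}(\Gamma_\infty)}(\|\phi\|_{H^s_\Gamma}+|G|_{2,\Hull(\Gamma)})$. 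For the third term, the quadrature error in the functional itself, I would apply \eqref{eq:JJQ} of Theorem \ref{thm:Quadrature}(ii) with $\psi_N=\phi_N^Q$ (taking the endpoint exponent $-1/2$ there, to avoid an inverse estimate), controlling $\|\phi_N^Q\|_{H^\mhalf_\Gamma}\le c(\|\phi\|_{H^s_\Gamma}+\dots)$ via \eqref{eq:GalerkinBound}, the embedding $H^s_\Gamma\hookrightarrow H^\mhalf_\Gamma$, and the Strang bound; the resulting power of $h$ exceeds $2s+1$ comfortably, contributing $|V|_{2,\Hull(\Gamma)}(\|\phi\|_{H^s_\Gamma}+|G|_{2,\Hull(\Gamma)})h^{2s+1}$.

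Collecting the three contributions reproduces exactly the coefficient structure of \eqref{eq:superFD}. The homogeneous case is handled identically, taking $p=1$ in \eqref{eq:EAbForm} so that \eqref{eqn:hQuadSuperUniform} again yields $E_A=O(h^{s+1/2})$. The main obstacle, and the crux of the argument, is the calibration of the second term: unlike the first, its superconvergence does \emph{not} arise from a duality/Aubin--Nitsche argument but purely from the smallness of the quadrature perturbation, and verifying that \eqref{eqn:hQuadSuperGeneral} is precisely the condition forcing $E_A=O(h^{s+1/2})$ (with $E_b$ subcritical) requires careful bookkeeping of the powers of $h$ in \eqref{eq:EAbForm}, together with the dependence on $\diam(\Gamma)$. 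The remaining steps are routine applications of results already established.
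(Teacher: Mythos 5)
Your proposal is correct, and it follows essentially the same route as the paper's proof: the same three-term splitting via the triangle inequality, the same use of the Galerkin superconvergence bound \eqref{eq:GalerkinBoundJ} for the first term, and, for the middle term, the same duality bound $|J(\phi_N-\phi_N^Q)|\le\|\varphi\|_{H^{1/2}(\Gamma_\infty)}\|\phi_N-\phi_N^Q\|_{H^\mhalf_\Gamma}$ combined with the Strang estimate \eqref{eq:PerturbError}, after checking that the stronger quadrature condition forces $E_A=O(h^{s+1/2})$ and $E_b=O(h^{s+1/2})$ in \eqref{eq:EAbForm} --- this is exactly the paper's bookkeeping, including the preliminary observation that \eqref{eqn:hQuadSuperGeneral} implies \eqref{eqn:hQuadCondGeneral} (since $s>-1/2$) so that invertibility comes from Corollary \ref{cor:fullydiscrete}.

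The one genuine point of divergence is the third term. The paper applies \eqref{eq:JJQ} with exponent $s$, which requires the bound $\|\phi_N^Q\|_{H^s_\Gamma}\le C_1(\|\phi\|_{H^s_\Gamma}+|G|_{2,\Hull(\Gamma)})$, obtained from Corollary \ref{cor:fullydiscrete} together with Proposition \ref{prop:InvApp1}, i.e.\ it invokes the inverse-estimate machinery. You instead take the endpoint exponent $-1/2$ in Theorem \ref{thm:Quadrature}(ii), so that only $\|\phi_N^Q\|_{H^\mhalf_\Gamma}$ is needed, and this is controlled directly by the triangle inequality, \eqref{eq:PerturbErrorUniform}, and the embedding $H^s_\Gamma\subset H^\mhalf_\Gamma$. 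Since the resulting exponent of $h$ (namely $2(d+s+3/2)-1/2+(n-d)/2$ in the general case, and the analogous quantity with $p=1$ in the homogeneous case) strictly exceeds $2s+1$, the surplus power of $h$ can be absorbed into the constant via $h\le\diam(\Gamma)$, just as in the paper. Your variant is thus marginally more elementary --- it bypasses Proposition \ref{prop:InvApp1} at this step --- at the cost of a cruder (but still sufficient) power of $h$ in that one term; both arguments deliver the stated bound \eqref{eq:superFD} with the same coefficient structure.
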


\begin{proof}
The two conditions \eqref{eqn:hQuadSuperGeneral} and \eqref{eqn:hQuadSuperUniform} can be written as
\begin{align}
\label{eqn:hQuadSuperBoth}
h_Q\leq C'_Q h^{\frac{d+s+1/2}{1+p}+1},
\end{align}
where $p$ is as in the proof of Corollary \ref{cor:fullydiscrete}.
We first note that, for every $C_Q>0$, since $s> -1/2$, \eqref{eqn:hQuadSuperBoth} implies \eqref{eqn:hQuadCondBoth} provided $C'_Q$ is sufficiently small. Thus, if \eqref{eqn:hQuadSuperBoth} holds with $C'_Q$ sufficiently small, then the approximated linear system is invertible by Corollary \ref{cor:fullydiscrete}. Next we estimate
\begin{align*}
&|J(\phi)-J^Q(\phi_N^Q)|\\
 &\leq  |J(\phi)-J(\phi_N)| + |J(\phi_N)-J(\phi_N^Q)| + |J(\phi_N^Q)-J^Q(\phi_N^Q)|\\
& \leq  c h^{2s+1}\|\phi\|_{H^{s}_\Gamma}\|\zeta\|_{H^{s}_\Gamma} + \|\varphi\|_{H^{1/2}(\Gamma_\infty)}\|\phi_N-\phi_N^Q\|_{H^{-1/2}_\Gamma} +C h_Q^2 h^{s+(n-d)/2} |V|_{2,\Hull(\Gamma)} \|\phi^Q_N\|_{H_\Gamma^{s}},
\end{align*}
by \eqref{eq:GalerkinBoundJ} and Theorem \ref{thm:Quadrature}(ii). We note moreover that $\|\phi^Q_N\|_{H_\Gamma^{s}}\leq C_1 (\|\phi\|_{H_\Gamma^{s}}+|G|_{2,\Hull(\Gamma)})$, for some constant $C_1>0$ independent of $\phi$, $G$, and $h$,
 as a consequence of Corollary \ref{cor:fullydiscrete} and Proposition \ref{prop:InvApp1}, so that \eqref{eqn:hQuadSuperBoth} implies that the last term in the above equation is
\begin{eqnarray*}
&\leq &CC_1(C'_Q)^2 h^{\frac{2}{1+p}(d+s+1/2)+2+s+(n-d)/2} |V|_{2,\Hull(\Gamma)} (\|\phi\|_{H_\Gamma^{s}}+|G|_{2,\Hull(\Gamma)})\\
& \leq &CC_1(C'_Q)^2 (\diam(\Gamma))^{s(\frac{2}{1+p}-1)+1+\frac{2}{1+p}(d+1/2)+(n-d)/2 } |V|_{2,\Hull(\Gamma)} (\|\phi\|_{H_\Gamma^{s}}+|G|_{2,\Hull(\Gamma)})h^{2s+1},
\end{eqnarray*}
recalling that $s>-1/2$ so that $s(\frac{2}{1+p}-1)+1+\frac{2}{1+p}(d+1/2)+(n-d)/2>0$ for both $p=0$ and $p=1$.
Further, arguing as in the proof of Corollary \ref{cor:fullydiscrete}, the conditions \eqref{eq:AmA} are satisfied with $E_A$ and $E_b$ given by \eqref{eq:EAbForm}, for some constant $C_*$ independent of $h$ and $h_Q$. If also \eqref{eqn:hQuadSuperBoth} holds then %
$$
E_A
\leq C_*(C'_Q)^{1+p}h^{s+1/2}
$$
and
\begin{align}
\label{}
E_b
&\leq C_*(C'_Q)^2 h^{\frac{2}{1+p}(d+s+1/2)+1-s+(n-d)/2} |G|_{2,\Hull(\Gamma)}\\\notag
& \leq C^*(C'_Q)^2 (\diam(\Gamma))^{(2-\frac{2}{1+p})(-s)+\frac{2}{1+p}(d+1/2)+1/2+(n-d)/2} |G|_{2,\Hull(\Gamma)} h^{s+1/2},
\end{align}
recalling that $s<-(n-d)/2$, so that   
$(2-\frac{2}{1+p})(-s)+\frac{2}{1+p}(d+1/2)+1/2+(n-d)/2>0$ for $p=0,1$. 
Thus, if $C'_Q$ is chosen sufficiently small so that $E_A\leq \alpha/(2c_{t_d}^2)$ for $0<h\leq \diam(\Gamma)$, it follows from Proposition \ref{prop:Strang} that
$$
\|\phi_N-\phi_N^Q\|_{H^{-1/2}_\Gamma} \leq C_2(\|\phi\|_{H_\Gamma^s} + |G|_{2,\Hull(\Gamma)})h^{2s+1},
$$
for some constant $C_2>0$ independent of $h$, $\phi$ and $G$. The bound
\eqref{eq:superFD} follows by combining the above inequalities.
\end{proof}

\begin{rem}[Reduced quadrature]
\label{rem:reducedquad}
The quadrature rule defined by \eqref{eq:OffDiagQuad} uses the same maximum mesh width $h_Q$ for all off-diagonal elements $A_{ij}$, $i\neq j$. Since the magnitude of the integrand $\Phi(x,y)$ and its derivatives blows up as one approaches the diagonal $x=y$, and decays away from it, it is possible to save computational effort, while maintaining the error bounds \eqref{eq:AquadEst2} and \eqref{eq:AquadEst1},
and hence the error estimates \eqref{eq:PerturbErrorUniform} and \eqref{eq:superFD}.
To achieve this we increase
the quadrature mesh width (and hence potentially %
decrease the number of quadrature points)
for the computation of $A_{ij}$ when the elements $\Gamma_{\bm(i)}$ and $\Gamma_{\bm(j)}$ are sufficiently well-separated spatially.
In more detail, from \cite[Prop 5.2]{HausdorffQuadrature} we have that, for $i\neq j$, the error in the quadrature approximation \eqref{eq:OffDiagQuad}, with $h_Q$ replaced by a local quadrature mesh width $h_{Q,i,j}$, satisfies
\[|A_{ij}-A_{ij}^Q|\leq C\mu_{\bm(i)}\mu_{\bm(j)}h_{Q,i,j}^2\Upsilon(R_{ij}),\]
for some constant $C$ independent of $i$ and $j$, where
\[\Upsilon(R):=\frac{1+(kR)^{n/2+1}}{R^{n+1}} \qquad \text{and} \qquad R_{ij}:= \dist(\Hull(\Gamma_{\bm(i)}),\Hull(\Gamma_{\bm(j)})).\]
Given $h_Q$, one can therefore maintain the bounds \eqref{eq:AquadEst2} and \eqref{eq:AquadEst1} by replacing $h_Q$ in \eqref{eq:OffDiagQuad} by
\[h_{Q,i,j} = h_Q\left(\frac{\max_{p\neq q}
\Upsilon(R_{pq})}{
\Upsilon(R_{ij})}\right)^{1/2} \geq h_Q. \]
If the quantities $R_{ij}$ are not known exactly, but satisfy
\begin{align}
\label{eq:deltabounds}
R^-_{ij}\leq R_{ij}\leq R^+_{ij},\qquad i\neq j,
\end{align}
for some known quantities $R^-_{ij}\geq 0$ and $R^+_{ij}<\infty$, then one can maintain \eqref{eq:AquadEst2} and \eqref{eq:AquadEst1} by replacing $h_Q$ in \eqref{eq:OffDiagQuad} by
\begin{align}
\label{eq:ReducedhQij}
h_{Q,i,j} = h_Q\max\left(\left(\frac{\max_{p\neq q}%
\Upsilon(R^+_{pq})}{
\Upsilon(R^-_{ij})}\right)^{1/2},1\right) \geq h_Q. 
\end{align}
Here we are using the fact that $\Upsilon(R)$ is a monotonically decreasing function of $R$, and the $\max(\cdot,1)$ is needed to ensure that $h_{Q,i,j}\geq h_Q$ (so that we are not increasing the computational effort unnecessarily) because the quantity inside the square root may be smaller than 1. (This holds in particular if $R^-_{ij}=0$, in which case we are interpreting $1/\Upsilon(R^-_{ij})$ as $1/\Upsilon(0)=1/\infty=0$.)

In our numerical results in \S\ref{sec:NumericalResults} we will compute $A_{ij}^Q$ as described above, choosing $h_{Q,i,j}$ according to \eqref{eq:ReducedhQij} with%
\begin{align}
\label{eq:ReducedRijminus}
R^-_{ij}:=\max(\dist(x_{\bm(i)},x_{\bm(j)})-\diam(\Gamma_{\bm(i)})-\diam(\Gamma_{\bm(j)}),0)
\end{align}
and
\begin{align}
\label{eq:ReducedRijplus}
R^+_{ij}:= \dist(x_{\bm(i)},x_{\bm(j)}),
\end{align}
which satisfy \eqref{eq:deltabounds}.
\end{rem}

\section{Numerical results}
\label{sec:NumericalResults}

In this section we present numerical results that illustrate our methods and assess the sharpness of our theoretical predictions.
The Hausdorff BEM \eqref{eq:cvec} has
been implemented in the Julia language \cite{julia} and the code is available at \url{https://github.com/AndrewGibbs/IFSintegrals}.
In all examples described in this section, we consider the scattering of plane waves, i.e.\ the datum $g$ is as in \eqref{eqn:gDefScatteringProblem} with $u^i(x)=e^{\ri k\vartheta\cdot x}$ and $|\vartheta|=1$. 
We validate our implementation against a different method in \S\ref{sec:lebesgue}, but in the rest of our experiments we use as a reference solution a more accurate Hausdorff-BEM solution with a large number of degrees of freedom $\Nref$. Most of our experiments are for homogeneous attractors, in which case $\Nref=M^{\ellref}$ for some $\ellref\in\N$. For Cantor sets $(M=2)$ we choose $\ellref=15$, so that $\Nref=32768$. For Cantor Dusts ($M=4$) we choose $\ellref=8$, so that $\Nref=65536$.
Since we do not use any matrix compression, the memory required to store the Galerkin matrices grows like $N^2$, with  $N = M^{\ell}$ when the attractor is homogeneous. Thus, while many of our experiments were run on a standard laptop, some (in particular, the calculation of reference solutions) 
required the use of the Myriad High Performance Computing Facility available at University College London, which has computing nodes available with 1.5TB of RAM.

Our implementation uses the quadrature rules described in \S\ref{sec:Quadrature}. Precisely, we approximate the right-hand side in the linear system and linear functionals of the solution (the scattered field and far field) using the quadrature rules \eqref{eq:bi} and \eqref{eq;JQdef}, respectively. We approximate the diagonal matrix elements $A_{ii}$ by \eqref{eq:Aii}-\eqref{eq:AiiSing}. To approximate $A_{ij}$ with $i\neq j$ we use \eqref{eq:OffDiagQuad} with $h_Q$ replaced by $h_{Q,i,j}\geq h_Q$ given (in terms of $h_Q$) by \eqref{eq:ReducedhQij}-\eqref{eq:ReducedRijplus}. All these quadrature rules depend on the parameter $h_Q$. We choose $h_Q=C_Qh$, 
where $C_Q>0$ is a constant independent of $h$. 
While the requirement $h_Q=C_Q h$ is weaker (i.e.\ it requires fewer quadrature points) than the conditions $h_Q\le C_Q h^{1+d/2}$ (see\ \eqref{eqn:hQuadCondUniform}) and $h_Q\le C_Q h^{1+d}$ (see \eqref{eqn:hQuadCondGeneral}) required by our theory, 
our numerical experiments suggest that, in practice, $h_Q=C_Q h$ is sufficient to achieve 
our theoretical convergence rates,  
even when using the reduced quadrature of Remark \ref{rem:reducedquad}. Except where indicated otherwise, in the simulations reported below for homogeneous attractors we use $C_Q=\rho^2$ for cases with $k<20$,  $C_Q=\rho^4$ when $20\leq k\leq 50$.

We measure the accuracy of our BEM solutions in the $H^{-1/2}_\Gamma$ norm. These $H^{-1/2}_\Gamma$ norms are computed by expressing them in terms of a single-layer BIO with wavenumber $k=\ri$, which we denote 
$S^\ri$, as in  
\cite[Table 1]{BEMfract}. Practically, we achieve this by assembling an approximation 
$A^{\ri,Q}$ to the Galerkin matrix $A^\ri$ for the operator $S^\ri$, analogous to \eqref{eq:GalInt}, with $k=\ri$ and $\Nref$ degrees of freedom, using quadrature approximations analogous to 
\eqref{eq:Aii}-\eqref{eq:AiiSing} and \eqref{eq:OffDiagQuad}, choosing $h_Q$ the same as for the reference solution, with the reduced quadrature formulae \eqref{eq:ReducedhQij}-\eqref{eq:ReducedRijplus} applied as if $k=1$. (While the quadrature convergence analysis in \cite[\S5]{HausdorffQuadrature} was presented only for $k>0$, one can check that the relevant results also extend, mutatis mutandis, to the case $k=\ri$.) 
Next, we 
view $\phi_N^Q$ as an element of the larger space $V_{\Nref}$  
and define $\vv$ as the coefficient vector of $\phi_N^Q-\phi_{\Nref}^Q$ in $V_{\Nref}$. Then, arguing as in \cite[Table 1]{BEMfract} (and see Footnote \ref{footnote:Si}), it follows that
	\[
	\|\phi_N^Q-\phi_{\Nref}^Q\|_{H^{-1/2}_\Gamma}^2
	=2\left\langle S^\ri(\phi_N^Q-\phi_{\Nref}^Q),\phi_N^Q-\phi_{\Nref}^Q\right\rangle_{H^{1/2}(\Gamma_\infty) \times H^{-1/2}(\Gamma_\infty)}
	= 2\vv^HA^\ri \vv\approx 2\vv^HA^{\ri,Q} \vv.
	\]

\subsection{\texorpdfstring{$H^{-1/2}_\Gamma$}{Fractional}-norm convergence, Cantor set (\texorpdfstring{$n=1$}{n=1})}
\label{s:exp:Cantor}

\begin{figure}
\includegraphics[width=.48\linewidth]{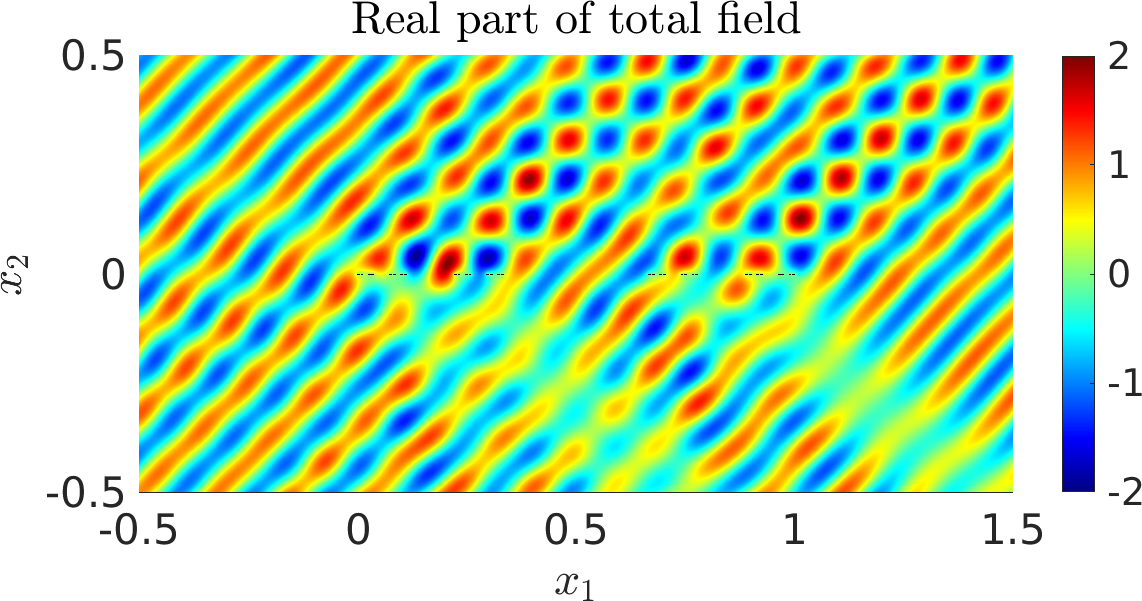}
\hspace{.04\linewidth}
\includegraphics[width=.48\linewidth]{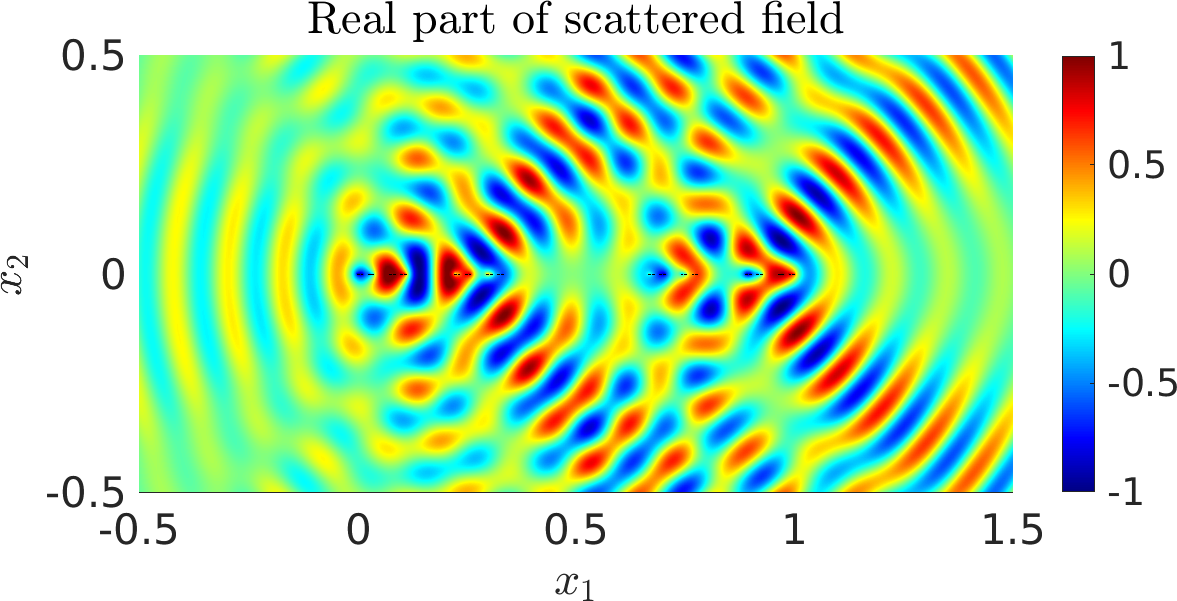}
\caption{Scattering in $\R^2$ by a middle-third Cantor set screen (a subset of the line segment $[0,1]\times\{0\}$) %
with wavenumber $k=50$ and incident plane wave direction $\vartheta=(1,-1)/\sqrt2$, showing the total (left panel) and scattered (right panel) fields computed using our Hausdorff BEM with $\ell=10$ and quadrature parameters as in \S\ref{s:exp:Cantor}. 
}
\label{fig:CantorSetFields}
\end{figure}

We consider first the case where $n=1$ (corresponding to scattering in $\R^2$) and $\Gamma\subset\Gamma_\infty\cong \R$ is a ``middle-$(1-2\rho)$ Cantor set'' screen (e.g., \cite[p.~71, eqn.~(91)]{Fal}), for some $0<\rho<1/2$. Precisely, $\Gamma$ is the attractor of the disjoint homogeneous IFS with $M=2$,
\begin{equation} \label{eq:CS_IFS}
s_1(x)=\rho x, \qquad s_2(x) = 1-\rho +\rho x, \qquad x\in\R,
\end{equation}
a $d$-set with $d=\dimH(\Gamma)=\log{2}/\log{(1/\rho)}$.
We denote by $\phi_\ell$ the Hausdorff-BEM solution ``at level $\ell$'', for $\ell\in\N_0$, i.e.\ with $h=\rho^\ell$ and $N=2^\ell$.

In Figure~\ref{fig:CantorSetFields} we show an example of the total and scattered fields for the middle-third case ($\rho=1/3$). In Figure~\ref{fig:CantorSetH12} we present plots of the relative errors in the Hausdorff-BEM solution for $\ell=0,1,\ldots,14$, measured in $H^\mhalf_\Gamma$ norm, using $\phi_{15}$ as reference solution, for various $\rho$ and $k$ values.
In all cases the incident wave is the plane wave $u^i(x)=\re^{\ri k\vartheta\cdot x}$ with $\vartheta=(1,-1)/\sqrt2$.
The experiments in Figures~\ref{fig:CantorSetFields} and \ref{fig:CantorSetH12} are carried out, as indicated in the second paragraph of this section,  using the quadrature rules described in \S\ref{sec:Quadrature}, with the reduced quadrature of Remark \ref{rem:reducedquad}, and with $h_Q=\rho^2 h$ for $k=0.1$ and $h_Q=\rho^4 h$ for $k=50$.
This means that the off-diagonal entries of the BEM matrix require up to $16$ and $256$ evaluations of the Helmholtz fundamental solution %
for $k=0.1$ and $k=50$, respectively.

In Remark \ref{rem:ConvRates} we noted that our theoretical analysis suggests we should expect convergence approximately like $2^{-\ell/2}$ for the BEM solution, %
which is what we observe in the numerical results
(for each error convergence plot we also plot $C2^{-\ell/2}$ -- the black dashed lines -- choosing the constant $C$ so that the two curves coincide for the largest value of $\ell$).
Higher wavenumbers give different pre-asymptotic behaviour (in particular, the approximation is not accurate for small values of $\ell$) but do not affect the asymptotic rates for large $\ell$, see Figure \ref{fig:CantorSetH12}.

\begin{figure}[tbp!]
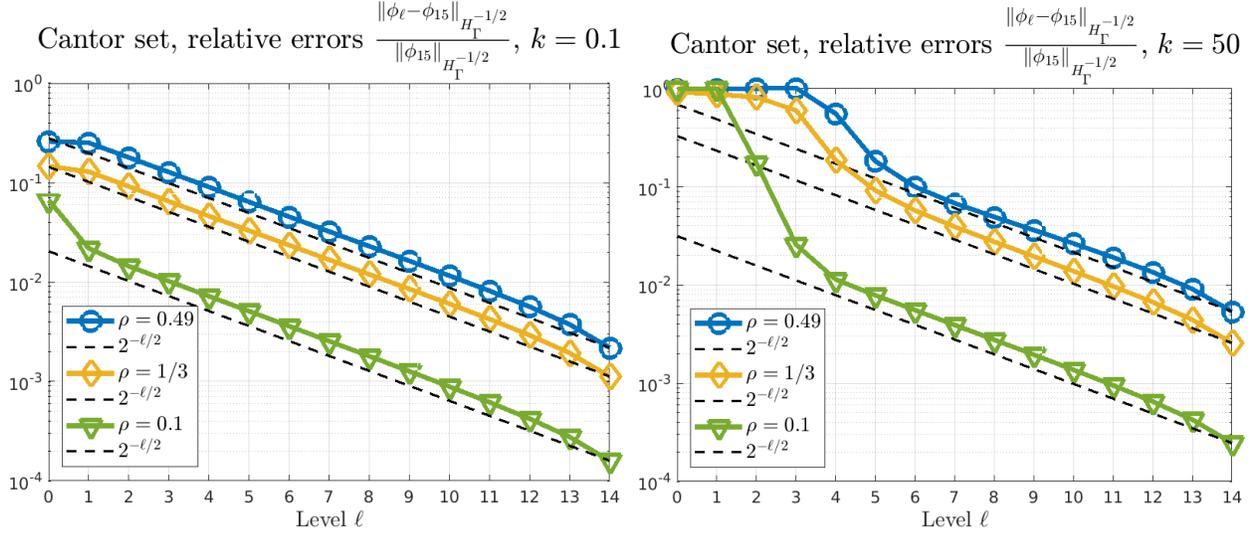

\includegraphics[width=0.49\textwidth]%
{{PlotAndrewData-ManyRho-Hhalf-CantorSet-AgainstRef--rho-0.49-0.333-0.1--k0.1}.png}
\includegraphics[width=0.49\textwidth]%
{{PlotAndrewData-ManyRho-Hhalf-CantorSet-AgainstRef--rho-0.49-0.333-0.1--k50}.png}
\caption{$H^{-1/2}_\Gamma$ relative errors for Hausdorff-BEM solutions on Cantor sets.}
\label{fig:CantorSetH12}
\end{figure}

For $\rho=0.5$ our  Hausdorff BEM coincides with classical piecewise-constant BEM on a uniform mesh applied to the  Lipschitz screen that is the unit interval $[0,1]$; our implementation uses  the special choice  \eqref{eq:Aii}-\eqref{eq:AiiSing} of quadrature rules to evaluate the matrix entries (which is simply the composite midpoint rule for the off-diagonal  entries).
Running our Hausdorff-BEM code with $\rho=0.5$ we observe error curves (not reported here) that are almost identical to those for $\rho=0.49$. Hence, at least in this case, the piecewise-constant Hausdorff-BEM approximation of the integral equation on a fractal (the Cantor set with $\rho=0.49$) is no less accurate, for the same number of degrees of freedom, than a classical piecewise-constant BEM approximation on an adjacent, more regular set (the interval $[0,1]$).

\subsection{\texorpdfstring{$H^{-1/2}_\Gamma$}{Fractional}-norm convergence, Cantor dust (\texorpdfstring{$n=2$}{n=2})}
\label{s:exp:Dust}

We consider next the case where $n=2$ (corresponding to scattering in $\R^3$) and $\Gamma\subset\Gamma_\infty\cong \R^2$ is a ``middle-$(1-2\rho)$ Cantor dust'' for some $0<\rho<1/2$,
defined by the disjoint homogeneous IFS with $M=4$,
\begin{align}
\label{eq:CD_IFS}
\begin{split}
s_1(x,y)=\rho (x,y), \qquad &s_2(x,y) = (1-\rho,0) +\rho (x,y),\\
s_3(x,y) = (1-\rho,1-\rho) +\rho (x,y), \qquad &s_4(x,y) = (0,1-\rho) +\rho (x,y), \qquad (x,y)\in\R^2,
\end{split}
\end{align}
a $d$-set with $d=\dimH(\Gamma)=\log{4}/\log{(1/\rho)}$, see Figure~\ref{Fig:Dust}.
Such a screen generates a non-zero scattered field if and only if $\rho>1/4$ (see \cite[Example 8.2]{ScreenPaper} and the discussion before Lemma \ref{lem:Nullity}).
In Figure~\ref{fig:CantorDustH12} we present results similar to those in Figure \ref{fig:CantorSetH12}, albeit for a more restricted range of $\ell$ and a lower value of $k$ in the right-hand panel, due to the increased computational cost associated with the change from $M=2$ (Cantor set) to $M=4$ (Cantor dust).
The Hausdorff-BEM solution $\phi_\ell$ corresponds to a mesh of $N=4^\ell$ elements of diameter $h=\sqrt2\rho^\ell$.
The incident wave is the plane wave $u^i(x)=\re^{\ri k\vartheta\cdot x}$ with $\vartheta=(0,1,-1)/\sqrt2$.
As indicated at the beginning of this section, we use the quadrature rules described in \S\ref{sec:Quadrature}, with the reduced quadrature of Remark \ref{rem:reducedquad}, and with $h_Q=\rho^2 h$ for both $k=0.1$ and $k=5$. As a result, the off-diagonal entries of the BEM matrix require up to $256$ evaluations of $\Phi$.

\begin{figure}[tbp!]
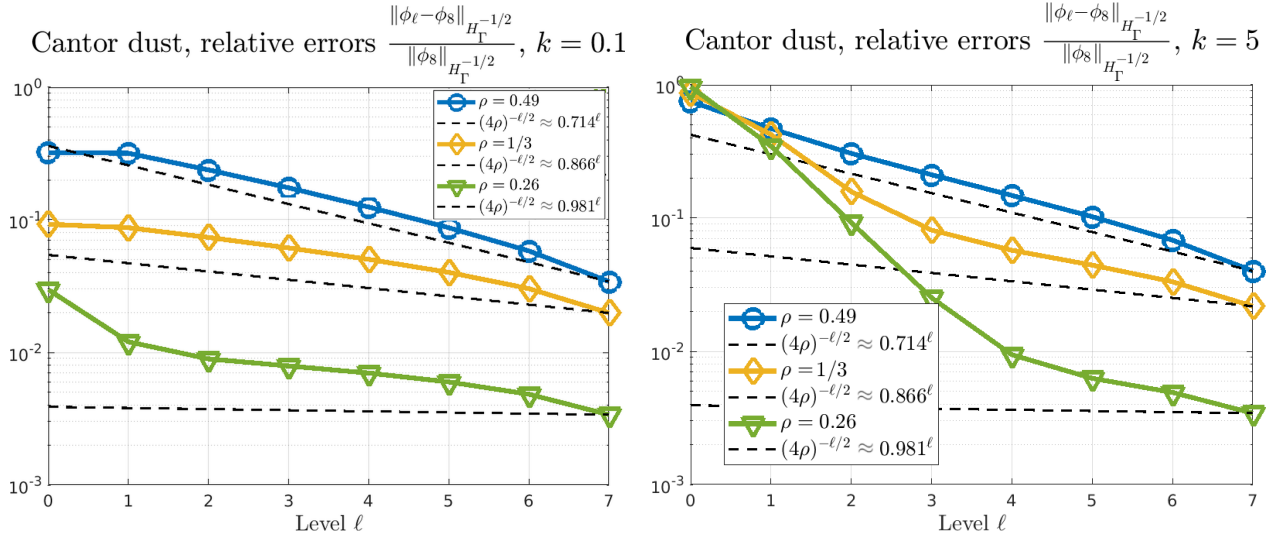

\includegraphics[width=0.49\textwidth]%
{{PlotAndrewData-ManyRho-Hhalf-CantorDust-AgainstRef--rho-0.49-0.333-0.26--k0.1_WITHLEGEND}.png}
\hspace{2mm}
\includegraphics[width=0.49\textwidth]%
{{PlotAndrewData-ManyRho-Hhalf-CantorDust-AgainstRef--rho-0.49-0.333-0.26--k5_CROP}.png}
\caption{$H^{-1/2}_\Gamma$ relative errors for Hausdorff-BEM solutions on Cantor dusts.
}
\label{fig:CantorDustH12}
\end{figure}

In Remark \ref{rem:ConvRates} we noted that our theoretical analysis suggests we should expect convergence approximately like $(4\rho)^{-\ell/2}$, whereas the convergence we observe in Figure~\ref{fig:CantorDustH12} appears to be faster than this, especially for lower values of $\rho$.
In fact, these numerical results are consistent with the theoretical bounds being sharp, 
as we now explain.
The issue is that the convergence rate $(4\rho)^{-\ell/2}$ is slow, so the reference solution (with $\ell=8$) used in Figure~\ref{fig:CantorDustH12} is still quite far from the exact solution $\phi$ of the integral equation, which affects the shape of the error plot.
Let us assume that the Galerkin error bound \eqref{eq:ConvRates} holds with $\epsilon=0$ and in fact is exact, i.e.\
\begin{equation} \label{eq:ErrBEx}
\|\phi-\phi_\ell\|_{H^\mhalf_\Gamma}=C \mathfrak{a}^\ell, \quad \ell\in \N,
\end{equation}
for $\mathfrak{a}:=(4\rho)^{-1/2}\in(1/\sqrt2,1)$ and some $C>0$.
Then, by the triangle inequality, %
\begin{equation}\label{eq:NumericsIncrement}
C \mathfrak{a}^{\ell} (1-\mathfrak{a}^{\ell'-\ell})
\le\|\phi_\ell-\phi_{\ell'}\|_{H^\mhalf_\Gamma}
\le C \mathfrak{a}^{\ell} (1+\mathfrak{a}^{\ell'-\ell}),
\qquad  \ell,\ell'\in\N_0 \;\text{with}\; \ell<\ell',
\end{equation}
and, for any given $\ell'\in \N$, \eqref{eq:ErrBEx} imposes no constraint on $\|\phi_\ell-\phi_{\ell'}\|_{H^\mhalf_\Gamma}$, for $\ell=0,\ldots,\ell'-1$, beyond  \eqref{eq:NumericsIncrement}; in particular, it may be that either the lower bound or the upper bound in \eqref{eq:NumericsIncrement} is attained.
In Figure~\ref{fig:CantorDustBounds} we plot $\|\phi_\ell-\phi_8\|_{H^\mhalf_\Gamma}$, for $\ell=1,\ldots,7$, together with the upper and lower bounds of \eqref{eq:NumericsIncrement} for a suitable $C$ and $\ell'=8$. This plot makes clear that the values we compute for $\|\phi_\ell-\phi_8\|_{H^\mhalf_\Gamma}$ are consistent with \eqref{eq:NumericsIncrement} and so with \eqref{eq:ErrBEx}. %
(This reasoning, i.e. the bounds \eqref{eq:NumericsIncrement}, also explains the little ``dip'' at the right endpoints of the error curves for the Cantor set experiments in Figure~\ref{fig:CantorSetH12}.)

In Figure~\ref{fig:CantorDustIncrements} we plot, as a better test of the sharpness of our theory, the norm $\|\phi_\ell-\phi_{\ell+1}\|_{H^\mhalf_\Gamma}$, for $\ell=0,\ldots,7$, of the difference between Galerkin solutions at consecutive levels $\ell$. We observe, for each choice of $\rho$ and $k$, that, for large enough $\ell$ (how large depending on $\rho$ and $k$), $\|\phi_\ell-\phi_{\ell+1}\|_{H^\mhalf_\Gamma}= c\mathfrak{a}^\ell=c(4\rho)^{-\ell/2}$, for some constant $c>0$, also depending on $\rho$ and $k$, in agreement with \eqref{eq:NumericsIncrement} with $\ell'=\ell+1$. Moreover, since $\|\phi-\phi_\ell\|_{H^\mhalf_\Gamma}\to 0$ as $\ell\to\infty$ by Theorem \ref{thm:BEMConvergence}, this observed convergence implies that
$$
\|\phi-\phi_\ell\|_{H^\mhalf_\Gamma} \leq \sum_{m=\ell}^\infty \|\phi_m-\phi_{m+1}\|_{H^\mhalf_\Gamma} =
\frac{c}{1-\mathfrak{a}} \, \mathfrak{a}^\ell,
$$
confirming, experimentally, the convergence rates for the Cantor dust case predicted in Remark \ref{rem:ConvRates}, and confirming the predicted dependence of these convergence rates on $\rho$.

\begin{figure}[tb!]
\includegraphics[width=0.33\textwidth,clip,trim=30 0 30 0]{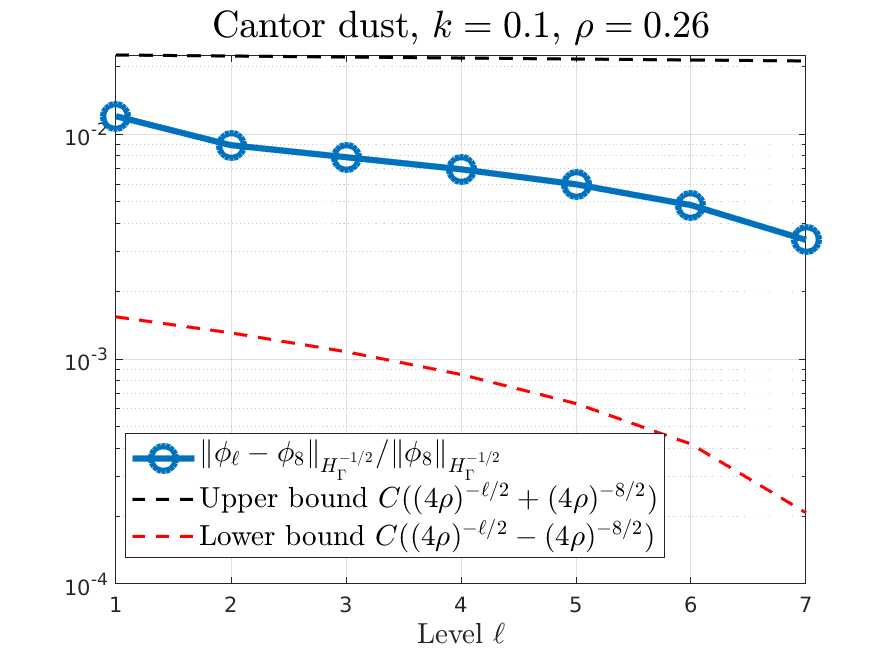}
\includegraphics[width=0.33\textwidth,clip,trim=30 0 30 0]{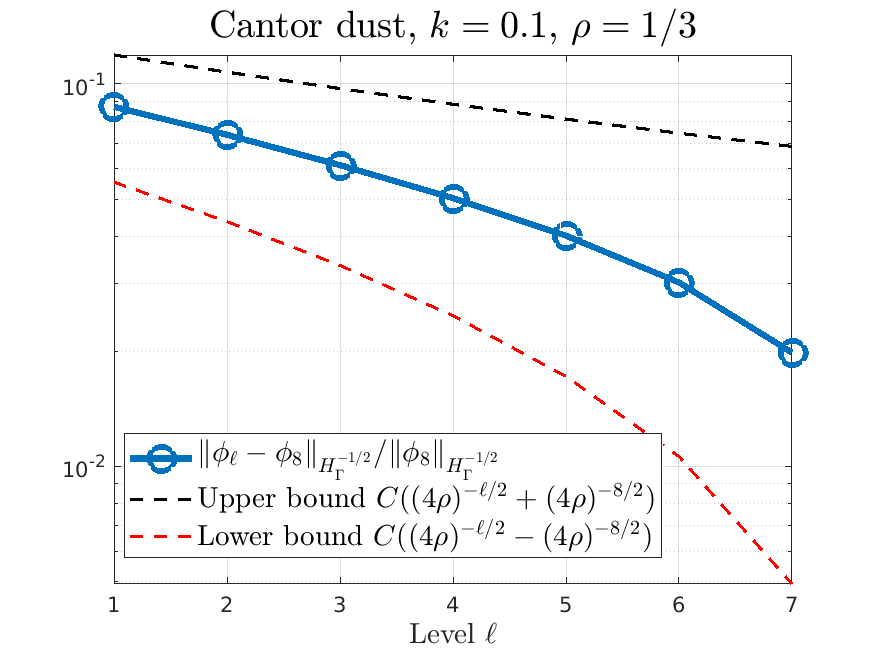}
\includegraphics[width=0.33\textwidth,clip,trim=30 0 30 0]{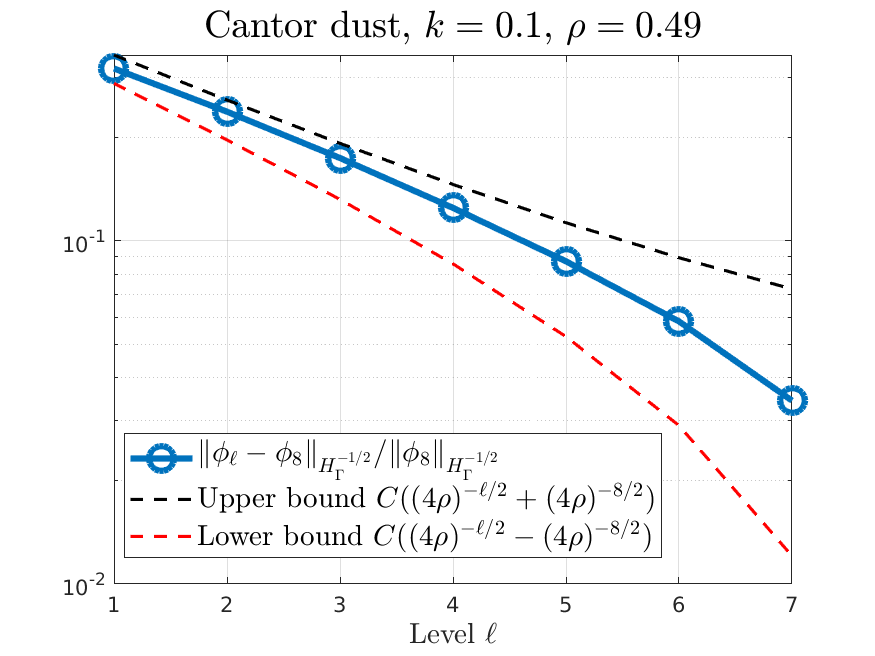}
\caption{Relative $H^{-1/2}_\Gamma$ errors for Hausdorff-BEM solutions on Cantor dusts  using $\phi_8$ as the reference solution, and the  bounds from \eqref{eq:NumericsIncrement} with $\ell'=8$, for $k=0.1$ and different contraction factors $\rho$.}
\label{fig:CantorDustBounds}
\end{figure}

\begin{figure}[tb!]
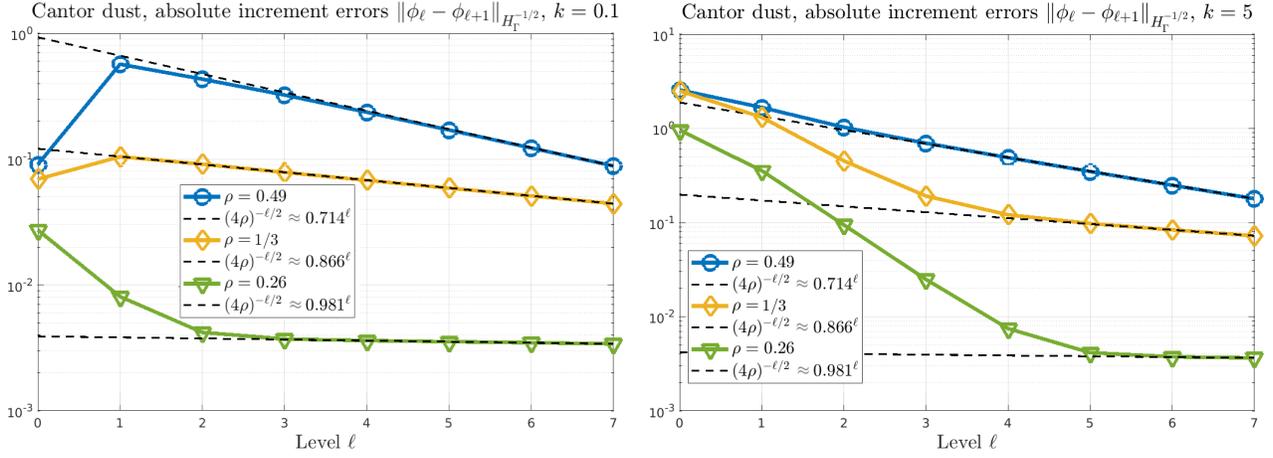

\includegraphics[width=0.49\textwidth]%
{{PlotAndrewData-ManyRho-Hhalf-CantorDust-Increments--rho-0.49-0.333-0.26--k0.1_WITHLEGEND}.png}
\hspace{2mm}
\includegraphics[width=0.49\textwidth]%
{{PlotAndrewData-ManyRho-Hhalf-CantorDust-Increments--rho-0.49-0.333-0.26--k5_CROP}.png}
\caption{$H^{-1/2}_\Gamma$ errors for Hausdorff-BEM solutions on Cantor dusts: the norm of the difference $\phi_\ell-\phi_{\ell+1}$ between consecutive levels decays like $(4\rho)^{-\ell/2}$.}
\label{fig:CantorDustIncrements}
\end{figure}

For $\rho=0.5$ our  Hausdorff BEM coincides with a piecewise-constant BEM on a uniform mesh applied to the screen $[0,1]^2$; our implementation uses the quadrature rules \eqref{eq:Aii}-\eqref{eq:AiiSing} (which reduce to a composite product midpoint rule for the off-diagonal  entries).
As similarly reported for the Cantor set in \S\ref{s:exp:Cantor}, running our Hausdorff-BEM code with $\rho=0.5$ we observe error curves (not reported here) that are almost identical to those for $\rho=0.49$. %

For this Cantor dust example we also compare numerical results with the theoretical predictions of Theorem~\ref{thm:MatrixBounds}. In Figure \ref{fig:MatrixNorms} we plot the norms of the Hausdorff-BEM Galerkin matrix $A$ and its inverse against $\ell$ (the matrix dimension is $N=4^\ell$) for $\rho=1/3$ and different values of $k$.
We observe that $\|A\|_2$ is approximately independent of $\ell$ and $h=\sqrt{2}\rho^\ell$, and that, for $\ell\geq 2$, $\|A^{-1}\|_2\approx (\rho M)^\ell=(4\rho)^\ell=\rho^{(1-d)\ell}=c^* h^{1-d}=c^*h^{-2t_d}$, where $c^*=2^{(d-1)/2}$ and $t_d$ is defined by \eqref{eq:tdDef}, in agreement with Theorem~\ref{thm:MatrixBounds}. (We note that $-2t_d = 1-d =1-\log4/\log3 \approx -0.262$.)

\begin{figure}[tbp!]
\centering
\includegraphics[width=0.95\textwidth,clip,trim=50 0 50 0]{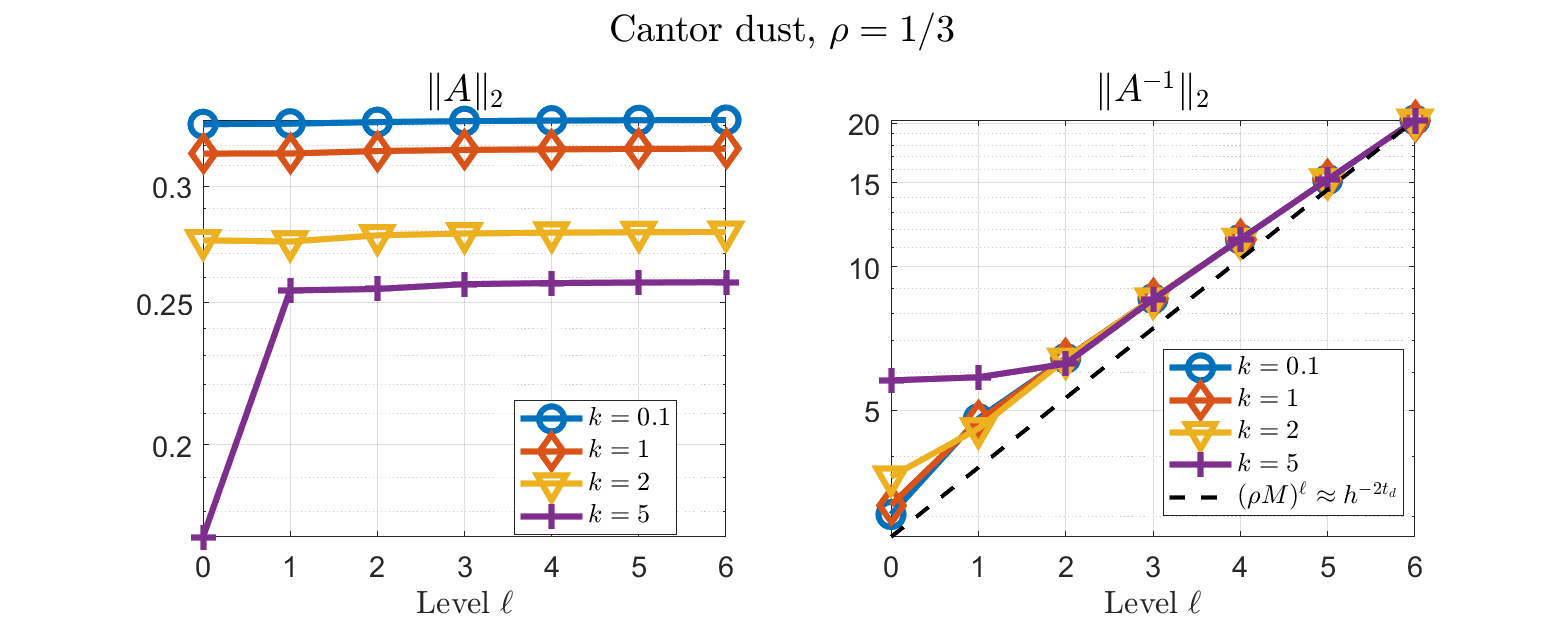}
\caption{The norm of the Hausdorff-BEM Galerkin matrix and its inverse. Here $-2t_d \approx -0.262$.}
\label{fig:MatrixNorms}
\end{figure}

\subsection{Convergence of the scattered field}
\label{s:exp:NearFar}

\begin{figure}[tbp!]\centering
\includegraphics[width=0.47\textwidth,clip,trim=30 0 30 0]{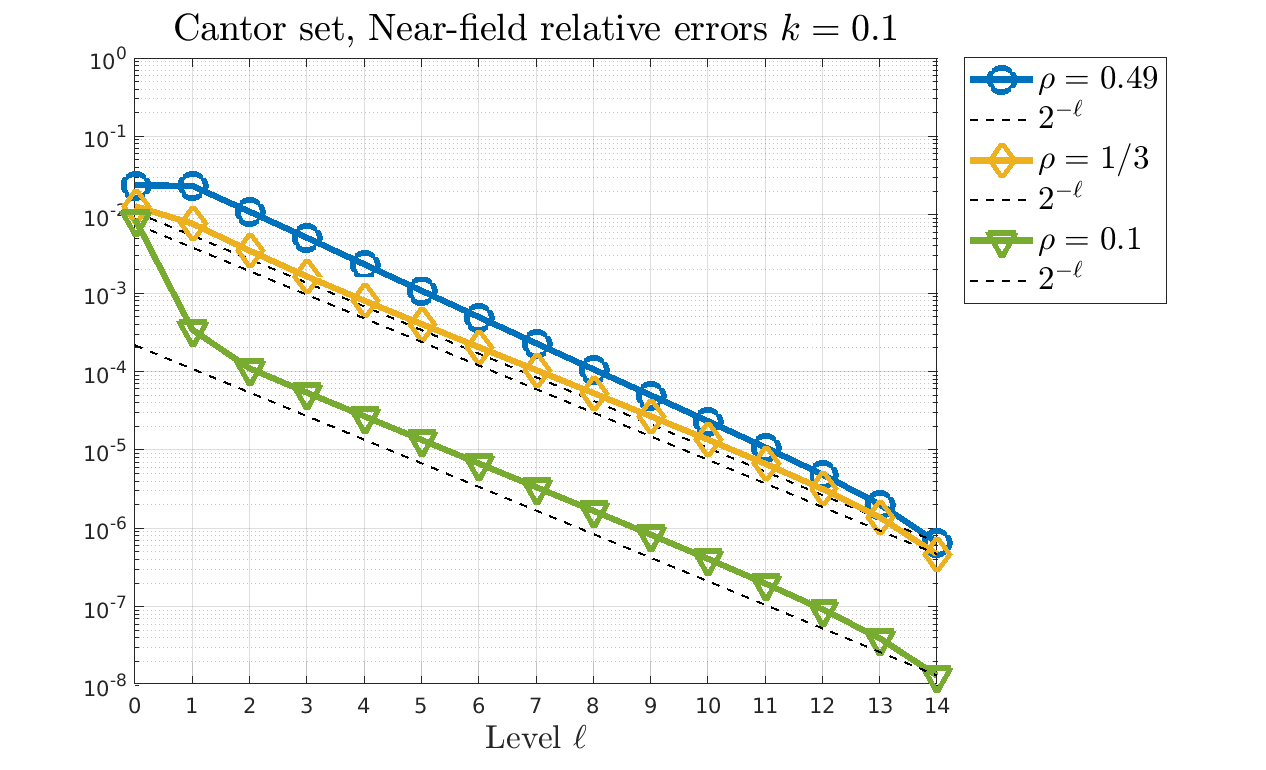}
\includegraphics[width=0.47\textwidth,clip,trim=30 0 30 0]{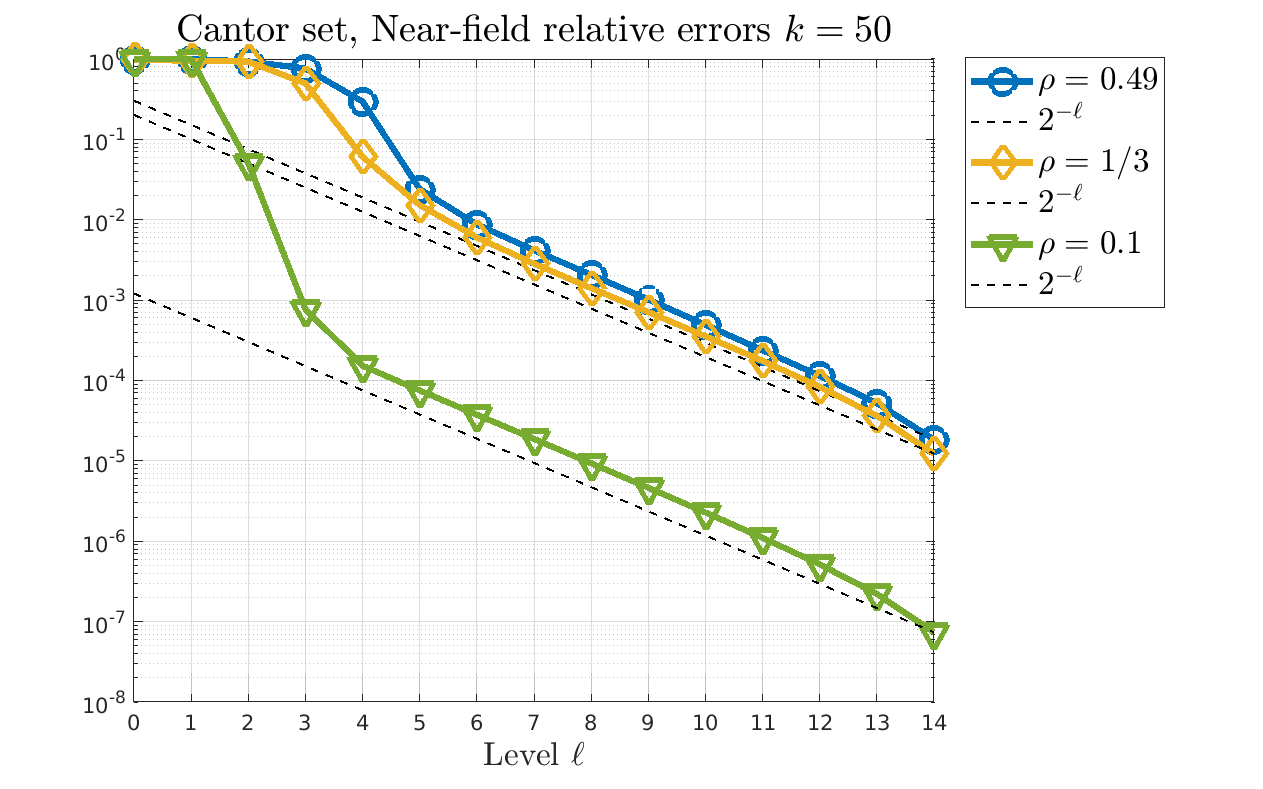}
\\
\includegraphics[width=0.47\textwidth,clip,trim=30 0 30 0]{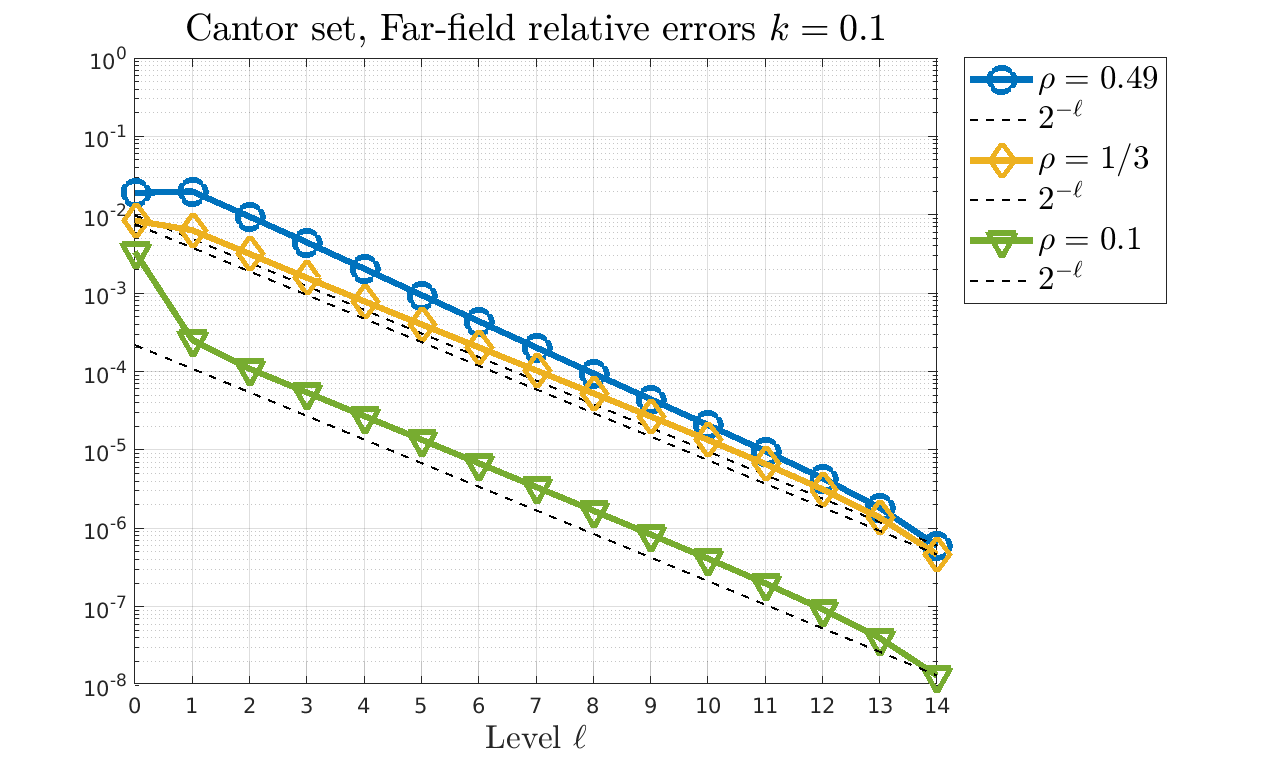}
\includegraphics[width=0.47\textwidth,clip,trim=30 0 30 0]{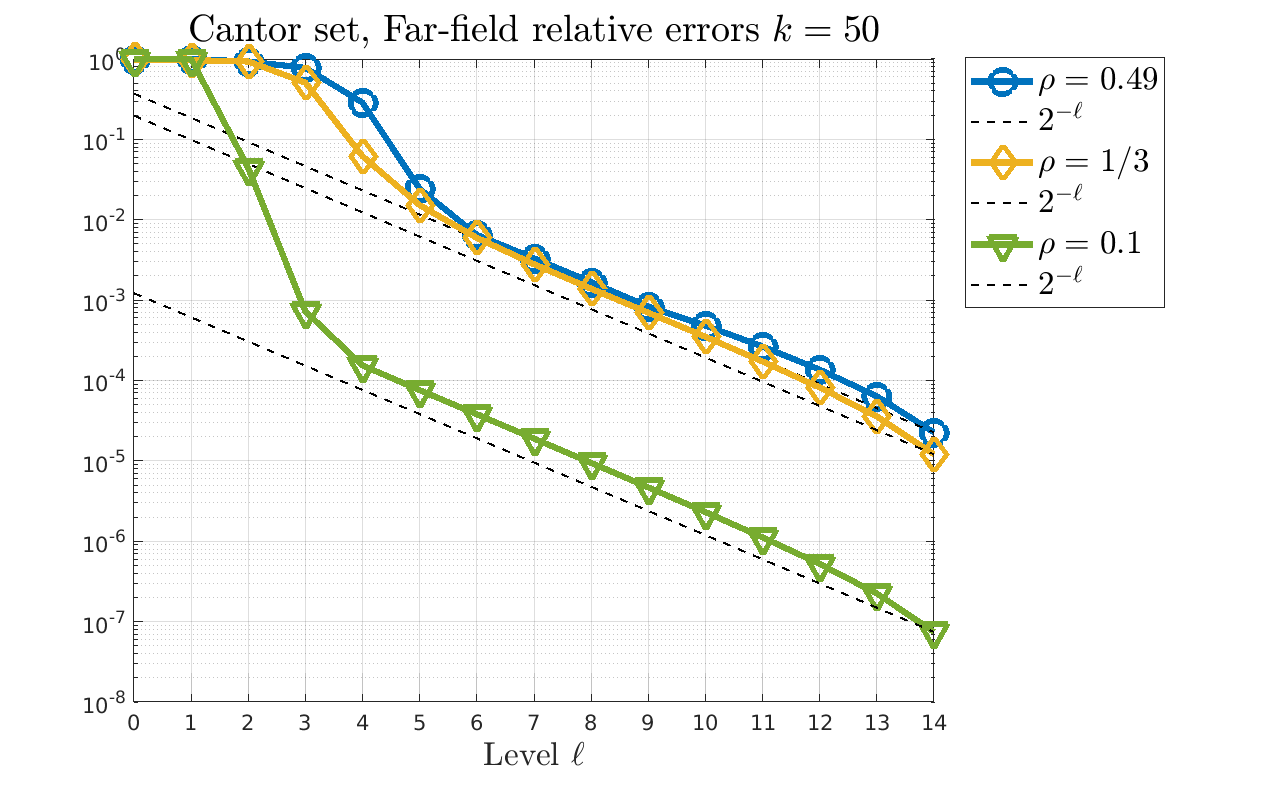}
\\
\includegraphics[width=0.47\textwidth,clip,trim=30 0 30 0]{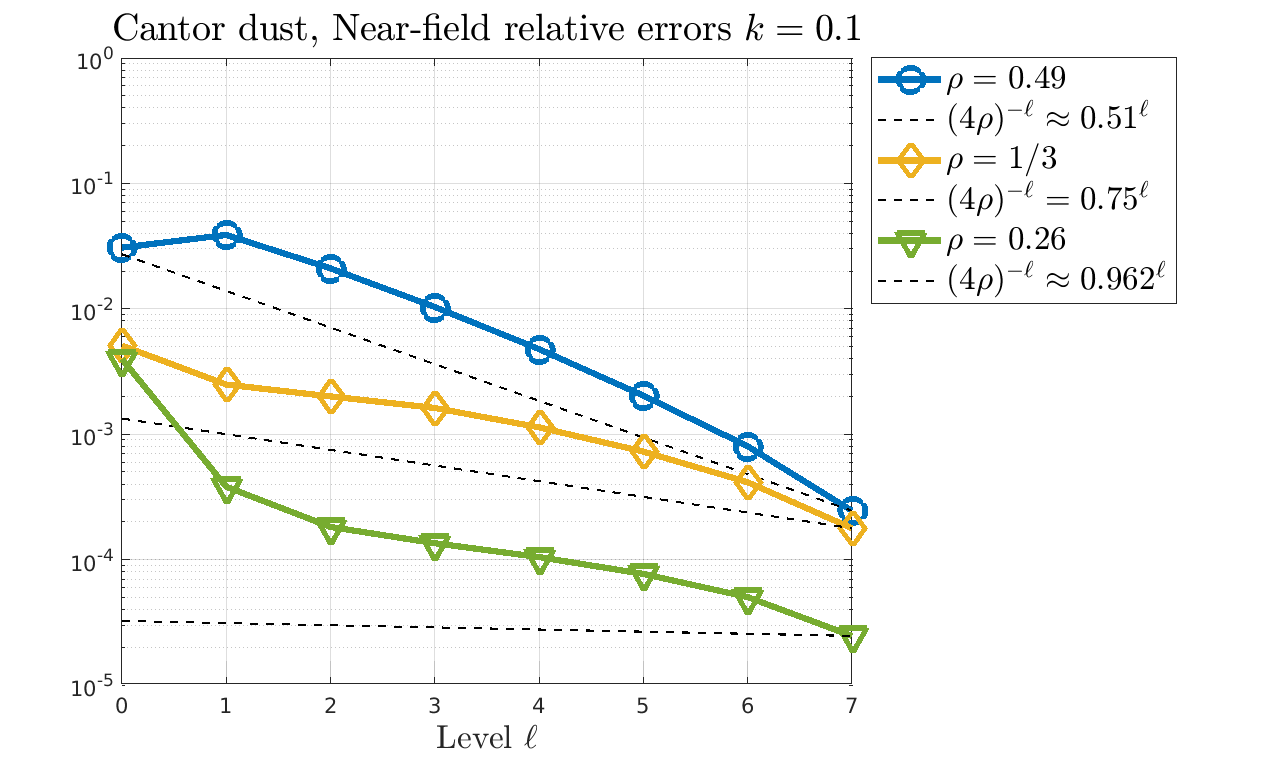}
\includegraphics[width=0.47\textwidth,clip,trim=30 0 30 0]{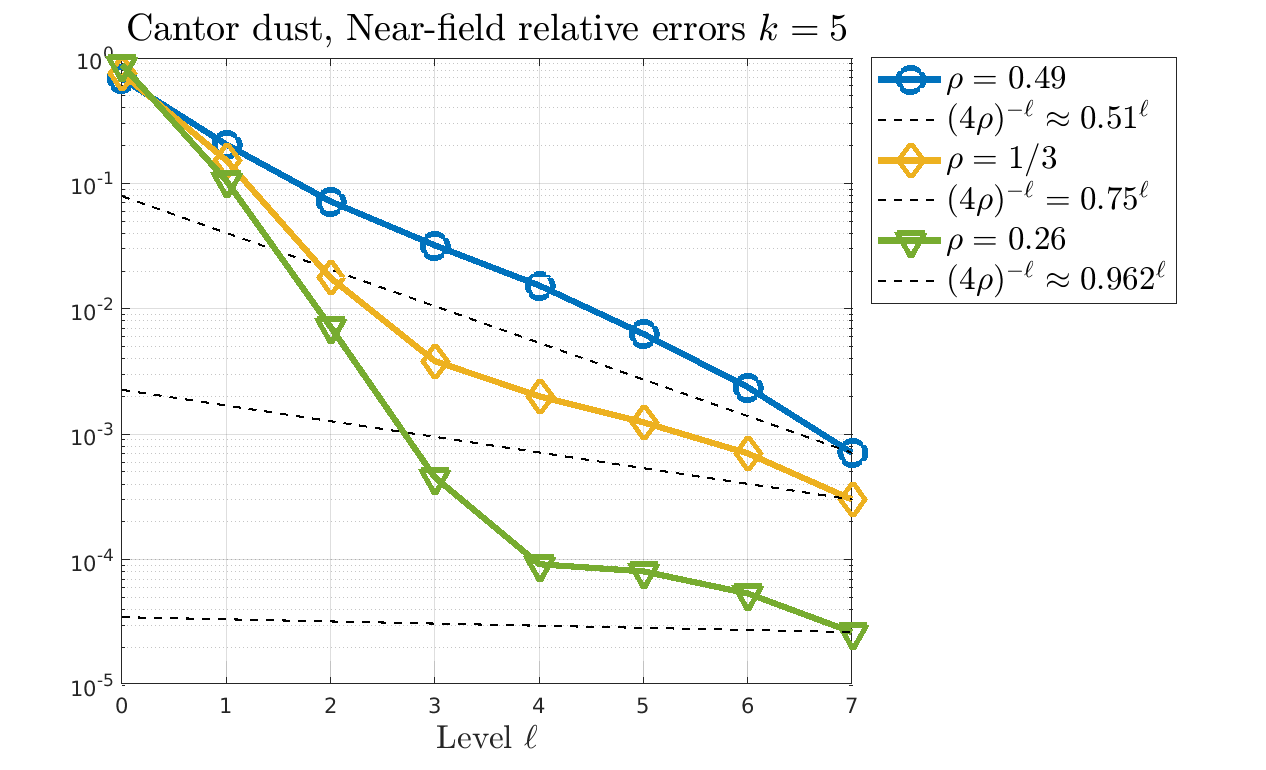}
\\
\includegraphics[width=0.47\textwidth,clip,trim=30 0 30 0]{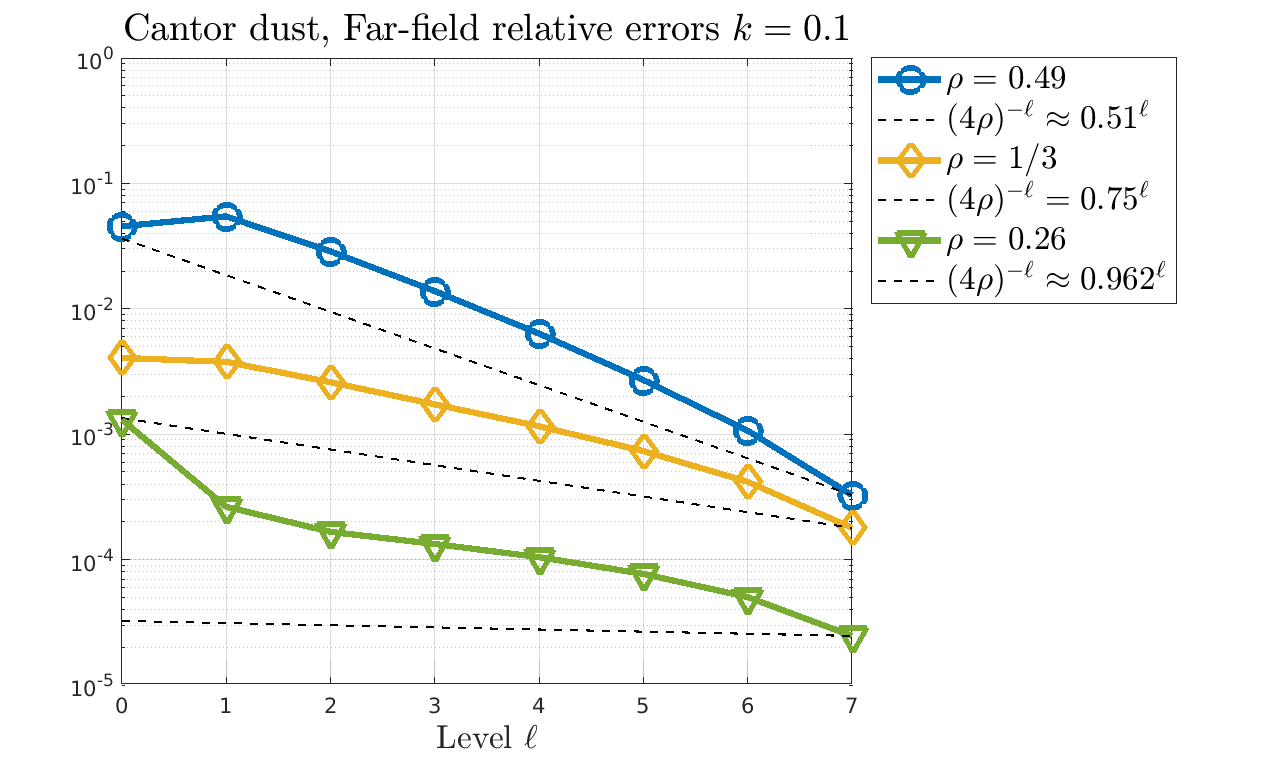}
\includegraphics[width=0.47\textwidth,clip,trim=30 0 30 0]{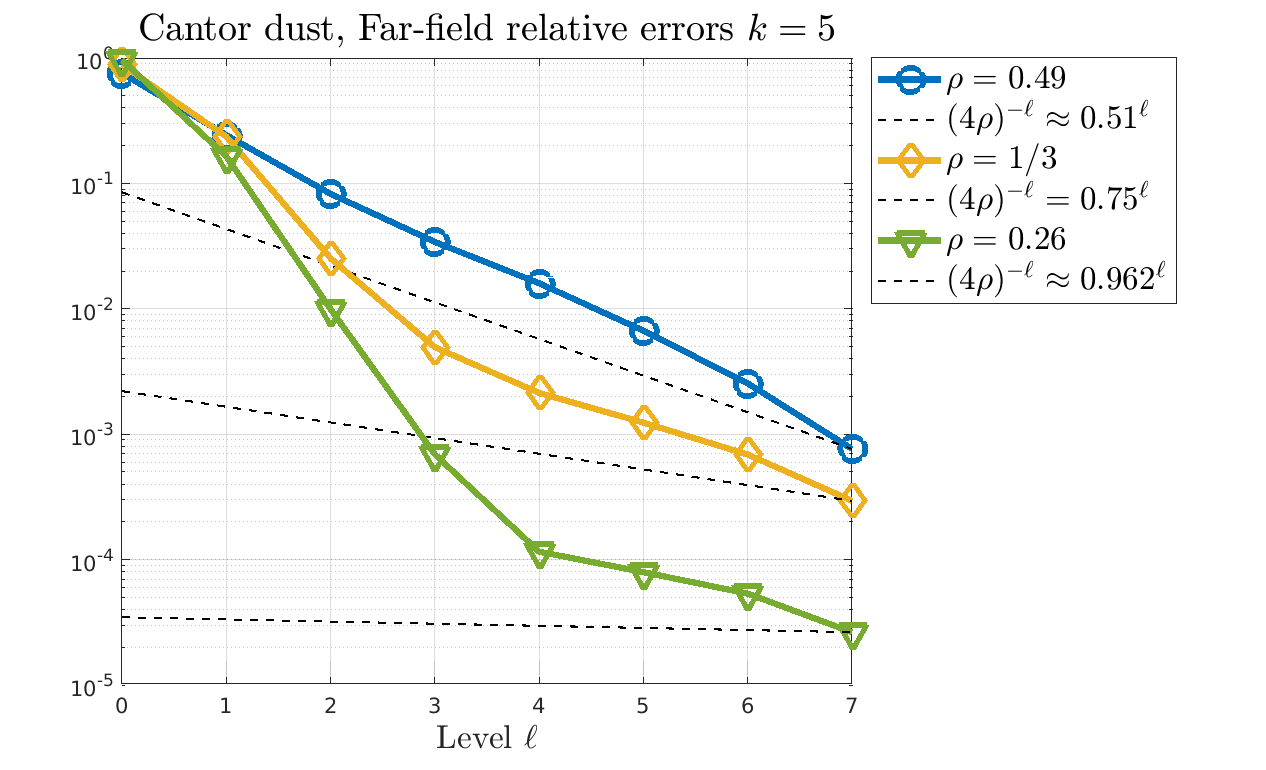}
\\

\caption{Convergence of the scattered near- and far-field for Cantor sets and dusts, measured as $\|u_\ell-u_{\ell_{\mathrm{ref}}}\|_{L_\infty}/\|u_{\ell_{\mathrm{ref}}}\|_{L_\infty}$ and $\|u^\infty_\ell-u^\infty_{\ell_{\mathrm{ref}}}\|_{L_\infty}/\|u^\infty_{\ell_{\mathrm{ref}}}\|_{L_\infty}$, respectively, with $\ellref=15$ for Cantor sets and $\ellref=8$ for Cantor dusts.}
\label{fig:NearFar}
\end{figure}

\begin{figure}[tb!]\centering
	\includegraphics[width=0.47\textwidth,clip,trim=30 0 30 0]{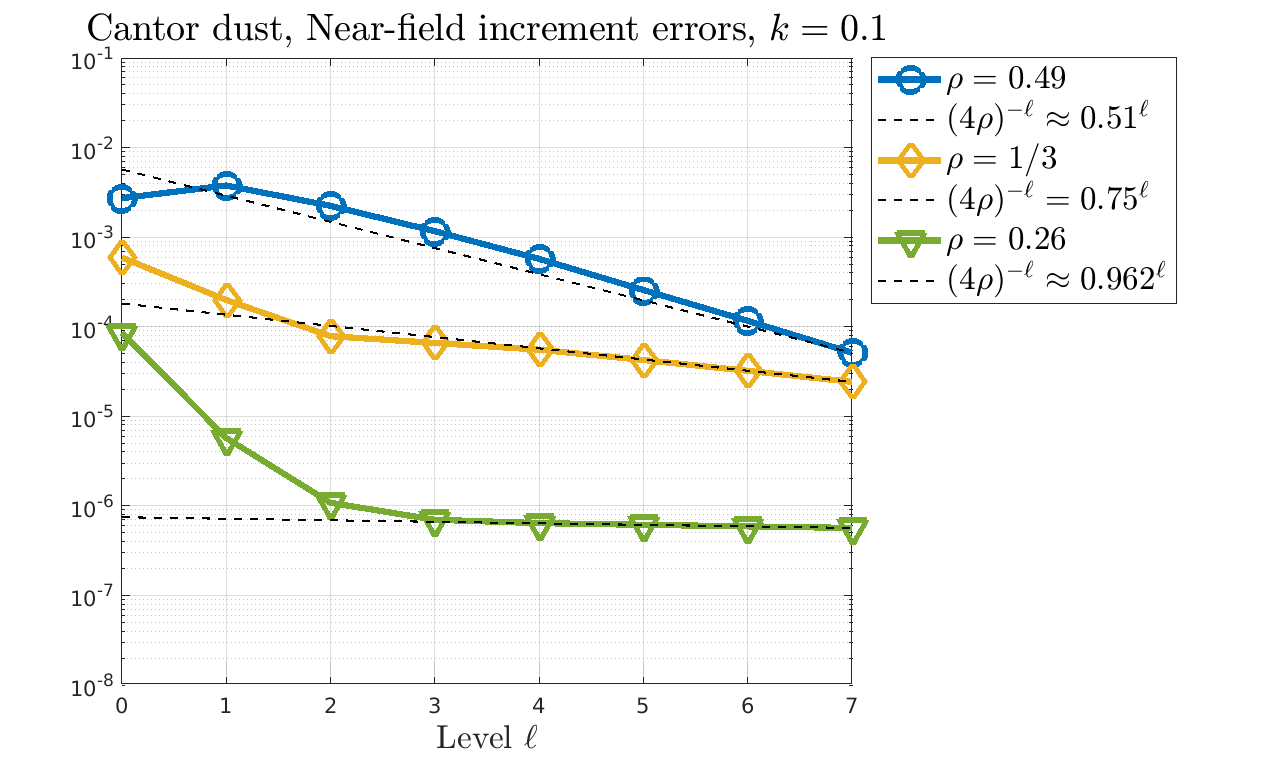}
	\includegraphics[width=0.47\textwidth,clip,trim=30 0 30 0]{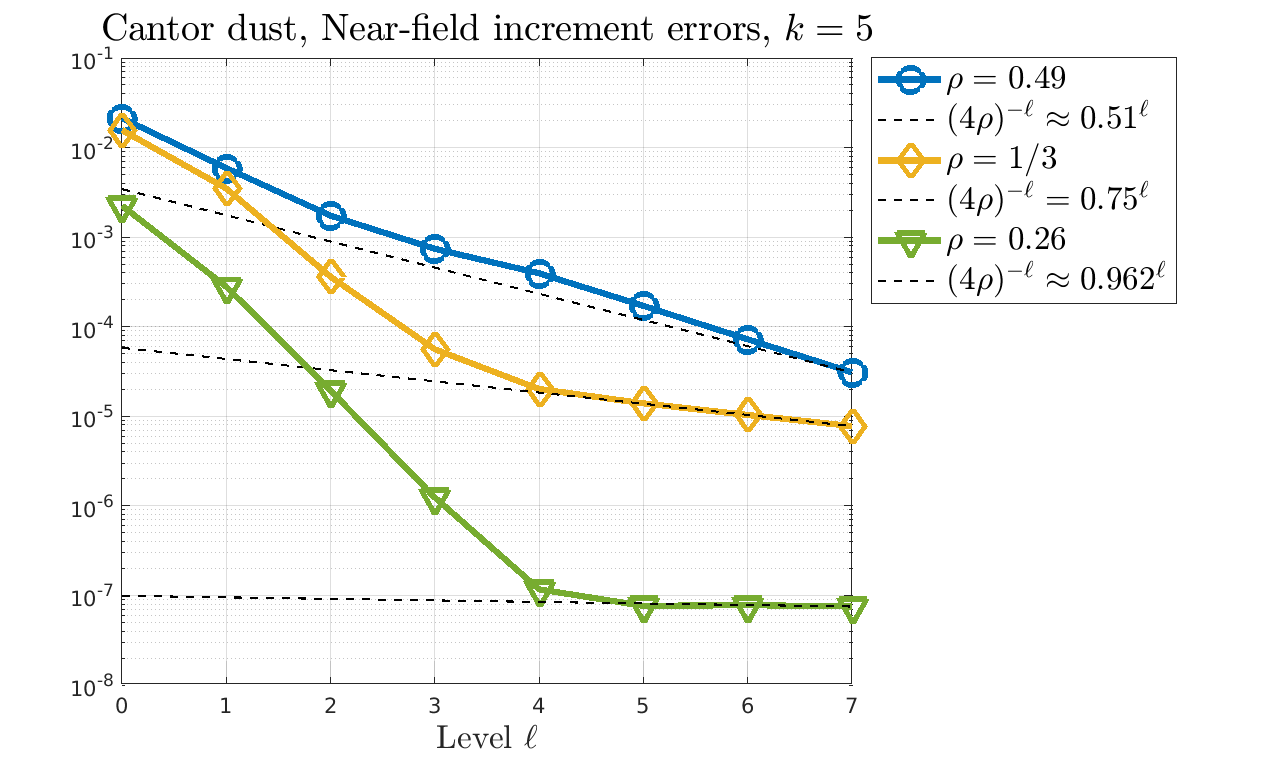}
	\\
	\includegraphics[width=0.47\textwidth,clip,trim=30 0 30 0]{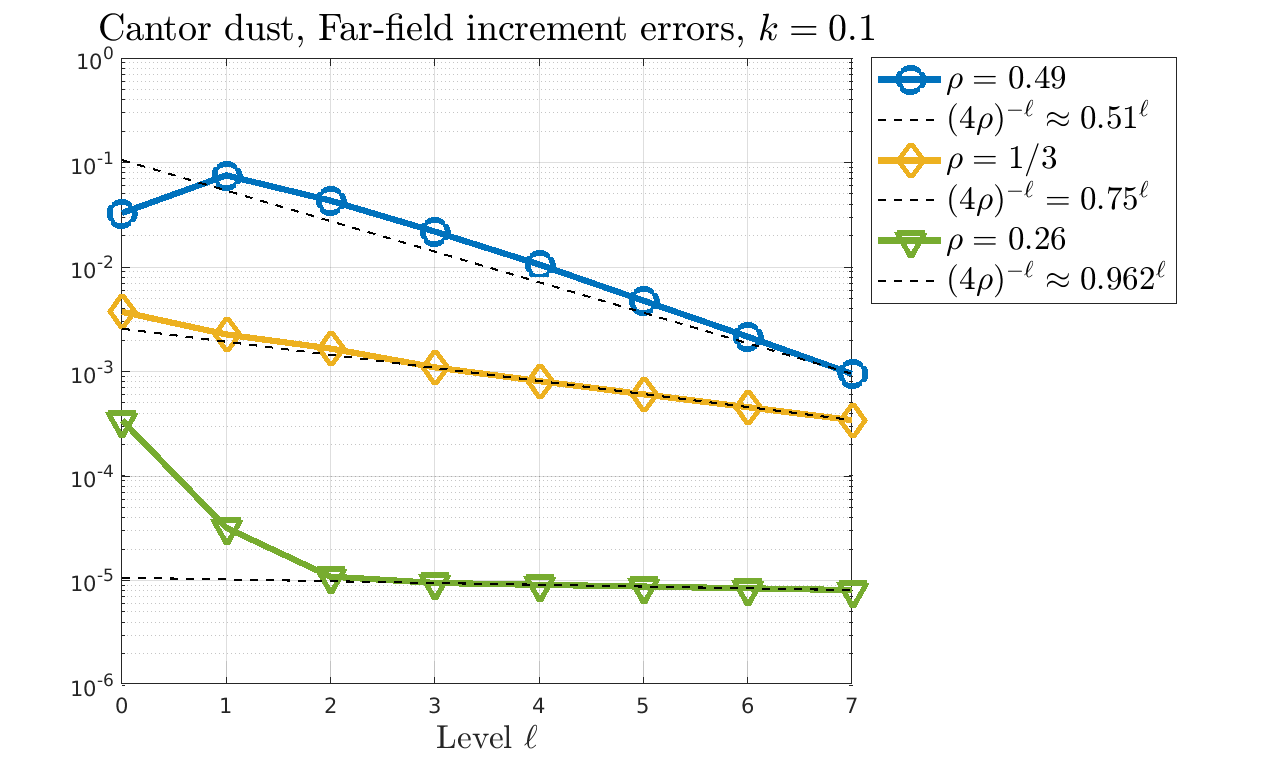}
	\includegraphics[width=0.47\textwidth,clip,trim=30 0 30 0]{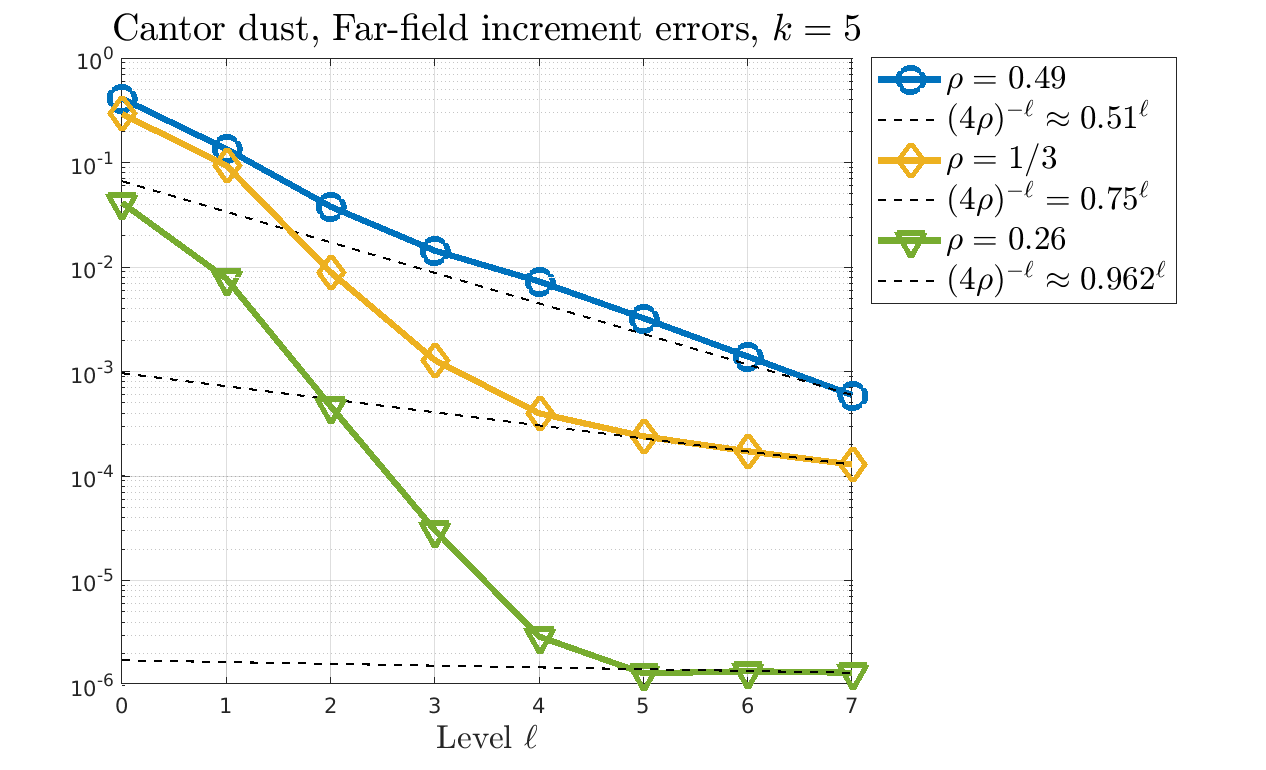}
	
	\caption{Incremental convergence of the scattered near- and far-field for Cantor dusts, measured as $\|u_\ell-u_{\ell+1}\|_{L_\infty}$ and $\|u^\infty_\ell-u^\infty_{\ell+1}\|_{L_\infty}$, respectively.}
	\label{fig:NearFar2}
\end{figure}

We now study the convergence of the Hausdorff-BEM approximation of the field scattered by $\Gamma$, for the same scatterers (Cantor sets and dusts), wavenumbers, incident waves and quadrature parameters   
considered in \S\ref{s:exp:Cantor}--\ref{s:exp:Dust}. The near field at $x\in\R^{n+1}$ and far-field pattern at $\hat{x}\in\mathds{S}^n$ are computed using \eqref{eq;JQdef}, choosing $\varphi=\Phi(x,\cdot)$ and $\varphi=\Phi^{\infty}(\hat{x},\cdot)$ respectively, using the same $h_Q$ values used to construct the associated BEM system. 
In Figure~\ref{fig:NearFar} we show 
	$L_\infty$ errors in the near- and far-field for scattering by Cantor sets and Cantor dusts, 
	obtained by computing the maximum error over a suitable set of sample points. In more detail, define the parameter $\cN(k):=10\max(k,2)$, which is always an even integer for the values of $k$ in our experiments. For the near-field, when $n=1$ we sample at $4\cN$ points on the boundary of the square $(-1,2)\times(-1.5,1.5)$, and when $n=2$ we sample at $\cN^2$ points on a uniform grid on the square $(-1,2)\times(-1,2)\times\{-1\}$ (recall that $\Gamma\subset[0,1]\times[0,1]\times\{0\}$). For the far-field, when $n=1$ we sample at $\cN$ points on the circle $\mathds{S}^1$, and when $n=2$ we sample at $\frac{\cN}{2}\times\cN$ points on the sphere $\mathds{S}^2$, chosen such that the points form a uniform grid in spherical coordinate space $[0,\pi]\times[0,2\pi]$.

For Cantor sets, we observe that near- and far-field errors converge to zero with rates $2^{-\ell}$, precisely as predicted by the theory in \eqref{eq:ConvRates}. For Cantor dusts, we observe convergence that is apparently faster than the predicted theory, similar to the observation made in relation to Figure \ref{fig:CantorDustH12}, which was explained in \S\ref{s:exp:Dust}. 
Applying the same reasoning 
as in \S\ref{s:exp:Dust} 
to achieve a robust comparison with the theoretical error bounds, in 
Figure \ref{fig:NearFar2} 
we plot 
$\|u_\ell-u_{\ell+1}\|_{L_\infty}$ and $\|u^\infty_\ell-u^\infty_{\ell+1}\|_{L_\infty}$ against $\ell$. Both agree with the predicted convergence rate of $(\rho M)^{-\ell}$.

\subsection{Comparison against ``prefractal BEM''}\label{sec:lebesgue}
\begin{figure}[tb!]\centering
	\includegraphics[width=\textwidth]{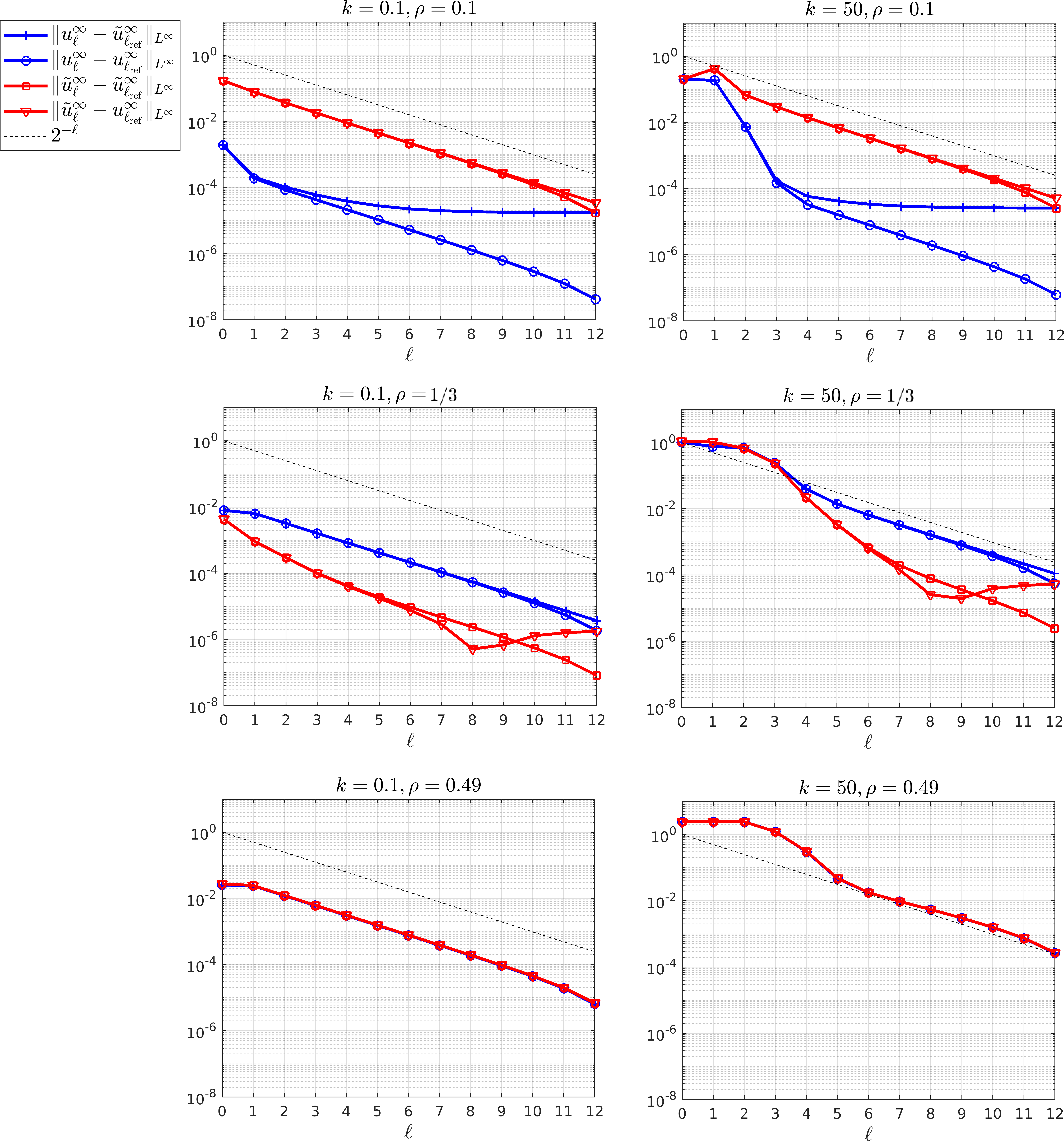}
	\caption{Absolute $L_\infty$ errors of Hausdorff-BEM (${u}_{\ell}^\infty$, blue curves) and prefractal-BEM ($\tilde{u}_{\ell}^\infty$, red curves) approximations of the far-field pattern $u^\infty$ for scattering by three Cantor sets with $k=0.1$ (left panels) and $k=50$ (right panels) and $\vartheta=(1/2,-\sqrt{3}/2)$. 
In all cases $\ellref=13$.
	}
	\label{fig:lebbem}
\end{figure}

\begin{figure}[tb!]\centering
	\includegraphics[width=0.48\textwidth]{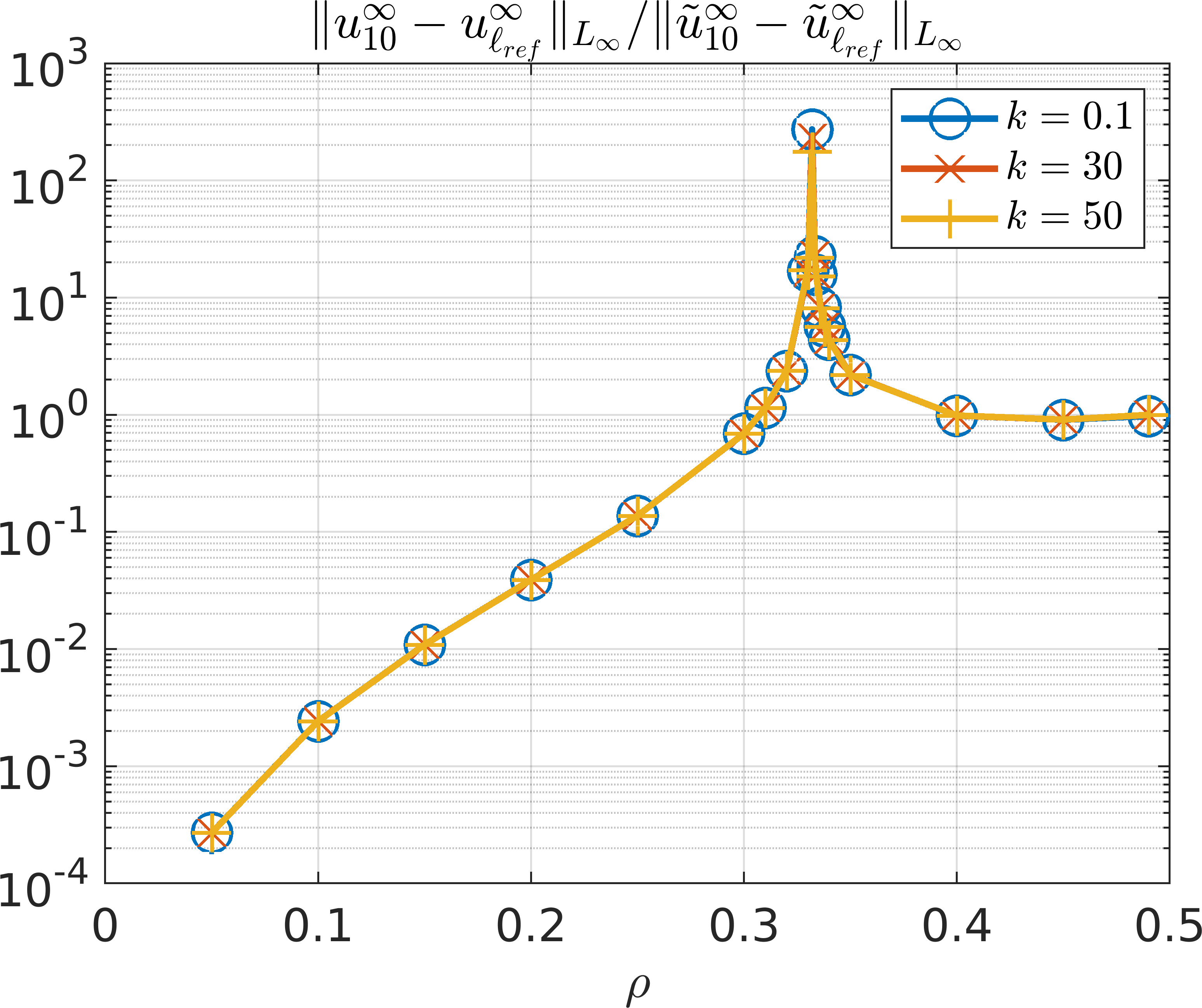}
	\hspace{3mm}
		\includegraphics[width=0.48\textwidth]{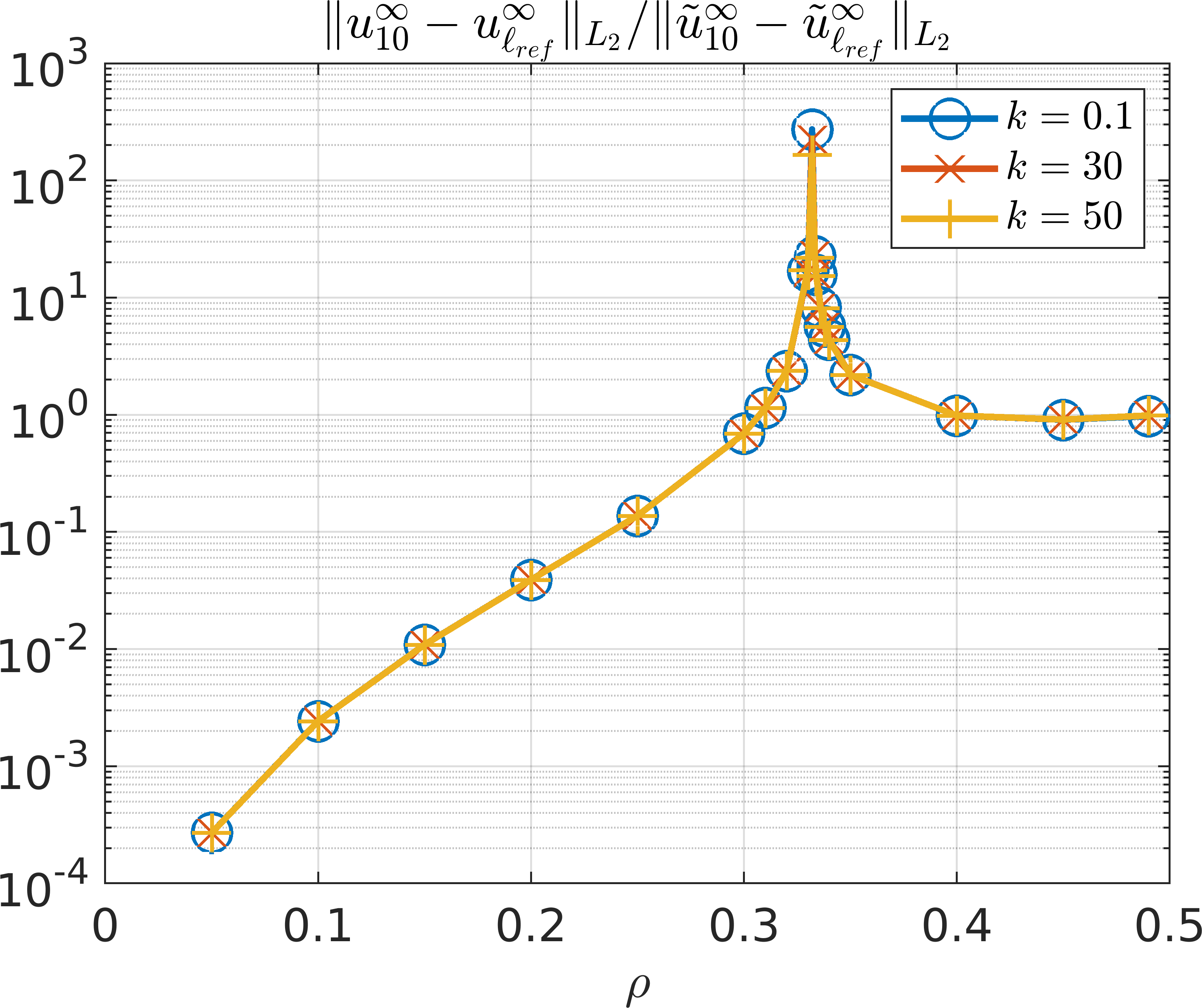}
	\caption{
	Ratio of errors of Hausdorff-BEM (${u}_{10}^\infty$) and prefractal-BEM ($\tilde{u}_{10}^\infty$) approximations of the far-field pattern $u^\infty$ with $\ell=10$ for scattering by  Cantor sets with $k=0.1$, $k=30$ and $k=50$ and $\vartheta=(1/2,-\sqrt{3}/2)$, for a range of values of $\rho\in(0,1/2)$, measured in the $L_\infty$ (left panel) and $L_2$ (right panel) norms on $\mathds{S}^1$. 
	A value below $1$ indicates that the Hausdorff BEM is more accurate, while a value above $1$ indicates that the prefractal BEM is more accurate. 
In both cases $\ellref=13$.
	}
	\label{fig:lebbem2}
\end{figure}

In this section we compare the approximations produced by our Hausdorff BEM with those produced by the method of \cite{BEMfract}. This provides a stronger validation of our Hausdorff BEM than the experiments so far presented, which compare our Hausdorff-BEM approximations against higher-accuracy approximations produced using the same Hausdorff-BEM code. We shall refer to the method of \cite{BEMfract} as ``prefractal BEM'', since it involves applying a standard BEM approximation on a prefractal approximation of $\Gamma$, which is the closure of a finite union of disjoint Lipschitz open sets. For brevity we focus only on the case of scattering by Cantor sets. In this case, for the prefractal BEM we take as prefractals the sequence of sets $\Gamma^{(0)}=[0,1]$, $\Gamma^{(\ell)}:=s(\Gamma^{(\ell-1)})$, $\ell\in\N$, where $s$ is as in \eqref{eq:fixedfirst} for the IFS \eqref{eq:CS_IFS}. 

For each $\ell\in\N_0$ we denote by $\tilde{u}^\infty_\ell$ the prefractal-BEM approximation to the far-field pattern $u^\infty$, 
computed on the prefractal $\Gamma^{(\ell)}$ using a standard piecewise-constant Galerkin BEM with one degree of freedom for each connected component of $\Gamma^{(\ell)}$. 
Quadrature was carried out using high-order Gauss and product Gauss rules, and quadrature parameters were chosen so that increasing quadrature accuracy did not noticeably change the results presented below.  
This leads to a total number of degrees of freedom $N(\ell)=2^\ell$, which is the same number of degrees of freedom used in our Hausdorff BEM at level $\ell$. As before, we write $u^\infty_\ell$ to denote the approximation to the far-field pattern for the Hausdorff BEM, which is computed as described in 
\S\ref{s:exp:NearFar}, 
except that now we use smaller values of the quadrature parameter $h_Q$ (as detailed below), to ensure that the results presented are not polluted by effects of insufficiently accurate quadrature, so our focus is on the Galerkin error \textit{per se}. 

In Figure \ref{fig:lebbem} we show $L_\infty$ errors in the far-field pattern between the two methods for $\rho = 0.1$, $1/3$ and $0.49$, with incident direction $\vartheta=(1/2,-\sqrt{3}/2)$ and wavenumbers $k=0.1$ and $k=50$ (with $h_Q=\rho^6 h$ for $k=0.1$ and $h_Q=\rho^8 h$ for $k=50$ for the Hausdorff BEM), obtained by computing the maximum error over 300 equally-spaced observation angles in $\mathds{S}^1$. 
For each method we calculate the errors in two different ways, using, firstly, a Hausdorff-BEM reference solution, and, secondly, a prefractal-BEM reference solution; in both cases the reference solution is computed with $\ellref=13$. 
In all cases, it is clear that both methods are converging to the same solution, validating both methods. 
Indeed, it appears that both methods converge at the same $2^{-\ell}$ rate. 
However, the results suggest that the relative accuracy of the two methods is dependent on the value of $\rho$, with the two methods having essentially the same accuracy for $\rho=0.49$, the prefractal BEM being more accurate for $\rho=1/3$, and the Hausdorff BEM being more accurate for $\rho=0.1$. To investigate this further we carried out similar experiments for more values of $\rho$, plotting the ratio of the errors obtained (measured in the $L_\infty$ and $L_2$ norms) with $\ell=10$ in Figure \ref{fig:lebbem2}.
We observe that:
\setlength{\parskip}{0pt}
\begin{itemize}
\setlength{\itemsep}{0pt}
\setlength{\parsep}{0pt}
\setlength{\parskip}{0pt}
\item for $\rho$ between $0.4$ and $0.5$ the two methods are very similar in accuracy;
\item for $\rho$ between $0.3$ and $0.4$ the prefractal BEM appears to be more accurate, significantly so, by a factor $>100$, for $\rho\approx1/3$ (this appears to be due entirely to some unexpected enhanced accuracy of the prefractal BEM for $\rho\approx 1/3$);
\item for $\rho$ between $0$ and $0.3$ the Hausdorff BEM appears to be more accurate, significantly so, by a factor $>1000$, for the lowest value of $\rho$;
\item the ratio of the errors for the two methods appears to be essentially independent of $k$ for the range of $k$ tested. To illustrate this we have also included in Figure \ref{fig:lebbem2} results for $k=30$ (computed with $h_Q=\rho^8 h$), alongside the results for $k=0.1$ and $k=50$; we observe that the results for all three $k$ values are almost identical. 
\end{itemize} 
These observations 
about the accuracy of the two methods (particularly the ``spike'' in Figure \ref{fig:lebbem2} near 
$\rho=1/3$)  
merit further investigation, but we leave this to future work.
\setlength{\parskip}{3mm}
\subsection{Non-homogeneous or non-disjoint IFS attractors}
\label{s:exp:SnowflakeWonky}

\begin{figure}[tb!]
	\centering
	\includegraphics[width=0.75\linewidth]{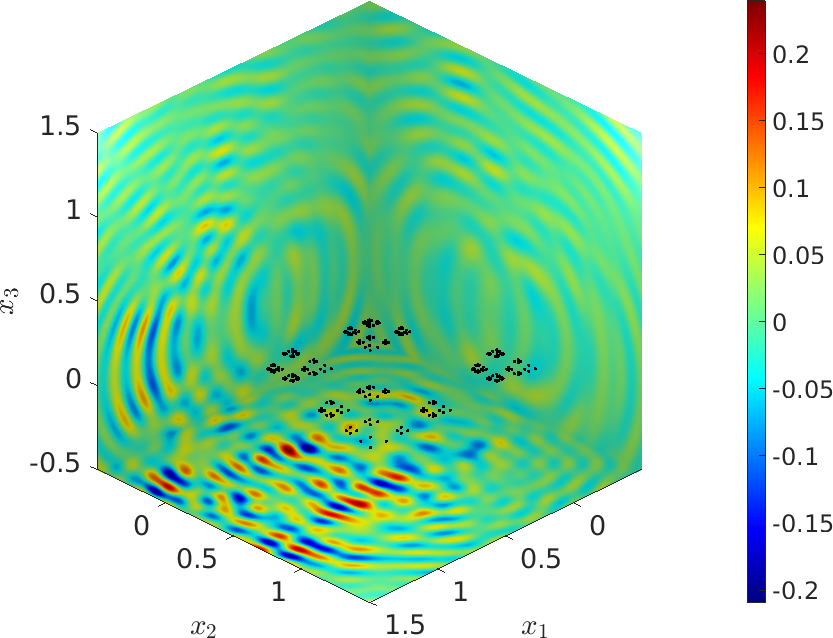}
	\caption{The scattered field induced by a plane wave, with wavenumber $k=50$ and direction vector $\vartheta=(0,1,-1)/\sqrt{2}$, incident on the ``non-homogenous dust" of \cite[Fig.~8b, eq.~(70)]{HausdorffQuadrature}, plotted on three faces of a cube, computed with mesh parameter $h=\hmeshref=\sqrt{2}/16384$. The scatterer, which is a subset of the plane $\R^2\times\{0\}$, is shown in black. Further details are given in \S\ref{s:exp:SnowflakeWonky}.}
	\label{fig:domainplot}
\end{figure}

\begin{figure}[tb!]\centering
\includegraphics[width=0.47\textwidth,clip,trim=20 0 0 0]{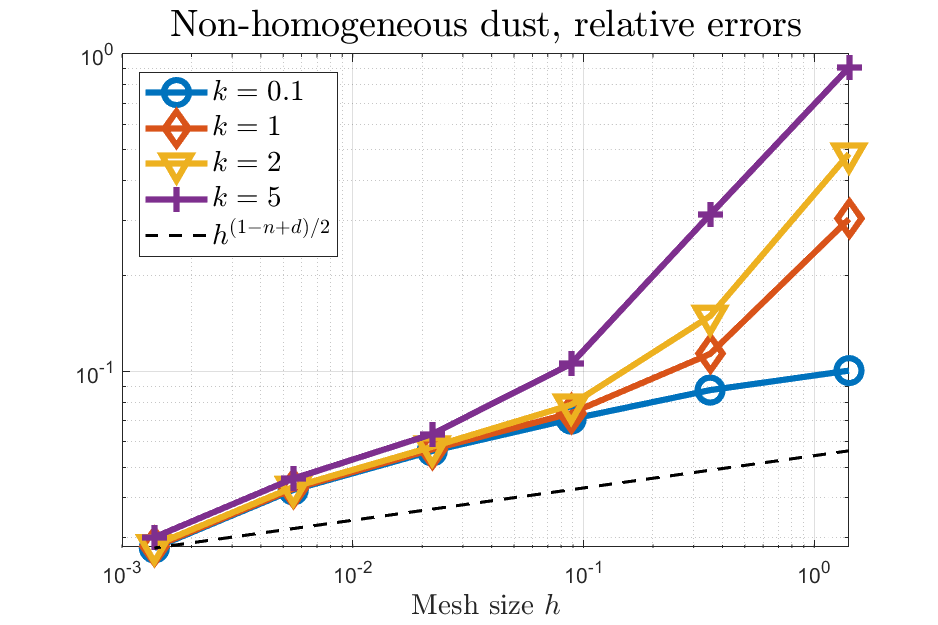}
\includegraphics[width=0.47\textwidth,clip,trim=20 0 0 0]{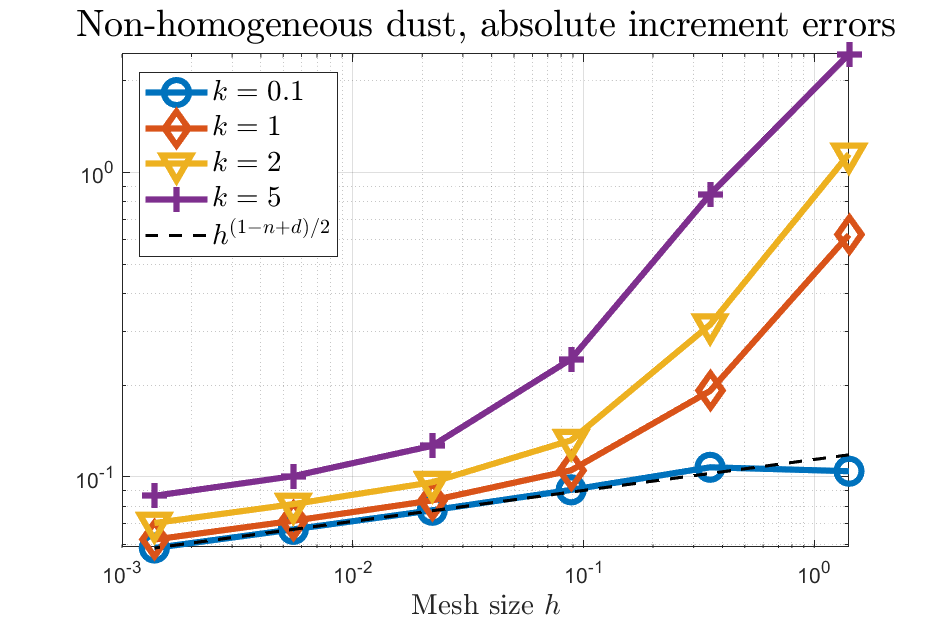}
\includegraphics[width=0.47\textwidth,clip,trim=20 0 0 0]{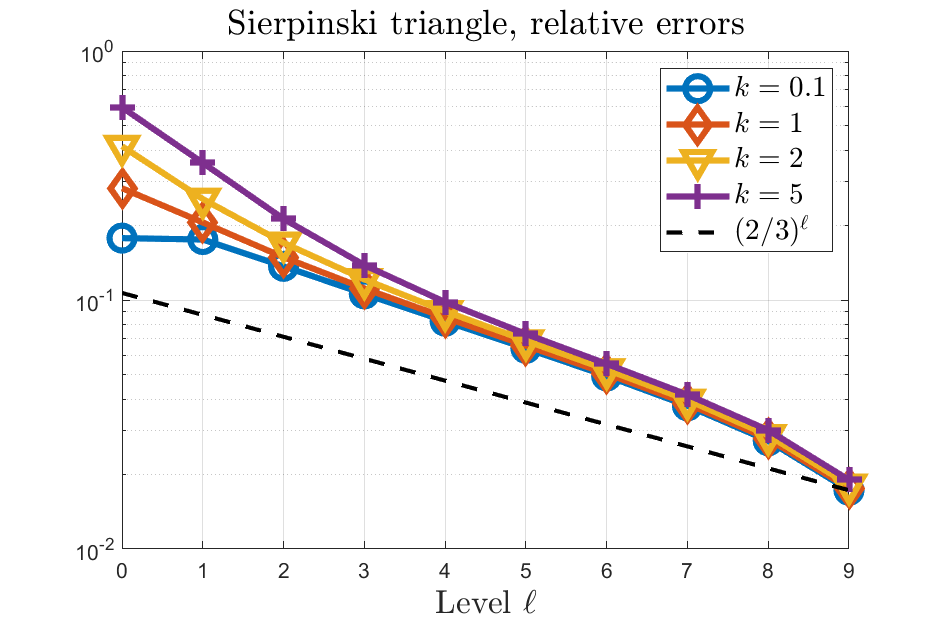}
\includegraphics[width=0.47\textwidth,clip,trim=20 0 0 0]{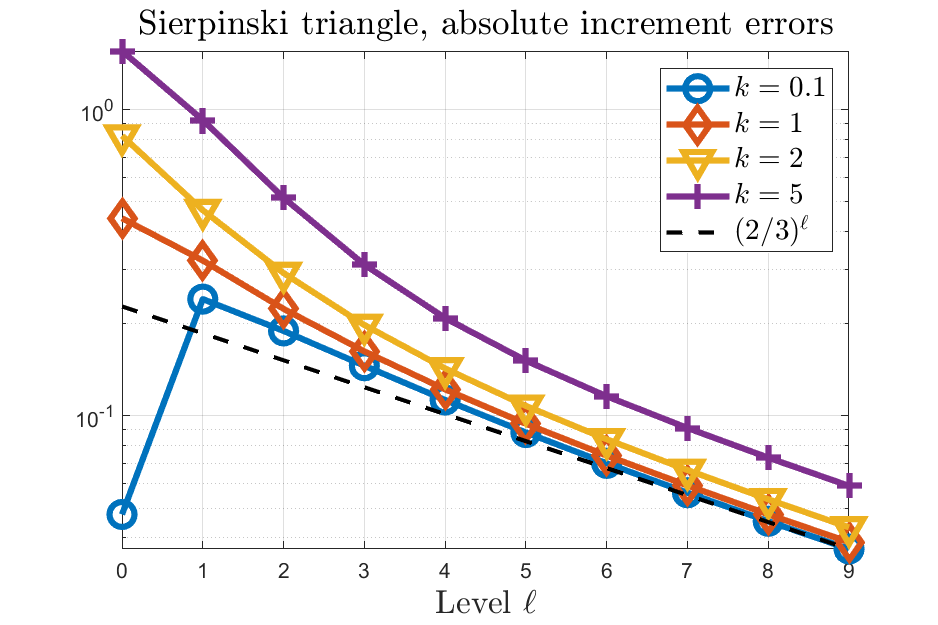}
\caption{$H^{-1/2}_\Gamma$ norms of the errors for the two IFS attractors described in \S\ref{s:exp:SnowflakeWonky}. Top left: $\|\phi_h-\phi_{\hmeshref}\|_{H^{-1/2}_\Gamma}/\|\phi_{\hmeshref}\|_{H^{-1/2}_\Gamma}$, bottom left: $\|\phi_\ell-\phi_{\ellref}\|_{H^{-1/2}_\Gamma}/\|\phi_{\ellref}\|_{H^{-1/2}_\Gamma}$, top right: $\|\phi_h-\phi_{h/4}\|_{H^{-1/2}_\Gamma}$ (see Footnote \ref{fn:inc}), bottom right: $\|\phi_\ell-\phi_{\ell+1}\|_{H^{-1/2}_\Gamma}$.
\label{fig:OtherShapes}}
\end{figure}

We now consider two IFS attractors that are not Cantor sets/dusts, both for $n=2$:
\begin{itemize}
\item[(i)] the ``non-homogeneous dust'', shown in Figure \ref{fig:domainplot}, and defined in \cite[Fig.~8b, eq.~(70)]{HausdorffQuadrature} (with $M=4$, $\rho_1=\rho_2=\rho_3=\frac14$, $\rho_4=\frac12$ and Hausdorff dimension $d\approx1.20$);
\item[(ii)] the Sierpinski triangle \cite[\S6.3]{BEMfract} (with $M=3$, $s_1(x)=\frac12x$, $s_2(x)=\frac12 x+(\frac12,0)$, $s_3(x)=\frac12x+(\frac14,\frac{\sqrt3}{4})$ and Hausdorff dimension $d=\log 3/\,\log2\approx 1.58$);
\end{itemize}
Both satisfy the OSC \eqref{oscfirst} and are hence $d$-sets for the stated values of $d$. But only (ii) is homogeneous, and only (i) is disjoint and satisfies the assumptions of the Hausdorff-BEM convergence theory of \S\ref{sec:GalerkinBounds}.
Figure~\ref{fig:OtherShapes} shows the $H^{-1/2}_\Gamma$ norms of the Hausdorff-BEM errors computed against a fixed reference solution (left column) and the norms of the differences between Hausdorff-BEM solutions on consecutive meshes as in Figure~\ref{fig:CantorDustIncrements} (right column), for a range of wavenumbers.%
\footnote{\label{fn:inc}
	In \S\ref{s:exp:Dust} we motivated the study of the errors $\|\phi_\ell-\phi_{\ell+1}\|_{H^\mhalf_\Gamma}$ between consecutive levels with the bound \eqref{eq:NumericsIncrement}.
In the case of non-homogeneous IFSs, denoting by $\phi_h$ the Hausdorff-BEM solution corresponding to a mesh of size $h$, assuming that the theoretical error bound \eqref{eq:GalerkinBound} is sharp, i.e.\ $\|\phi-\phi_h\|_{H^\mhalf_\Gamma}=Ch^{\mathfrak{a}}$ for some $C>0$ and $\mathfrak{a}=(1-n+d)/2$, it follows that
$
C h^{\mathfrak{a}}\big(1-(h'/h)^{\mathfrak{a}}\big)\le\|\phi_h-\phi_{h'}\|_{H^\mhalf_\Gamma}\le C h^{\mathfrak{a}}\big(1+(h'/h)^{\mathfrak{a}}\big)$
for $0<h'<h$. 
In the example (i), two consecutive meshes have $h'/h=1/4$; thus we expect theoretically that $\|\phi_h-\phi_{h'}\|_{H^\mhalf_\Gamma}$ is approximately proportional to $h^{\mathfrak{a}}$.
}
For the Sierpinski triangle (ii) we use meshes of $N=3^\ell$ elements of diameter $2^{-\ell}$ and we plot the errors against the level $\ell$ for $\ell=0,\ldots,9$, using a reference solution with $\ellref=10$, i.e.\ $\Nref=3^{10}=59049$.
For (i), since the IFS is non-homogeneous the BEM mesh is not parametrised by the level $\ell$ but rather by the mesh size $h=\frac{\sqrt2}{4^j}$, $j=0,\ldots,5$, giving the number of degrees of freedom as $N=1,7,40,217,1159,6160,$ respectively. The reference solution has $h=\hmeshref = \frac{\sqrt2}{4^6}= \frac{\sqrt2}{4096}$ and $\Nref=75316$.
For quadrature, for (i) we use $h_Q=\frac{h}{16}$ and for (ii) $h_Q=\frac{h}{4}$. %

For (i), as we found for the homogeneous Cantor dust in \S\ref{s:exp:Dust}, at first sight the convergence in the top-left panel of Figure~\ref{fig:OtherShapes} appears slightly faster than the theoretical rate $h^{(1-n+d)/2}$ predicted by Theorem \ref{thm:Convergence}. As before, this apparent mismatch is due to the limited accuracy of the reference solution; when we plot 
incremental errors (in the top right panel) 
we see much clearer agreement with the theory. 
Example (ii) is not covered by our convergence theory, but if our theory were to extend to this case our predicted convergence rate of $h^{(1-n+d)/2}$ would evaluate to $(2/3)^\ell$. Similarly to what we observed for (i), the relative errors in the bottom left panel of Figure~\ref{fig:OtherShapes} converge slightly faster than $(2/3)^\ell$, while the increments in the bottom right panel converge approximately in agreement with $(2/3)^\ell$. However, we leave theoretical justification of this empirical observation for future work.

\section{Conclusions and Future Work}
\label{sec:Conclusions}
In this paper we presented and analysed a piecewise-constant Galerkin BEM for acoustic scattering in $\R^{n+1}$, $n=1,2$, by a sound-soft planar screen $\Gamma\subset\Gamma_\infty = \R^n\times\{0\}$. Our BEM is defined whenever the screen $\Gamma$ is a compact $d$-set, for some $n-1<d\leq n$, which includes cases where $\Gamma$ is fractal. It is based on an integral equation formulation in which integration is carried out with respect to Hausdorff measure $\cH^d$. For any compact $d$-set we proved that the method converges as the mesh width $h$ tends to zero (see Theorem \ref{thm:Convergence}). 
Regarding the relationship between mesh width and wavelength, in our numerical results presented in \S\ref{sec:NumericalResults} we observe the same behaviour one obtains for a conventional BEM on a smooth scatterer: as $h$ decreases towards zero there is a pre-asymptotic phase until $h$ reaches the wavelength scale, beyond which one observes the predicted asymptotic behaviour. 
In the case where $\Gamma$ is the disjoint attractor of an IFS satisfying the OSC, the ``elements'' in the BEM are self-similar subsets of $\Gamma$, and in \S\ref{sec:Quadrature} we showed how the Galerkin integrals can be evaluated using quadrature rules from \cite{HausdorffQuadrature}. In this case we also proved fully-discrete convergence rates, under certain regularity assumptions on the integral equation solution (see Theorem \ref{thm:BEMConvergence}, Corollaries \ref{cor:fullydiscrete} and \ref{cor:fullydiscrete2}). 
Specifically, we showed that the BEM solution converges like $O(h^{s+1/2})$ as $h\to 0$, and the near- and far-field solutions like $O(h^{2s+1})$, assuming $\phi\in H^s_{\Gamma}$ for some $s>-1/2$. We proved the existence of such an $s$ in Remark \ref{rem:extra_smoothness}, using Proposition \ref{prop:epsilon}, but our numerical results in \S\ref{sec:NumericalResults} suggest that, for sufficiently smooth data, it may hold that $\phi\in H^s_{\Gamma}$ for all $s <-(n-d)/2$, i.e.\ for all $s$ such that the space $H^s_{\Gamma}$ is non-trivial. 
Guided by these observations, we formulated Conjecture \ref{ass:Smoothness}, which is a statement about the range of Sobolev spaces on which the Hausdorff-measure integral operator $\mathbb{S}$ (defined in \eqref{eq:ISdef}) is invertible.  

Proving or disproving Conjecture \ref{ass:Smoothness} is the main outstanding theoretical question relating to the paper. Other avenues for future research include:
\begin{itemize}
\item The extension of our analysis to non-disjoint fractal screens, such as the Sierpinski triangle screen considered in \S\ref{s:exp:SnowflakeWonky}. Singular quadrature rules for such cases (generalising those described in \S\ref{sec:Quadrature}) have been presented recently in \cite{NonDisjointQuad}, but extending our BEM convergence analysis will require a suitable generalisation of the wavelet approximation theory of \cite{Jonsson98}, which is yet to be worked out. 
\item The extension of our convergence rate analysis to the case $d=n$. This requires somewhat different techniques to the case $d<n$, 
and will be presented in a separate article \cite{dequalsnpaper}.
\item The generalisation of our Hausdorff BEM to scattering by non-planar fractal structures. Results in this direction were presented recently in \cite{HausdorffDomain}. 
\item The extension to Neumann problems. Theoretical results relating to integral equation formulations of Neumann screen problems were presented in \cite{ScreenPaper}. These show that the Neumann boundary condition is ``weaker'' than the Dirichlet condition, in the sense that Neumann screens do not scatter waves unless $H^{1/2}_\Gamma\neq \{0\}$, which requires in particular that $\Gamma\subset\Gamma_\infty\cong\R^n$ has positive $n$-dimensional Lebesgue measure. Hence, in the context of scattering by screens that are attractors of IFS satisfying the OSC, only the case $d=n$ is relevant. A specific example would be scattering by a sound-hard Koch snowflake screen. However, the development of a BEM for such Neumann problems using a mesh of fractal elements (as we consider in the current paper for the Dirichlet case) is complicated by the fact that non-trivial piecewise polynomials on such meshes cannot be continuous, and hence cannot be $H^{1/2}_\Gamma$-conforming. Thus a different approach is required, perhaps involving a discontinuous Galerkin discretization. 
We remark that a rigorous convergence analysis for a prefractal BEM approach to the related impedance problem was presented in \cite{ImpedanceScreen}. 
\item A more detailed investigation into the relative accuracy of our Hausdorff BEM and the alternative prefractal BEM of \cite{BEMfract}, extending the preliminary analysis of \S\ref{sec:lebesgue}. For many problems the Hausdorff BEM appears to be more accurate than the prefractal BEM. However, the comparison between the two is rather subtle: for the examples we considered in \S\ref{sec:lebesgue}, while both methods appear to converge at the same rate, for the same number of degrees of freedom the Hausdorff BEM can be over 1000 times more accurate than the prefractal BEM, or 100 times less accurate, depending on the fractal simulated. As yet we do not understand why. %
\item The combination of our Hausdorff BEM with accelerated linear algebra via a technique such as $\mathcal{H}$-matrix compression or the fast multipole method. This would allow the simulation of larger problems with finer mesh widths, permitting the calculation of higher accuracy solutions and/or the study of higher frequency problems. 
\end{itemize}

\appendix
\section{Besov spaces on \texorpdfstring{$d$}{d}-sets}
\label{app:Besov}

In this appendix we show that the spaces denoted by $B_{\alpha}^{p,q}(\Gamma)$,
and defined in terms of atoms in \cite[section 6]{Jonsson98},
coincide with the spaces denoted by $\mathbb{B}_{p,q,0}^{\alpha}(\Gamma)$
defined  in \cite[Def.~6.3]{caetano2019density}, under the assumptions
$\alpha>0$, $1\leq p,q<\infty$, and $\Gamma$ being a $d$-set, $0<d<n$, %
preserving Markov's inequality in the sense of \cite[\S4]{Jonsson98} (or see \cite[p.~34]{JoWa84}, and note Remark \ref{rem:key} below). As a consequence (see Corollary \ref{cor:equiv}) we have that
our space $\mathbb{H}^{\alpha}(\Gamma)$, defined in \S\ref{sec:FunctionSpaces}, coincides with $B_{\alpha}^{2,2}(\Gamma)$
under the same assumptions on $\alpha$ and $\Gamma$.\footnote{We recall (see the discussion in \S\ref{sec:FunctionSpaces} and \cite[Rem.\ 6.4]{caetano2019density}) that, for $0<\alpha<1$, $\IH^\alpha(\Gamma)$ also coincides with the Besov space $B^\alpha_{2,2}(\Gamma)$ of \cite{JoWa84}.} %

We start by rephrasing the above mentioned definition of $B_{\alpha}^{p,q}(\Gamma)$
in terms closer to the notation used in \cite{Caetano2011}, which
we want to use to make the announced connection. %
In order to do that, we need first to recall the notion of atom as
used in \cite{Jonsson98}. For each $\nu\in\No$, consider the family of cubes
\[
Q^{\nu m}:=\prod_{i=1}^{n}[2^{-\nu}m_{i},2^{-\nu}(m_{i}+1)],\quad m\in\Zn.
\]
Then, given $\alpha$, $p$, $d$, and $K$ as in Definition \ref{def:B^p,q_alpha}
below, an $(\alpha,p)$-atom $a^{\nu m}$ associated with $Q^{\nu m}$
is any  $a^{\nu m}\in C^K(\R^n)$ such that
\[
\supp a^{\nu m}\subset3Q^{\nu m},
\]
\[
|D^{\beta}a^{\nu m}(x)|\leq2^{-\nu(\alpha-|\beta|-d/p)},\quad x\in\R^n,\;|\beta|\leq K,
\]
where for a cube $Q\subset\R^n$ and a scalar $r>0$ the cube $r Q$ is defined to have the same centre $x_Q$ as $Q$ and side length $r$ times that of $Q$, i.e., $r Q := \{x_Q + r(x-x_Q):x\in Q\}$. %

The following definition of $B_{\alpha}^{p,q}(\Gamma)$ is the second, equivalent, definition of \cite[\S6]{Jonsson98}. %
\begin{defn} %
\label{def:B^p,q_alpha}Let $\alpha>0$, $1\leq p,q<\infty$, and $\Gamma$
be a $d$-set, $0<d<n$, preserving Markov's inequality. Assume that
$\N\ni K>\lfloor\alpha\rfloor$ %
Then $f\in B_{\alpha}^{p,q}(\Gamma)$ iff %
\begin{equation}
f=\sum_{\nu=0}^{\infty}\sum_{m\in\Zn}\lambda^{\nu m}a^{\nu m},\quad\mbox{ convergence in
}\mathbb{L}_{p}(\Gamma),\label{eq:atom rep Jonsson}
\end{equation}
for some family of $(\alpha,p)$-atoms $a^{\nu m}$ and numbers $\lambda=(\lambda^{\nu m})_{\nu\in \N_0, \, m\in \Z^n}$ satisfying
\begin{equation}
\|\lambda\|_{b_{p,q}}:=\left(\sum_{\nu=0}^{\infty}\left(\sum_{m\in\Zn}|\lambda^{\nu m}|^{p}\right)^{q/p}\right)^{1/q}<\infty,\label{eq:b_p,q}
\end{equation}
the norm of $f$ in $B_{\alpha}^{p,q}(\Gamma)$ being defined as the
infimum of the left-hand side of (\ref{eq:b_p,q}) taken over all possible representations
(\ref{eq:atom rep Jonsson}) of $f$.
\end{defn}

In \cite{Caetano2011}, atoms are also considered, but the definition
is different: for each $\nu\in\No$, consider the family of cubes
\[
Q_{\nu m}:=\prod_{i=1}^{n}[2^{-\nu}(m_{i}-1),2^{-\nu}(m_{i}+1)],\quad m\in\Zn.
\]
Then, given $s\in\R$, %
$0<p<\infty$, $K\in\No$, and $c\geq1$, an
$(s,p)_{K,0,c}$-atom $a_{\nu m}$ associated with $Q_{\nu m}$ is
any %
continuous function with (classical) partial derivatives up to
the order $K$ such that
\[
\supp a_{\nu m}\subset cQ_{\nu m},
\]
\[
|D^{\beta}a_{\nu m}(x)|\leq2^{-\nu(s-|\beta|-n/p)},\quad x\in \R^n,\;|\beta|\leq K.
\]

\begin{rem}
\label{rem:comparing atoms}It is easily seen that, for each $\nu\in\No$
and $m\in\Zn$, $Q_{\nu m}\subset3Q^{\nu m}\subset2Q_{\nu m}$. And
from this it is straightforward to show that:
\begin{enumerate}
\item Let $\alpha>0$, $1\leq p<\infty$, $0<d<n$, and $\N\ni K>\lfloor\alpha\rfloor$.
Any $(\alpha,p)$-atom associated with $Q^{\nu m}$ is an $(\alpha+\frac{n-d}{p},p)_{K,0,2}$-atom
$a_{\nu m}$ associated with $Q_{\nu m}$.
\item Let $0<d<n$, $s>\frac{n-d}{p}$, $1\leq p<\infty$, $\N\ni K>\lfloor s-\frac{n-d}{p}\rfloor$.
Any $(s,p)_{K+1,0,1}$-atom associated with $Q_{\nu m}$ is an $(s-\frac{n-d}{p},p)$-atom
associated with $Q^{\nu m}$.
\end{enumerate}
\end{rem}

\begin{prop}
\label{prop:atomic characterization traces}Let $\alpha>0$, $1\leq p,q<\infty$,
and $\Gamma$ be a $d$-set, $0<d<n$, preserving Markov's inequality.
Then the space $\mathbb{B}_{p,q,0}^{\alpha}(\Gamma)$ defined in
\cite[Def.~6.3]{caetano2019density} coincides with the space $B_{\alpha}^{p,q}(\Gamma)$
of Definition \ref{def:B^p,q_alpha} above, with equivalence of norms.
\end{prop}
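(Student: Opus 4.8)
The plan is to prove Proposition~\ref{prop:atomic characterization traces} by identifying both $\mathbb{B}_{p,q,0}^{\alpha}(\Gamma)$ and $B_{\alpha}^{p,q}(\Gamma)$ as the trace space (or restriction space) of a single Besov space on $\R^n$, so that the equivalence follows by transitivity. The key structural fact I would exploit is that both definitions are \emph{atomic} characterisations --- one using the cubes $Q^{\nu m}$ and $(\alpha,p)$-atoms (the $\Gamma$-intrinsic scaling of \cite{Jonsson98}), the other using the cubes $Q_{\nu m}$ and $(s,p)_{K,0,c}$-atoms (the $\R^n$-type scaling of \cite{Caetano2011}) --- and that Remark~\ref{rem:comparing atoms} provides the precise dictionary between them via the shift $s = \alpha + \tfrac{n-d}{p}$.

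First I would make the abstract identification explicit. The space $\mathbb{B}_{p,q,0}^{\alpha}(\Gamma)$ of \cite[Def.~6.3]{caetano2019density} is, by its definition, the image under the trace operator $\mathrm{tr}_\Gamma$ of the Besov space $B^{s}_{p,q}(\R^n)$ with $s = \alpha + \tfrac{n-d}{p}$, equipped with the quotient norm (this mirrors the construction of $\IH^t(\Gamma)$ in \S\ref{sec:FunctionSpaces} in the case $p=q=2$). On the other side, \cite{Caetano2011} gives an atomic characterisation of $B^{s}_{p,q}(\R^n)$ itself in terms of $(s,p)_{K,0,c}$-atoms and the sequence norm $\|\lambda\|_{b_{p,q}}$. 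Thus the whole problem reduces to showing that restricting/tracing these $\R^n$-atoms to $\Gamma$ produces exactly the $(\alpha,p)$-atoms of Definition~\ref{def:B^p,q_alpha}, with comparable coefficient norms.

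The core of the argument is then a two-sided atom comparison, carried out in the order dictated by the two inclusions of Remark~\ref{rem:comparing atoms}. For the inclusion $B_{\alpha}^{p,q}(\Gamma)\subset\mathbb{B}_{p,q,0}^{\alpha}(\Gamma)$: given an atomic representation $f=\sum_{\nu,m}\lambda^{\nu m}a^{\nu m}$ of $f\in B_{\alpha}^{p,q}(\Gamma)$, I would use Remark~\ref{rem:comparing atoms}(1) to view each $(\alpha,p)$-atom $a^{\nu m}$ as an $(s,p)_{K,0,2}$-atom on $\R^n$; since the supports, normalisations and the coefficient sequence $\lambda$ are all preserved, the function $F:=\sum_{\nu,m}\lambda^{\nu m}a^{\nu m}$ (now read as a sum on $\R^n$) lies in $B^{s}_{p,q}(\R^n)$ with $\|F\|_{B^s_{p,q}(\R^n)}\lesssim\|\lambda\|_{b_{p,q}}$ by the atomic theorem of \cite{Caetano2011}, and $\mathrm{tr}_\Gamma F = f$ because tracing is continuous on $\mathbb{L}_p$ and commutes with the $\mathbb{L}_p(\Gamma)$-convergent sum. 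For the reverse inclusion I would start from an $F\in B^{s}_{p,q}(\R^n)$ with $\mathrm{tr}_\Gamma F = f$ and an optimal atomic decomposition of $F$ into $(s,p)_{K+1,0,1}$-atoms; Remark~\ref{rem:comparing atoms}(2) converts each such atom's restriction to $\Gamma$ into an $(\alpha,p)$-atom, yielding a representation of $f$ of the form \eqref{eq:atom rep Jonsson} with $\|\lambda\|_{b_{p,q}}\lesssim\|F\|_{B^s_{p,q}(\R^n)}$; taking the infimum over $F$ gives $\|f\|_{B_{\alpha}^{p,q}(\Gamma)}\lesssim\|f\|_{\mathbb{B}_{p,q,0}^{\alpha}(\Gamma)}$.

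The main obstacle I anticipate is not the atom bookkeeping itself (which Remark~\ref{rem:comparing atoms} largely settles) but the handling of \emph{convergence and the role of Markov's inequality}. Two delicate points need care: first, that the $\mathbb{L}_p(\Gamma)$-convergence required in Definition~\ref{def:B^p,q_alpha} is compatible with the $B^s_{p,q}(\R^n)$-convergence of the lifted series and survives under $\mathrm{tr}_\Gamma$ --- here I would invoke the $d$-set property \eqref{eq:dset} to control $\int_{3Q^{\nu m}\cap\Gamma}\rd\cH^d$ by $2^{-\nu d}$ and thereby estimate $\mathbb{L}_p(\Gamma)$-norms of traced atoms. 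Second, the hypothesis that $\Gamma$ preserves Markov's inequality is what guarantees that the atomic/trace description of \cite{caetano2019density} is norm-equivalent to the intrinsic one, and I would need to check that the smoothness order $K$ (with $K>\lfloor\alpha\rfloor$, and $K+1$ on the $\R^n$ side) is large enough for both atomic theorems to apply simultaneously; matching these two differentiability thresholds, and confirming the admissible range $s>\tfrac{n-d}{p}$ coincides with $\alpha>0$, is the fiddly step that ties the argument together.
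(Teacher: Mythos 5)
Your proposal follows essentially the same route as the paper's proof: both directions hinge on the identification of $\mathbb{B}_{p,q,0}^{\alpha}(\Gamma)$ as the trace of $B^{\alpha+\frac{n-d}{p}}_{p,q}(\R^n)$, the atom dictionary of Remark \ref{rem:comparing atoms}, the atomic decomposition theorems of \cite{Caetano2011}, and the continuity of $\tr$ from \cite[Prop.~6.2]{caetano2019density} to pass the $\mathbb{L}_p(\Gamma)$-convergent sums through the trace. The convergence and smoothness-threshold issues you flag are handled in the paper exactly as you anticipate (choosing $K>\alpha+\frac{n-d}{p}$ so that both atomic theorems apply, and noting that $a^{\nu m}|_\Gamma$ and $a^{\nu m}$ define the same class in $\mathbb{L}_p(\Gamma)$), so your plan is correct as stated.
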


\begin{proof}
Consider $\N\ni K>\alpha+\frac{n-d}{p}$. We have, in particular,
that $K>\lfloor\alpha\rfloor$.
Given $f\in B_{\alpha}^{p,q}(\Gamma)$, we have (\ref{eq:atom rep Jonsson})
with (\ref{eq:b_p,q}) for some family of $(\alpha,p)$-atoms $a^{\nu m}$
and numbers $\lambda^{\nu m}$ such that
$$
\|\lambda\|_{b_{p,q}} \leq 2\|f\|_{B_{\alpha}^{p,q}(\Gamma)}.
$$
On the other hand, since $K>\alpha+\frac{n-d}{p}$
and $0>-\alpha-\frac{n-d}{p}$, it follows from \cite[Thm.~2.3, Rem.~2.4 and Thm.~2.5]{Caetano2011}
and part 1 of the above Remark \ref{rem:comparing atoms} that
\[
g:=\sum_{\nu=0}^{\infty}\sum_{m\in\Zn}\lambda^{\nu m}a^{\nu m}\in B_{p,q}^{\alpha+\frac{n-d}{p}}\Rn
\]
with convergence in the sense of $B_{p,q}^{\alpha+\frac{n-d}{p}}\Rn$
and
\begin{equation}
\|g\|_{B_{p,q}^{\alpha+\frac{n-d}{p}}\Rn }\leq c\|\lambda\|_{b_{p,q}},\label{eq:B lesssim b}
\end{equation}
for some constant $c>0$ independent of $f$.
From elementary embeddings and the linearity and continuity of the
trace operator $\tr $ from \cite[Prop. 6.2]{caetano2019density},
\[
\tr g=\sum_{\nu=0}^{\infty}\sum_{m\in\Zn}\lambda^{\nu m}\tr a^{\nu m}=\sum_{\nu=0}^{\infty}\sum_{m\in\Zn}\lambda^{\nu m}a^{\nu m}|_{\Gamma},\quad\mbox{ convergence in }\mathbb{L}_{p}(\Gamma),
\]
the latter equality being justified by the arguments in \cite[Rem.~6.4]{caetano2019density}
and the fact that each $a^{\nu m}$ is continuous. But then $\tr g$
must be the same as $f$ (recall the representation (\ref{eq:atom rep Jonsson})
and the fact that $a^{\nu m}|_{\Gamma}$ and $a^{\nu m}$ define the
same class in $\mathbb{L}_{p}(\Gamma)$.
So, we have proved that
$f$ also belongs to $\mathbb{B}_{p,q,0}^{\alpha}(\Gamma)$. Additionally,
using (\ref{eq:B lesssim b}),
\[
\|f\|_{\mathbb{B}_{p,q,0}^{\alpha}(\Gamma)}\leq\|g\|_{B_{p,q}^{\alpha+\frac{n-d}{p}}\Rn }\leq c\|\lambda\|_{b_{p,q}} \leq 2c\|f\|_{B_{\alpha}^{p,q}(\Gamma)}.
\]

To see  the opposite inclusion,
given $f\in\mathbb{B}_{p,q,0}^{\alpha}(\Gamma)$, choose $g\in B_{p,q}^{\alpha+\frac{n-d}{p}}\Rn $
such that $f=\tr g$ and
\begin{align} \label{eq:extralabel}
\|g\|_{B_{p,q}^{\alpha+\frac{n-d}{p}}\Rn} \leq 2\|f\|_{\mathbb{B}_{p,q,0}^{\alpha}(\Gamma)}.
\end{align}
Since $K+1>\alpha+\frac{n-d}{p}$ and
$0>-\alpha-\frac{n-d}{p}$, it follows from \cite[Thm.~2.3 and Thm.~2.5]{Caetano2011}
that
\[
g=\sum_{\nu=0}^{\infty}\sum_{m\in\Zn}\lambda_{\nu m}a_{\nu m},\quad\mbox{ convergence in }B_{p,q}^{\alpha+\frac{n-d}{p}}\Rn ,
\]
for some family of $(\alpha+\frac{n-d}{p},p)_{K+1,0,1}$-atoms $a_{\nu m}$
and numbers $\lambda_{\nu m}$ which we can choose to satisfy the estimate
\begin{equation}
\left(\sum_{\nu=0}^{\infty}\left(\sum_{m\in\Zn}|\lambda_{\nu m}|^{p}\right)^{q/p}\right)^{1/q}\leq c^\prime\|g\|_{B_{p,q}^{\alpha+\frac{n-d}{p}}\Rn },\label{eq:b lesssim B}
\end{equation}
for some constant $c^\prime>0$ independent of $f$. Again using
elementary embeddings and the linearity
and continuity of the trace operator $\tr $ from \cite[Prop.~6.2]{caetano2019density},
\[
f=\tr g=\sum_{\nu=0}^{\infty}\sum_{m\in\Zn}\lambda_{\nu m}\tr a_{\nu m}=\sum_{\nu=0}^{\infty}\sum_{m\in\Zn}\lambda_{\nu m}a_{\nu m}|_{\Gamma},\quad\mbox{ convergence in }\mathbb{L}_{p}(\Gamma).
\]
But then we see from part 2 of the above Remark \ref{rem:comparing atoms}
that we have what is needed to conclude that $f$ also belongs to
$B_{\alpha}^{p,q}(\Gamma)$. Moreover, from \eqref{eq:extralabel} and (\ref{eq:b lesssim B})
it also follows that
\[
\|f\|_{B_{\alpha}^{p,q}(\Gamma)}\leq c^\prime\|g\|_{B_{p,q}^{\alpha+\frac{n-d}{p}}\Rn }  \leq 2c^\prime \|f\|_{\mathbb{B}_{p,q,0}^{\alpha}(\Gamma)}.
\]
\end{proof}
\begin{cor} \label{cor:equiv}
Let $\alpha>0$ and $\Gamma$ be a $d$-set, $0<d<n$, preserving
Markov's inequality. Then $\mathbb{H}^{\alpha}(\Gamma)=B_{\alpha}^{2,2}(\Gamma)$
with equivalence of norms.
\end{cor}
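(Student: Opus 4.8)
The plan is to deduce Corollary \ref{cor:equiv} directly from Proposition \ref{prop:atomic characterization traces} together with the identifications already established in the excerpt. The strategy is to compose three equalities of spaces, each with equivalence of norms: first, $\mathbb{H}^\alpha(\Gamma) = \mathbb{B}^\alpha_{2,2,0}(\Gamma)$; second, $\mathbb{B}^\alpha_{2,2,0}(\Gamma) = B^{2,2}_\alpha(\Gamma)$; and the second of these is precisely Proposition \ref{prop:atomic characterization traces} specialised to $p=q=2$.

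First I would address the specialisation of Proposition \ref{prop:atomic characterization traces}. That proposition is stated for general $1\le p,q<\infty$ and requires $\alpha>0$, $\Gamma$ a $d$-set with $0<d<n$ preserving Markov's inequality. Taking $p=q=2$ is legitimate, and all hypotheses of the proposition are exactly the hypotheses assumed in the corollary. Hence $\mathbb{B}^\alpha_{2,2,0}(\Gamma) = B^{2,2}_\alpha(\Gamma)$ with equivalence of norms, with no further work.

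Next I would supply the identification $\mathbb{H}^\alpha(\Gamma) = \mathbb{B}^\alpha_{2,2,0}(\Gamma)$. Here I would invoke the definitions from \S\ref{sec:FunctionSpaces}: the space $\mathbb{H}^\alpha(\Gamma)$ is the trace space $\tr(H^s(\R^n))$ with $s = \alpha + (n-d)/2$, equipped with the quotient norm, and is denoted $\IH^\alpha_{2,0}(\Gamma)$ in \cite[\S6]{caetano2019density}. Since $H^s(\R^n) = B^s_{2,2}(\R^n)$ with equivalent norms (as noted in \S\ref{sec:FunctionSpaces}, citing \cite[p.~8]{JoWa84}), the trace of $H^s(\R^n)$ coincides, with equivalent norms, with the trace of $B^s_{2,2}(\R^n)$, which is by definition the space $\mathbb{B}^\alpha_{2,2,0}(\Gamma)$ of \cite[Def.~6.3]{caetano2019density}, the exponent on $\R^n$ being $s = \alpha+(n-d)/2 = \alpha + (n-d)/p$ with $p=2$. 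This matches the relationship $\alpha \leftrightarrow \alpha + (n-d)/p$ used throughout Proposition \ref{prop:atomic characterization traces}.

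Chaining these two identifications gives $\mathbb{H}^\alpha(\Gamma) = \mathbb{B}^\alpha_{2,2,0}(\Gamma) = B^{2,2}_\alpha(\Gamma)$ with equivalence of norms, which is the assertion. I do not anticipate any serious obstacle: the entire content is bookkeeping of indices and of the various definitions, and the substantive analytic work (the atomic decomposition comparison) is already carried out in Proposition \ref{prop:atomic characterization traces}. The only point demanding care is the consistent tracking of the shift $s = \alpha + (n-d)/2$ between the exponent on $\R^n$ and the exponent on $\Gamma$, and confirming that the definition of $\mathbb{B}^\alpha_{2,2,0}(\Gamma)$ in \cite{caetano2019density} is indeed the trace of $B^{\alpha+(n-d)/2}_{2,2}(\R^n)$, so that the identification $H^s(\R^n)=B^s_{2,2}(\R^n)$ can be applied cleanly. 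All of these are statements available from the excerpt or its cited references.
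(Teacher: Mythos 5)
Your proposal is correct and takes essentially the same route as the paper's own proof: identify $\mathbb{H}^{\alpha}(\Gamma)$ with $\mathbb{B}^{\alpha}_{2,2,0}(\Gamma)$ using the coincidence $H^{s}(\R^n)=F^{s}_{2,2}(\R^n)=B^{s}_{2,2}(\R^n)$ with $s=\alpha+(n-d)/2$ (the paper phrases this step through the Triebel--Lizorkin trace space $\mathbb{F}^{\alpha}_{2,2,0}(\Gamma)$ of \cite[Def.~6.3]{caetano2019density}, but the content is the same), and then apply Proposition \ref{prop:atomic characterization traces} with $p=q=2$.
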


\begin{proof} %
Our space $\IH^\alpha(\Gamma)$ coincides with the spaces $\IH^\alpha_{2,0}(\Gamma)=\mathbb{F}^\alpha_{2,2,0}(\Gamma)$ defined in \cite[Def. 6.3]{caetano2019density}. Moreover, since (for all $s\in \R$)
the Triebel-Lizorkin space $F_{2,2}^{s}\Rn$ coincides with the Besov space $B_{2,2}^{s}\Rn $, with equivalent norms, it follows, inspecting the definitions of $\mathbb{F}^\alpha_{2,2,0}(\Gamma)$ and $\mathbb{B}_{2,2,0}^{\alpha}(\Gamma)$ in \cite[Def. 6.3]{caetano2019density}, that $\mathbb{F}^\alpha_{2,2,0}(\Gamma)=\mathbb{B}_{2,2,0}^{\alpha}(\Gamma)$,
also with equivalence of norms, and the result follows immediately
from the previous proposition.
\end{proof}
\begin{rem}
Characterizations of trace spaces in terms of atomic representations,
like the one given in Proposition \ref{prop:atomic characterization traces}
above, are not new. With a somewhat different approach and in the
larger setting of the so-called $h$-sets, they can already be seen
in \cite[Prop.~3.5.4]{Bricchi}.
\end{rem}

\begin{rem} \label{rem:key}
If $\Gamma\subset \R^n$ is a $d$-set and $d>n-1$,
then the hypothesis that $\Gamma$ preserves Markov's inequality is
automatically satisfied  \cite[Thm.~3 on p.~39]{JoWa84}.
\end{rem}

\section{Inverse estimates}
\label{app:Inverse}
In this appendix we prove that the inverse estimate \eqref{eq:InvEst}, established for $0<t<1$ in Theorem \ref{thm:inverse}, holds in fact for the extended range $0<t\leq J$, for any $J\in\N$, moreover with a constant in the inverse estimate that is independent of $t$.
Our proof is modelled on standard inverse estimate arguments for negative exponent Sobolev spaces, %
e.g.\ \cite[Theorem 4.6]{Dahmen04}, with the important difference that it is, of course, impossible to support the smooth ``bubble functions'' (in the terminology of  \cite[\S4.3]{Dahmen04}), that are a key tool in the arguments of \cite{Dahmen04} and in the proof of earlier inverse estimate results, inside an element $\Gamma_{\bm}\subset \Gamma$, as $\Gamma$ has empty interior. In the case that $\Gamma$ is a disjoint IFS attractor, it turns out that, to carry through an analogous argument, it is enough to replace the bubble function by a smooth function supported in a carefully chosen neighbourhood of $\Gamma_{\bm}$ (the function $\sigma_{\bm}\in C_0^\infty(\R^n)$ in the proof below). The assumption that $\Gamma$ is a disjoint IFS attractor implies that $\Gamma$ is a $d$-set with $0<d<n$ 
(see Lemma \ref{lem:disconnected}), 
but, in contrast to Theorem \ref{thm:inverse}, we make no additional constraint on $d$.
We continue to assume in this appendix that $h$ lies in the range \eqref{eq:hrange}, i.e.\ that
\begin{equation*}
0<h\leq h_0 := \diam(\Gamma),
\end{equation*}
and to use the notations of the main part of the paper, notably $L_h$, $\chi_\bm$, and $\IY_h$ defined by \eqref{eq:LhDef}, \eqref{eq:VkBasisDefn}, and \eqref{eq:YHDef2}, respectively.

\begin{thm} \label{thm:inverse2} Suppose that $\Gamma$ is the attractor of an IFS, satisfying \eqref{eq:fixedfirst}, that $\Gamma_1$, ..., $\Gamma_M$ are disjoint (i.e., $\Gamma$ is disjoint), and that $J\in \N$. Then there exists $c_I>0$ such that, for every $\psi_h\in \IY_h$,
\begin{equation} \label{eq:ii}
\|\psi_h\|_{\IL_2(\Gamma)} \leq c_I h^{-t}\|\psi_h\|_{\IH^{-t}(\Gamma)}, \quad 0<t\leq J.
\end{equation}
\end{thm}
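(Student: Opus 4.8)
The plan is to reduce \eqref{eq:ii} to the \emph{positive-order} inverse estimate
\begin{equation} \label{eq:posinv}
\|\psi_h\|_{\IH^{t}(\Gamma)}\le c\,h^{-t}\|\psi_h\|_{\IL_2(\Gamma)},\qquad 0<t\le J,\ \ \psi_h\in\IY_h,
\end{equation}
with $c$ independent of $t$, and then dualise. Since $\Gamma$ is disjoint it is a $d$-set with $0<d<n$ (Lemma~\ref{lem:disconnected}), and $\IY_h\subset\IH^t(\Gamma)$ for every $t>0$; hence, using \eqref{eq:L2dualequiv} and the Cauchy--Schwarz inequality for the duality pairing,
\[
\|\psi_h\|_{\IL_2(\Gamma)}^2=\langle\psi_h,\psi_h\rangle_{\IH^{-t}(\Gamma)\times\IH^{t}(\Gamma)}\le\|\psi_h\|_{\IH^{-t}(\Gamma)}\,\|\psi_h\|_{\IH^{t}(\Gamma)}.
\]
Substituting \eqref{eq:posinv} and cancelling one factor of $\|\psi_h\|_{\IL_2(\Gamma)}$ yields \eqref{eq:ii} with $c_I=c$, so the whole problem is \eqref{eq:posinv}.

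To prove \eqref{eq:posinv} I would build, for each $\psi_h=\sum_{\bm\in L_h}d_\bm\chi_\bm\in\IY_h$, a smooth extension $G\in C_0^\infty(\R^n)$ with $\tr G=\psi_h$ and $\|G\|_{H^s(\R^n)}\lesssim h^{-t}\|\psi_h\|_{\IL_2(\Gamma)}$, where $s=t+\tfrac{n-d}{2}$ as in \eqref{eq:st}; since $\tr\colon H^s(\R^n)\to\IH^t(\Gamma)$ has unit norm and $\IH^t(\Gamma)$ carries the quotient norm, this gives $\|\psi_h\|_{\IH^t(\Gamma)}\le\|G\|_{H^s(\R^n)}$. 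Fix $\sigma\in C_0^\infty(\R^n)$ with $\sigma\equiv1$ on a neighbourhood of $\Gamma$ and $\supp\sigma\subset\{x:\dist(x,\Gamma)<\eps\}$ for a small $\eps>0$ to be chosen, write $s_\bm:=s_{m_1}\circ\cdots\circ s_{m_\ell}$ (a similarity of ratio $\rho_\bm:=\rho_{m_1}\cdots\rho_{m_\ell}$) for $\bm=(m_1,\dots,m_\ell)$, set $\sigma_\bm:=\sigma\circ s_\bm^{-1}$, and put
\[
G:=\sum_{\bm\in L_h}d_\bm\,\cH^d(\Gamma_\bm)^{-1/2}\,\sigma_\bm.
\]
The crucial geometric step is a separation lemma: because $\Gamma_1,\dots,\Gamma_M$ are disjoint, $\delta_0:=\min_{m\ne m'}\dist(\Gamma_m,\Gamma_{m'})>0$, and a first-difference/common-ancestor argument shows that $L_h$ is an antichain and that, for distinct $\bm,\bm'\in L_h$, $\dist(\Gamma_\bm,\Gamma_{\bm'})\ge\rho_{\mathrm{anc}}\,\delta_0$, where the common-ancestor scale satisfies $\rho_{\mathrm{anc}}\ge\max(\rho_\bm,\rho_{\bm'})$. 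Choosing $\eps<\delta_0/2$ then forces the supports $\supp\sigma_\bm\subset\{\dist(\cdot,\Gamma_\bm)<\rho_\bm\eps\}$ to be pairwise disjoint and each disjoint from $\Gamma\setminus\Gamma_\bm$; consequently $\sigma_\bm|_\Gamma=1_{\Gamma_\bm}=\cH^d(\Gamma_\bm)^{1/2}\chi_\bm$ (recall \eqref{eq:VkBasisDefn}), so that $\tr G=\psi_h$ exactly.

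For the norm bound I would first estimate $\|G\|_{H^m(\R^n)}$ at integer orders $m\in\No$. Similarity scaling gives $\|\sigma_\bm\|_{H^m(\R^n)}^2\le\rho_\bm^{\,n-2m}\|\sigma\|_{H^m(\R^n)}^2$ (valid as $\rho_\bm\le1$), the disjointness of the supports makes the integer-order norm additive, and self-similarity gives $\cH^d(\Gamma_\bm)=\rho_\bm^d\cH^d(\Gamma)$ with $\rho_\bm\approx h$ for $\bm\in L_h$; combining these, and using $\sum_{\bm}|d_\bm|^2=\|\psi_h\|_{\IL_2(\Gamma)}^2$,
\[
\|G\|_{H^m(\R^n)}^2=\sum_{\bm\in L_h}|d_\bm|^2\,\cH^d(\Gamma_\bm)^{-1}\,\|\sigma_\bm\|_{H^m(\R^n)}^2\le C_m\,h^{\,n-d-2m}\,\|\psi_h\|_{\IL_2(\Gamma)}^2 .
\]
Thus the linear extension map $E_h\colon\psi_h\mapsto G$ satisfies $\|E_h\|_{\IY_h\to H^m(\R^n)}\le C_m^{1/2}h^{(n-d)/2-m}$ for every integer $m\ge0$ ($\IY_h$ carrying the $\IL_2(\Gamma)$ norm). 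I would then fix $m^*:=\lceil J+\tfrac{n-d}{2}\rceil$ and interpolate between $m_0=0$ and $m_1=m^*$: since $H^s(\R^n)=(L_2(\R^n),H^{m^*}(\R^n))_\theta$ with $s=\theta m^*$, operator interpolation gives $\|E_h\|_{\IY_h\to H^s(\R^n)}\le\|E_h\|_{\IY_h\to L_2}^{1-\theta}\|E_h\|_{\IY_h\to H^{m^*}}^{\theta}\le C^*h^{(n-d)/2-s}=C^*h^{-t}$ for all $s\in(\tfrac{n-d}2,J+\tfrac{n-d}2]$, with $C^*$ depending only on $J$ and the IFS. Hence $\|\psi_h\|_{\IH^t(\Gamma)}\le\|G\|_{H^s(\R^n)}\le C^*h^{-t}\|\psi_h\|_{\IL_2(\Gamma)}$, which is \eqref{eq:posinv}.

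The main obstacle is securing a constant \emph{uniform} in $t$ over the full range $(0,J]$ — precisely the feature that blocks extending the wavelet proof of Theorem~\ref{thm:inverse}, whose characterisation holds only for $|t|<1$. I expect to circumvent this by never estimating the fractional norm $\|G\|_{H^s}$ directly: the only genuine computations are at integer orders, where disjoint supports render everything additive and explicit, and the passage to non-integer $s$ is handled once and for all by interpolating the single operator $E_h$, which automatically produces a $t$-independent constant. The remaining delicate point is the separation lemma, in particular verifying that one $\eps$ (depending on the IFS but not on $h$) simultaneously disjoins \emph{all} the supports $\supp\sigma_\bm$; this is exactly where disjointness of the $\Gamma_m$ (rather than merely the open set condition) is needed, and it is the only place the hypothesis enters.
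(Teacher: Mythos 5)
Your proof is correct, and its technical core coincides with the paper's own proof in Appendix B: the same scaled cutoffs $\sigma_\bm=\sigma\circ s_{m_\ell}^{-1}\circ\cdots\circ s_{m_1}^{-1}$ attached to the mesh $\{\Gamma_\bm\}_{\bm\in L_h}$, with disjointness of $\Gamma_1,\ldots,\Gamma_M$ entering in exactly the same place (to make these supports pairwise disjoint and disjoint from the rest of $\Gamma$ --- the paper gets this slightly more briefly by citing Lemma \ref{lem:DisjointOSC} to obtain an OSC open set $O\supset\Gamma$ and taking $\supp\sigma\subset O$, whereas your quantitative separation lemma with $\eps<\delta_0/2$ essentially re-proves that lemma); the same Fourier scaling bound $\|\sigma_\bm\|^2_{H^m(\R^n)}\leq\rho_\bm^{\,n-2m}\|\sigma\|^2_{H^m(\R^n)}$ at integer orders; and one interpolation step to cover all $t\in(0,J]$ with a uniform constant. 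The only genuine difference is the final bookkeeping. The paper never forms an extension of $\psi_h$: it uses $v_h:=\sum_{\bm\in L_h}a_\bm\sigma_\bm$ as a test function, computes $\langle v_h,\tr^*\psi_h\rangle_{H^{s}(\R^n)\times H^{-s}(\R^n)}$ exactly via \eqref{eq:L2dualrep}, and lower-bounds $\|\psi_h\|_{\IH^{-t}(\Gamma)}=\|\tr^*\psi_h\|_{H^{-s}(\R^n)}$ by the quotient $|\langle v_h,\tr^*\psi_h\rangle|/\|v_h\|_{H^s(\R^n)}$, handling fractional $s$ by the elementary inequality $\|v_h\|_{H^s}\leq\|v_h\|_{H^0}^{1-s/L}\|v_h\|_{H^L}^{s/L}$ (with $L$ a fixed integer $\geq\max(s,n/2)$) instead of by operator interpolation. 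You renormalise the same function --- your $G$ is $v_h$ with each coefficient multiplied by $(\cH^d(\Gamma_\bm))^{-1/2}$ --- so that $\tr G=\psi_h$, read off the positive-order inverse estimate from the quotient-norm definition of $\IH^t(\Gamma)$, and then flip the sign of the exponent by Cauchy--Schwarz in the duality pairing, using \eqref{eq:L2dualequiv}. Both packagings are rigorous and yield $t$-uniform constants for any disjoint IFS attractor ($0<d<n$); yours has the modest advantages of recording the positive-order inverse estimate $\|\psi_h\|_{\IH^t(\Gamma)}\leq c\,h^{-t}\|\psi_h\|_{\IL_2(\Gamma)}$ as an explicit byproduct (not stated in the paper) and of exhibiting $\IY_h\subset\IH^t(\Gamma)$ constructively, while the paper's direct pairing argument is marginally shorter because no normalisation of the test function is needed.
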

\begin{proof} Let
$$
\rho_{\min} := \min_{j=1,...,M} \rho_j \quad \mbox{ and } \quad c_0 := \cH^d(\Gamma).
$$
Since $\Gamma_1$, ..., $\Gamma_M$ are disjoint, $\Gamma$ satisfies the OSC for some open set $O\supset \Gamma$  
by Lemma \ref{lem:DisjointOSC}. 
By a standard mollification we can construct a $\sigma\in C_0^\infty(\R^n)$ supported in $O$ such that $\sigma = 1$ in a neighbourhood of $\Gamma$. %
For each $\bm=(m_1,...,m_\ell) \in L_h$, let $\rho_{\bm}:= \prod_{j=1}^\ell \rho_{m_j}$,
\begin{equation} \label{eq:hbm}
h_{\bm}:=  \rho_{\bm}h_0 = \diam(\Gamma_{\bm}) \in (\rho_{\min} h,h],
\end{equation}
and
\[
\sigma_{\bm} := \sigma\circ s_{m_\ell}^{-1}\circ s_{{m_{\ell-1}}}^{-1}\circ\cdots \circ s_{m_1}^{-1},
\]
and note that, as a consequence of the OSC, the supports of $\sigma_{\bm}$ and $\sigma_{\bm'}$ are disjoint, for $\bm\neq \bm'$.

Now suppose that $0<t\leq J$, and define $s$ in terms of $t$ by \eqref{eq:st}. For $\bm\in L_h$, noting \eqref{eq:L2dualrep} and \eqref{eq:VkBasisDefn}, we have that
\begin{equation} \label{eq:chi_sigma}
\langle \sigma_{\bm},\tr^* \chi_{\bm}\rangle_{H^{s}(\R^n)\times H^{-s}(\R^n)}=(\tr \sigma_{\bm},\chi_{\bm})_{\IL_2(\Gamma)} = (\cH^d(\Gamma_{\bm}))^{1/2} = c_0^{1/2}\, \rho_{\bm}^{d/2}.
\end{equation}
Clearly \eqref{eq:ii} holds for $\psi_h=0$. Now suppose that $\psi_h\in \IY_h\setminus \{0\}$, %
so that, for some coefficients $a=(a_{\bm})_{\bm\in L_h}$, %
we have
$$
\psi_h = \sum_{\bm\in L_h}a_{\bm} \chi_{\bm},
$$
and
$$
\|\psi_h\|_{\IL_2(\Gamma)} = \|a\|_2   := \left(\sum_{\bm\in L_h}|a_{\bm}|^2\right)^{1/2}.
$$
Define $u_h\in Y_h\subset H^{-s}(\R^n)$ and $v_h\in H^s(\R^n)$ by
$$
u_h:= \tr^*\psi_h= \sum_{\bm\in L_h}a_{\bm} \tr^*\chi_{\bm} \quad \mbox{and} \quad v_h := \sum_{\bm\in L_h}a_{\bm}\sigma_{\bm}.
$$
Then, since the supports of the $\sigma_{\bm}$ are disjoint and again noting \eqref{eq:L2dualrep}, we have, using \eqref{eq:chi_sigma} and \eqref{eq:hbm}, that %
\begin{equation} \label{eq:lb1}
\langle v_h,u_h\rangle_{H^{s}(\R^n)\times H^{-s}(\R^n)} %
= c_0^{1/2}\, \sum_{\bm\in L_h}|a_{\bm}|^2 \rho_{\bm}^{d/2}\geq c_0^{1/2}\,(\rho_{\min}h/h_0)^{d/2} \,\|a\|_2^2.
\end{equation}
For $j\in \N_0$  and $\bm\in L_h$, %
using standard properties of Fourier transforms (e.g., \cite[Prop. 2.2.11]{Grafakos}),
\begin{eqnarray*}
\|\sigma_{\bm}\|^2_{H^{j}(\R^n)} &=& \rho_\bm^{2n} \int_{\R^n} \left(1+|\xi|^2\right)^j\left|\widehat \sigma(\rho_\bm \xi)\right|^2\, \rd \xi\\
&=& \rho_\bm^{n} \int_{\R^n} \left(1+|\xi/\rho_\bm|^2\right)^j\left|\widehat \sigma(\xi)\right|^2\, \rd \xi\\
 &\leq &\rho_{\bm}^{n-2j}\|\sigma\|^2_{H^j(\R^n)},
\end{eqnarray*}
since $\rho_\bm \leq 1$,
so, using that $\rho_{\bm} \in (\rho_{\min}h/h_0,h/h_0]$ by \eqref{eq:hbm}, and again that the supports of the $\sigma_{\bm}$ are disjoint,
$$
\|v_h\|^2_{H^{j}(\R^n)} = \sum_{\bm\in L_h}|a_{\bm}|^2\|\sigma_{\bm}\|^2_{H^j(\R^n)} \leq \left\{\begin{array}{ll}
                                                                                          \|\sigma\|^2_{H^j(\R^n)} (\rho_{\min}h/h_0)^{n-2j} \,\|a\|_2^2, & \mbox{if } j\geq n/2, \\
                                                                                          \|\sigma\|^2_{H^j(\R^n)} (h/h_0)^{n-2j} \,\|a\|_2^2, & \mbox{if } j<n/2.
                                                                                        \end{array}
\right.
$$
In particular this applies for $j=0,L$, where $L := \lceil\max(J+(n-d)/2,n/2)\rceil$, so that $L\geq \max(s,n/2)$. Hence, and applying H\"older's inequality to the definition of the $H^s(\R^n)$ norm and noting that $\|\sigma\|_{H^0(\R^n)}\leq \|\sigma\|_{H^L(\R^n)}$,
\begin{equation} \label{eq:lb2}
\|v_h\|_{H^s(\R^n)} \leq \|v_h\|^{1-s/L}_{H^0(\R^n)}\|v_h\|^{s/L}_{H^L(\R^n)} \leq \|\sigma\|_{H^L(\R^n)}\, \rho_{\min}^{(n/(2L)-1)s}\,(h/h_0)^{n/2-s}\,\|a\|_2.
\end{equation}
Now, recalling from \S\ref{sec:FunctionSpaces} that $\tr^*:\IH^{-t}(\Gamma)\to H^{-s}(\R^n)$ has unit norm, %
$$
\|\psi_h\|_{\IH^{-t}(\Gamma)}\geq
\|u_h\|_{H^{-s}(\R^n)} = \sup_{v\in H^s(\R^n)\setminus\{0\}}\frac{|\langle v,u_h\rangle_{H^{s}(\R^n)\times H^{-s}(\R^n)}|}{\|v\|_{H^s(\R^n)}}\geq  \frac{|\langle v_h,u_h\rangle_{H^{s}(\R^n)\times H^{-s}(\R^n)}|}{\|v_h\|_{H^s(\R^n)}}.
$$
Combining this inequality with \eqref{eq:lb1} and \eqref{eq:lb2} and recalling \eqref{eq:st}  we obtain that
$$
\|\psi_h\|_{\IH^{-t}(\Gamma)} \geq \frac{c_0^{1/2}}{\|\sigma\|_{H^L(\R^n)}} \,\rho^{t+n/2}_{\min}\,(h/h_0)^{t}\,\|a\|_2,
$$
and the result follows on recalling that $\|a\|_2= \|\psi_h\|_{\IL_2(\Gamma)}$.
\end{proof}

\section{Table of definitions}
\label{sec:TableOfDefns}
\begin{tabular}{|l|l|}
\hline
\textbf{Symbol/terminology} & \textbf{Defined/introduced} \\\hline
$\Gamma$, $\Gamma_\infty$  & \S\ref{sec:Introduction}   \\\hline
$\cH^\alpha$, $\dimH$, $|E|$ (Lebesgue measure), $d$-set & \S\ref{sec:HausdorffMeasure}    \\\hline
IFS, OSC, homogeneous, self-similar, disjoint   & \S\ref{sec:IFS}    \\\hline
$H^s(\Gamma_\infty)$, $H^s_E$, $H^s(\Omega)$, $\tH^s(\Omega)$   & \S\ref{sec:FunctionSpaces} \& \S\ref{sec:FunctionSpacesScreens}   \\\hline
$\IL_2(\Gamma)$, $\IL_\infty(\Gamma)$, $\IH^t(\Gamma)$, $\tr$, $\tr^*$  & \S\ref{sec:FunctionSpaces} \& \S\ref{sec:FunctionSpacesScreens}   \\\hline
$W^1(\Omega)$, $W^{1,{\rm loc}}(\Omega)$, $\gamma^\pm$, $\partial_{\bn}^\pm$, $C^\infty_{0,\Gamma}$  &  \S\ref{sec:FunctionSpacesScreens}   \\\hline
$I_\ell$, $I_{\N}$, $I_{\N_0}$, $\Gamma_\bm$, $\IW_\ell$, $\chi_\bm$, $\psi_0$, $\psi^m_\bm$, $J_\nu$, $\nu_0$, $\|\cdot\|_{t}$  &  \S\ref{sec:IFS}   \\\hline
$u^i$, $\Gamma^c$, $P$, $\cS$, $S$, $\phi$, $a(\cdot,\cdot)$, $a^\Omega(\cdot,\cdot)$, $S^\Omega$, $u^\infty$, $\Phi^\infty$, $\mathds{S}^n$  &  \S\ref{sec:BVPsBIEs}   \\\hline
$t_d$, $\IS$ &  \S\ref{sec:BIEondsets}   \\\hline
$\{T_j\}_{j=1}^N$, $\mathbb{V}_N$, $V_N$, $\phi_N$, $A$, $\vec{c}$, $\vec{b}$, $J$  & \S\ref{sec:HausdorffBEM}    \\\hline
$\IX_\nu$, $K_\nu$, $\IP_\nu$, $X_\nu$, $\IY_h$, $Y_h$, $L_h$, $h$  &  \S\ref{sec:BestApprox}   \\\hline
$c_t$  & \S\ref{sec:InverseEstimates}    \\\hline
$A^Q$, $\vec{b}^Q$, $\vec{c}^Q$, $\phi_N^Q$, $J^Q$, $\Hull$  & \S\ref{sec:Quadrature}    \\\hline
\end{tabular}

\vspace{3ex}

\small
{\bf Acknowledgements.} 
We are grateful to Martin Costabel for instructive discussions related to this project. SC-W was supported by Engineering and Physical Sciences Research Council (EPSRC) Grant EP/V007866/1, and DH and AG by EPSRC grants EP/S01375X/1 and EP/V053868/1. AM was supported by the PRIN project ``NA-FROM-PDEs'' and by the MIUR through the ``Dipartimenti di Excellenza'' Program (2018--2022)--Dept.\ of Mathematics, University of Pavia. 
AC was supported by CIDMA (Center for Research and Development in Mathematics and Applications) and FCT (Foundation for Science and Technology) within project UIDB/04106/2020. 
The authors acknowledge the use of the UCL Myriad High Performance Computing Facility (Myriad@UCL), and associated support services, in the completion of this work.
\bibliography{BEMbib_short2014}%

\begin{thebibliography}{10}

\bibitem{AbramowitzStegun}
{\sc M.~Abramowitz and I.~A. Stegun}, {\em Handbook of Mathematical Functions
  with Formulas, Graphs, and Mathematical Tables}, Dover, 1964.

\bibitem{AlAv:21}
{\sc F.~Alouges and M.~Averseng}, {\em New preconditioners for the {L}aplace
  and {H}elmholtz integral equations on open curves: analytical framework and
  numerical results}, Numer. Math., 148 (2021), pp.~255--292.

\bibitem{ImpedanceScreen}
{\sc J.~Bannister, A.~Gibbs, and D.~P. Hewett}, {\em {Acoustic scattering by
  impedance screens/cracks with fractal boundary: well-posedness analysis and
  boundary element approximation}}, Math. Mod. Meth. Appl. Sci. (M3AS), 32
  (2022), pp.~291--319.

\bibitem{BeLo}
{\sc J.~Bergh and J.~L{\"{o}}fstr{\"{o}}m}, {\em Interpolation Spaces: an
  Introduction}, Springer, 1976.

\bibitem{julia}
{\sc J.~Bezanson, S.~Karpinski, V.~B. Shah, and A.~Edelman}, {\em Julia: A fast
  dynamic language for technical computing}, arXiv preprint arXiv:1209.5145,
  (2012).

\bibitem{Braess2007}
{\sc D.~Braess}, {\em Finite elements}, CUP, third~ed., 2007.

\bibitem{Bricchi}
{\sc M.~Bricchi}, {\em Tailored Function Spaces and $h$-sets}, PhD thesis,
  University of Jena, 2002.

\bibitem{Caetano2011}
{\sc A.~Caetano}, {\em On the type of convergence in atomic representations},
  Complex Var. Elliptic Equat., 56 (2011), pp.~875--883.

\bibitem{caetano2019density}
{\sc A.~Caetano, D.~P. Hewett, and A.~Moiola}, {\em {Density results for
  Sobolev, Besov and Triebel-Lizorkin spaces on rough sets}}, J. Funct. Anal.,
  281 (2021), p.~109019.

\bibitem{HausdorffDomain}
{\sc A.~M. Caetano, S.~N. Chandler-Wilde, X.~Claeys, A.~Gibbs, D.~P. Hewett,
  and A.~Moiola}, {\em {Integral equation methods for acoustic scattering by
  fractals}}, arXiv preprint arXiv:2309.02184,  (2023).

\bibitem{dequalsnpaper}
{\sc A.~M. Caetano, S.~N. Chandler-Wilde, A.~Gibbs, and D.~P. Hewett}, {\em
  {Properties of IFS attractors with non-empty interiors and associated
  function spaces and scattering problems}}.
\newblock {I}n preparation.

\bibitem{CC08}
{\sc A.~Carvalho and A.~Caetano}, {\em On the {H}ausdorff dimension of
  continuous functions belonging to {H}\"{o}lder and {B}esov spaces on fractal
  {$d$}-sets}, J. Fourier Anal. Appl., 18 (2012), pp.~386--409.

\bibitem{ChGrLaSp:11}
{\sc S.~N. Chandler-Wilde, I.~G. Graham, S.~Langdon, and E.~A. Spence}, {\em
  Numerical-asymptotic boundary integral methods in high-frequency acoustic
  scattering}, Acta Numer., 21 (2012), pp.~89--305.

\bibitem{CoercScreen2}
{\sc S.~N. Chandler-Wilde and D.~P. Hewett}, {\em Wavenumber-explicit
  continuity and coercivity estimates in acoustic scattering by planar
  screens}, Integr. Equat. Oper. Th., 82 (2015), pp.~423--449.

\bibitem{ScreenPaper}
\leavevmode\vrule height 2pt depth -1.6pt width 23pt, {\em Well-posed {PDE} and
  integral equation formulations for scattering by fractal screens}, SIAM J.
  Math. Anal., 50 (2018), pp.~677--717.

\bibitem{InterpolationCWHM}
{\sc S.~N. Chandler-Wilde, D.~P. Hewett, and A.~Moiola}, {\em Interpolation of
  {H}ilbert and {S}obolev spaces: quantitative estimates and counterexamples},
  Mathematika, 61 (2015), pp.~414--443.

\bibitem{ChaHewMoi:13}
\leavevmode\vrule height 2pt depth -1.6pt width 23pt, {\em {S}obolev spaces on
  non-{L}ipschitz subsets of $\mathbb{R}^n$ with application to boundary
  integral equations on fractal screens}, Integr. Equat. Operat. Th., 87
  (2017), pp.~179--224.

\bibitem{InterpolationE_CWHM}
\leavevmode\vrule height 2pt depth -1.6pt width 23pt, {\em Corrigendum:
  {I}nterpolation of {H}ilbert and {S}obolev spaces: quantitative estimates and
  counterexamples ({M}athematika 61 (2015), 414--443)}, Mathematika, 68 (2022),
  pp.~1393--1400.

\bibitem{BEMfract}
{\sc S.~N. Chandler-Wilde, D.~P. Hewett, A.~Moiola, and J.~Besson}, {\em
  Boundary element methods for acoustic scattering by fractal screens}, Numer.
  Math., 147 (2021), pp.~785--837.

\bibitem{Ciarlet78}
{\sc P.~G. Ciarlet}, {\em The Finite Element Method for Elliptic Problems},
  North-Holland, 1978.

\bibitem{Cletal:21}
{\sc X.~Claeys, L.~Giacomel, R.~Hiptmair, and C.~Urz\'ua-Torres}, {\em
  Quotient-space boundary element methods for scattering at complex screens},
  BIT Numerical Mathematics, 61 (2021), pp.~1193--1221.

\bibitem{CoSt88}
{\sc M.~Costabel and E.~P. Stephan}, {\em Duality estimates for the numerical
  solution of integral equations}, Numer. Math., 54 (1988), pp.~339--353.

\bibitem{CwKa95}
{\sc M.~Cwikel and N.~J. Kalton}, {\em Interpolation of compact operators by
  the methods of {C}alder\'on and {G}ustavsson-{P}eetre}, Proc. Edin. Math.
  Soc., 38 (1995), pp.~261--276.

\bibitem{Dahmen04}
{\sc W.~Dahmen, B.~Faermann, I.~G. Graham, W.~Hackbusch, and S.~A. Sauter},
  {\em Inverse inequalities on non-quasi-uniform meshes and application to the
  mortar element method}, Math. Comp., 73 (2004), pp.~1107--1138.

\bibitem{ErStEl90}
{\sc V.~J. Ervin, E.~P. Stephan, and S.~Abou El-Seoud}, {\em An improved
  boundary element method for the charge density of a thin electrified plate in
  $\mathbb{R}^3$}, Math. Meth. Appl. Sci., 13 (1990), pp.~291--303.

\bibitem{Evans2010}
{\sc L.~C. Evans}, {\em Partial Differential Equations}, AMS, 2010.

\bibitem{Fal85}
{\sc K.~Falconer}, {\em The Geometry of Fractal Sets}, CUP, 1985.

\bibitem{Fal}
\leavevmode\vrule height 2pt depth -1.6pt width 23pt, {\em Fractal Geometry:
  Mathematical Foundations and Applications}, Wiley, 3rd ed., 2014.

\bibitem{NonDisjointQuad}
{\sc A.~Gibbs, D.~P. Hewett, and B.~Major}, {\em Numerical evaluation of
  singular integrals on non-disjoint self-similar fractal sets}, arXiv preprint
  arXiv:2303.13141,  (2023).

\bibitem{HausdorffQuadrature}
{\sc A.~Gibbs, D.~P. Hewett, and A.~Moiola}, {\em {Numerical evaluation of
  singular integrals on fractal sets}}, Numer. Alg., 92 (2023), pp.~2071--2124.

\bibitem{Grafakos}
{\sc L.~Grafakos}, {\em Classical Fourier Analysis}, Springer, 2008.

\bibitem{HewMoi:15}
{\sc D.~P. Hewett and A.~Moiola}, {\em On the maximal {S}obolev regularity of
  distributions supported by subsets of {E}uclidean space}, Anal. Appl., 15
  (2017), pp.~731--770.

\bibitem{HsiaoWendland81}
{\sc G.~C. Hsiao and W.~L. Wendland}, {\em The {A}ubin-{N}itsche lemma for
  integral equations}, J. Integr. Equat., 3 (1981), pp.~299--315.

\bibitem{JePi:22}
{\sc C.~Jerez-Hanckes and J.~Pinto}, {\em Spectral {G}alerkin method for
  solving {H}elmholtz boundary integral equations on smooth screens}, IMA J.
  Numer. Anal., 42 (2022), pp.~3571--3608.

\bibitem{jones1994fast}
{\sc P.~Jones, J.~Ma, and V.~Rokhlin}, {\em {A fast direct algorithm for the
  solution of the Laplace equation on regions with fractal boundaries}}, J.
  Comput. Phys., 113 (1994), pp.~35--51.

\bibitem{Jonsson98}
{\sc A.~Jonsson}, {\em Wavelets on fractals and {B}esov spaces}, J. Fourier
  Anal. Appl., 4 (1998), pp.~329--340.

\bibitem{JoWa84}
{\sc A.~Jonsson and H.~Wallin}, {\em Function {S}paces on {S}ubsets of
  {${\mathbb R}^n$}}, Math. Rep., 2 (1984).

\bibitem{McLean}
{\sc W.~McLean}, {\em Strongly Elliptic Systems and Boundary Integral
  Equations}, CUP, 2000.

\bibitem{mitrea1999boundary}
{\sc M.~Mitrea and M.~Taylor}, {\em {Boundary layer methods for Lipschitz
  domains in Riemannian manifolds}}, J. Funct. Anal., 163 (1999), pp.~181--251.

\bibitem{panagouli1997fem}
{\sc P.~Panagiotopoulos and O.~Panagouli}, {\em {The FEM and BEM for fractal
  boundaries and interfaces. Applications to unilateral problems}}, Comput.
  Struct., 64 (1997), pp.~329--339.

\bibitem{RiddleWebSite}
{\sc L.~Riddle}, {\em {Classic Iterated Function Systems}}.
\newblock
  \href{https://larryriddle.agnesscott.org/ifs/ifs.htm}{larryriddle.agnesscott.org/ifs/ifs.htm},
  downloaded 18 July 2023.

\bibitem{sauter-schwab11}
{\sc S.~A. Sauter and C.~Schwab}, {\em Boundary Element Methods}, Springer,
  2011.

\bibitem{Schneider91}
{\sc R.~Schneider}, {\em Reduction of order for pseudodifferential operators on
  {L}ipschitz domains}, Commun. Partial. Differ. Equ., 16 (1991),
  pp.~1263--1286.

\bibitem{SloanSpence:88}
{\sc I.~H. Sloan and A.~Spence}, {\em The {G}alerkin method for integral
  equations of the first kind with logarithmic kernel: Theory}, IMA J. Numer.
  Anal., 8 (1988), pp.~105--122.

\bibitem{vsneiberg1974spectral}
{\sc I.~J. {\v{S}}ne{\i}berg}, {\em {Spectral properties of linear operators in
  interpolation families of Banach spaces}}, Mat. Issled, 9 (1974),
  pp.~214--229.

\bibitem{So:01}
{\sc C.~M. Sorensen}, {\em Light scattering by fractal aggregates: a review},
  Aerosol Sci. Tech., 35 (2001), pp.~648--687.

\bibitem{Steinbach}
{\sc O.~Steinbach}, {\em Numerical Approximation Methods for Elliptic Boundary
  Value Problems}, Springer, 2008.

\bibitem{Ste:87}
{\sc E.~P. Stephan}, {\em Boundary integral equations for screen problems in
  $\mathbb{R}^3$}, Integr. Equat. Oper. Th., 10 (1987), pp.~236--257.

\bibitem{StWe84}
{\sc E.~P. Stephan and W.~L. Wendland}, {\em An augmented {G}alerkin procedure
  for the boundary integral method applied to two-dimensional screen and crack
  problems}, Appl. Anal., 18 (1984), pp.~183--219.

\bibitem{Triebel97FracSpec}
{\sc H.~Triebel}, {\em Fractals and Spectra}, Birkh{\"a}user, 1997.

\bibitem{Tri08}
{\sc H.~Triebel}, {\em Function spaces and wavelets on domains}, European
  Mathematical Society, 2008.

\bibitem{WeGa:03}
{\sc D.~H. Werner and S.~Ganguly}, {\em An overview of fractal antenna
  engineering research}, IEEE Ant. Propag. M., 45 (2003), pp.~38--57.

\bibitem{XiongZhou2005}
{\sc Y.~Xiong and J.~Zhou}, {\em The {H}ausdorff measure of a class of
  {S}ierpinski carpets}, J. Math. Anal. Appl., 305 (2005), pp.~121--129.

\bibitem{Zuberman2019}
{\sc L.~Zuberman}, {\em Exact {H}ausdorff and packing measure of certain
  {C}antor sets, not necessarily self-similar or homogeneous}, J. Math. Anal.
  Appl., 474 (2019), pp.~143--156.

\end{thebibliography}
\bibliographystyle{siam}
\end{document}